\DeclareMathOperator{\Coker}{Coker}
\DeclareMathOperator{\cox}{Cox}
\DeclareMathOperator{\Gal}{Gal}
\DeclareMathOperator{\GL}{GL}
\DeclareMathOperator{\Gr}{Gr}
\DeclareMathOperator{\h}{ht}
\DeclareMathOperator{\head}{head}
\DeclareMathOperator{\Hom}{Hom}
\DeclareMathOperator{\id}{id}
\DeclareMathOperator{\im}{Im}
\DeclareMathOperator{\Ind}{Ind}
\DeclareMathOperator{\Ker}{Ker}
\DeclareMathOperator{\supp}{supp}
\DeclareMathOperator{\tail}{tail}
\newcommand{\mA}{{\mathbb A}}
\newcommand{\mF}{{\mathbb F}}
\newcommand{\mG}{{\mathbb G}}
\newcommand{\mN}{{\mathbb N}}
\newcommand{\mP}{{\mathbb P}}
\newcommand{\mQ}{{\mathbb Q}}
\newcommand{\mZ}{{\mathbb Z}}
\newcommand{\mcO}{{\mathcal O}}
\newcommand{\D}{{\mathcal D}}
\renewcommand{\max}{{\rm max}}
\renewcommand{\min}{{\rm min}}
\renewcommand{\P}{{\mathcal P}}
\renewcommand{\to}{\longrightarrow}
\theoremstyle{plain}
\newtheorem{thm}{Theorem}[section]
\newtheorem*{thm*}{Theorem}
\newtheorem{lemma}[thm]{Lemma}
\newtheorem{coro}[thm]{Corollary}
\newtheorem*{coro*}{Corollary}
\newtheorem{prop}[thm]{Proposition}
\newtheorem{conj}[thm]{Conjecture}
\newtheorem*{conj*}{Conjecture}
\theoremstyle{definition}
\newtheorem{defn}[thm]{Definition}
\newtheorem{eg}[thm]{Example}
\newtheorem{egs}[thm]{Examples}
\newtheorem{rk}[thm]{Remark}
\newtheorem{rks}[thm]{Remarks}
\numberwithin{equation}{section}
\begin{document}
\title[The cohomology of Deligne-Lusztig varieties]{The cohomology of Deligne-Lusztig varieties for the general linear group \\ 
\vspace{0.5cm}{\rm \tiny Dedicated to Michael Rapoport on the occasion of his 65th birthday}  \\ }
\author[Sascha Orlik]{Sascha Orlik}
\address{Bergische Universit\"at Wuppertal\\
Fachbereich C - Mathematik und Naturwissenschaften \\
Gaussstr. 20\\
42119 Wuppertal\\ Germany.}
\email{orlik@math.uni-wuppertal.de}

\date{\today}


\begin{abstract}
We propose two inductive approaches for determining the cohomology of Deligne-Lusztig
varieties in the case of $G=\GL_n$. The first one uses Demazure compactifications and analyses the corresponding Mayer-Vietoris
spectral sequence. This allows us to give an inductive formula  for the Tate twist $-1$ contribution of the cohomology of a DL-variety.
The second approach relies on considering more generally DL-varieties  attached to  hypersquares in the Weyl group. Here we give  
explicit formulas for  the cohomology of height one elements.
\end{abstract}

\maketitle

\section{Introduction}

In 1976 Deligne and Lusztig \cite{DL} introduced certain locally closed subvarieties in flag varieties over finite fields
which are  of particular importance in the representation theory of finite groups of Lie type. They proved that their Euler-Poincar\'e characteristic
considered as a  virtual representation of the corresponding finite group detect all irreducible
representations. However, a description of the individual cohomology groups of Deligne-Lusztig varieties has been determined
since then only in a few special cases cf. \cite{L2, DMR,DM,Du}, where in contrast the intersection cohomology groups of their Zariski closures
are treated in \cite{L3}.
In this paper we propose two inductive approaches for determining all of them
in the case of $G=\GL_n$ (resp. for reductive groups of Dynkin type $A_{n-1}$). Although the key ideas work for other (split) reductive groups as well, 
we have decided to treat
here only the case of the general linear group since things are more concrete in this special situation.

For a split reductive group ${\bf G}$ defined  over $k=\mF_q$, let $X$ be the set of all Borel subgroups of ${\bf G}$.
Let $F: X \rightarrow X$ be the Frobenius map over $\mF_q$.
The Deligne-Lusztig variety associated to  an element $w\in W$ of the Weyl group is the locally closed subset of $X$ given by
\begin{equation*}
X(w)=\{x\in X\mid {\rm inv}(x,F(x))=w\}.
\end{equation*}
Here ${\rm inv}:X \times X \to W$ is the relative position map induced by the Bruhat lemma.
Then $X(w)$ is a smooth quasi-projective variety defined over $\mF_q$. It is naturally equipped with an action of $G={\bf G}(k)$
and has dimension equal to the length of $w$.
The $\ell$-adic cohomology with compact support $H^\ast_c(X(w)):=H_c^\ast(X(w),\overline{\mQ}_\ell)$ has therefore the structure of a
$G \times \Gal(\overline{k}/k)$-module.

Let ${\bf G}=\GL_n.$ In this paper, we make heavily use of
certain maps $\gamma : X_1 \to X(w')$ resp. $\delta :X_2 \to X(sw')$ introduced in \cite[Theorem 1.8]{DL} and implicitly further studied
in \cite{DMR}.
Here $w,w'\in W$  and $s$ is a simple reflection with $w=sw's$ and $\ell(w)=\ell(w')+2.$ Further  $X_1$ is a closed
subset of $X(w)$ and $X_2$ denotes its open complement. It is proved in loc.cit that $\gamma$ is a $\mA^1$-bundle whereas
$\delta$ is a $\mG_m$-bundle. Here  we consider instead of the map $\delta$ its look-alike $X_2 \to X(w's).$  The above
maps extend to $\mP^1$-bundles $X_2 \cup X(sw') \cup X(w's) \to X(w's)$ and $X_1 \cup X(w') \to X(w')$ which glue in turn to a $\mP^1$-bundle
$$\gamma:  X(Q)  \to X(w's) \cup X(w')$$
where $X(Q)=X(w) \cup X(sw') \cup X(w's) \cup X(w').$
Here $\gamma_{|X(w's)\cup X(w')}=\id$ whereas the restriction of $\gamma$ to $Z:=X(w) \cup X(sw')$ is a
$\mA^1$-bundle over the base $Z':=X(w's)\cup X(w').$ In particular, we deduce that
$$H^i_c(X(Q))=H^{i}_c(Z')\oplus H^{i-2}_c(Z')(-1)$$ for all integers  $i\geq 2$, which has been already known since \cite{DMR}. 

The quadruple $Q=\{w',sw',w's,w\}\subset W$ is a square in the sense of \cite{BGG}. The notion of a square appears in the theory of
BGG-resolutions of finite-dimensional Lie algebra representations. It seems to be also useful
in the study of the cohomology of Deligne-Lusztig varieties. We consider more generally hypersquares in $W$ and even in the
monoid $F^+$ which is freely generated by the subset $S$ of simple reflections in $W$. In fact we work more generally with DL-varieties and their
Demazure compactifications attached to
elements  in  ${F^+}$ in the spirit of \cite{DMR}.
More precisely, let $w=s_{i_1}\cdots s_{i_r}$ be a fixed reduced decomposition of $w\in W$ and let $\overline{X}(w)$ be the associated
Demazure compactification of $X(w)$. This variety is equipped with a compatible action of $G.$
We consider the closed complement of $X(w)$ in $\overline{X}(w)$  which is - as already observed in \cite{DL} - a union of smooth equivariant divisors.
We analyse the resulting spectral sequence converging to the cohomology of $X(w).$
The crucial point is that the intersection of these divisors is again a compactification of a DL-variety attached to some
subexpression of $s_{i_1}\cdots s_{i_r} \in F^+$.
Concretely the  spectral sequence has the shape
$$E_1^{p,q}=\bigoplus_{v \preceq w, \ell(v)=\ell(w)-p} H^q(\overline{X}(v))
\Longrightarrow H^{p+q}_c(X(w)).$$
Another feature  is that if $w=sw's\in F^+$, then  $\overline{X}(w)$ is a
$\mP^1$-bundle over  $\overline{X}(w's)$. This comes about from the fact that
$\overline{X}(w)$ is paved by DL-varieties attached to  squares of the special type as above. So by induction on the length of $w's$ we know the cohomology of the
compactification $\overline{X}(w).$ Of course not every element $w$ in $F^+$ has the shape $w=sw's$, but by using a result of \cite{GKP}, every element can be transformed into such an element by
applying the usual Weyl group relations and a cyclic shift operator. We study henceforth the effect on the cohomology by these operations. The start of induction is given by
elements of minimal length in their conjugacy classes, i.e. by Coxeter elements in  Levi subgroups of $G.$
This is one reason why we deal only with reductive groups of Dynkin type $A_{n-1}.$  In this case the Demazure compactification of the standard Coxeter element
can be considered as one of the Drinfeld halfspace $\Omega^n=\mP^{n-1} \setminus \bigcup\nolimits_{H / \mF_q} H$
(complement of all $\mF_q$-rational hyperplanes in the projective space of lines in $V=\mF^n$), cf. \cite{Dr}, and may be realised as a sequence of blow ups as it comes up in the arithmetic
theory of $\Omega^{n}$ over a local field \cite{Ge,GK,I}.

The following results are already known to the experts.

\begin{thm*} Let $G=\GL_n.$

\noindent i) For all $w\in F^+$, we have  $H^i(\overline{X}(w))=0$ for odd $i.$

\noindent ii) Let $w=sw's$ with $s\in S$ and $w'\in F^+$. Then there are decompositions
$H^i(\overline{X}(w))=H^i(\overline{X}(w's)) \oplus H^{i-2}(\overline{X}(w's))(-1)$ $=H^i(\overline{X}(sw')) \oplus H^{i-2}(\overline{X}(sw'))(-1).$

\noindent iii) The action of the Frobenius on $H^i(\overline{X}(w))$ and $H^i_c(X(w))$ is semi-simple for all $w\in F^+$ and for all $i\geq 0.$

\noindent iv) For all $i \geq 0$, the cycle map $A^i(\overline{X}(w))_{\overline{\mQ}_\ell}\to H^{2i}(\overline{X}(w))$ is an isomorphism
$\bigl($where $A^i(\overline{X}(w))$ is the Chow group of $\overline{X}(w) \mbox{ in degree $i$} \bigr)$.
\end{thm*}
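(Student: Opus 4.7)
The plan is a double induction: first on the length $\ell(w)$ of $w \in F^+$, and for fixed length on the number of cyclic shifts and braid moves needed to bring $w$, within its $W$-conjugacy class, to an element of the form $sw's$. The two geometric inputs are the $\mP^1$-bundle $\overline{X}(w) \to \overline{X}(w's)$ recalled in the introduction, and the realisation of $\overline{X}(c)$, for the standard Coxeter element $c$, as the Drinfeld-halfspace compactification $\overline{\Omega}^n$.

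For the base case, \cite{GKP} ensures that every $W$-conjugacy class in type $A_{n-1}$ has a minimal-length representative that is, up to conjugation, a product of standard Coxeter elements in Young Levi subgroups. For such $w$ the Demazure compactification $\overline{X}(w)$ can be built from the associated Drinfeld-halfspace compactifications $\overline{\Omega}^{n_i}$, each realised by \cite{Ge,GK,I} as an iterated equivariant blow-up of $\mP^{n_i-1}$ along $\mF_q$-rational flats. The resulting $\mF_q$-rational cellular decomposition, with cells isomorphic to $\mA^k$ and Frobenius acting on each cell by a power of $q$, yields parts (i), (iii) and (iv) directly (cell closures span both the Chow group and the cohomology, and Frobenius is diagonal in the resulting basis), while (ii) is read off from the final blow-up step.

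For the inductive step with $w = sw's$ and $\ell(w) = \ell(w') + 2$, the $\mP^1$-bundle $\overline{X}(w) \to \overline{X}(w's)$ gives, by the projective bundle formula in both $\ell$-adic cohomology and Chow theory,
\begin{equation*}
H^i(\overline{X}(w)) \cong H^i(\overline{X}(w's)) \oplus H^{i-2}(\overline{X}(w's))(-1)
\end{equation*}
as $G \times \Gal(\overline{k}/k)$-modules, which is half of (ii); the symmetric decomposition over $\overline{X}(sw')$ is obtained via the cyclic-shift identification $\overline{X}(w) \simeq \overline{X}(sw'\cdot s)$. Parts (i), (iii) and (iv) for $\overline{X}(w)$ then descend from those for $\overline{X}(w's)$ by compatibility of the formula with cycle maps, with Frobenius actions, and with parity of degree. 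For general $w$, the algorithm of \cite{GKP} provides a sequence of braid moves and cyclic shifts carrying $w$ either to an element of the form $sw's$ or to a minimal-length representative of the conjugacy class. A braid move induces a canonical isomorphism between the corresponding Bott-Samelson varieties; a cyclic shift $su \leftrightarrow us$ of equal lengths is implemented by a $G$- and Frobenius-equivariant isomorphism $\overline{X}(su) \simeq \overline{X}(us)$ built by cyclically relabelling the flag factors, in the spirit of the maps $\gamma, \delta$ of \cite[Theorem 1.8]{DL}; both operations transport all four assertions.

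Finally, semi-simplicity of the Frobenius on $H^i_c(X(w))$ follows from the spectral sequence
\begin{equation*}
E_1^{p,q} = \bigoplus_{v \preceq w,\, \ell(v) = \ell(w) - p} H^q(\overline{X}(v)) \Longrightarrow H^{p+q}_c(X(w))
\end{equation*}
of the introduction. By part (i), applied to each $\overline{X}(v)$ via the induction hypothesis, only rows with even $q$ contribute; each $H^q(\overline{X}(v))$ is moreover pure of weight $q$ (as $\overline{X}(v)$ is smooth projective), so all higher differentials vanish by weight considerations and the sequence degenerates. Hence $H^{p+q}_c(X(w))$ is a subquotient of a semi-simple Galois module, and is therefore itself semi-simple. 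The most delicate step of the argument is the cyclic-shift reduction: producing an equivariant isomorphism $\overline{X}(su) \simeq \overline{X}(us)$ compatible with the boundary stratification and the inductive $\mP^1$-bundle structure is essential for propagating the induction across distinct $W$-conjugacy classes, since the base case alone covers only a very sparse subset of $F^+$.
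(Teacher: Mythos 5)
Your base case and your inductive step for words of the form $sw's$ do coincide with the paper's route (the blow-up description of $\overline{X}(\cox)$ for height-zero elements, and the $\mP^1$-bundle over $\overline{X}(w's)$ with the projective bundle formula in cohomology and Chow theory); note only that the second decomposition in (ii) should come from the symmetric $\mP^1$-bundle over $\overline{X}(sw')$, or from the cohomological identity $H^i(\overline{X}(sw'))\cong H^i(\overline{X}(w's))$ of Proposition \ref{cyclicshift_coh} — your "cyclic-shift identification $\overline{X}(w)\simeq\overline{X}(sw'\cdot s)$" is vacuous as written. The genuine gap is the reduction of a general word to this situation: you assert that a braid move induces a canonical isomorphism of the corresponding Bott–Samelson compactifications, and this is false. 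The variety $\overline{X}(w)$ depends on the word $w\in F^+$, not merely on its image in $B^+$ (only the open stratum $X(w)$ has that invariance), and even the cohomologies differ: for $G=\GL_4$, $w=s_1s_2s_1s_3$ and $R(w)=s_2s_1s_2s_3$, Proposition \ref{coh_sts} gives $H^2(\overline{X}(R(w)))=H^2(\overline{X}(w))-i^G_{P_{(2,2)}}(-1)+i^G_{P_{(1,3)}}(-1)$, and $i^G_{P_{(2,2)}}\not\cong i^G_{P_{(1,3)}}$, so there is no equivariant isomorphism. (The cyclic shift is also only a universal homeomorphism — the two shift maps compose to Frobenius — but that is harmless for $\overline{\mQ}_\ell$-cohomology and rational Chow groups.)

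This matters because words such as $s_1s_2s_1s_3$ are not cyclically of the form $svs$, so parts (i), (iii), (iv) for them are exactly the cases your transport was meant to cover; moreover your degeneration argument for semisimplicity of $H^*_c(X(w))$ uses the column $E_1^{0,q}=H^q(\overline{X}(w))$, hence presupposes the semisimplicity of $H^*(\overline{X}(w))$ that the (failed) transport was to supply. The paper handles general $w$ differently: for (i), Lemma \ref{vanish} reduces to showing that the Frobenius eigenvalues on the groups $H^*_c(X(v))$ of the open strata are integral powers of $q$ — these groups, unlike $H^*(\overline{X}(v))$, are invariant under cyclic shift and braid relations — and then concludes by purity of the smooth projective $\overline{X}(w)$; for (iii), Proposition \ref{Frobss} uses the vanishing $H^i_c(X(w))=0$ for $i<\ell(w)$ to embed $H^i(\overline{X}(w))$ into the boundary cohomology, Poincaré duality above the middle degree, and a separate weight argument in degree $\ell(w)$ in which only the braid- and shift-invariant group $H^{\ell(w)}_c(X(w))$ is moved to an $sw's$ representative; for (iv), one must track how cohomology and Chow groups actually change under the operations (I)–(III), i.e. use Propositions \ref{cyclicshift_coh}, \ref{coh_iso_comm} and the correction formula of Proposition \ref{coh_sts}, rather than an isomorphism. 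Without corrective comparisons of this kind your induction does not reach general $w\in F^+$.
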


Whereas  part ii) of this theorem is already contained more generally in \cite[Prop. 3.2.3]{DMR}, it was pointed out to me that
part i) and iii) and iv) can be deduced  from \cite{L4}. Our proofs differ from loc.cit. 

It turns out that the cohomology of the varieties $\overline{X}(w)$ is similar to the classical situation of Schubert varieties.
Indeed, let $\prec$ be the Bruhat order on $F^+$. For a parabolic subgroup  $P\subset G$, let $i^G_P=\Ind^G_P(\overline{\mathbb Q_\ell})$
be the induced representation of the trivial one.  By part ii) of the above theorem and by  studying the effect of the usual Weyl group
relations on the Demazure  compactifications of DL-varieties we are able to deduce the next statement.

\begin{thm*}
 Let $w\in F^+.$ Then the cohomology of $\overline{X}(w)$ can be written as
$$H^\ast(\overline{X}(w))=\bigoplus_{z\preceq w} i^G_{P^w_z}(-\ell(z))[-2\ell(z)]$$
for certain std parabolic subgroups $P^w_z\subset G.$
\end{thm*}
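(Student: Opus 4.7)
The plan is strong induction on the length $\ell(w)$ of $w\in F^+$, with the heavy lifting done by part ii) of the previous theorem together with the reduction results of \cite{GKP}. The base case is $\ell(w)=0$, where the claim is immediate. More substantively, one takes the base of the induction to include all Coxeter elements in Levi subgroups of $G$: in these cases, the Demazure compactification $\overline{X}(c)$ is identified, following \cite{Ge,GK,I}, with an iterated blow-up of $\mP^{n-1}$ modelled on the Drinfeld halfspace $\Omega^n$, whose cohomology can be read off as a sum of induced representations of the required form.

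For the inductive step in the factored case $w=sw's$ with $s\in S$ and $w'\in F^+$, part ii) of the previous theorem gives
$$H^\ast(\overline{X}(w))=H^\ast(\overline{X}(w's))\oplus H^{\ast-2}(\overline{X}(w's))(-1).$$
Since $\ell(w's)<\ell(w)$, the inductive hypothesis produces a decomposition indexed by $y\preceq w's$, and since every subword of $w=s\cdot w's$ either avoids the prepended $s$ or uses it, the subwords of $w$ split into two copies of subwords of $w's$ with length shifted by one. Setting $P^w_y:=P^{w's}_y$ in the first copy and $P^w_{sy}:=P^{w's}_y$ in the second, the Tate twist $(-1)$ and cohomological shift $[-2]$ in the second summand match precisely the length increment $\ell(sy)=\ell(y)+1$, assembling the two pieces into the desired direct sum $\bigoplus_{z\preceq w}i^G_{P^w_z}(-\ell(z))[-2\ell(z)]$.

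When $w$ does not already factor as $sw's$, I would invoke the cited result of \cite{GKP} to transform $w$, via a sequence of braid relations and cyclic shifts $sv\leftrightarrow vs$, either into an element of the factored form or into a minimal-length element in its conjugacy class where the base case applies. The inductive engine must then be fed an analysis of how cohomology responds to these moves: for a braid relation one compares the two Demazure compactifications attached to the two reduced expressions and exhibits a $G$-equivariant isomorphism of cohomology with a prescribed matching of the parabolics; for a cyclic shift $sv\mapsto vs$ one exploits an explicit geometric relation between $\overline{X}(sv)$ and $\overline{X}(vs)$ arising from conjugation by the Frobenius.

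The main obstacle will be precisely this last step: controlling how the parabolics $P^w_z$ change, in a coherent and functorial way, under the braid and cyclic-shift moves, since different reduced expressions yield genuinely different compactifications. My plan to attack it is to exploit the spectral sequence
$$E_1^{p,q}=\bigoplus_{v\preceq w,\ \ell(v)=\ell(w)-p}H^q(\overline{X}(v))\Longrightarrow H^{p+q}_c(X(w))$$
from the excerpt, using part i) of the previous theorem to force concentration in even $q$, the Frobenius semi-simplicity from part iii) to split the Tate summands cleanly, and the Chow-cohomology isomorphism of part iv) to identify the parabolics as $G$-representations on classes of equivariant cycles supported on the natural strata. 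Once these comparisons are in place on both sides of a braid move and a cyclic shift, the desired decomposition propagates from the two base families to arbitrary $w\in F^+$.
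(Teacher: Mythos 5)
Your overall architecture matches the paper's: induction on $\ell(w)$, base case given by Coxeter elements (via the blow-up description of $\overline{X}(\cox_n)$ in Proposition~\ref{cohomology_Coxeter_compactification}), the $\mP^1$-bundle decomposition of part ii) in the factored case $w=sw's$ (your bookkeeping $P^w_y:=P^{w's}_y$, $P^w_{sy}:=P^{w's}_y$ is essentially Remark~\ref{P1_closures}), and reduction to the factored case by the operations of~\cite{GKP}. You also correctly identify the crux: what happens to the decomposition under those operations.

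That is exactly where your proposal has a genuine gap. You lump the cyclic shift, the commuting relation $st\leftrightarrow ts$, and the braid relation $sts\leftrightarrow tst$ together as a single ``sequence of braid relations and cyclic shifts.'' The first two are in fact easy: Propositions~\ref{cyclicshift_coh} and~\ref{coh_iso_comm} produce \emph{isomorphisms} $H^\ast(\overline{X}(w))\cong H^\ast(\overline{X}(C(w)))$, $H^\ast(\overline{X}(w))\cong H^\ast(\overline{X}(K(w)))$ because the stratifications of the two Demazure varieties match stratum-by-stratum, and the parabolics transport along the obvious bijections $z\mapsto C(z)$, $z\mapsto K(z)$. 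The genuinely hard step is $R:w_1stsw_2\rightsquigarrow w_1tstw_2$, where the stratifications do \emph{not} match and no such isomorphism exists. The paper handles this with Proposition~\ref{coh_sts}, which is a subtractive relation
$$H^i(\overline{X}(R(w)))= H^i(\overline{X}(w)) - H^{i-2}(\overline{X}(w_1sw_2))(-1) + H^{i-2}(\overline{X}(w_1tw_2))(-1)$$
together with the non-trivial claim (proved by a delicate sub-induction on the number of moves needed to reach the shape $rw'r$) that the subtracted term is actually a direct summand of $H^i(\overline{X}(w))$. Your proposed substitute — running the spectral sequence and identifying the parabolics via the Chow-cycle map of part iv) — is too vague to replace this: it is not clear how it would exhibit the required containment, and moreover part iv) is itself derived in the paper by the same methods as the theorem you are proving, so leaning on it here risks circularity. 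You would need to prove something like Proposition~\ref{coh_sts} to close the argument.
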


The gradings are not canonical as there are in general plenty of choices. However, they behave functorial for
appropriate choices. 

\begin{thm*}
 Let $w,v\in F^+$ with $v \prec w.$  Then there are gradings 
 $H^{2i}(\overline{X}(w))=\bigoplus_{z\preceq w \atop \ell(z)=i} i^G_{P^w_z}$
 and $H^{2i}(\overline{X}(v))=\bigoplus_{z\preceq v \atop \ell(z)=i} i^G_{P^v_z}$
 such that the natural homomorphism $H^{2i}(\overline{X}(w)) \to 
 H^{2i}(\overline{X}(v))$ is graded. 
 Moreover, the maps $i^G_{P^w_z} \to i^G_{P^v_z}$ (which are induced by the double cosets of
$1$ in $W_{P^w_z}\setminus  W/W_{P^{v}_{z}}$  via Frobenius reciprocity) are injective or surjective  for all $z \preceq v.$
\end{thm*}

In a next step we analyse the above spectral sequence attached to $\overline{X}(w)$ and its divisors. By weight reasons the
spectral sequence degenerates in $E_1$
and we believe  that it can be evaluated via the following approach.

\begin{conj*}
Let $w\in F^+$ and fix an integer $i\geq 0.$ For $v\preceq w$, there are gradings
$H^{2i}(\overline{X}(v))=\bigoplus\limits_{z\preceq v \atop \ell(z)=i}i^G_{P^v_z}(-\ell(z))$ such that
the complex
\begin{equation*}
E_1^{\bullet,2i}: H^{2i}(\overline{X}(w)) \to  \bigoplus_{\genfrac{}{}{0pt}{2}{v\prec w}{\ell(v)=\ell(w)-1}} H^{2i}(\overline{X}(v)) \to \bigoplus_{\genfrac{}{}{0pt}{2}{v\prec w}{\ell(v)=\ell(w)-2}} H^{2i}(\overline{X}(v)) \to
\cdots \to  H^{2i}(\overline{X}(e))
\end{equation*}
is quasi-isomorphic to a direct sum $\bigoplus_{z\preceq w \atop \ell(z)=i} H(\,\cdot\,)_z$ of complexes of the shape
\begin{equation*}
H(\,\cdot\,)_z: i^G_{P^w_{z}} \rightarrow \bigoplus_{\genfrac{}{}{0pt}{2}{z \preceq v\preceq w}{\ell(v)=\ell(w)-1}}i^G_{P^v_{z}} \rightarrow
\bigoplus_{\genfrac{}{}{0pt}{2}{z\preceq v\preceq w}{\ell(v)=\ell(w)-2}}i^G_{P^v_{z}}
\rightarrow \dots \rightarrow i^G_{P^e_z}
\end{equation*}
(Here the maps $i^G_{P^v_z} \to i^G_{P^{v'}_{z}}$ in the complex are induced - up to sign - by the double cosets of
$1$ in $W_{P^v_z}\setminus  W/W_{P^{v'}_{z}}$  via Frobenius reciprocity. Further $i^G_{P^v_{z}}=(0)$ if $z\npreceq v.$).
\end{conj*}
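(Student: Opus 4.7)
The plan is to proceed by induction on $\ell(w)$, using the reduction sketched in the introduction: by the result of \cite{GKP} together with braid relations and cyclic shift operations, every $w \in F^+$ is equivalent (after finitely many such moves) to an element of the special form $sw's$ with $s \in S$ and $w' \in F^+$. The inductive step therefore has two ingredients: (a) understanding how the conjectural decomposition of $E_1^{\bullet,2i}$ transforms under a single braid relation or cyclic conjugation, and (b) using the $\mP^1$-bundle $\pi:\overline{X}(sw's)\to\overline{X}(w's)$ to compare the complex for $w=sw's$ with the complex for the shorter element $w's$. The base case is the Coxeter element, where $\overline{X}(w)$ is a known iterated blow-up of the Drinfeld halfspace and the decomposition can be written down explicitly.

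For (b), the Leray-type decomposition $H^{2i}(\overline{X}(w)) = H^{2i}(\overline{X}(w's)) \oplus H^{2i-2}(\overline{X}(w's))(-1)$ furnished by the second main theorem is the key. The codimension-one divisors of $\overline{X}(w)$ fall into two classes: the $\pi$-vertical ones, arising by pulling back codimension-one divisors of $\overline{X}(w's)$, and the two sections $\overline{X}(sw')$ and $\overline{X}(w's)$ obtained by deleting the initial or the terminal $s$. Applying the inductive hypothesis to $w's$ produces a direct-sum decomposition of $E_1^{\bullet,2i}$ (resp.\ of $E_1^{\bullet,2i-2}$) for $\overline{X}(w's)$ into complexes $H(\cdot)_{z'}$ indexed by $z' \preceq w's$ with $\ell(z') = i$ (resp.\ $\ell(z') = i-1$). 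The set $\{z \preceq w : \ell(z) = i\}$ breaks up canonically into $\{z \preceq w's : \ell(z) = i\}$ together with reflections of $\{z' \preceq w's : \ell(z') = i-1\}$ (encoding the Tate-twisted summand), and this bijection should induce the desired matching of the two families of complexes, with $P^w_z$ defined in terms of $P^{w's}_{z'}$ and $s$.

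For (a), one tracks a single braid move $s_i s_j s_i \leftrightarrow s_j s_i s_j$ and a single cyclic shift $w \leftrightarrow s^{-1} w s$. In each case the varieties $\overline{X}(w)$ are related by birational modifications (and the DL-varieties $X(w)$ by isomorphisms), so one must transport the gradings and the parabolics $P^v_z$ along these modifications and verify that the resulting complexes are quasi-isomorphic to the prescribed direct sums. The differentials in $E_1^{\bullet,2i}$ are signed sums of restriction maps $H^{2i}(\overline{X}(v)) \to H^{2i}(\overline{X}(v'))$ for $v' \prec v$ of codimension one, and the content of the conjecture is that in the correct gradings these restrictions become block-diagonal with respect to $z$, with each block given by the Frobenius reciprocity map attached to the double coset of $1$ in $W_{P^v_z} \backslash W / W_{P^{v'}_z}$.

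The main obstacle is clearly the coherent construction of the gradings. The decomposition $H^{2i}(\overline{X}(v)) = \bigoplus i^G_{P^v_z}$ is not canonical, and one must make consistent choices for all $v \preceq w$ simultaneously so that every restriction map to a codimension-one $v' \prec v$ has the prescribed block-diagonal Frobenius-reciprocity shape. Preserving this compatibility through braid moves and cyclic shifts, where the indexing set $\{z \preceq v\}$ and the parabolics $P^v_z$ are themselves being re-parametrised, is the delicate combinatorial point that presumably keeps the statement at the level of a conjecture; a direct attack via the Hecke algebra relations governing these operations seems to be the natural route. By contrast, the $E_1$-degeneration of the spectral sequence follows formally from purity and from the even-degree vanishing supplied by the first main theorem.
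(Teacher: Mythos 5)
The statement you are trying to prove is, in the paper, a conjecture: the paper itself does not prove it in full, but only (a) reduces it to the case $w=sw's$ via the operations (I)--(III) (Proposition \ref{reduced_complex_sw's}), (b) proves it for Coxeter elements (Proposition \ref{Coxeter_complex}), and (c) proves it for $i\in\{0,1,\ell(w)-1,\ell(w)\}$ (Theorem \ref{thm_conj}). Your outline reproduces exactly this strategy — induction on $\ell(w)$, base case the Coxeter element via the blow-up description, the splitting of the index set $\{z\preceq w,\ \ell(z)=i\}$ into $\{z\preceq w's\}$ and $\{sz':z'\preceq w's,\ \ell(z')=i-1\}$ coming from the $\mP^1$-bundle decomposition $H^{2i}(\overline{X}(w))=H^{2i}(\overline{X}(w's))\oplus H^{2i-2}(\overline{X}(w's))(-1)$, and the transport of gradings through cyclic shifts, commutations and braid moves — so as a plan it is faithful to the paper.

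However, it does not close the conjecture, and the gap is precisely the one you flag in passing but do not resolve: the simultaneous, coherent choice of gradings on $H^{2i}(\overline{X}(v))$ for \emph{all} $v\preceq w$ so that every codimension-one restriction map becomes block-diagonal with blocks given by the double coset of $1$. In the paper this compatibility is carried by Lemma \ref{restriction_graded}, whose proof rests on Lemma \ref{qi_restriction} (the statement that $H^{2i}(\overline{X}(w))\to H^{2i}(\overline{X}(w')) $ is quasi-isomorphic to a graded map with injective components $H(w)_z\to H(w')_z$), and this is only established for $i=1$; for general $i$ it is itself only Conjecture \ref{conjecture_coxeter_u}. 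Without an analogue of Lemma \ref{restriction_graded} in all degrees, your inductive step (b) cannot be carried out: the decomposition of $H^{2i}(\overline{X}(w))$ into the two Leray summands tells you nothing about how the differentials to the \emph{other} codimension-one faces $\overline{X}(v)$, $v\neq w's, sw'$, interact with the grading, and that is the whole content of the conjecture. Two further points where your sketch is too optimistic: under a braid move (III) the compactifications are not isomorphic and their cohomologies differ by the correction terms $H^{2i}_c(X(s_l,w_1s^2w_2))$ versus $H^{2i}_c(X(t_r,w_1t^2w_2))$ (Proposition \ref{coh_sts}), so "transporting the gradings along a birational modification" requires the explicit bookkeeping of Proposition \ref{reduced_complex_sw's}(III); and the reduction to the form $sw's$ must avoid the critical simultaneous substitution $(v_1stsv_2,v_1s^2v_2)\rightsquigarrow(v_1tstv_2,v_1t^2v_2)$, which is the point of Lemma \ref{reduce_sw's} and is invisible in your outline. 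So what you have is a correct reconstruction of the paper's programme, not a proof; the missing ingredient — gradings compatible with all restriction maps in arbitrary even degree — is exactly what keeps the statement conjectural in the paper as well.
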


A similar statement holds true for period domains over finite fields \cite{DOR}. 
We are able to prove the conjecture is some cases.
Additionally, we prove how to reduce the issue to the case where $w$ is again of the form $w=sw's.$

\begin{thm*}
 i) The conjecture is true for  Coxeter elements $w$.

ii) If $w\in F^+$ is arbitrary, then the conjecture is true for $i\in \{0,1,\ell(w)-1,\ell(w)\}.$
\end{thm*}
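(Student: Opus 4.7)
The plan is to organise the proof around the two parts, exploiting in each case the fact that only a few summands in the conjectural decomposition are nonzero.

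For part (i), I would use the realisation of $\overline{X}(w)$ for the standard Coxeter element as an iterated blow-up of $\mP^{n-1}$ along its $\mF_q$-rational linear subspaces, as explained in the introduction and detailed in \cite{Ge, GK, I}. The blow-up formula for $\ell$-adic cohomology gives a stratification-compatible decomposition of $H^\ast(\overline{X}(w))$ indexed by flags of such linear subspaces, and these flags are in natural bijection with the Bruhat interval below $w$. An inductive check identifies each graded piece with an induced representation $i^G_{P^w_z}$ whose parabolic is the stabiliser of the corresponding flag, and identifies the $E_1$-differentials (sums of Gysin maps between strata) with the Frobenius reciprocity maps attached to the trivial double coset in $W_{P^v_z}\setminus W/W_{P^{v'}_z}$. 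Since for a Coxeter element the entire compactification is built up from this iterated blow-up, the spectral sequence itself splits as a direct sum over $z$, yielding the conjecture on the nose rather than only up to quasi-isomorphism.

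For part (ii) I would handle the four indices separately. When $i=0$, one has $H^0(\overline{X}(v))=\overline{\mQ}_\ell=i^G_G$ with trivial $G$-action for every $v\preceq w$; since the double coset of $1$ in $W\setminus W/W$ is trivial, the conjectural complex $H(\cdot)_e$ has all terms equal to $\overline{\mQ}_\ell$ with $\pm\id$ differentials, and this coincides with $E_1^{\bullet,0}$ once one identifies the Gysin restriction between connected smooth projective varieties with the identity. When $i=\ell(w)$ the cohomology $H^{2\ell(w)}(\overline{X}(v))$ vanishes by dimension unless $v=w$, so the complex is concentrated in a single position and equal to the permutation representation of $G$ on the top-dimensional components of $\overline{X}(w)$, which is manifestly of the form $i^G_{P^w_w}(-\ell(w))$ for the standard parabolic stabilising one such component.

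The intermediate cases $i=1$ and $i=\ell(w)-1$ I would attack by induction on $\ell(w)$. Whenever $w=sw's\in F^+$, part ii) of the first main theorem gives $H^i(\overline{X}(w))=H^i(\overline{X}(sw'))\oplus H^{i-2}(\overline{X}(sw'))(-1)$; for $i=1$ the second summand vanishes, and the entire complex $E_1^{\bullet,2}$ for $w$ pulls back from that of $sw'$ via the $\mP^1$-bundle structure on the divisors, so the quasi-isomorphism transports. For general $w$ one first reduces, via the cyclic shifts and braid relations of \cite{GKP}, to an element of the form $sw's$ or to a Coxeter element in a Levi, both already handled. The case $i=\ell(w)-1$ is symmetric: by dimension, $E_1^{\bullet,2\ell(w)-2}$ has only two nonzero positions, with $H^{2\ell(w)-2}(\overline{X}(w))$ essentially the Picard group of $\overline{X}(w)$ and the length-$(\ell(w)-1)$ positions giving top cohomologies of codimension-one strata; both decompose as permutation representations indexed by the coatoms of $[e,w]$ in the required way.

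The principal obstacle is the consistent bookkeeping of the parabolics $P^v_z$ as $v$ varies over the Bruhat interval, together with the verification that the $E_1$-differentials agree (up to the prescribed signs) with the Frobenius reciprocity maps coming from the double cosets. One must, in particular, check that the freedom in choosing gradings on $H^{2i}(\overline{X}(v))$ for different $v$ can be arranged compatibly so that the spectral sequence differentials respect the splitting into the $H(\cdot)_z$ summands. This is automatic in part (i) from the explicit blow-up picture, but for $i\in\{1,\ell(w)-1\}$ it requires a delicate lemma tracking how braid relations act on these gradings.
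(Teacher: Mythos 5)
Your part (i) is essentially the paper's own argument (Proposition \ref{Coxeter_complex} built on the blow-up description in Proposition \ref{cohomology_Coxeter_compactification}), so there is nothing to object to there. The problems are in part (ii), where two of your four cases rest on false statements about the cohomology of the Demazure compactifications. For $i=0$ you assert that $H^0(\overline{X}(v))=\overline{\mQ}_\ell$ with trivial $G$-action for every $v\preceq w$. This is not so: $\overline{X}(v)$ is highly disconnected ($\overline{X}(e)$ is the finite set of rational Borel subgroups), and $H^0(\overline{X}(v))=i^G_{P(v)}$. Consequently the row $E_1^{\bullet,0}$ is not a complex of trivial representations with $\pm\id$ differentials; it is the complex $C^\bullet_w$ of induced representations from (\ref{complex_0_F}), and the actual content of the $i=0$ case is that this complex is quasi-isomorphic to the Coxeter complex and resolves the Steinberg representation (Proposition \ref{qi_cox}). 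Your argument as stated proves nothing in this case.

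For $i=1$ the gap is more serious. From $w=sw's$ the splitting reads $H^{2}(\overline{X}(w))=H^{2}(\overline{X}(w's))\oplus H^{0}(\overline{X}(w's))(-1)$, and the second summand does \emph{not} vanish — $H^{2i-2}$ here is $H^0$, which by the previous point is $i^G_{P(w's)}$. These Tate-twisted $H^0$-terms occur in every $H^{2}(\overline{X}(sv's))$ and assemble into the ``lower line'' of the reduced complex (\ref{complex_reduced}); showing that this lower line is either contractible (when $s\in\supp(w')$) or a resolution of the generalized Steinberg representation $v^G_{P(s)}$ (when $s\notin\supp(w')$), and that it does not intertwine with the upper line, is exactly the hard part of the paper's proof, requiring Proposition \ref{qi_cox_s}, the graded-restriction Lemma \ref{restriction_graded} (itself only proved for $i=1$ after the delicate Lemma \ref{reduce_sw's} about avoiding the critical braid move), and the reduction Proposition \ref{reduced_complex_sw's}. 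Your claim that ``the entire complex $E_1^{\bullet,2}$ pulls back from that of $sw'$ so the quasi-isomorphism transports'' skips all of this and is false as stated. For $i=\ell(w)-1$ your two-term observation is fine but incomplete: you still need the decomposition of $H^{2\ell(w)-2}(\overline{X}(w))$ as in Corollary \ref{coro_l(w)-1}, which rests on the vanishing $H^{2\ell(w)-1}_c(X(w))=0$ for $\h(w)\geq 1$ (Proposition \ref{vanishtop}); nothing in your sketch supplies this vanishing.
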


Of course the cases $i=0$ and $i=\ell(w)$ are trivial. As a consequence we derive an inductive formula for the Tate twist $-1$-contribution $H^\ast_c(X(w))\langle -1 \rangle$  of the cohomology of
a DL-variety $X(w)$.  For a parabolic
subgroup $P$ of $G$, let $v^G_P$ be the corresponding generalized Steinberg representation.

\begin{coro*}
 Let $w=sw's\in F^+$ with $\h(sw')\geq 1.$ Then

$$H^\ast_c(X(w))\langle -1 \rangle = \left\{\begin{array}{cc}
                                                     H^\ast_c(X(sw'))\langle -1 \rangle[-1] & \mbox{ if } s\in \supp(w') \\ \\
  H^\ast_c(X(sw'))\langle -1 \rangle[-1] \oplus v^G_{P(s)}(-1)[-\ell(w)] & \mbox{ if } s \notin \supp(w')
                                                    \end{array} \right. .$$
\end{coro*}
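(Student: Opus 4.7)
The plan is to read off $H^\ast_c(X(w))\langle -1\rangle$ from the weight-$2$ row of the spectral sequence
$$E_1^{p,q} = \bigoplus_{\substack{v\preceq w \\ \ell(v)=\ell(w)-p}} H^q(\overline{X}(v)) \Longrightarrow H^{p+q}_c(X(w)),$$
which degenerates at $E_1$ by purity (parts (iii)--(iv) of the first main theorem). Since $H^q(\overline{X}(v))$ is pure of weight $q$, the Tate-twist $-1$ contribution equals $H^\ast(E_1^{\bullet,2})[-2]$. Applying the proved case $i=1$ of the conjecture, this complex splits (up to quasi-isomorphism) as a direct sum over simple reflections $s'\preceq w$ of the explicit sub-complexes $H(\cdot)_{s'}(-1)$. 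The same decomposition applies to $X(sw')$, so the corollary reduces to comparing $H^\ast(H(\cdot)_{s'}(w))$ with $H^\ast(H(\cdot)_{s'}(sw'))$ for each simple reflection $s'\preceq w$.

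For each $s'$ I would use the $\mP^1$-bundle structure $\overline{X}(w)\to\overline{X}(w's)$ from the introduction (together with the symmetric version coming from $w=(sw')s$) to factor the Bruhat interval $[e,w]\cap F^+$ over $[e,sw']\cap F^+$. This fibration should split the complex $H(\cdot)_{s'}(w)$ into a copy of $H(\cdot)_{s'}(sw')$ shifted by one (accounting for the $[-1]$ in the statement) plus a residual sub-complex involving only the $\mP^1$-fibre direction. When $s'\neq s$, or when $s'=s\in\supp(w')$, the parabolics $P^v_{s'}$ along the fibre coincide in compatible pairs and the residual sub-complex becomes acyclic, yielding the first case of the corollary.

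The second case, $s\notin\supp(w')$ with $s'=s$, is where the extra contribution lives and where the main obstacle lies. Here the residual sub-complex survives because $s$ is a freshly introduced simple reflection, and the top parabolic is forced to be $P^w_s=P(s)=\langle S\setminus\{s\}\rangle$. Tracing through the differentials, which by the conjecture arise via Frobenius reciprocity from the double cosets of $1$ in $W_{P^v_s}\setminus W/W_{P^{v'}_s}$, this residual sub-complex becomes the standard resolution of the generalized Steinberg $v^G_{P(s)}$ by modules induced from parabolics strictly containing $P(s)$; its cohomology is thus concentrated in top degree $\ell(w)$ with value $v^G_{P(s)}$, producing the extra summand $v^G_{P(s)}(-1)[-\ell(w)]$. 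The delicate point is ensuring that the gradings furnished by the proof of case $i=1$ of the conjecture are compatible with the $\mP^1$-bundle splitting, so that the residual sub-complex is genuinely the expected acyclic piece, respectively the expected Steinberg resolution, rather than some perturbation thereof.
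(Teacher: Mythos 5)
Your proposal is correct and is essentially the paper's own argument: the paper also reads off $H^\ast_c(X(w))\langle -1\rangle$ from the $q=2$ row of the degenerate spectral sequence and extracts the corollary from the proof of the $i=1$ case of the conjecture (Theorem \ref{thm_conj}), where the reduced complex (\ref{complex_reduced}) splits into the "upper line", which is the weight-$2$ row for $sw'$ shifted by one (giving $H^\ast_c(X(sw'))\langle -1\rangle[-1]$), and the "lower line" attached to the fresh letter $s$, which is exactly your residual subcomplex and is, by Proposition \ref{qi_cox_s}, acyclic when $s\in\supp(w')$ and a resolution of $v^G_{P(s)}$ concentrated in degree $\ell(w)$ when $s\notin\supp(w')$; your worry about compatibility of the gradings with the $\mP^1$-bundle splitting is moot, since in the paper those gradings are defined precisely through that splitting ($H(w)_t:=H(sw')_t$, $H(w)_{s_r}:=H(w's)_e$), with the surjectivity of the last differential guaranteeing that $v^G_{P(s)}$ survives. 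One small correction: $P(s)$ is the standard parabolic generated by $B$ and $s$ (so $W_{P(s)}=\langle s\rangle$), not $\langle S\setminus\{s\}\rangle$, and the resolving complex ends in $i^G_{P(s)}$ itself rather than in parabolics strictly containing $P(s)$.
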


\medskip

\noindent Here $\supp(w')$ denotes the set of simple reflections appearing in $w'$ whereas  $P(s)$ is the parabolic subgroup
of $G$ generated by $B$ and $s.$ Moreover, $\h$ is the height function on $F^+.$
It has the property that
$\h(w)=\h(w')+1$ if $w=sw's$ as above. Further $\h(w)=0$ if $w$ is minimal length in its conjugacy class.
The start of the inductive formula is hence given by height one elements. Here we are even able to determine all cohomology groups.
For a partition $\lambda$  of $n$, let $j_\lambda$ be the corresponding irreducible $G$-representation.

\begin{thm*}  Let $w=sw's\in W$ with  $\h(w)=1$ and $\supp(w)=S.$ 

i) If $\h(sw')=0$, then we have for $i\in \mathbb N$, with $\ell(w)<i<2\ell(w)-1,$
 $$H^{i}_c(X(w'))= \big(H^{i-2}_c(X(w')) - j_{(i+1-n,1,\ldots,1)}(n-i)\big)(-1) -
j_{(i+2-n,1\ldots,1)}(n-i-1).$$

Furthermore,
$$H^i_c(X(w)) =\left\{\begin{array}{lc}
          v^G_B \oplus \big(v^G_{P(s)} - j_{(2,1\ldots,1)}\big)(-1)  &;\;\;\; i=\ell(w) \\ \\
          0 &; \;\;\;  i=2\ell(w)-1 \\ \\
          i^G_G(-\ell(w)) &; \;\;\; i=2\ell(w)
                 \end{array}\right. .$$

\smallskip
ii) If $\h(sw')=1$, then we have
$$H^i_c(X(w))=H^{i-2}_c(X(w' s)\cup X(w'))(-1) \oplus H^{i-1}_c(X(s w'))$$ for all $i\neq 2\ell(w)-1, 2\ell(w)-2$
and  $H^{2\ell(w)-1}_c(X(w))=H^{2\ell(w)-2}_c(X(w))=0.$
\end{thm*}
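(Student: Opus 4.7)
The proof proceeds via the $\mathbb{P}^1$-bundle machinery set up in the introduction. The square $Q=\{w',sw',w's,w\}$ produces the stratification $X(Q)=X(w)\cup X(sw')\cup X(w's)\cup X(w')$, the $\mathbb{P}^1$-bundle $\gamma:X(Q)\to Z'$ onto $Z':=X(w's)\cup X(w')$, and the identification $H^i_c(Z)=H^{i-2}_c(Z')(-1)$ coming from the $\mathbb{A}^1$-bundle restriction of $\gamma$ to $Z:=X(w)\cup X(sw')$. The fundamental tool is the excision long exact sequence for the closed immersion $X(sw')\hookrightarrow Z$ with open complement $X(w)$, which after substitution reads
\begin{equation*}
\cdots\to H^{i-1}_c(X(sw'))\to H^i_c(X(w))\to H^{i-2}_c(Z')(-1)\to H^i_c(X(sw'))\to\cdots. \quad(\ast)
\end{equation*}

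For part (ii), with $\h(sw')=1$, the plan is to show that the connecting map $H^{i-2}_c(Z')(-1)\to H^i_c(X(sw'))$ in $(\ast)$ vanishes for every $i\notin\{2\ell(w)-1,2\ell(w)-2\}$, so that $(\ast)$ breaks into short exact sequences. These then split by the semisimplicity of Frobenius (part iii of the first theorem) to give the claimed direct sum decomposition. The vanishing of the connecting map would follow by comparing weights: $H^{i-2}_c(Z')(-1)$ is built, via the excision sequence for $X(w')\hookrightarrow Z'\hookleftarrow X(w's)$, out of cohomology of DL-varieties of length $\ell(w)-1$ and $\ell(w)-2$ with an extra Tate twist, while $H^i_c(X(sw'))$ naturally involves only the untwisted weights of a length-$(\ell(w)-1)$ variety; a case distinction on $i$ rules out common Frobenius-eigenvalues outside the two listed degrees. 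The vanishing $H^{2\ell(w)-1}_c(X(w))=H^{2\ell(w)-2}_c(X(w))=0$ is then extracted from the top-degree piece of $(\ast)$, using the known top cohomology $H^{2\ell(sw')}_c(X(sw'))$ and $H^{2\ell(w)-2}_c(Z')$, both controlled inductively by length.

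Part (i) requires an explicit evaluation of $(\ast)$ when $sw'$ is of minimal length in its conjugacy class, i.e.\ a Coxeter element in a standard Levi. Here the already-established case of the conjecture for Coxeter elements (part i of the final theorem of the excerpt) gives a complete, explicit description of $H^*_c(X(sw'))$: it is a sum of induced representations whose irreducible constituents are precisely the hook-type representations $j_{(k,1,\ldots,1)}$, and these are the source of the subtraction terms in the stated formulas. In parallel, $H^*_c(Z')$ is evaluated via the excision triple $X(w')\hookrightarrow Z'\hookleftarrow X(w's)$, again reducing to strictly smaller length. Plugging both computations into $(\ast)$ and reading off degree by degree yields the virtual-representation identity in the intermediate range $\ell(w)<j<2\ell(w)-1$, while the three boundary degrees $j=\ell(w),\,2\ell(w)-1,\,2\ell(w)$ are forced by the top and bottom of $(\ast)$ together with the appearance of the generalised Steinberg $v^G_{P(s)}$ at the edge of the Coxeter cohomology.

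The main obstacle is the control of the connecting homomorphism in $(\ast)$. The geometric splitting coming from the $\mathbb{P}^1$-bundle $\gamma$ does not directly split $(\ast)$, since $X(w)$ is only the complement of $X(sw')$ inside the total space $Z$ of the associated $\mathbb{A}^1$-subbundle; producing the splittings requires a genuine weight-monodromy input. A secondary difficulty, present especially in part (i), is the bookkeeping of Tate twists and the passage to the Grothendieck group: the subtraction $-j_{(j+1-n,1,\ldots,1)}(n-j)$ inside $H^{j-2}_c(X(w'))$ need not correspond to an honest subrepresentation, so the stated equality is to be read in the equivariant Grothendieck group, and one must verify that the composed long exact sequence remains exact at this level rather than at the level of genuine submodules.
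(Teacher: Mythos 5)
Your overall scaffolding — the long exact sequence for the closed pair $X(sw')\hookrightarrow Z$, the identification $H^i_c(Z)=H^{i-2}_c(Z')(-1)$, and the strategy of proving the restriction map $r^i_{w,sw'}\colon H^i_c(Z)\to H^i_c(X(sw'))$ vanishes outside two degrees so that the sequence splits — does match the paper's. (Note one terminological slip: $H^{i-2}_c(Z')(-1)\to H^i_c(X(sw'))$ is the \emph{restriction} map induced by the closed immersion, not a connecting homomorphism.)

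However, a weight comparison alone does not establish the needed vanishing, and this is where the proposal has a genuine gap. After the paper splits $H^{j-2}_c(Z')\cong H^{j-2}_c(X(w's))\oplus H^{j-2}_c(X(w'))$ (itself using Proposition \ref{height_one}, whose nontrivial proof via the auxiliary square $\{w,s_iw',w''s_i,w''\}$ and Lemma \ref{vanishing_square} you never address), the two summands behave differently under $r^j$. For the $X(w')$-summand a Frobenius-weight argument does succeed, and this is exactly what the paper does. But for the $X(w's)$-summand a pure weight comparison fails precisely at $j=2\ell(w)-2$: there $H^{j-2}_c(X(w's))(-1)$ and $H^j_c(X(sw'))$ share the Tate twist $-\ell(sw')$ (and the former contains more than just $i^G_G$), so one cannot conclude anything from eigenvalues. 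The paper instead invokes Remark \ref{contribute}, i.e.\ the Deligne--Lusztig $\mathbb{G}_m$-bundle $X_2\to X(w's)$ and the resulting structure on the boundary maps, which kills this contribution regardless of degree. That geometric input is indispensable and is missing from your sketch. You also stop short of proving the two edge statements $H^{2\ell(w)-1}_c(X(w))=H^{2\ell(w)-2}_c(X(w))=0$, which require knowing that $r^{2\ell(w)-2}$ is an \emph{isomorphism} rather than merely of controlled weight. For part (i), the same issue recurs: the explicit Coxeter formula and the conjecture for Coxeter elements are not enough; the actual driver is Proposition \ref{height_one} (surjectivity of $r^j_{w,sw'}$ for $j>\ell(sw')$), together with the surjectivity of the boundary maps for Coxeter subexpressions (Proposition \ref{boundary_Coxeter}), and both of these are nontrivial statements with their own geometric proofs that your proposal omits.
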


Moreover, we give an inductive recipe for the cohomology in degree $2\ell(w)-2$ of a DL-variety $X(w).$
Here the case of $\h(w')=0$ is treated by the above results.

\begin{coro*}
  Let $w=sw's\in F^+$ with $\h(w')\geq 1$ and $\supp(w)=S.$
 Then $$H^{2\ell(w)-2}_c(X(w))=H^{2\ell(w's)-2}_c(X(w's))(-1) \oplus (i^G_{P(w')}-i^G_G)(-\ell(w)+1).$$
\end{coro*}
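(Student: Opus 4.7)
The plan is to apply the hypersquare spectral sequence
\begin{equation*}
E_1^{p,q}=\bigoplus_{v \preceq w,\, \ell(v)=\ell(w)-p} H^q(\overline{X}(v)) \Longrightarrow H^{p+q}_c(X(w))
\end{equation*}
and single out the entries contributing in total degree $n=2\ell(w)-2$. Since $H^q(\overline{X}(v))$ vanishes for odd $q$ by part (i) of the first theorem, and vanishes for $q>2\ell(v)=2(\ell(w)-p)$, the only possibly non-zero positions on the antidiagonal $p+q=2\ell(w)-2$ sit at $(p,q)=(0,2\ell(w)-2)$ and $(2,2\ell(w)-4)$. The same odd-row vanishing forces $d_r=0$ on these positions for all $r\geq 2$, so $E_\infty=E_2$ there. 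Combined with Frobenius semi-simplicity (part (iii) of the first theorem), which splits the resulting filtration, this yields
\begin{equation*}
H^{2\ell(w)-2}_c(X(w)) \cong E_2^{0,2\ell(w)-2} \oplus E_2^{2,2\ell(w)-4}.
\end{equation*}

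For the first summand I would invoke the already-proved case $i=\ell(w)-1$ of the conjecture. For that $i$ the row complex $E_1^{\bullet,2\ell(w)-2}$ has only the terms at $p=0$ and $p=1$, and it splits as $\bigoplus_{z\preceq w,\,\ell(z)=\ell(w)-1}H(\cdot)_z$, where each $H(\cdot)_z$ is a two-term complex $i^G_{P^w_z}\to i^G_G$. Its kernel at $p=0$ is thus the direct sum of the kernels of these simple complexes. Under the standing hypotheses $\h(w')\geq 1$ and $\supp(w)=S$, the explicit description of the parabolics $P^w_z$ coming from the proof of the $i=\ell(w)-1$ case is expected to leave exactly one non-trivial summand, namely $i^G_{P(w')}-i^G_G$, carrying the Tate twist $(-\ell(w)+1)$.

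For the second summand I would compare the spectral sequence of $w$ with that of $w's$ via the $\mP^1$-bundle $\overline{X}(w)\to\overline{X}(w's)$. By part (ii) of the first theorem, the cohomology of each $\overline{X}(v)$ appearing in the row $q=2\ell(w)-4$ of $E_1(w)$ decomposes as a pullback contribution from a suitable $\overline{X}(v')\preceq w's$ plus a Tate-twisted contribution reflecting the $\mP^1$-fiber. Tracking these pieces through the $d_1$-differentials and applying the first step's vanishing analysis now to the sequence for $w's$, the resulting $E_2^{2,2\ell(w)-4}$ should identify with $H^{2\ell(w's)-2}_c(X(w's))(-1)$, giving the second summand of the claimed formula.

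The main obstacle is the careful bookkeeping in this last step: one has to separate the sub-expressions $v\preceq w$ into those projecting isomorphically onto sub-expressions of $w's$ and those playing the role of $\mP^1$-sections, and check that the hypersquare $d_1$-differentials decompose compatibly along this horizontal/vertical dichotomy. A secondary but non-trivial difficulty in the kernel computation is pinning down the parabolics $P^w_z$ under the hypotheses $\h(w')\geq 1$ and $\supp(w)=S$, and verifying that of all $z\preceq w$ with $\ell(z)=\ell(w)-1$ it is exactly the one producing the parabolic $P(w')$ that contributes, with multiplicity one.
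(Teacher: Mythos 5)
Your reduction of the abutment in degree $2\ell(w)-2$ to the two entries $E_2^{0,2\ell(w)-2}$ and $E_2^{2,2\ell(w)-4}$ is correct, but the way you then match these with the two summands of the statement fails on weight grounds, and this is a genuine gap, not bookkeeping. The term $E_2^{2,2\ell(w)-4}$ is a subquotient of $\bigoplus_{\ell(v)=\ell(w)-2}H^{2\ell(w)-4}(\overline{X}(v))$ and is therefore pure of weight $2\ell(w)-4$, whereas $H^{2\ell(w's)-2}_c(X(w's))(-1)$ is in general \emph{not} of that weight. Concretely, take $G=\GL_4$, $s=s_1$, $w'=s_1s_2s_3s_2$ and $w=sw's=s_1s_1s_2s_3s_2s_1\in F^+$: then $\supp(w)=S$, $\h(w')=1$, and $w's=s_1s_2s_3s_2s_1$ is a reduced word for $(1,4)$, for which Appendix B gives $H^{8}_c(X(w's))=j_{(3,1)}(-4)$. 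Hence $H^{2\ell(w's)-2}_c(X(w's))(-1)=j_{(3,1)}(-5)$ has weight $10=2\ell(w)-2$ and cannot be isomorphic to the pure weight-$(2\ell(w)-4)$ module $E_2^{2,2\ell(w)-4}$. Dually, in this example $\supp(w')=S$ forces $(i^G_{P(w')}-i^G_G)(-\ell(w)+1)=0$, while the corollary gives $H^{10}_c(X(w))=j_{(3,1)}(-5)$, which by purity lies entirely in $E_2^{0,2\ell(w)-2}$; so your identification of the kernel term with $(i^G_{P(w')}-i^G_G)(-\ell(w)+1)$ is also false. The two summands of the statement are simply not the two weight-graded pieces of $H^{2\ell(w)-2}_c(X(w))$: the first summand generally contributes to both. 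A further problem is that invoking the proved case $i=\ell(w)-1$ of the conjecture to compute the kernel is circular: that case rests on Corollary \ref{coro_l(w)-1}, which expresses $H^{2\ell(w)-2}(\overline{X}(w))$ in terms of the unknown group $H^{2\ell(w)-2}_c(X(w))$ and does not pin down the parabolics $P^w_z$, so it cannot by itself single out one surviving summand.

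The paper's own argument avoids the Demazure spectral sequence entirely and is much shorter; you may want to compare. Since $\h(sw')\geq 1$ and $\h(w)\geq 1$, Proposition \ref{vanishtop} gives $H^{2\ell(sw')-1}_c(X(sw'))=0$ and $H^{2\ell(w)-1}_c(X(w))=0$, so the closed embedding $X(sw')\hookrightarrow Z=X(w)\cup X(sw')$ yields a short exact sequence $0\to H^{2\ell(w)-2}_c(X(w))\to H^{2\ell(w)-2}_c(Z)\to H^{2\ell(w)-2}_c(X(sw'))\to 0$, and $\supp(sw')=S$ gives $H^{2\ell(w)-2}_c(X(sw'))=H^{2\ell(sw')}_c(X(sw'))=i^G_G(-\ell(w)+1)$. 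The $\mA^1$-bundle $Z\to Z'=X(w's)\cup X(w')$ of Corollary \ref{cohomology_vb} identifies $H^{2\ell(w)-2}_c(Z)$ with $H^{2\ell(w)-4}_c(Z')(-1)$, and the analogous sequence for $X(w')\hookrightarrow Z'$, with both ends killed by $\h(w')\geq1$ and $\h(w's)\geq1$, exhibits $H^{2\ell(w)-4}_c(Z')$ as an extension of $H^{2\ell(w')}_c(X(w'))=i^G_{P(w')}(-\ell(w)+2)$ by $H^{2\ell(w's)-2}_c(X(w's))$; subtracting $i^G_G(-\ell(w)+1)$ gives the formula. If you insist on a spectral-sequence proof, you would have to treat the weight-$(2\ell(w)-2)$ and weight-$(2\ell(w)-4)$ parts of the right-hand side separately, which in effect reproduces this exact-sequence argument.
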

\noindent Here $P(w')$ is the parabolic subgroup of $G$ which is generated by $B$ and $\supp(w')\subset S.$


\medskip
The proof of the two last formulas bases on the second approach for determining the cohomology of DL-varieties. This alternative proposal is
pursued for arbitrary elements of the Weyl group in the appendix.
Whereas the previous version
uses Demazure resolutions, i.e., DL-varieties attached  to maximal hypersquares, this time the procedure goes  the other way round in the sense
that the considered hypersquare grows. In fact, for determining the cohomology of $X(w)$, we study first the map
$$H^i_c(X(w)\cup X(sw')) \to H^i_c(X(sw'))$$ induced by the closed embedding $X(sw') \hookrightarrow X(w) \cup X(sw').$
By induction on the length  the cohomology of the RHS is known. Further we give a conjecture on the structure of this map.
Hence it suffices to know the cohomology of the edge $X(w)\cup X(sw')$ which is - as explained above - induced  by the cohomology of the
edge $X(w's)\cup X(w')$.
Thus we have transferred the question of determining the cohomology of the vertex $X(w)$
to the knowledge of the cohomology of the edge $X(w's)\cup X(w'),$ but which has has smaller length, i.e. $\ell(w's)<\ell(w).$ In the next step one reduces similar the case of an edge to the case of a square etc.
This second approach is a little bit vague as it depends among other things on some more conjectures.
Nevertheless, I have decided to include it into this paper for natural reasons.

In Section 2 we review some facts on unipotent representations of $\GL_n(\mF_q)$. In Section 3 we consider DL-varieties and study explicitly
the case of a Coxeter element. In Section 4 we deal with squares and their associated DL-varieties. Here we treat in particular
the case of the special square $Q=\{sw's,w's,sw',w'\}$ and prove that the map $X(Q)\to Z'$ is a $\mP^1$-bundle. In section 5 we determine the cohomology of DL-varieties for
$G=\GL_4$ and in general for height 1 elements in $W.$
In Section 6 we generalize the ideas of the foregoing section to hypersquares. Section 7 deals with the cohomology of Demazure Varieties.
In Section 8 we reconsider the spectral sequence and discuss the conjecture mentioned above. Finally in Section 9 we illustrate the Conjecture
resp. the Theorem in the case of $G=\GL_4.$

\tableofcontents


{\it Notation:}

- Let $k=\mF_q$ be a finite field of cardinality $q$  with fixed algebraic closure $\overline{k}$ and  absolute Galois group  $\Gamma:={\rm Gal}(\overline{k}/k)$.

- We denote for any $\Gamma$-module $V$ and any integer $i$, by $V\langle i \rangle$ the eigenspace
of the arithmetic  Frobenius  with eigenvalues of absolute value $q^i.$

- Let ${\bf G_0}=\GL_n$ be the
general linear group over $k$.  Denote by ${\bf G} = {\bf G_0}\times_{\mF_q} \mF$ the base change
to the algebraic closure. Let ${\bf T} \subset {\bf B} \subset {\bf G}$  be the  diagonal torus  resp. the  Borel subgroup of upper triangular matrices.
Let $W\cong S_n$ be the Weyl group of ${\bf G}$ and $S$ be the subset of simple reflections. For a subset $I\subset S$, we denote as usual by
$W_I$ be the subgroup of $W$ generated by $I.$

- We also use the cyclic notation for elements in the symmetric group. Hence the expression $w=(i_1,i_2,\ldots,i_r)$ denotes the permutation with $w(i_j)=i_{j+1}$
for $j=1,\ldots,r-1$, $w(i_r)=i_1$ and $w(i)=i$ for all $i\not\in\{i_1,\ldots,i_r\}.$

- For a vector space $V$ of dimension $n$ over $k$ and any integer $1\leq i \leq n$, we let ${\rm Gr}_i(V)$ be the Grassmannian parametrizing
subspaces of dimension $i.$

- We denote by $1$ or $e$ the identity in any group or monoid.

\smallskip

{\it Acknowledgements:} I want to thank  Olivier Dudas for his numerous remarks  on this paper.
I am grateful for the invitation to Paris and all the discussions with Fran\c{c}ois Digne   and Jean Michel. 
Finally I thank Roland Huber,  Michael Rapoport and Markus Reineke for their support.

\vspace{0.5cm}

\section{Unipotent Representations of $\GL_n$}

We start with a discussion on representations of $G={\bf GL_n}(k)$ which are called unipotent.
These kind of objects appear in the cohomology of Deligne-Lusztig varieties.
In this section, all representations will be in vector spaces over a fixed algebraically closed field $C$ of characteristic zero.

Recall that a standard parabolic subgroup (std psgp)  ${\bf P} \subset {\bf G}$ is a parabolic subgroup of ${\bf G}$
with ${\bf B} \subset {\bf P}.$
The set of all std psgp is in bijection with the set
$$\D=\D(n)=\bigl\{(n_1,\ldots,n_r) \in \mN^r \mid n_1 + \cdots + n_r=n,\, r\in \mN \bigr\}$$ of decompositions of $n.$
For a decomposition  $d=(n_1,\ldots,n_r)\in \D(n)$,
we let ${\bf P}_{d}$ be the corresponding std psgp with Levi subgroup ${\bf M_{P_d}}=\prod_i {\bf GL}_{n_i}.$
There is a partial order $\leq $ on $\D(n)$ defined by
$$d'\leq d \;\mbox{ if and only if }\; {\bf P}_{d'} \subset {\bf P}_{d}.$$ If ${\bf P}={\bf P}_d$ is a std psgp to a decomposition $d\in \D$,
then any $d' \in \D$ induces a std psgp ${\bf Q}_{d'}={\bf M_{P}} \cap {\bf P}_{d'}$ of ${\bf M_{P}}$ and this assignment  gives a bijection between
the sets
$$\{d' \in \D \mid d' \leq d\} \stackrel{\sim}{\longrightarrow} \{\mbox { std psgps of } {\bf M_{P}}\}.$$
In the sequel we call the finite group $P={\bf P}(k)$ attached to a std psgp ${\bf P}$ of $\bf G$ standard parabolic of $G$, as well.

Recall that a parabolic subgroup of $W$ is by definition  the Weyl group of the Levi component ${\bf M_P}$
of some std psgp ${\bf P}$. This defines   a one-to-one correspondence between the std psgp ${\bf P}$ of ${\bf G}$
and the parabolic subgroups $W_P$ of $W$. If ${\bf P}={\bf P}_d$ with $d=(n_1,\ldots,n_r)\in \D$, then $W_P\cong S_{n_1} \times \cdots \times S_{n_r}.$

We denote by $\P=\P(n)$ the set of partitions of $n.$ There is a map $\lambda: \D \to \P$  by
ordering a decomposition $(n_1,\ldots,n_r)$ in decreasing size.
Let $\mu=(\mu_1,\mu_2,\ldots, \mu_n)$ and $\mu'=(\mu_1',\mu_2',\ldots, \mu_n')$ be two partitions of $n$.
Consider the order $\leq$  on $\P$ defined by $\mu' \leq \mu$ if for all $r=1,\ldots,n,$ we have
$$ \sum _{i=1}^r \mu'_i \leq \sum_{i=1}^r \mu_i.$$ Then the map $\lambda$ is compatible with both orders in the sense that if
$d' \leq d$ then $\lambda(d') \leq \lambda(d)$.
If $P=P_d$ is a std psgp, then we also write $\lambda(P)$ for $\lambda(d).$
On the other hand,  two parabolic subgroups are called associate, if their Levi components are conjugate under $G$.
If $P=P_{d_1}$ and $P'=P_{d_2}$ with associated decompositions $d_1=(n_1,\ldots,n_r)$ and $d_2=(n_1',\ldots,n_{r'}')$ of $n,$
then $P$ and $P'$ are associate if and only if $\lambda(d_1)=\lambda(d_2).$

Let $P$ be a std psgp of $G$. We denote by $$i^G_P=\Ind^G_P {\bf 1}$$
the induced representation of the trivial representation ${\bf 1}$ of $P.$ It coincides with the set $C[G/P]$ of $C$-valued functions
on $G/P$ equipped with the natural action.
Let $\hat G(i^G_B)$ be the set of isomorphism classes of irreducible subobjects of  $i^G_B$.
We remind the reader at the following properties of the representations $i^G_P$, cf.  \cite[Thm. 3.2.1]{DOR}.

\begin{thm}\label{thmirredDarstG}
(i)  $i^G_P$ is equivalent to $i^G_{P'}$ if and only if $P$ is associate to $P'.$

\noindent (ii) $i^G_P$ contains a unique irreducible subrepresentation $j^G_P$ which occurs with multiplicity one and such that
$$ \Hom_G(j^G_P,i^G_{P'}) \neq (0) \Leftrightarrow \lambda(P') \leq \lambda(P).$$

\noindent (iii) We set for every $\mu\in \P,$
$$j_\mu=j^G_{P_\mu}$$
where $P_\mu$ is any std psgp with $\lambda(P_\mu)=\mu.$ Then
$\{j_\mu \mid \mu\in \P \}$ is a set of representatives for $\hat G(i^G_B)$.
\end{thm}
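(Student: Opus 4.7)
The plan is to transport the problem to the symmetric group $W\cong S_n$ via the Iwahori--Hecke algebra, and then invoke classical Young theory. Set $\mathcal H:=\operatorname{End}_G(i^G_B)$. By Iwahori's theorem $\mathcal H$ is the Hecke algebra of $W$ with parameter $q$, and since $C$ has characteristic zero, Tits' deformation argument yields a $C$-algebra isomorphism $\mathcal H\cong C[W]$. Because $i^G_B$ is a semisimple $G$-module, the double centralizer theorem provides an inclusion-preserving bijection between the $G$-submodules of $i^G_B$ and the $\mathcal H$-submodules of the regular $\mathcal H$-module, and under this bijection $i^G_P\hookrightarrow i^G_B$ corresponds to the permutation module $M^P:=\Ind_{W_P}^W \mathbf 1\hookrightarrow C[W]$.

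Granting this dictionary, the three parts reduce to classical statements about permutation representations of $S_n$. For (iii), the set $\hat G(i^G_B)$ corresponds to the set of irreducible $W$-representations, which by Young's theory is parametrized by $\P(n)$ through the Specht modules $S^\mu$; this yields the labelling $\mu\leftrightarrow j_\mu$. For (i), Mackey's formula combined with the Bruhat decomposition gives
\begin{equation*}
\dim\Hom_G(i^G_P,i^G_{P'})=|W_P\backslash W/W_{P'}|=\dim\Hom_W(M^P,M^{P'}),
\end{equation*}
which reduces the equivalence $i^G_P\cong i^G_{P'}$ to the isomorphism $M^P\cong M^{P'}$; by a standard character argument the latter holds iff $W_P$ and $W_{P'}$ are conjugate Young subgroups of $S_n$, i.e.\ iff $\lambda(P)=\lambda(P')$, i.e.\ iff $P$ and $P'$ are associate.

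For (ii), I invoke the two classical Young theorems: the permutation module $M^\mu$ contains $S^\mu$ with multiplicity one, and
\begin{equation*}
\Hom_{S_n}(S^\mu,M^\nu)\neq 0\ \Llrto\ \nu\leq\mu
\end{equation*}
in the dominance order on partitions. Transporting these through the $\mathcal H\cong C[W]$ dictionary produces the unique irreducible subrepresentation $j^G_P\subset i^G_P$ with the stated $\Hom$-vanishing behaviour, and matches $j^G_P$ with $j_{\lambda(P)}$.

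The main obstacle is the compatibility of the algebra isomorphism $\mathcal H\cong C[W]$ with the geometric submodule structure: under the double-centralizer correspondence, the cyclic submodule generated by the idempotent $e_P:=\frac{1}{|P|}\sum_{p\in P}p$ (which cuts out $i^G_P$ inside $i^G_B$ via the natural surjection $i^G_B\twoheadrightarrow i^G_P$) must match the cyclic submodule in $C[W]$ generated by the corresponding idempotent for $W_P$. This compatibility is not formal; it requires choosing the Tits deformation carefully and is where the prime-to-$q$ hypothesis on $C$ enters the argument in an essential way. Once this identification is in place, parts (i)--(iii) follow as above.
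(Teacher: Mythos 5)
Your proposal is correct and takes essentially the same route as the paper, which gives no local proof but defers to \cite[Thm.\ 3.2.1]{DOR} and, in Remark \ref{connection_symmetric group}, to Howe's result: precisely the dictionary between constituents of $i^G_B$ and irreducible $W$-representations, matching multiplicities in $i^G_P$ with those in $i^W_{W_P}=\Ind^W_{W_P}{\bf 1}$, obtained from the Hecke algebra $\operatorname{End}_G(i^G_B)$ and its characteristic-zero deformation to $C[W]$, after which parts (i)--(iii) are classical Young theory (Kostka numbers and the dominance order). The compatibility you single out as the main obstacle is exactly the content of Howe's theorem that the paper invokes, so your argument is the intended one (the only slip is calling the hypothesis on $C$ ``prime-to-$q$''; the paper assumes characteristic zero).
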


\begin{rks}\label{connection_symmetric group}
i) The proof of the above theorem makes use of  the representation theory of the symmetric group and bases
on the following result of Howe \cite{Ho}.
Let $\hat{W}$ be the set of isomorphism classes of irreducible representations of $W$.
Analogously to the definition of $i^G_P$, we set
$i^W_{W'}:=\Ind^W_{W'} {\bf 1}$ for any subgroup $W'$ of $W.$
There exists a unique bijection
$$ \alpha: \hat{G}(i^G_B) \to \hat{W}$$
characterized  by the following property. An irreducible representation $\sigma \in \hat{G}(i^G_B)$ occurs in $i^G_{P}$ if and only
if $\alpha(\sigma)$ occurs in $i^W_{W_P}$. Furthermore
$$ \dim \Hom_G(\sigma, i^G_{P}) = \dim \Hom_W(\alpha(\sigma), i^W_{W_P}).$$
In particular, we get by Frobenius reciprocity
$$\Hom_G(i^G_{Q}, i^G_{P}) \cong  \Hom_W(i^W_{W_Q}, i^W_{W_P})=\Hom_{W_Q}({\bf 1}, i^W_{W_P})=C[W_Q\backslash W / W_P].$$

ii) Let $P\subset Q$ be two standard parabolic subgroups. Then there is a natural inclusion of  $G$-representations
$i^G_Q \subset i^G_P$ which corresponds just to the double coset of $e\in W$ in $W_Q\backslash W / W_P$.
On the other hand, the map $i^G_P \to i^G_Q$ induced by $e$ is given by $\delta_{gP} \mapsto \sum_{gP\subset gQ} \delta_{gQ}$
where $\delta_{gP}\in C[G/P]$ is the Kronecker function with respect to $gP\in G/P.$ In general, for two arbitrary
standard parabolic subgroups $P,Q$ of $G$, the map $i^G_P \to i^G_Q$ induced by $e\in W$ 
is injective (surjective) if and only if $\lambda(P) \geq \lambda(Q)$ $(\lambda(P) \leq \lambda(Q))$. 
In fact, this property which is a refinement of Theorem \ref{thmirredDarstG} ii) was observed by Liebler and Vitale \cite{LV}
and reproved by Hazewinkel and Vorst \cite{HV} for the symmetric group.
In particular, one can speak
of the maximum $\max\{i^G_P,i^G_Q\}$ of these representations in this sense.
\end{rks}

\begin{coro}\label{grading_unique}
Let $V$ and $W$ be two finite-dimensional isomorphic $G$-representations. Let $V=\bigoplus_{i=1}^s i^G_{P_i}$, 
$W=\bigoplus_{i=1}^t i^G_{Q_i}$ be decompositions into induced representations. 
Then $s=t$  and after a possible permutation the parabolic subgroups $P_i$ and $Q_i$ are associate for all $i=1,\ldots,s.$
\end{coro}

\begin{proof}
The trivial $G$-representation $i^G_G={\bf 1}$ appears with multiplicity one in each induced  representation $i^G_P$. Hence $s=t.$

 The remaining  proof is by induction on $s.$ Let $s=1$. Then the claim follows by the theorem above.
 Let $s>1$. Then again we use this theorem  to deduce that there are associate parabolic subgroups $P_i,Q_j$ in these decompositions. 
 By dividing out the summand $i^G_{P_i}\cong i^G_{Q_j}$ on both sides, respectively, we get two decompositions
 of isomorphic representations of the type above. Now the claim follows by induction on $s$.
 \end{proof}

\begin{defn}
The {\it generalized Steinberg representation}\index{generalized!Steinberg representation} associated to a std psgp $P=P_d$ is the quotient
$$v^G_P = i^G_P / \sum_{Q\supsetneq P} i^G_Q=i^G_{P_d} / \sum_{d'>d} i^G_{P_{d'}}.$$
\end{defn}

For $P=G,$ we have $v^G_G ={\bf 1}$
whereas if $B=P$ then we get the ordinary Steinberg representation $v^G_B$ which is irreducible.
In general, the generalized Steinberg representations $v^G_P$ are not  irreducible. More precisely, we have the following
criterion.

\begin{prop}
Let $d\in \D$. Then $v^G_{P_{d}}$ is irreducible 
if and only if $d=(k,1,\ldots,1)$ for some $k$ with $1\leq k \leq n.$ 
\end{prop}

\begin{proof}
By Remark \ref{connection_symmetric group} it suffices to show that the corresponding claim is correct
for the attached Weyl group representation $v^W_{W_d}:=i^W_{W_d}/\sum_{d'>d} i^W_{W_{d'}}$.

First let $d=(k,1,\ldots,1)$ for some $k$ with $1\leq k \leq n.$ By induction hypothesis  the $n-1$-tuple $\tilde{d}=(k,1,\ldots,1)\in \mZ^{n-1}$ which is 
induced by
deleting the last entry gives rise to an irreducible $S_{n-1}$-representation $V.$ By Pieri's formula \cite{FH} we have a decomposition
$\Ind^{S_n}_{S_{n-1}}(V)=\sum_{\lambda} V_\lambda$ where $\lambda\in \P$ ranges over all partitions obtained by adding the 
integer 1 to the $n-1$-tuple $\tilde{d}.$ Hence for $k\geq 2$, there are exactly 3 irreducible representations appearing in this sum. Otherwise there are 2 of them. In any case
the irreducible representation $j_d$ is one of them. On the other hand, we have 
$\Ind^{S_n}_{S_{n-1}}(V)=i^W_{W_d}/\sum_{d'>d \atop d'\neq (k,1,\ldots,1,2)} i^W_{W_{d'}}.$ The missing contribution
$i^W_{W_{(k,1,\ldots,1,2)}}$ covers the remaining irreducible representations. Thus $v^W_{W_d}$ and therefore 
$v^G_{P_d}=j_d$ are irreducible.

Suppose now that $d$ is not of the shape above. We shall see that the representation $v^W_{W_d}$  is reducible. In a first step
we may assume that $d$ is a partition  since $v^W_{W_{\lambda(d)}} \subset v^W_{W_d}.$ Then we consider
the partition $(d_1+1,d_2-1,d_3,\ldots, d_n).$ By Remark \ref{connection_symmetric group} the irreducible representation
attached to $(d_1+1,d_2-1,d_3,\ldots, d_n)$ appears in $v^W_{W_d}.$ Hence the latter one is not irreducible.
\end{proof}

\medskip
Let us recall some further properties of the representations $i^G_P.$ Let $P=P_{d} \subset G$ for some $d\in \D$. Let $d' \leq d$ and consider
the std psgp $Q_{d'}\subset M_P$ of $M_P$.
We consider the induced representation $i^{M_P}_{Q_{d'}}$ as a $P$-module via the trivial action of the unipotent radical of $P.$
Then
$$\Ind^G_P(i^{M_P}_{Q_{d'}})=i^G_{P_{d'}}.$$ Since $\Ind^G_P$ is an exact functor we get for any
$d''\in \D$ with $d' \leq d'' \leq d$, the identity
$\Ind^G_P(i^{M_P}_{Q_{d'}}/i^{M_P}_{Q_{d''}})=i^G_{P_{d'}}/i^G_{P_{d''}}.$
In particular, we conclude that
$$\Ind^G_P(v^{M_P}_{Q_{d'}})=i^G_{P_{d'}}/ \sum_{\{d'' \in \D \mid d' < d'' \leq d \}}  i^G_{P_{d''}}. $$

Consider the special situation where $d=(n_1,n_2)\in \D$. Then $P=P_{(n_1,n_2)}$ and  $M_P=M_1 \times M_2$ with $M_1=\GL_{n_1}$, $M_2=\GL_{n_2}.$
Let for $i=1,2$, $d_i \in \D(n_i)$ be a decomposition of $n_i$ and consider the corresponding std psgps $P_{d_i}$ of $M_i$.
Denote by $(d_1,d_2)\in \D(n)$  the glued decomposition of $n$.

\begin{lemma} We have  the identity
 \begin{equation}\label{identity_induced_Steinberg}
\Ind^G_{P_{(n_1,n_2)}}(v^{M_1}_{P_{d_1}} \boxtimes v^{M_2}_{P_{d_2}})= i^G_{P_{(d_1,d_2)}}/ \sum_{d_1' > d_1} i^G_{P_{(d_1',d_2)}} + \sum_{d_2' > d_2} i^G_{P_{(d_1,d_2')}}.
\end{equation}
\end{lemma}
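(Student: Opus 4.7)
The plan is to deduce the lemma from the already stated formula $\Ind^G_P(i^{M_P}_{Q_{d'}}) = i^G_{P_{d'}}$ (just above the lemma), using exactness of the external tensor product over $C$ and exactness of $\Ind^G_P$. First I unfold the two factors via the definition of the generalized Steinberg representation,
\begin{equation*}
v^{M_i}_{P_{d_i}} = i^{M_i}_{P_{d_i}}\Big/\sum_{d_i' > d_i} i^{M_i}_{P_{d_i'}}, \qquad i=1,2,
\end{equation*}
and distribute $\boxtimes$ over these quotients to realise $v^{M_1}_{P_{d_1}}\boxtimes v^{M_2}_{P_{d_2}}$ as the quotient of $i^{M_1}_{P_{d_1}}\boxtimes i^{M_2}_{P_{d_2}}$ by the subspace
\begin{equation*}
\sum_{d_1'>d_1} i^{M_1}_{P_{d_1'}}\boxtimes i^{M_2}_{P_{d_2}} \;+\; \sum_{d_2'>d_2} i^{M_1}_{P_{d_1}}\boxtimes i^{M_2}_{P_{d_2'}}.
\end{equation*}

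The next step is to recognise each external tensor product above as a single induced representation of $M_P = M_1\times M_2$. Since the standard parabolic $Q_{(a,b)}\subset M_P$ factors as $P_a\times P_b$ for any $a\in \D(n_1)$, $b\in \D(n_2)$, inducing in stages gives $i^{M_1}_{P_a}\boxtimes i^{M_2}_{P_b}=i^{M_P}_{Q_{(a,b)}}$. Substituting this identification and then applying the exact functor $\Ind^G_P$, together with $\Ind^G_P(i^{M_P}_{Q_{d'}}) = i^G_{P_{d'}}$, delivers exactly the right hand side of \eqref{identity_induced_Steinberg}.

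The only point that really requires attention is a small bookkeeping matter: one must check that no additional terms coming from pairs $(d_1',d_2')$ with both $d_1'>d_1$ and $d_2'>d_2$ are missing in the denominator. This is handled by observing that $(d_1',d_2')\geq (d_1',d_2)$ in the order on $\D(n)$, whence $i^G_{P_{(d_1',d_2')}}\subset i^G_{P_{(d_1',d_2)}}$; all such contributions are therefore already absorbed by the two sums written in the statement, and the remainder of the argument is a straightforward manipulation of quotients.
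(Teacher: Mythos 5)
Your proposal is correct and follows essentially the same route as the paper: the paper's proof applies the exact functor $\Ind^G_{P_{(n_1,n_2)}}$ to the short exact sequence expressing $v^{M_1}_{P_{d_1}}\boxtimes v^{M_2}_{P_{d_2}}$ as the quotient of $i^{M_1}_{P_{d_1}}\boxtimes i^{M_2}_{P_{d_2}}$ by $\sum_{d_1'>d_1} i^{M_1}_{P_{d_1'}}\boxtimes i^{M_2}_{P_{d_2}}+\sum_{d_2'>d_2} i^{M_1}_{P_{d_1}}\boxtimes i^{M_2}_{P_{d_2'}}$, using the identification $i^{M_1}_{P_{d_1}}\boxtimes i^{M_2}_{P_{d_2}}=i^{M_1\times M_2}_{P_{d_1}\times P_{d_2}}$ and $\Ind^G_{P_{(n_1,n_2)}}(i^{M_1}_{P_{d_1}}\boxtimes i^{M_2}_{P_{d_2}})=i^G_{P_{(d_1,d_2)}}$, which is exactly your argument with the quotient bookkeeping made explicit.
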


\begin{proof}
Since $$i^{M_1}_{P_{d_1}}\boxtimes i^{M_2}_{P_{d_2}}=
i^{M_1\times M_2}_{P_{d_1} \times P_{d_2}}$$ we get
$$\Ind^G_{P_{(n_1,n_2)}}(i^{M_1}_{P_{d_1}} \boxtimes i^{M_2}_{P_{d_2}})= i^G_{P_{(d_1,d_2)}}.$$
Then the identity above follows by applying the exact functor $\Ind^G_{P_{(n_1,n_2)}}$ to the exact sequence
$$0 \to  \sum_{d_1' > d_1} i^{M_1}_{P_{d_1'}} \boxtimes i^{M_2}_{P_{d_2}} + \sum_{d_2' > d_2}i^{M_1}_{P_{d_1}} \boxtimes
i^{M_2}_{P_{d_2'}} \to i^{M_1}_{P_{d_1}} \boxtimes i^{M_2}_{P_{d_2}} \to v^{M_1}_{P_{d_1}} \boxtimes v^{M_2}_{P_{d_2}} \to 0.$$
\end{proof}

For the next property of generalized Steinberg representations, we refer to \cite{Le} (resp. to \cite{DOR} for a detailed discussion 
on this complex). Here we set for any decomposition $d=(n_1,\ldots,n_r) \in \D$,
$r(d)=r.$
\begin{prop}\label{IIIaltsum}
Let $P=P_d$, where $d\in \D.$ Then there is an acyclic resolution of $v^G_P$ by $G$-modules,
\begin{equation}\label{IIIcomplex}
0 \rightarrow  i^G_G \rightarrow \bigoplus_{\genfrac{}{}{0pt}{2}{d'\geq d}{r(d')=2}}i^G_{P_{d'}} \rightarrow
\bigoplus_{\genfrac{}{}{0pt}{2}{d'\geq d}{r(d')=3}}i^G_{P_{d'}}
\rightarrow \dots \rightarrow \bigoplus_{\genfrac{}{}{0pt}{2}{d'\geq d}{r(d')=r(d)-1}}i^G_{P_{d'}}\rightarrow i^G_{P_d} \rightarrow
v^G_{P}\rightarrow 0.
\end{equation}
\end{prop}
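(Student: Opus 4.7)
The plan is to recognize the complex as a \v{C}ech/Mayer-Vietoris type resolution indexed by the coarsenings of $d$, reduce via Howe's bijection to the symmetric group $S_n$, and conclude by a combinatorial argument on Young subgroups.

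Setting $r := r(d)$ and $I := \{1, \ldots, r-1\}$, the coarsenings $d' \geq d$ are in bijection with subsets $T \subseteq I$: the set $T$ records which separators of $d$ survive in $d'$, so that $d' = d_T$ has $|T|+1$ parts, $P_{d_\emptyset} = G$, and $P_{d_I} = P$. The inclusions $P_{d_{T'}} \subseteq P_{d_T}$ for $T \subseteq T'$ induce inclusions $i^G_{P_{d_T}} \hookrightarrow i^G_{P_{d_{T'}}}$, and \eqref{IIIcomplex} takes the shape of the augmented cochain complex of the $(r-2)$-simplex on vertex set $I$ (with alternating-sum differentials), with coefficient system $T \mapsto i^G_{P_{d_T}}$ and capped off at the top by the quotient $v^G_P$.

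I would next apply Howe's theorem (Remark \ref{connection_symmetric group}) and Frobenius reciprocity: for each irreducible $\sigma \in \hat{G}(i^G_B)$ with $\pi := \alpha(\sigma) \in \hat{W}$, one has $\dim \Hom_G(\sigma, i^G_{P_{d'}}) = \dim \pi^{W_{P_{d'}}}$. Since all terms of \eqref{IIIcomplex} are semisimple objects in the subcategory generated by $\hat{G}(i^G_B)$, exactness of \eqref{IIIcomplex} is equivalent to exactness, for every $\pi \in \hat{W}$, of the scalar complex
\begin{equation*}
0 \to \pi^W \to \bigoplus_{|T|=1} \pi^{W_{P_{d_T}}} \to \cdots \to \pi^{W_{P_d}} \to \pi^{W_{P_d}}\Big/\textstyle\sum_{j\in I} \pi^{W_{P_{d_{I\setminus\{j\}}}}} \to 0
\end{equation*}
of finite-dimensional $C$-vector spaces. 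Here the identity $\langle W_{P_{d_T}}, W_{P_{d_{T'}}}\rangle = W_{P_{d_{T \cap T'}}}$ (joining Young subgroups corresponds to intersecting wall sets) gives the intersection identity $\pi^{W_{P_{d_T}}} \cap \pi^{W_{P_{d_{T'}}}} = \pi^{W_{P_{d_{T \cap T'}}}}$, so the subspaces $\F_\pi(T) := \pi^{W_{P_{d_T}}}$ of $\pi^{W_{P_d}}$ have well-behaved intersections.

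To finish, one combines this with the dual sum identity $\F_\pi(T) + \F_\pi(T') = \F_\pi(T \cap T')$ and invokes the standard inclusion-exclusion/Mayer-Vietoris argument for distributive lattices of subspaces to conclude exactness. The main obstacle is the sum identity, which does not follow formally from the intersection identity and is the heart of the proof: for generic subspace lattices arising from invariants, the sum of invariants under two subgroups is strictly smaller than the invariants under their intersection. For Young subgroups of $S_n$ and an irreducible $\pi = V_\lambda$, this degeneration can be avoided, and the identity reduces to a combinatorial statement on Young tableaux verifiable via the Pieri rule or by induction on $n$; alternatively, it can be obtained as a Solomon-Tits-type acyclicity for the Coxeter complex of $S_n$. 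A detailed execution of this strategy appears in \cite{Le} and in \cite{DOR}.
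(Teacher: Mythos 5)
Your framing of $(\ref{IIIcomplex})$ as a simplicial (Mayer--Vietoris) complex indexed by subsets $T\subseteq I$ and your reduction via Howe's bijection to the invariant spaces $\F_\pi(T) := \pi^{W_{P_{d_T}}}$ inside $\pi^{W_P}$ are both sound; the intersection identity $\F_\pi(T) \cap \F_\pi(T') = \F_\pi(T \cap T')$ is likewise correct and is precisely $(\ref{IIIP-inters})$ translated through Howe's equivalence.

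The step that fails is the one you yourself single out as ``the heart of the proof'': the sum identity $\F_\pi(T) + \F_\pi(T') = \F_\pi(T \cap T')$. (Taken literally this has the wrong monotonicity --- $\F_\pi$ is order-preserving in $T$, so the left side contains both summands while the right side is contained in them --- so you presumably intended $T \cup T'$.) Even after that correction the identity is false, and the ``degeneration'' you claim is avoided for Young subgroups cannot be: for $n=3$, $d=(1,1,1)$ and $\pi$ the sign character, one has $\F_\pi(\{1\}) = \pi^{\langle (2,3)\rangle} = 0$ and $\F_\pi(\{2\}) = \pi^{\langle (1,2)\rangle} = 0$, yet $\F_\pi(\{1,2\}) = \pi^{\{e\}} = \pi \neq 0$. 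Equivalently, at the $G$-module level $i^G_{P_{(2,1)}} + i^G_{P_{(1,2)}}$ misses the Steinberg constituent of $i^G_B$. What the argument actually needs --- and what the remark following Proposition~\ref{IIIaltsum} records in $(\ref{IIIsum_intersection})$ --- is only the distributive law
\begin{equation*}
i^G_{P_d} \cap \bigl(i^G_{P_{d_1}} + \cdots + i^G_{P_{d_r}}\bigr) = \bigl(i^G_{P_d} \cap i^G_{P_{d_1}}\bigr) + \cdots + \bigl(i^G_{P_d} \cap i^G_{P_{d_r}}\bigr),
\end{equation*}
that is, that the subspaces $\{i^G_{P_{d'}}\}_{d'\geq d}$ of $i^G_{P_d}$ generate a distributive lattice. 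Together with $(\ref{IIIP-inters})$ this is exactly the hypothesis under which the Mayer--Vietoris acyclicity you invoke at the end does hold; the stronger sum identity is neither available nor needed. (The paper itself does not prove $(\ref{IIIcomplex})$ but defers to \cite{Le} and \cite{DOR}, stating $(\ref{IIIP-inters})$ and $(\ref{IIIsum_intersection})$ as the key inputs; your proposal would be essentially correct if the false sum identity were replaced by the distributive law.)
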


\begin{rk}
The prove of Proposition \ref{IIIaltsum}  relies on some simplicial arguments as follows, cf. loc.cit.
Let $d,d' \in \D$. Then we have
\begin{equation}\label{IIIP-inters}
i^G_{P_d}\; \cap\; i^G_{P_{d'}} = i^G_{P_{d \vee d'}}
\end{equation}
for some $d\vee d' \in \D.$
Further for all $d_1,\ldots,d_r \in \D$, we have
\begin{equation}\label{IIIsum_intersection}
i^G_{P_d}\; \cap\; (i^G_{P_{d_1}} + i^G_{P_{d_2}} + \cdots + i^G_{P_{d_r}})  = (i^G_{P_d}  \cap
i^G_{P_{d_1}}) \; +\; \cdots +  (i^G_{P_d}  \cap i^G_{P_{d_r}}).
\end{equation}
\end{rk}

\medskip
We reinterpret the complex (\ref{IIIcomplex}) as follows. Let ${F^+}$ be the monoid which is freely generated by the subset $S\subset W$ of simple 
reflections.
Denote by $$\gamma: F^+\to W$$
the natural map. For $w=s_{i_1}\cdots s_{i_r} \in {F^+},$ let $\ell(w):=r$ be the length and
$$\supp(w):=\{s_{i_1},\ldots,s_{i_r}\}$$ its support.
Any subword $v$  which is induced by erasing factors in $w$ gives by definition
rise to an element which is shorter with respect to the Bruhat ordering $\preceq$  on ${F^+}$, cf. also \cite{Hu}.
Note that this ordering is not compatible with the usual one $\leq$ on $W$ via $\gamma$.

For $w\in F^+,$ let $I(w)\subset S$ be  a minimal subset such
that $w$ is contained in the submonoid generated by $I(w).$ Let
\begin{equation}\label{psgp}
P(w)=P_{I(w)}\subset G
\end{equation}
be the std parabolic subgroup
generated by $B$ and $I(w).$ Alternatively, let $d(w)\in \D$ be the decomposition which corresponds to the subset $\supp(w)\subset S$
under the  natural bijection $\D\stackrel{\sim}{\to} S$. Then $P(w)=P_{d(w)}.$

We may define for $w\in F^+$ the following complex where the differentials are defined similar as above:
\begin{equation}\label{complex_0_F}
C^\bullet_w: 0 \rightarrow  i^G_{P(w)} \rightarrow \bigoplus_{\genfrac{}{}{0pt}{2}{v\preceq w}{\ell(v)=\ell(w)-1}}i^G_{P(v)} \rightarrow
\bigoplus_{\genfrac{}{}{0pt}{2}{v\preceq w}{\ell(v)=\ell(w)-2}}i^G_{P(v)}
\rightarrow \dots \rightarrow \bigoplus_{\genfrac{}{}{0pt}{2}{v \preceq w}{\ell(v)=1}}i^G_{P(v)}\rightarrow i^G_{P(e)} \rightarrow 0.
\end{equation}

\medskip

\begin{eg}
Let $w=\cox_n=s_1s_2\ldots s_{n-1}\in F^+$ be the standard Coxeter element. Then the complex $C^\bullet_w$  coincides
- up to augmentation in the Steinberg representation $v^G_B$ -  with the complex (\ref{IIIcomplex}) where $d=d(e).$
\end{eg}

\begin{defn}
 Let $w\in F^+.$ Then we say that $w$ has full support if $\supp(w)=S.$
\end{defn}

\begin{prop}\label{qi_cox}
Let $w \in F^+$ be of  full support. Then the complex $C^\bullet_w$ is quasi-isomorphic to $C^\bullet_{\cox}.$
\end{prop}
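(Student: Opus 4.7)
The plan is to verify that both complexes are acyclic resolutions of the Steinberg representation $v^G_B$, each concentrated in its own top cohomological degree. This will identify both with $v^G_B$ in the derived category of $G$-representations (up to the natural shift by $\ell(w)-(n-1)$), yielding the asserted quasi-isomorphism.

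\textbf{Step 1 (Coxeter base).} As remarked in the example immediately preceding the proposition, $C^\bullet_{\cox}$ together with the canonical augmentation $i^G_B \twoheadrightarrow v^G_B$ is precisely the acyclic resolution (\ref{IIIcomplex}) of Proposition \ref{IIIaltsum} evaluated at $d = d(e) = (1,\ldots,1)$. Thus $C^\bullet_{\cox}$ has cohomology $v^G_B$ concentrated in degree $n-1$ and zero in all other degrees.

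\textbf{Step 2 (induction setup and base case).} For the general $w$ of full support, I would induct on the excess $\ell(w)-(n-1) \geq 0$. When $\ell(w) = n-1$, full support forces each simple reflection to occur exactly once in the reduced expression of $w$, so the assignment $J \mapsto w_J$ from subsets of positions of $w$ to subwords in $F^+$ is a bijection. Consequently the subword poset of $w$ is isomorphic to the Boolean lattice of subsets of $S$, and since $P(v) = P_{\supp(v)}$ depends only on the support, one obtains an isomorphism of complexes $C^\bullet_w \cong C^\bullet_{\cox}$, reducing to Step 1.

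\textbf{Step 3 (inductive step).} If $\ell(w) > n-1$, some simple reflection $s$ appears at least twice in $w$; fix such an occurrence and delete it to form $w'$, which still has full support and satisfies $\ell(w') = \ell(w)-1$. I would compare $C^\bullet_w$ with $C^\bullet_{w'}$ by partitioning the subexpressions of $w$ according to whether the distinguished occurrence of $s$ is kept or deleted. Careful bookkeeping should yield a short exact sequence of complexes relating $C^\bullet_w$ and a shift of $C^\bullet_{w'}$; chasing the resulting long exact sequence and invoking the induction hypothesis on $w'$ then forces the cohomology of $C^\bullet_w$ to be concentrated in the top degree with value $v^G_B$, as required.

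\textbf{Main obstacle.} The crux is making the decomposition in Step 3 combinatorially precise. The complex $C^\bullet_w$ is indexed by distinct subwords $v \preceq w$ as elements of $F^+$, whereas the include/exclude dichotomy for the distinguished occurrence of $s$ is phrased naturally at the level of subexpressions (subsets of positions of $w$). The two parametrizations fail to biject precisely because $s$ appears multiple times in $w$, so the fibers of the map "subexpressions $\to$ elements of $F^+$" can have varying size, and the parabolic $P(w_{J \cup \{d\}})$ attached to a subexpression containing the distinguished position $d$ need not agree with $P(w'_J)$ when $s \notin \supp(w'_J)$. Organizing these multiplicities coherently with the sign conventions of the differentials, so as to obtain an honest short exact sequence rather than only a filtration requiring spectral-sequence machinery, is the technical heart of the argument.
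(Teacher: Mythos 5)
Your overall strategy---induct by deleting one occurrence of a simple reflection $s$ that appears at least twice, so that the shortened word $w'$ still has full support---is exactly the paper's route, and Steps 1 and 2 are fine. But Step 3, which you yourself flag as unresolved, is where the entire content lies, and as written it has a genuine gap. First, the bookkeeping you worry about is not actually a problem: work throughout with subexpressions, i.e.\ subsets of positions of the fixed word $w$ (this is how the strata of $\overline{X}(w)$ are indexed in any case), so the fibers of the map from subexpressions to elements of $F^+$ never enter. With that convention, the terms of $C^\bullet_w$ indexed by subexpressions omitting the distinguished position $d$ automatically form a subcomplex---the differential only deletes letters, so it can never create the letter at $d$---and this subcomplex is canonically a copy of $C^\bullet_{w'}$ placed one degree further along. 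You therefore get your short exact sequence for free, with quotient $Q^\bullet$ indexed by the subexpressions containing $d$.

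The missing idea is how to dispose of $Q^\bullet$. You try to compare it with $C^\bullet_{w'}$ and correctly observe that this fails because $P(w_{J\cup\{d\}})\neq P(w'_J)$ when $s\notin\supp(w'_J)$---but no such comparison is needed; what is needed is that $Q^\bullet$ is acyclic, and this is the paper's one observation. Every index of $Q^\bullet$ retains the occurrence of $s$ at $d$, so its support contains $s$ no matter what. Hence toggling a second, fixed occurrence $d'\neq d$ of $s$ is a length-one involution on the index set of $Q^\bullet$ which does not change the associated parabolic: in the paper's notation, $P(v_1sv'v_2)=P(v_1sv'sv_2)$. The differential component between two paired terms is the map attached to the identity double coset between equal parabolic subgroups, i.e.\ $\pm\,\mathrm{id}$, so $Q^\bullet$ is contractible (filter by the positions other than $d'$, or write the evident contracting homotopy); the long exact sequence then gives that $C^\bullet_w$ is quasi-isomorphic to $C^\bullet_{w'}$ up to the degree shift you already track, and your induction closes. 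Without this support-invariance pairing, your Step 3 remains a plan rather than a proof; with it, your argument coincides with the paper's.
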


\begin{proof}
 We may suppose that $w$ is not a Coxeter element. Hence there exists a simple reflection $s\in S$ which appears at least twice in $w$.
Write $w=w_1sw'sw_2$ with $s\in S$ and $w_1,w_2,w'\in F^+.$
Of course the subword $v=w_1w'sw_2$ has full support,  as well. Hence by induction on the length  the complex $C^\bullet_v$ is quasi-isomorphic to $C^\bullet_{\cox}.$
On the other hand, for any subword $v_1sv'sv_2$ of $w$, we have $P(v_1sv'v_2)=P(v_1sv'sv_2)$. Hence the difference between the complexes
$C^\bullet_w$ and $C^\bullet_v$ is a contractible
complex. The result follows.
\end{proof}

Let $w\in F^+$ and $s\in \supp(w).$ Hence we may write $w=w_1sw_2$ for some subwords $w_1,w_2\in F^+$ of $w.$ Then we also use the notation
$$w/s:=w_1w_2\in F^+$$
for convenience.  We may define analogously to (\ref{complex_0_F}) the complex
\begin{equation}\label{complex_0_F_s}
C^\bullet_{w,s}: 0 \rightarrow  i^G_{P(w)} \rightarrow \bigoplus_{\genfrac{}{}{0pt}{2}{s\preceq v\preceq w}{\ell(v)=\ell(w)-1}}i^G_{P(v)} \rightarrow
\bigoplus_{\genfrac{}{}{0pt}{2}{s\preceq v\preceq w}{\ell(v)=\ell(w)-2}}i^G_{P(v)}
\rightarrow \dots \rightarrow i^G_{P(s)} \rightarrow 0.
\end{equation}

\begin{eg}
Let $w=\cox_n\in F^+$ be the standard Coxeter element and $s\in S.$ Then the complex $C^\bullet_{w,s}$  coincides
- up to augmentation with respect to the generalized Steinberg representation -  with the complex (\ref{IIIcomplex}) where $d=d(s),$
i.e.,  $P=P(s).$
\end{eg}

With the same arguments as in Proposition \ref{qi_cox} one proves the next statement.

\begin{prop}\label{qi_cox_s}
Let $w \in F^+$ have full support and let $s\in \supp(w).$ Then the complex $C^\bullet_{w,s}$ is quasi-isomorphic to $C^\bullet_{\cox,s}$
if $s\not\in \supp(w/s)$. Otherwise, it is acyclic.\qed
\end{prop}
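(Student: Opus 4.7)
My plan is to mirror the inductive argument from Proposition~\ref{qi_cox}, carefully tracking how the $s$-constraint interacts with the reduction. I induct on $\ell(w)$. If $w$ is a Coxeter element, then $s$ occurs exactly once in $w$ and $C^\bullet_{w,s}$ is literally $C^\bullet_{\cox,s}$ (any two Coxeter elements in $F^+$ give the same complex up to relabeling). Otherwise, some simple reflection $t\in\supp(w)$ occurs at least twice; write $w=w_1 t w' t w_2$ and $v=w_1 w' t w_2$, so that $\ell(v)=\ell(w)-1$ and $v$ still has full support.

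The core reduction, exactly as in Proposition~\ref{qi_cox}, is that the subwords of $w$ of the form $u_1 t u' t u_2$ pair with $u_1 t u' u_2$ by toggling the second explicit $t$-position, with matching parabolics $P(u_1 t u' t u_2)=P(u_1 t u' u_2)$. This yields a contractible subcomplex of $C^\bullet_w$ whose quotient identifies with $C^\bullet_v$ (up to a natural degree shift), so $C^\bullet_v\simeq C^\bullet_w$.

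In Case~1, when $s\notin\supp(w/s)$ (so $s$ appears exactly once), I choose $t\neq s$ for the reduction. Toggling a $t$-position with $t\neq s$ does not affect whether a subword contains $s$, so the pairing restricts verbatim to a contractible subcomplex of $C^\bullet_{w,s}$ with quotient $C^\bullet_{v,s}$. Since $v$ still has full support and $s$ still occurs exactly once in $v$, the inductive hypothesis gives $C^\bullet_{v,s}\simeq C^\bullet_{\cox,s}$, and hence $C^\bullet_{w,s}\simeq C^\bullet_{\cox,s}$.

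In Case~2, when $s\in\supp(w/s)$ (so $s$ occurs at least twice), I take $t=s$. Both members of each pair $u_1 s u' s u_2\leftrightarrow u_1 s u' u_2$ contain the first explicit $s$ and so lie in $C^\bullet_{w,s}$; their common parabolic already has $s$ in its support, so the pairing produces a contractible subcomplex of $C^\bullet_{w,s}$ whose quotient is $C^\bullet_{v,s}$. The word $v$ has one fewer $s$, so I iterate. The main obstacle is the boundary of the induction: when the reduced $v$ has exactly one $s$, Case~1 only yields $C^\bullet_{v,s}\simeq C^\bullet_{\cox,s}$, not acyclicity. Closing this gap is where the argument departs from a direct translation of Proposition~\ref{qi_cox}; the cleanest remedy is to invoke the short exact sequence $0\to D^\bullet_w\to C^\bullet_w\to C^\bullet_{w,s}\to 0$, where $D^\bullet_w$ consists of subwords of $w$ whose support omits $s$, and to identify $D^\bullet_w$ with the complex $C^\bullet_{w_{-s}}$ (with $w_{-s}$ obtained by deleting every $s$ from $w$). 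Applying Proposition~\ref{qi_cox} inside the Levi subgroup $M_{S\setminus\{s\}}$ and combining with parabolic induction via the identity~(\ref{identity_induced_Steinberg}) computes the cohomology of $D^\bullet_w$; comparing it, via the long exact sequence, with the known cohomology of $C^\bullet_w$ from Proposition~\ref{qi_cox} then forces $C^\bullet_{w,s}$ to be acyclic.
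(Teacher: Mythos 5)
Your Case 1 is fine and is exactly the paper's intended argument (reduce a repeated letter $t\neq s$ by the pairing of Proposition \ref{qi_cox}, restricted to the subwords containing the unique occurrence of $s$). The gap is in Case 2, and it comes from a misreading of the index set of $C^\bullet_{w,s}$. In (\ref{complex_0_F_s}) the condition $s\preceq v\preceq w$ refers to the interval attached to the \emph{chosen occurrence} of $s$ in the factorization $w=w_1sw_2$: the terms are indexed by the $2^{\ell(w)-1}$ subwords of $w$ that contain that fixed position (note the last term is a single copy of $i^G_{P(s)}$, and this is precisely the complex that reappears as the lower line of (\ref{complex_reduced}) in the proof of Theorem \ref{thm_conj}, indexed by $\{v's: v'\preceq w'\}$). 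Under your reading (all subwords whose support contains $s$, i.e.\ the quotient $C^\bullet_w/D^\bullet_w$), the acyclicity claim is simply false: already for $G=\GL_2$ and $w=ss$ that quotient is $i^G_G\to i^G_G\oplus i^G_G$, whose homology is $i^G_G\neq 0$. Consequently your proposed remedy via $0\to D^\bullet_w\to C^\bullet_w\to C^\bullet_{w,s}\to 0$ cannot close the gap; if carried out it detects this nonzero class rather than proving vanishing. Your own iteration $C^\bullet_{w,s}\simeq C^\bullet_{v,s}$ with one $s$ removed, ending at $C^\bullet_{\cox,s}$, is the same symptom: if that reduction were available it would contradict acyclicity outright.

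With the correct index set, ``the same arguments as in Proposition \ref{qi_cox}'' finish Case 2 in one step, with no induction and no boundary case: since $s\in\supp(w/s)$ there is a second $s$-position $p_1$ different from the distinguished one; toggling $p_1$ is a fixed-point-free involution on the \emph{entire} index set of $C^\bullet_{w,s}$ (membership is governed only by the distinguished position, which is untouched), and for each matched pair the supports, hence the parabolics $P(\cdot)$, coincide because $s$ is already present via the distinguished position, so the matched differential component is $\pm\mathrm{id}$. Splitting the complex by whether an index contains $p_1$ exhibits $C^\bullet_{w,s}$ as the cone of an isomorphism of complexes, hence it is acyclic. (A small further slip in your write-up: the paired part is in general a quotient complex, not a subcomplex, since the boundary of a matched element has components landing outside the paired span; in the one-step argument above this is handled by taking the indices containing $p_1$ as the subcomplex-free column of the cone.)
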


More generally, let $w\in F^+$ and fix a subword  $u \prec w$. For any $v\in F^+$ with  $u \preceq v\preceq w$, let $P_v \subset G$ be a std psgp chosen inductively in the following way.
Start with an arbitrary std psgp $P_u\subset G.$ Let $u\prec v \preceq w$ and suppose that for all $u\preceq z \prec v$, the std psgp $P_z$ are already defined.
Set $\{z_1,\ldots,z_r\}=\{u \preceq z \prec v \mid \ell(z)=\ell(v)-1\}.$
Then let $P_v\subset G$ be a std psgp such that $i^G_{P_v} = A\oplus B$ where $A\subset \bigcap_i i^G_{P_{z_i}}$
and $B$ maps to zero under all the maps $i^G_{P_v} \to i^G_{P_{z_i}}$ induced by the double cosets of $e\in W$ in $W_{P_v}\setminus  W/W_{P_{z_i}}$
via Frobenius reciprocity. Hence we get a sequence of $G$-representations

\begin{equation}\label{complex_F}
0 \rightarrow  i^G_{P_{w}} \rightarrow \bigoplus_{\genfrac{}{}{0pt}{2}{u\preceq v\preceq w}{\ell(v)=\ell(w)-1}}i^G_{P_{v}} \rightarrow
\bigoplus_{\genfrac{}{}{0pt}{2}{u\preceq v\preceq w}{\ell(v)=\ell(w)-2}}i^G_{P_{v}}
\rightarrow \dots \rightarrow \bigoplus_{\genfrac{}{}{0pt}{2}{u\preceq v \preceq w}{\ell(v)=\ell(u)+1}}i^G_{P_{v}}\rightarrow i^G_{P_u} \rightarrow 0
\end{equation}
which we equip analogously with the same signs as above. By the very construction of the sequence we derive the following fact.

\begin{lemma}\label{lemma_complex}
The  sequence $(\ref{complex_F})$ is a complex.\qed
\end{lemma}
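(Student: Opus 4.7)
The plan is to verify that $d\circ d = 0$ in (\ref{complex_F}) by examining each pair $(v,z)$ with $u \preceq z \prec v \preceq w$ and $\ell(v) = \ell(z) + 2$ separately: it must be shown that the sum over intermediate $y$ (with $z \prec y \prec v$, $\ell(y) = \ell(v)-1$) of the signed compositions $i^G_{P_v} \to i^G_{P_y} \to i^G_{P_z}$ vanishes.

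The key advantage is the defining decomposition $i^G_{P_v} = A_v \oplus B_v$ from the construction: by design, $B_v$ is annihilated by every component $i^G_{P_v} \to i^G_{P_y}$ of the first differential, so it contributes nothing to $d\circ d$, and it suffices to check the vanishing on $A_v$. On $A_v$ the inclusion $A_v \subset \bigcap_{y} i^G_{P_y}$ supplied by the construction identifies each component map $A_v \to i^G_{P_y}$ with the canonical inclusion (up to the prescribed sign).

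The essential point is then that, for every admissible intermediate $y$, the composition $A_v \hookrightarrow i^G_{P_y} \to i^G_{P_z}$ equals the restriction to $A_v$ of a \emph{single} canonical morphism $i^G_{P_v} \to i^G_{P_z}$, independent of $y$. This follows from Remark \ref{connection_symmetric group}: all morphisms in sight correspond via Frobenius reciprocity to the trivial double coset in $C[W_{P_?}\backslash W / W_{P_{?'}}]$, and a direct Weyl-group double-coset computation shows that the composition of two such canonical maps is again the canonical one (the image of $(e,e)$ under the natural pairing $C[W_{P_v}\backslash W / W_{P_y}] \times C[W_{P_y}\backslash W / W_{P_z}] \to C[W_{P_v}\backslash W / W_{P_z}]$ is $e$). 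This functoriality of canonical maps is the main technical step.

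With this identification, the $(v,z)$-component of $d \circ d$ on $A_v$ collapses to $\bigl(\sum_{y}\varepsilon(v,y,z)\bigr)$ times a single canonical morphism, where $\varepsilon(v,y,z) = \varepsilon(v,y)\,\varepsilon(y,z)$ is the product of the two BGG-style signs. The identity $\sum_y \varepsilon(v,y,z) = 0$ is the standard sign cancellation (the two erasure orders on any size-two subset of positions contribute with opposite signs), which is precisely the combinatorial content of the fact that the analogous complex (\ref{complex_0_F}) is already a complex. Combining, $d \circ d = 0$, as required.
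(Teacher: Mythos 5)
There is a genuine gap, and it sits exactly at what you call the main technical step. The claim that the pairing $C[W_{P_v}\backslash W/W_{P_y}]\times C[W_{P_y}\backslash W/W_{P_z}]\to C[W_{P_v}\backslash W/W_{P_z}]$ sends $(e,e)$ to $e$ is false: the composite of the two double-coset-of-$e$ intertwiners corresponds to the class of the whole subset $W_{P_v}W_{P_y}W_{P_z}$, counted with multiplicities, not to the single coset of $e$. Already for $G=\GL_2$, $P_v=P_z=B$, $P_y=G$, the composite $i^G_B\to i^G_G\to i^G_B$ is the rank-one averaging map, i.e.\ it corresponds to $e+s$, whereas the canonical map $i^G_B\to i^G_B$ is the identity. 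In the very situation of the paper's example following the lemma, the composite $i^G_{P_{(3,1)}}\to i^G_{P_{(2,2)}}\to i^G_{P_{(3,1)}}$ acts on the two irreducible constituents by the scalars $6$ and $2$ (it equals $2\cdot\mathrm{id}$ plus the all-ones map in the permutation model), so the two routes from $v$ to $z$ through the two intermediate vertices are genuinely different maps; the Koszul sign cancellation therefore does not by itself give $d\circ d=0$. The auxiliary identification you make beforehand is also unjustified: the containment $A_v\subset i^G_{P_y}$ does not imply that the double-coset-of-$e$ map $i^G_{P_v}\to i^G_{P_y}$ restricts on $A_v$ to that containment (already the trivial constituent is rescaled by an index, and on other isotypic parts the restriction need not be compatible for different $y$).

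A structural symptom of the problem is that your argument uses the defining decomposition $i^G_{P_v}=A\oplus B$ only to discard $B$; afterwards it would apply verbatim to an arbitrary assignment $v\mapsto P_v$, i.e.\ it would show that any sequence of double-coset-of-$e$ maps equipped with the BGG signs is a complex. The example the paper places immediately after the lemma, where $i^G_{P_{(2,2)}}\to i^G_{P_{(2,2)}}\oplus i^G_{P_{(3,1)}}\to i^G_{P_{(2,2)}}$ is explicitly stated not to be a complex, shows that the statement is sensitive to the choice of the $P_v$'s, so your proof proves too much. The paper itself offers no written argument (the lemma is asserted to follow ``by the very construction''), but any correct proof must exploit the inductive compatibility of the decompositions $i^G_{P_v}=A\oplus B$ across the different intermediate vertices between $v$ and $z$ — for instance by showing that on the $A$-part the two composites through the two intermediates literally coincide — rather than a general composition law for canonical intertwiners, which does not hold.
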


\begin{eg}
Let ${\bf G}=\GL_4$ and $w,u\in F^+$ with $\ell(w)=\ell(u)+2.$ The sequence
 $$i^G_{P_{(3,1)}} \to i^G_{P_{(3,1)}} \oplus i^G_{P_{(2,2)}} \to i^G_{P_{(3,1)}}  $$
 (with differentials as explained above)  is a complex in the sense above, whereas
  $$i^G_{P_{(2,2)}} \to i^G_{P_{(2,2)}} \oplus i^G_{P_{(3,1)}} \to i^G_{P_{(2,2)}}  $$
  is not.
\end{eg}

For later use, we introduce the next definition.

\begin{defn}
 Let $V$ be a finite-dimensional $G$-representation. We denote by $\supp(V)$ the set of isomorphism classes of
irreducible subrepresentations which appear in $V.$
\end{defn}

Let $f: V \to W$ be a homomorphism of $G$-representations. We get for each irreducible $G$-representation $Z$ an induced map
$$f^Z:V^Z \to W^Z$$ of the $Z$-isotypic parts.

\begin{defn}
Let  $f:V \to W$  be a homomorphism of $G$-representations.

i) We call  $f$ si-surjective resp. si-injective resp.  si-bijective if
the map $f^Z$ is surjective resp. injective resp. bijective for all $Z \in \supp(V) \cap \supp(W).$

ii) We say that $f$ has si-full rang if the map $f^Z$ has full rang for all $Z \in \supp(V) \cap \supp(W).$
\end{defn}

\begin{rks}
i) The above definition makes of course sense for arbitrary groups. We will apply it in the upcoming sections in the case of $H:=G\times \Gamma.$
Here we shall  see later on  that the action of $H$ on the considered geometric representations is semi-simple.

ii) Obviously, the homomorphism $f$ has si-full rang if and only if the map $f^Z$ is injective or surjective
for all $Z \in \supp(V) \cap \supp(W).$
\end{rks}

We close this section with the following observation.

\begin{lemma}\label{exact_sequence}
Let $V^1 \stackrel{f_1}{\to} V^2 \stackrel{f_2}{\to} V^3 \stackrel{f_3}{\to} V^4$ be an exact sequence of $H$-modules with
$\supp(V^1) \cap \supp(V^4) =\emptyset.$

a) If $f_1$ is si-surjective then $f_2$ is si-injective.

b) If $f_3$ si-injective then $f_2$ si-surjective,

c) $f_2$ has si-full rang.
\end{lemma}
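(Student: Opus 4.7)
The approach is entirely formal and rests on one input: on semi-simple $H$-modules, the functor $V \mapsto V^{Z}$ extracting the $Z$-isotypic component of an irreducible $H$-representation $Z$ is exact. Since (as noted in the remark just preceding the statement) the $H$-modules relevant for the applications are semi-simple, the four-term exact sequence in the lemma restricts, for every irreducible $Z$, to an exact sequence of $C$-vector spaces
\begin{equation*}
V^{1,Z} \xrightarrow{f_1^Z} V^{2,Z} \xrightarrow{f_2^Z} V^{3,Z} \xrightarrow{f_3^Z} V^{4,Z}.
\end{equation*}
All three claims will be proved by chasing in this sequence and keeping track of which $\supp(V^{i})$ contain $Z$.

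I would prove (c) first, since (a) and (b) drop out of it easily. Fix $Z \in \supp(V^2) \cap \supp(V^3)$ and suppose for contradiction that $f_2^Z$ has less than full rank, i.e.\ is neither injective nor surjective. Non-injectivity gives $\im(f_1^Z) = \ker(f_2^Z) \neq 0$, so $V^{1,Z} \neq 0$ and hence $Z \in \supp(V^1)$. Non-surjectivity gives $\ker(f_3^Z) = \im(f_2^Z) \subsetneq V^{3,Z}$, so $f_3^Z \neq 0$ and $V^{4,Z} \supseteq \im(f_3^Z) \neq 0$, i.e.\ $Z \in \supp(V^4)$. This contradicts the hypothesis $\supp(V^1) \cap \supp(V^4) = \emptyset$.

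For (a), pick $Z \in \supp(V^2) \cap \supp(V^3)$; by (c), $f_2^Z$ is injective or surjective. If it were surjective but not injective, the argument of (c) already puts $Z$ into $\supp(V^1)$, so $Z \in \supp(V^1) \cap \supp(V^2)$, and the si-surjectivity of $f_1$ forces $\im(f_1^Z) = V^{2,Z}$. Then $\ker(f_2^Z) = V^{2,Z}$ and $f_2^Z = 0$, contradicting surjectivity onto the nonzero space $V^{3,Z}$. Hence $f_2^Z$ is injective. Part (b) is entirely dual: if $f_2^Z$ were injective but not surjective for some $Z \in \supp(V^2) \cap \supp(V^3)$, then $f_3^Z \neq 0$ puts $Z$ in $\supp(V^4)$, and the si-injectivity of $f_3$ at $Z \in \supp(V^3) \cap \supp(V^4)$ forces $\ker(f_3^Z) = 0$, whence $\im(f_2^Z) = 0$, contradicting injectivity on the nonzero space $V^{2,Z}$.

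There is no real obstacle here; the only discipline needed is bookkeeping with supports at each implication. The whole content of the lemma is the observation that, in a four-term exact sequence of semi-simple modules, the middle map can fail to have full rank on the $Z$-isotypic piece only if both of its neighbours have a non-trivial $Z$-isotypic piece, which is precisely what the hypothesis $\supp(V^1)\cap \supp(V^4)=\emptyset$ excludes.
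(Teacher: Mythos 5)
Your proof is correct and follows essentially the same argument as the paper: exactness of the $Z$-isotypic-component functor (semisimplicity) plus bookkeeping of which $\supp(V^i)$ contain $Z$, with the disjointness of $\supp(V^1)$ and $\supp(V^4)$ supplying the contradiction. The only difference is cosmetic — you prove (c) first and feed it into (a) and (b), whereas the paper argues (a) and (b) directly and treats (c) as immediate.
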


\begin{proof}
a) Let $Z\in \supp(V^2) \cap \supp(V^3).$ If the map  $f_2^Z$ has a kernel it follows that $V\in \supp(V^1).$ Since $f_1$ is si-surjective
we deduce that $f_2^Z$ is the zero map. Hence $V_3^Z$ maps injectively into $V^4$ which implies $Z \in \supp(V^4)$ which is a contradiction to
the assumption.

b)  Let $Z\in \supp(V^2) \cap \supp(V^3).$ If the map  $f_2^Z$ is not surjective it follows that $V\in \supp(V^4).$ Since $f_3$ is si-injective
we deduce that $f_2^Z$ is the zero map which implies $Z \in \supp(V^1)$ which is again a contradiction to
the assumption.

c) This is obvious.
\end{proof}

\vspace{0.5cm}

\section{Deligne-Lusztig varieties}

Let $X=X_{\bf G}$ be the set of all Borel subgroups of ${\bf G}$. This is a smooth projective algebraic variety homogeneous
under ${\bf G}$.  By the Bruhat lemma the set of orbits of ${\bf G}$ on $X\times X$ can be identified with $W.$ We denote by $\mcO(w)$ the orbit of
$(B,wBw^{-1})\subset X\times X$ and by $\overline{\mcO(w)}\subset X\times X$ its Zariski closure.

Let $F: X \rightarrow X$ be the Frobenius map over $\mF_q$.
The {\it Deligne-Lusztig variety associated to} $w\in W$ is the locally closed subset of $X$ given by
\begin{equation*}
X(w)=X_{{\bf G}}(w)=\big\{x\in X\mid {\rm inv}(x,F(x))=w\big\}
\end{equation*}
where by definition ${\rm inv}(x,F(x))=w \Leftrightarrow (x,F(x))\in \mcO(w).$
Denote by $\leq$ the Bruhat order and by $\ell$ the length function on $W.$
Then $X(w)$ is a smooth quasi-projective variety of
dimension  $\ell(w)$ defined  over $\mF_q $ and  which is equipped with an action of $G$, cf. \cite[1.4]{DL}.
We denote by $\overline{X(w)}\subset X$ its Zariski closure.

Before we proceed, let us recall some properties of the varieties $\mcO(w)$ we need in the sequel, cf. loc.cit.
If $w=w_1w_2$ with $\ell(w)=\ell(w_1)+\ell(w_2)$ then
\begin{enumerate}\label{properties_O}
 \item a) $(B,B')\in \mcO(w_1)$ and $(B',B'')\in \mcO(w_2)$ implies $(B,B'')\in \mcO(w)$

\smallskip
\noindent b) If $(B,B'')\in \mcO(w)$, then there is a unique $B'\in X$ with  $(B,B')\in \mcO(w_1)$ and $(B',B'')\in \mcO(w_2).$

\smallskip
\noindent In other words, there is an isomorphism of schemes $\mcO(w_1) \times_{X} \mcO(w_2) \cong \mcO(w).$

\smallskip
\item Let $w,w'\in W.$ Then $\mcO(w') \subset \overline{\mcO(w)}$ $\Leftrightarrow w' \leq w$  for the Bruhat order $\leq $ on $W.$
\end{enumerate}

As in the case of usual Schubert cells, there is by item (2) the following relation concerning the closures of DL-varieties. Let $w',w\in W$.
Then 
$$X(w') \subset \overline{X(w)} \Leftrightarrow w'\leq w.$$
It follows that we have a Schubert type stratification
\begin{equation}\label{stratification}
\overline{X(w)}=\bigcup_{v\leq w} X(v).
\end{equation}
In particular if $w'\leq w$ with $\ell(w)=\ell(w')+1,$ then $X(w) \cup X(w')$ is a locally closed subvariety of $X$ which is moreover smooth
since the dimensions of $X(w)$ and $X(w')$ differs by one.

\begin{eg}\label{Example_1}
Let $G=\GL_3$ and identify $X$ with the full flag variety of $V=\mF^3$. Then
\begin{eqnarray*}
X(1) & = & X(\mF_q) \\
X((1,2)) & = & \big\{(0) \subset V^1 \subset V^2 \subset V \mid V^2 \mbox{ is $k$-rational},F(V^1) \neq V^1 \big\} \\
X((2,3)) & = & \big\{(0) \subset V^1 \subset V^2 \subset V \mid V^1 \mbox{ is $k$-rational}, F(V^2) \neq V^2 \big\} \\
X((1,2,3)) & = & \big\{(0) \subset V^1 \subset V^2 \subset V \mid F(V^1) \subset V^2 , F(V^i) \neq V^i,i=1,2\big\} \\
X((1,3,2)) & = & \big\{(0) \subset V^1 \subset V^2 \subset V \mid V^1 \subset F(V^2) , F(V^i) \neq V^i,i=1,2\big\} \\
X((1,3)) & = & \big\{(0) \subset V^1 \subset V^2 \subset V \mid F(V^1) \nsubseteq V^2, V^1 \nsubseteq  F(V^2)\big\} .
\end{eqnarray*}
\end{eg}

\bigskip
Let $S=\{s_1,\ldots,s_{n-1}\} \subset W$ be the set of simple reflections.
Recall that a Coxeter element $w\in W$ is given by any product of all  $s\in S$ (with multiplicity one).
In the sequel we denote
by $$\cox_n:=s_1\cdot s_2 \cdots s_{n-1}=(1,2,\ldots,n)\in W$$ the standard Coxeter element.

\begin{eg}
Let $w=\cox_n$. Then $X(w)$ can be identified via the projection map $X \to \mP^{n-1}$  with the Drinfeld space
$$\Omega(V)=\Omega^n=\mP^{n-1} \setminus \bigcup\nolimits_{H / \mF_q} H$$
(complement of all $\mF_q$-rational hyperplanes in the projective space of lines in $V=\mF^n$), cf. \cite{DL}, \S 2.
Its inverse is given by the map
\begin{eqnarray*}
\Omega(V) & \to & X(w) \\
x & \mapsto & x \subset x+F(x) \subset x+F(x)+F^2(x) \subset \cdots \subset V
\end{eqnarray*}
For any Coxeter element $w$ for $\GL_n$, the corresponding DL-variety
$X(w)$ is universally homeomorphic to $\Omega^n$, cf. \cite{L2}, Prop. 1.10.
\end{eg}

In the sequel we denote for any variety $X$ defined over $k$, by $H^i_c(X)=H^i_c(X,\overline{\mQ}_\ell)$
(resp. $H^i(X)=H^i(X,\overline{\mQ}_\ell)$) the $\ell$-adic
cohomology with compact support (resp. the $\ell$-adic
cohomology) in degree $i.$ For a Deligne-Lusztig variety $X(w)$, there is by functoriality an action of $H=G\times \Gamma$
on  these cohomology groups.

\begin{prop}\label{cohomology_Coxeter}
Let $w=\cox_n$ be the standard Coxeter element. Then
$$H_c^\ast(X(w))= \bigoplus_{k=1}^n j_{(k,1,\ldots,1)}(-(k-1))[-(n-1)-(k-1)]. $$
\end{prop}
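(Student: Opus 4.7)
The plan is to identify $X(\cox_n)$ with Drinfeld's halfspace $\Omega^n\subset\mP(V)$ via the universal homeomorphism described in the preceding example (invoking \cite{L2}, Prop.~1.10), which reduces the computation to that of $H^*_c(\Omega^n)$. I would then induct on $n$, with $\Omega^1=\mathrm{pt}$ as the trivial base. For the inductive step, stratify
\begin{equation*}
\mP(V) \;=\; \bigsqcup_{0\neq U\subset V \text{ rational}} \Omega(U),
\end{equation*}
indexing strata by nonzero $\mF_q$-rational subspaces $U$, where $[x]\in\Omega(U)$ iff $U$ is the smallest such subspace with $[x]\in\mP(U)$. This stratification is $G$-equivariant, and the union of strata with $\dim U=d$ is $G$-isomorphic to $G\times_{P_{(d,n-d)}}\Omega^d$, with $\GL_{n-d}$ acting trivially on the fibre. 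Filtering $\mP(V)$ by the closed subvarieties $F^d=\bigcup_{\dim U\le d}\mP(U)$ then yields a spectral sequence whose $E_1$-pieces are $\Ind_{P_{(d,n-d)}}^G H^*_c(\Omega^d)$ converging to $H^*_c(\mP(V))$.

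To upgrade this to an equality of $G\times\Gamma$-modules I would invoke Frobenius weights: by induction, $H^i_c(\Omega^d)$ is pure of weight $2(i-d+1)$, so the pieces attached to different values of $d$ live in mutually distinct Frobenius weights; since spectral-sequence differentials preserve weight, they must vanish. Consequently
\begin{equation*}
H^*_c(\mP(V)) \;=\; H^*_c(\Omega^n)\ \oplus\ \bigoplus_{d=1}^{n-1}\Ind_{P_{(d,n-d)}}^G H^*_c(\Omega^d).
\end{equation*}
Applying the identity (\ref{identity_induced_Steinberg}) with $(n_1,n_2)=(d,n-d)$, $d_1=(k,1,\ldots,1)$, $d_2=(n-d)$, together with the induction hypothesis $H^{d-1+k-1}_c(\Omega^d)=v^{\GL_d}_{P_{(k,1,\ldots,1)}}(-(k-1))$, realises each inductive contribution as an explicit quotient of $i^G_{P_{(k,1,\ldots,1,n-d)}}$. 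Combined with the classical $H^{2i}(\mP(V))=\mathbf{1}(-i)$ for $0\le i\le n-1$, this determines $[H^*_c(\Omega^n)]$ degree by degree.

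The remaining step is to recognise the resulting virtual representation as $\bigoplus_{k=1}^n v^G_{P_{(k,1,\ldots,1)}}(-(k-1))[-(n-1)-(k-1)]$. This is achieved by matching the alternating sum of induced pieces against the resolution (\ref{IIIcomplex}) of Proposition~\ref{IIIaltsum} applied to $P=P_{(k,1,\ldots,1)}$, and then invoking the remark following the generalized-Steinberg definition, which guarantees $v^G_{P_{(k,1,\ldots,1)}}$ is irreducible and thus coincides with $j_{(k,1,\ldots,1)}$ via the bijection $\alpha:\hat G(i^G_B)\to\hat W$ of Remark~\ref{connection_symmetric group}.

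The principal obstacle is precisely the combinatorial bookkeeping in this final step: tracking which parabolic $P_{(k,1,\ldots,1,n-d)}$ appears for each pair $(d,k)$, and aligning the resulting double alternating sum with the differentials of the resolution (\ref{IIIcomplex}). A sanity check is to verify degree and Frobenius-weight agreement first (straightforward from purity) and then pin down the irreducible unipotent constituents via Theorem~\ref{thmirredDarstG}. If the combinatorics becomes unwieldy, the argument can be short-circuited by quoting Lusztig's direct computation in \cite{L2}, which handles $X(\cox_n)$ via the action of the Coxeter torus.
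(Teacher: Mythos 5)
The paper offers no argument here: the proof of Proposition~\ref{cohomology_Coxeter} is a one-line citation to Lusztig and to \cite{O,SS}, so you are attempting to supply a proof where the author supplies none. Your strategy — identify $X(\cox_n)$ with $\Omega^n$, stratify $\mP(V)$ by the $\Omega(U)$, induct on $n$ and control the spectral sequence with Frobenius weights, then match against the resolution (\ref{IIIcomplex}) — is exactly the approach of the cited references, so the plan is sound in outline.

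There is, however, a genuine gap in the degeneration step, and it is load-bearing. You claim the spectral sequence splits as $H^*_c(\mP(V)) = H^*_c(\Omega^n)\oplus\bigoplus_{d<n}\Ind^G_{P_{(d,n-d)}}H^*_c(\Omega^d)$ because pieces for different $d$ lie in different weights. The weight of the $E_1$-term coming from $d$-dimensional strata in total cohomological degree $i$ is $2(i-d+1)$, so for \emph{fixed} $i$ distinct $d$ indeed give distinct weights — but the differential $d_1$ raises the total degree by $1$ \emph{and} raises $d$ by $1$, hence preserves the weight $2(i-d+1)$ exactly. Weight considerations therefore kill $d_r$ for $r\ge 2$ but say nothing about $d_1$, and $d_1$ is very much nonzero: already for $n=2$ the boundary map $H^0(\mP^1(\mF_q))\to H^1_c(\Omega^2)$ is surjective onto a $q$-dimensional Steinberg, while $H^*_c(\mP^1)$ is only two-dimensional, so the claimed direct sum fails by an unbounded margin. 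What additivity does give is the Euler characteristic $\sum(-1)^i[H^i_c]$ in the Grothendieck group; to recover the individual $H^i_c(\Omega^n)$ from the alternating sum you additionally need the purity statement that $H^i_c(\Omega^n)$ is concentrated in weight $2(i-n+1)$, which must be established independently (e.g.\ from smoothness and projectivity of $\overline{X(\cox_n)}$ together with the stratification (\ref{stratification}), as the later sections of the paper do implicitly). Once that is in place, the degree-by-degree matching with the resolution (\ref{IIIcomplex}) and the identification $v^G_{P_{(k,1,\ldots,1)}}=j_{(k,1,\ldots,1)}$ work as you describe; but the intermediate display equation must be read only in the Grothendieck group, not as a direct sum of modules.
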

(Here for an integer $m\in \mZ$, we denote as usual by $(m)$ the Tate twist of degree $m.$)

\begin{proof}
 See \cite{L2,O} resp. \cite{SS} in the case of a local field.
\end{proof}

The cohomology of the Zariski closure of $X(\cox_n)$ has the following description.

\begin{prop}\label{cohomology_Coxeter_compactification}
Let $w=\cox_n\in W$ be the standard Coxeter element. Then
$$H^\ast(\overline{X(w)})= \bigoplus_{v\leq w} J(v)(-\ell(v))[-2\ell(v)], $$
where $J(v)=J^G(v)$ is defined inductively as follows. Write $v$ in the shape $$v=v'\cdot  s_{j+1}\cdot s_{j+2}\cdots s_{j+l}$$ where
$1\leq j \leq n-2$, $l\geq 0$ and where $v'\leq \cox_j=s_1\cdots s_{j-1}$. Then $$J(v)=\Ind^G_{P_{(j,n-j)}}(J^{\GL_j}(v')\boxtimes {\bf 1}).$$
\end{prop}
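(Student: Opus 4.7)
The plan is to induct on $n$. The base cases $n=1$ (a point) and $n=2$ ($\overline{X(\cox_2)}=\mP^1$) are immediate. For the inductive step, I would exploit the realization, recalled in the introduction from \cite{Ge,GK,I}, of $\overline{X(\cox_n)}$ as a sequence of blow-ups of $\mP^{n-1}$ along the $\mF_q$-rational linear subspaces: set $Y_{-1}=\mP^{n-1}$ and, for $d=0,1,\dots,n-3$, let $Y_d$ be the blow-up of $Y_{d-1}$ along the disjoint union of the strict transforms of all $\mF_q$-rational linear subspaces of dimension $d$. The final variety $Y_{n-3}$ is $\overline{X(\cox_n)}$.

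At each stage I would apply the blow-up formula
\[
H^i(Y_d) \;=\; H^i(Y_{d-1}) \oplus \bigoplus_{k=1}^{c-1} H^{i-2k}(Z_d)(-k),
\]
with $Z_d$ the center and $c=n-1-d$ its codimension. The decisive geometric input, baked into the construction of the blow-up tower, is that the strict transform of any $\mF_q$-rational $d$-dimensional linear subspace is itself a smaller Demazure compactification $\overline{X_{\GL_{d+1}}(\cox_{d+1})}$ for the sub-Levi $\GL_{d+1}$. Since $G$ permutes these rational subspaces transitively with stabilizer $P_{(d+1,n-d-1)}$, the step-$d$ contribution to $H^*(\overline{X(\cox_n)})$ as a $G\times\Gamma$-module is
\[
\Ind^G_{P_{(d+1,n-d-1)}}\!\bigl(H^*(\overline{X_{\GL_{d+1}}(\cox_{d+1})}) \boxtimes \mathbf{1}\bigr) \otimes \bigoplus_{k=1}^{n-d-2} \overline{\mQ}_\ell(-k)[-2k].
\]

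The inductive hypothesis replaces $H^*(\overline{X_{\GL_{d+1}}(\cox_{d+1})})$ by $\bigoplus_{v'\leq\cox_{d+1}} J^{\GL_{d+1}}(v')(-\ell(v'))[-2\ell(v')]$. Together with the contribution of the initial $H^*(\mP^{n-1})$, each summand takes the shape $\Ind^G_{P_{(j,n-j)}}(J^{\GL_j}(v')\boxtimes\mathbf{1})(-\ell(v')-l)[-2(\ell(v')+l)]$ with $j=d+1$ and $l=k$. Matching this with the subword $v=v'\cdot s_{j+1}s_{j+2}\cdots s_{j+l}\leq\cox_n$ and invoking the recursive definition $J(v)=\Ind^G_{P_{(j,n-j)}}(J^{\GL_j}(v')\boxtimes\mathbf{1})$ identifies the summand with $J(v)(-\ell(v))[-2\ell(v)]$, yielding the stated formula. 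The main obstacle is the combinatorial bijection: one must verify that every subword $v\leq\cox_n$ is hit exactly once across the steps of the blow-up tower (including the edge cases whose contributions come from $H^*(\mP^{n-1})$ itself), with the correct parabolic and Tate twist. This amounts to showing that the decomposition $v=v'\cdot s_{j+1}\cdots s_{j+l}$ prescribed by the statement is unique and compatible with the geometric role of $d$ in the blow-up; a secondary technical point is to track $G$-equivariance through the tower, i.e.\ to verify that the strict transforms really are the Demazure compactifications of the sub-Levi Coxeter elements, which is implicit in the references cited above.
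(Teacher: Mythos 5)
Your proof is correct and follows the same route as the paper: both realize $\overline{X(\cox_n)}$ as the iterated blow-up of $\mP^{n-1}$ along the strict transforms of $\mF_q$-rational linear subspaces, identify each center as a smaller Demazure compactification $\overline{X_{\GL_j}(\cox_j)}$ induced up from the Levi of $P_{(j,n-j)}$, apply the blow-up formula for $\ell$-adic cohomology, and initialize with $H^{2i}(\mP^{n-1})=i^G_G(-i)$. You are merely somewhat more explicit than the paper about the combinatorial matching of blow-up contributions to subwords $v\leq\cox_n$ — in particular you correctly flag that the prefix subwords $v=s_1\cdots s_i$, for which the decomposition $v=v'\cdot s_{j+1}\cdots s_{j+l}$ degenerates, are precisely those absorbed by the $H^*(\mP^{n-1})$ initialization rather than by a blow-up step.
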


\begin{proof}
In terms of flags the DL-variety $X(w)$ has the description
\begin{eqnarray*}
X(w) & = &\big\{V^\bullet \mid F(V^j) \subset V^{j+1}, F(V^j) \neq V^j, \; \forall \,1\leq j \leq n-1 \big\}.
 \end{eqnarray*}
The Zariski closure $\overline{X(w)}$ of $X(w)$ in $X$ is then given by the subset
\begin{eqnarray*}
\overline{X(w)} & = &\big\{V^\bullet \mid F(V^j) \subset V^{j+1}, \; \forall\, 1\leq j \leq n-1 \big\}
\end{eqnarray*}
which can be identified with a sequence of blow-ups, cf. \cite{Ge,GK,I}. Start with $Y_0=\mP^{n-1}=\mP(V)$ where $V=\mF^n$ and consider the blow up $B_1$ in the set of rational
points $Z_0=\mP^{n-1}(k)\subset Y_0.$ Then we may identify $B_1$ with  the variety $\{V^1\subset V^2 \subset V \mid F(V^1) \subset V^2 \}.$ We set
$Y_1= \bigcup_{W \in {\rm Gr}_2(V)(k)} \mP(W) \subset B_1$ which is the strict transform of the finite set of $k$-rational planes and
blow up $B_1$ in $Y_1.$ The resulting variety $B_2$ can be identified with $\{V^1 \subset V^2  \subset V^3 \subset V \mid F(V^i) \subset V^{i+1},i=1,2 \}.$
Now we repeat this construction successively until we get $B_{n-2}=\overline{X(w)}.$
Hence the cohomology of $\overline{X(w)}$ can be deduced from the usual formula for blow ups \cite{SGA5}. More precisely, each time we blow up, we have to add
the cohomology of the variety 
$$\coprod_{W\in {\rm Gr}_j(V)(k)} \overline{X_{\GL(W)}(\cox_j)} \times \mP({V/W})\setminus \overline{X_{\GL(W)}(\cox_j)} \times \mP^0$$
which is 
$$\bigoplus_{v'\leq \cox_j}\bigoplus_{l=1}^{n-j-1} \Ind^G_{P_{(j,n-j)}}(J^{\GL_j}(v')\boxtimes {\bf 1}) (-\ell(v'\cdot s_{j+1}\cdot s_{j+2}\cdots s_{j+l}))[-2(\ell(v')+l)].$$
The start of this procedure  is given by the cohomology of the projective space $\mP(V)$, which we initialize by
$H^{2i}(\mP(V))=J(s_1s_2\cdots s_i)(-i)=i^G_G(-i), i=1,\ldots,n-1.$
\end{proof}

\begin{rks}\label{rk_cox_chow}
i) Since all DL-varieties for Coxeter elements are  homeomorphic \cite{L2}, they  have all the same cohomology.
By considering stratifications (\ref{stratification}) for different Coxeter elements, the same is true for their Zariski closures.
Alternatively, one might argue that the morphism $\sigma,\tau$ of the upcoming Proposition \ref{Prop_homeomorphic}
for   different Coxeter elements extend to their Zariski closures, thus inducing an isomorphism on their cohomology.

ii) It follows by the description of $\overline{X(\cox_n)}$ in terms of blow ups together with the remark before that thy cycle map
$$A^i(\overline{X}(w))_{\overline{\mQ}_\ell}\to H^{2i}(\overline{X(w)})$$
is an isomorphism for every Coxeter element $w.$ In fact, in the Chow group  the same formulas concerning blow ups are valid \cite{SGA5}.
The first Chow group of $\overline{X(\cox_n)}$ is also considered in \cite{RTW}.

iii) Let $w=s_{n-1}s_{n-2}\cdots s_2s_1$. Then by symmetry, there is is by considering the dual projective space $(\mP^{n-1})^\vee$ a  procedure
for realising
$$\overline{X(w)}=\big\{V^\bullet \mid V^i \subset F(V^{i+1}) ,\; i=1,\ldots, n-1\big\}$$ as a
sequence of blow ups.

iv) Since $\overline{X(\cox_n)}$ is smooth and projective the above formula can be also deduced by considering its stratification
into DL-varieties together with Proposition \ref{cohomology_Coxeter} and the statement below.
\end{rks}

On the other hand, for  elements $w\in W$ having not full support,  the cohomology of $X(w)$ can be deduced by an induction process. Here recall that
we say that $w$  has full support if it not contained in any proper parabolic subgroup $W_P$ of $W$.

\begin{prop}\label{cohomology_not_full}
Let $w\in W$ have not full support. Let ${\bf P=P_{(i_1,\ldots,i_r})}$ be a minimal parabolic subgroup such that $w \in W_P$.  Then
$$H_c^\ast(X(w))= \Ind^G_P\big(H_c^\ast(X_{\bf M_P}(w))\big)$$
and
$$H^\ast(\overline{X(w)})= \Ind^G_P\big(H^\ast(\overline{X_{\bf M_P}(w)})\big)$$
where $X_{\bf M_P}(w)=\prod_{j=1}^r X_{\GL_{i_j}}(w_j)$ and $w=w_1 \cdots w_r$ with $w_j\in S_{i_j}, j=1,\ldots,r.$
\end{prop}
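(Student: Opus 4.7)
The plan is to realize $X(w)$ as a $G$-induction of a Deligne--Lusztig variety for the Levi ${\bf M_P}$ by slicing it along the natural projection
$$\pi\colon X \to {\bf G}/{\bf P}$$
sending a Borel $B'$ to the unique parabolic of type ${\bf P}$ containing $B'$. This map is smooth and $F$-equivariant. The key step is to show $\pi(X(w)) \subset ({\bf G}/{\bf P})^F$. Writing $B' = g B g^{-1}$ and $F(B') = g w B w^{-1} g^{-1}$ for some $g \in {\bf G}$ (which exists by the very definition of ${\rm inv}$), the hypothesis $w \in W_P$ forces $w B w^{-1} \subset {\bf P}$, so the parabolic $\tilde{\bf P} := g {\bf P} g^{-1}$ contains both $B'$ and $F(B')$; uniqueness of the type-${\bf P}$ parabolic containing $F(B')$ then yields $F(\tilde{\bf P}) = \tilde{\bf P}$.

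Next I would identify the fibers of $\pi_{|X(w)}$. For an $F$-stable parabolic $\tilde{\bf P}$ of type ${\bf P}$, the quotient $B' \mapsto B'/R_u(\tilde{\bf P})$ gives an $F$-equivariant bijection between Borels of ${\bf G}$ contained in $\tilde{\bf P}$ and Borels of the Levi quotient ${\bf M}_{\tilde{\bf P}}$, preserving relative positions via the identification $W_P = W_{{\bf M_P}}$. Hence the fiber of $\pi_{|X(w)}$ over $\tilde{\bf P}$ is exactly the Deligne--Lusztig variety $X_{{\bf M}_{\tilde{\bf P}}}(w)$. By Lang--Steinberg, $G$ acts transitively on $({\bf G}/{\bf P})^F$ with stabilizer $P$ at the standard parabolic, so one obtains an isomorphism of $G$-varieties over $\mF_q$
$$X(w) \;=\; \bigsqcup_{\tilde{\bf P} \in G/P} X_{{\bf M}_{\tilde{\bf P}}}(w) \;\cong\; {\bf G} \times_{{\bf P}} X_{{\bf M_P}}(w),$$
which upon passing to $\ell$-adic cohomology with compact support yields $H^\ast_c(X(w)) = \Ind^G_P H^\ast_c(X_{{\bf M_P}}(w))$ as $H$-modules.

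To finish I would factor $X_{{\bf M_P}}(w)$ through the product decomposition ${\bf M_P} = \prod_{j=1}^r {\bf GL}_{i_j}$: a Borel of ${\bf M_P}$ is a tuple of Borels in the factors and, since $w = w_1 \cdots w_r$ with $w_j \in S_{i_j}$, the relative-position condition splits componentwise, giving $X_{{\bf M_P}}(w) = \prod_{j=1}^r X_{\GL_{i_j}}(w_j)$. The only delicate point is the $F$-stability of $\tilde{\bf P}$ established in the first paragraph, which genuinely uses $w \in W_P$; the rest is a mechanical unwinding of definitions, so I expect no serious obstacle.
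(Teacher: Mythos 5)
Your proof is correct and is in substance the argument behind the result the paper invokes: the paper's ``proof'' is just the citation to \cite[Prop. 8.2]{DL}, and your fibration of $X(w)$ over $({\bf G}/{\bf P})^F$ --- $F$-stability of the image forced by $w\in W_P$, identification of the fibres with the Deligne--Lusztig varieties $X_{{\bf M}_{\tilde{\bf P}}}(w)$ of the Levi, and Lang--Steinberg transitivity of $G$ on $({\bf G}/{\bf P})^F$ with stabilizer $P$ (the unipotent radical acting trivially on the fibre), followed by the componentwise splitting over ${\bf M_P}=\prod_j \GL_{i_j}$ --- is exactly the standard proof of that cited statement. The only point you leave tacit, harmless here since ${\bf G}=\GL_n$ is split and ${\bf P}$ is a standard parabolic defined over $\mF_q$, is that $F$ preserves the type of parabolic subgroups, which is what makes $F(\tilde{\bf P})$ again of type ${\bf P}$ and hence equal to $\tilde{\bf P}$.
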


\begin{proof}
See \cite[Prop. 8.2]{DL}.
\end{proof}

\begin{rk}
The proof in loc.cit.  shows that there is an identification of varieties $X(w)=\Ind^G_P\big (X_{\bf M_P}(w)\big).$
Hence we have the same formulas for Chow groups. In particular, by using Remark \ref{rk_cox_chow} we see that the cycle map is an 
isomorphism for all Coxeter elements in Levi-subgroups, as well.
\end{rk}

In the case when we erase some simple reflection in a reduced expression of a Coxeter element, we may deduce from the above results
the following consequence. Here we abbreviate the partition or decomposition  $(k,1,\ldots,1)\in \P$ by $(k,1^{(n-k)})$.
For any triple of integers $i,k,l\in \mZ$ with $1\leq i\leq n$ and $1\leq k \leq i$, $1\leq l \leq n-i$, we set
$$A_{k,l}= i^G_{P_{(k,1^{(i-k)},l,1^{(n-i-l)})}}/ \sum_{d_1 > (k,1^{(i-k)})} i^G_{P_{(d_1,l,1^{(n-i-l)})}}  + \sum_{d_2 > (l,1^{(n-i-l)})}i^G_{P_{(k,1^{(k-i)},d_2)}}.$$

\begin{coro}\label{coh_kleiner_Cox}
Let $w=\cox_n=s_1\cdots s_{n-1}$ be the standard Coxeter element and
let\footnote{Here the symbol $\hat{s_i}$ means as usual that $s_i$ is deleted from the above expression.} 
$w'=s_1\cdots \hat{s_i} \cdots s_{n-1}\in W$. Then
$$H_c^\ast(X(w'))=\bigoplus_{m=2}^{n}\bigoplus_{k+l=m} A_{k,l} (-(m-2))[-(n-2)-(m-2)].   $$
\end{coro}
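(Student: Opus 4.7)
The plan is to reduce to the Coxeter case (Proposition \ref{cohomology_Coxeter}) via Proposition \ref{cohomology_not_full}, and then to rewrite the induced representation obtained by Künneth in the explicit form $A_{k,l}$ using Lemma \ref{identity_induced_Steinberg}.

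First I would observe that $w'=s_1\cdots\hat s_i\cdots s_{n-1}$ does \emph{not} have full support: the simple reflection $s_i$ is missing, so $\supp(w')=S\setminus\{s_i\}$ and the minimal standard parabolic subgroup of $W$ containing $w'$ is $W_{\bf P}$ for ${\bf P}={\bf P}_{(i,n-i)}$. Under the block decomposition of the Levi ${\bf M}_{\bf P}=\GL_i\times\GL_{n-i}$, the element $w'$ factors as $w'=w_1\cdot w_2$, where $w_1=s_1\cdots s_{i-1}=\cox_i$ is the standard Coxeter element of $S_i$ and $w_2=s_{i+1}\cdots s_{n-1}$ corresponds to the standard Coxeter element $\cox_{n-i}$ of $S_{n-i}$ (after the obvious shift). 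Hence Proposition \ref{cohomology_not_full} gives
\begin{equation*}
H^\ast_c(X(w'))=\Ind^G_P\bigl(H^\ast_c(X_{\GL_i}(\cox_i))\boxtimes H^\ast_c(X_{\GL_{n-i}}(\cox_{n-i}))\bigr).
\end{equation*}

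Next, I would insert the formula from Proposition \ref{cohomology_Coxeter} applied to both Coxeter factors, and use the Künneth formula. The graded pieces of the exterior tensor product are indexed by pairs $(k,l)$ with $1\le k\le i$, $1\le l\le n-i$; the contribution in cohomological degree $(i-1)+(k-1)+(n-i-1)+(l-1)=(n-2)+(k+l-2)$ with Tate twist $-(k+l-2)$ is
\begin{equation*}
j_{(k,1^{(i-k)})}\boxtimes j_{(l,1^{(n-i-l)})}.
\end{equation*}
Writing $m=k+l$ and regrouping yields exactly the bidegrees appearing on the right-hand side of the claimed formula. It then remains only to identify the induced representation $\Ind^G_{P_{(i,n-i)}}\bigl(j_{(k,1^{(i-k)})}\boxtimes j_{(l,1^{(n-i-l)})}\bigr)$ with $A_{k,l}$.

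For the identification, I would use that the partition $(k,1,\ldots,1)$ is of the shape in the remark following the definition of $v^G_P$, so the generalized Steinberg representations $v^{\GL_i}_{P_{(k,1^{(i-k)})}}$ and $v^{\GL_{n-i}}_{P_{(l,1^{(n-i-l)})}}$ are irreducible and therefore coincide with $j_{(k,1^{(i-k)})}$ and $j_{(l,1^{(n-i-l)})}$ respectively (Theorem \ref{thmirredDarstG}(ii)). Applying Lemma \ref{identity_induced_Steinberg} to the pair of decompositions $d_1=(k,1^{(i-k)})\in\D(i)$ and $d_2=(l,1^{(n-i-l)})\in\D(n-i)$ produces precisely
\begin{equation*}
\Ind^G_{P_{(i,n-i)}}(v^{\GL_i}_{P_{d_1}}\boxtimes v^{\GL_{n-i}}_{P_{d_2}})=A_{k,l},
\end{equation*}
and the corollary follows.

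I do not expect any substantial obstacle: the argument is essentially bookkeeping. The only point requiring a little care is the translation between the partition ordering on $\P$ and the componentwise ordering on $\D$ that appears inside $A_{k,l}$, to make sure the sums over $d_1>(k,1^{(i-k)})$ and $d_2>(l,1^{(n-i-l)})$ in Lemma \ref{identity_induced_Steinberg} match the sums defining $A_{k,l}$.
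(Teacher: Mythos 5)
Your proof is correct and follows the paper's own argument almost exactly: reduce via Proposition \ref{cohomology_not_full} to the Levi $\GL_i\times\GL_{n-i}$, apply Proposition \ref{cohomology_Coxeter} to each Coxeter factor, and then use the identity of Lemma \ref{identity_induced_Steinberg} to recognize the induced representation as $A_{k,l}$. The only difference is that you spell out the identification $j_{(k,1^{(i-k)})}=v^{\GL_i}_{P_{(k,1^{(i-k)})}}$ (via irreducibility of the generalized Steinberg representation for hook-type decompositions), which the paper leaves implicit.
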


\begin{proof}
The Weyl group element $w'$ is contained in the parabolic subgroup $W_{P}$ with $P=P_{(i,n-i)}.$ Now the expressions  
$s_1\cdots s_{i-1}$
and $s_{i+1}\cdots s_{n-1}$ are both Coxeter elements in the
Weyl groups attached to $M_1=\GL_i$ resp. $M_2=\GL_{n-i}.$ The cohomology of $H_c^\ast(X_{M_1}(w_1))$ resp. $H_c^\ast(X_{M_2}(w_2))$ are
given by Proposition \ref{cohomology_Coxeter}:
$$H_c^\ast(X_{M_1}(w_1))= \bigoplus_{k=1,\ldots,i} j^{M_1}_{(k,1,\ldots,1)}(-(k-1))[-(i-1)-(k-1)]$$
resp.
$$H_c^\ast(X_{M_2}(w_2))= \bigoplus_{l=1,\ldots,n-i} j^{M_2}_{(l,1,\ldots,1)}(-(l-1))[-((n-i)-1)-(l-1)].$$
Thus we get by Proposition \ref{cohomology_not_full} and identity (\ref{identity_induced_Steinberg})
\begin{eqnarray*}
 H_c^\ast(X(w')) & = & \Ind^G_{P_{(i,n-i)}}\Big(H_c^\ast(X_{M_1}(w_1)) \boxtimes H_c^\ast(X_{M_2}(w_2))\Big) \\
 & = & \bigoplus_{m=2}^{n}\bigoplus_{k+l=m} A_{k,l} (-(k+l-2))[-(n-2)-(k+l-2)].
\end{eqnarray*}
\end{proof}

Let $w=\cox_n\in W$. As we see from Proposition \ref{cohomology_Coxeter_compactification}
the cohomology of $\overline{X}(w)$ vanishes in odd degree.
For any integer $i\geq 0$, let
\begin{equation}\label{complex1}
 H^{2i}(\overline{X(w)})\to \bigoplus_{v < w  \atop \ell(v)=\ell(w)-1} H^{2i}(\overline{X(v)})\to \cdots \to
\bigoplus_{v < w  \atop \ell(v)=1} H^{2i}(\overline{X(v)})\to  H^{2i}(\overline{X(e)})
\end{equation}
be the natural complex induced by the closed complement $\bigcup_{v < w} \overline{X(v)}$ of $X(w)$ in $\overline{X(w)}.$
This complex determines the  contribution with Tate twist $-i$ to the  cohomology of $X(w).$

On the other hand, we may consider the grading
$$H^{2i}(\overline{X(w)})=\bigoplus_{z \leq w \atop \ell(z)=i} H(w)_z$$
described in Proposition \ref{cohomology_Coxeter_compactification}, i.e. $H(w)_z=J(z)(-\ell(z))$ for $z<w.$
By Proposition \ref{cohomology_not_full} we also have such a grading for all subexpressions $v < w$, i.e.
$H^{2i}(\overline{X(v)})=\bigoplus_{z \leq  v \atop \ell(z)=i} H(v)_z$.

\begin{lemma}
 let $v_1 < v_2\leq  w$ with $\ell(v_2)=\ell(v_1)+1.$ Then $H(v_2)_z \subset H(v_1)_z$ for all $z\leq v_1.$
\end{lemma}

\begin{proof}
By Proposition \ref{cohomology_not_full} we may suppose that $w=v_2$ and $v=v_1=s_1\cdots \hat{s_i}\cdots s_{n-1}.$ 
Let $z=z'\cdot  s_{j+1}\cdot s_{j+2}\cdots s_{j+l}$ with $z'\leq \cox_j$ and $l \geq 1$
Then $H(w)_z=\Ind^G_{P_{(j,n-j)}}(J^{\GL_j}(z')\boxtimes {\bf 1}).$

If $i< j$  then $z'=z_1'z_2'$ with $z_1'< \cox_i$ and $z_2'| s_{i+1}\cdots s_{j-1}.$ Thus we see that
$H(v)_z=\Ind^G_{P_{(i,n-i)}}(J^{\GL_{i}\times \GL_{n-i}}(z))$
with
\begin{eqnarray*}
 J^{\GL_{i}\times \GL_{n-i}}(z) & = & J^{\GL_{i}}(z_1') \boxtimes J^{\GL_{n-i}}(z_2'\cdot  s_{j+1}\cdot s_{j+2}\cdots s_{j+l}) \\
& = & J^{\GL_{i}}(z_1') \boxtimes \Ind^{\GL_{n-i}}_{P_{(j-i,n-j)}}(J^{\GL_{j-i}}(z_2') \boxtimes {\bf 1}).
\end{eqnarray*}
Hence $H(v)_z=\Ind^G_{P_{(i,j-i,n-j)}}\big(J^{\GL_{i}}(z_1') \boxtimes J^{\GL_{j-i}}(z_2')\boxtimes {\bf 1}\big).$ 
Since we have by induction on $j$ the inclusion  $J^{\GL_j}(z') \subset \Ind^{\GL_j}_{P_{(i,j-i)}}(J^{\GL_{i}}(z_1') \boxtimes J^{\GL_{j-i}}(z_2'))$
the claim follows in this case.

If $i=j$ then one deduces moreover by arguing in the same way that the inclusion is an identity, i.e., $H(w)_z=H(v)_z$
as $J^{\GL_{n-i}}(s_{i+1}s_{i+2}\cdots s_{i+l})$ is the trivial representation.

If $i>j$ then we  necessarily  have $i>j+l$. Then $z=z_1z_2$ with $z_1\leq \cox_i$ and $z_2=e$. Thus 
\begin{eqnarray*} H(v)_z & = & \Ind^G_{P_{(i,n-i)}}(J^{\GL_{i}\times \GL_{n-i}}(z))\\ 
& = & \Ind^G_{P_{(i,n-i)}}(J^{\GL_{i}}(z_1)\boxtimes J^{\GL_{n-i}}(z_2))\\
& = & \Ind^G_{P_{(i,n-i)}}(\Ind^{\GL_{i}}_{P_{(j,i-j)}}(J^{\GL_j}(z')\boxtimes {\bf 1} )\boxtimes i^{\GL_{n-i}}_{B\cap \GL_{n-i}})\\
& = &\Ind^G_{P_{(j,i-j,n-i)}}(J^{\GL_j}(z')\boxtimes {\bf 1} \boxtimes i^{\GL_{n-i}}_{B\cap \GL_{n-i}}).
\end{eqnarray*}
Hence it contains $H(w)_z.$
\end{proof}

By Lemma \ref{lemma_complex} get thus for each subword $z\leq w$, a complex 
 $$H(\,\cdot \,)_z: H(w)_z \to \bigoplus_{z\leq v \leq w \atop \ell(v)=\ell(w)-1}  H(v)_z\to \cdots
\to \bigoplus_{z\leq v \leq w  \atop \ell(v)=1} H(v)_z \to \bigoplus_{z \leq e \atop \ell(z)=i} H(e)_z.$$

\begin{lemma}
If  $z\not\in \{s_1,s_1s_2,\ldots ,\cox_n\}$ then this complex is acyclic. Otherwise, it coincides
with the complex (\ref{IIIcomplex}) with respect to the  generalized Steinberg representation $v^G_{P(z)}$.
\end{lemma}

\begin{proof}
 Let $z=s_1s_2\cdots s_i$. Then the complex $H(\,\cdot\,)_z$ is easily verified by the above description just the complex 
 (\ref{IIIcomplex})  with respect to $v^G_{P(z)}$. So let $z\not\in \{s_1,s_1s_2,\ldots, \cox_n\}$ and write
$z=z'\cdot  s_{j+1}\cdot s_{j+2}\cdots s_{j+l}$ with $z'\leq \cox_j$ and $l \geq 1$ as above. In particular, we have $j\geq 1.$
By the proof of the foregoing lemma, we know that for $z\leq v \leq w$ with $s_j|v$ we have $H(v)_z=H(v/s_j)_z.$   
By standard simplicial arguments it follows that the complex is contractible. 
\end{proof}

Hence by summing up we get a graded complex
\begin{eqnarray}\label{complex2}
 & & \bigoplus_{z  \leq w \atop \ell(z)=i} H(w)_z\to \!\!\!\!\!\bigoplus_{v \leq w \atop \ell(v)=\ell(w)-1} \bigoplus_{z \leq v \atop \ell(z)=i} H(v)_z\to \cdots
\to \bigoplus_{v \leq w  \atop \ell(v)=1} \bigoplus_{z \leq v \atop \ell(z)=i} H(v)_z\to \bigoplus_{z \leq e \atop \ell(z)=i} H(e)_z \\ \nonumber
& = & \bigoplus_{z \leq w \atop \ell(z)=i} \Big(H(w)_z \to \bigoplus_{z\leq v \leq w \atop \ell(v)=\ell(w)-1}  H(v)_z\to \cdots
\to \bigoplus_{z\leq v \leq w  \atop \ell(v)=1} H(v)_z \to \bigoplus_{z \leq e \atop \ell(z)=i} H(e)_z \Big).
\end{eqnarray}

\medskip
 
\begin{prop}\label{Coxeter_complex}
The complexes (\ref{complex1}) and (\ref{complex2}) are  quasi-isomorphic.
\end{prop}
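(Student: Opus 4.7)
Complexes (\ref{complex1}) and (\ref{complex2}) have identical terms in each cohomological position, namely $\bigoplus_{v\leq w,\ \ell(v)=\ell(w)-p} H^{2i}(\overline{X(v)})$, because (\ref{complex2}) is obtained from (\ref{complex1}) by regrouping via the grading $H^{2i}(\overline{X(v)})=\bigoplus_{z\leq v,\ \ell(z)=i} H(v)_z$ from Proposition \ref{cohomology_Coxeter_compactification} (combined with Proposition \ref{cohomology_not_full} for subexpressions $v$ without full support). The problem is therefore to compare the two differentials: the geometric one coming from the stratification (\ref{stratification}), and the algebraically prescribed block-diagonal one in (\ref{complex2}).

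First I would prove that the geometric differential of (\ref{complex1}) is in fact block-diagonal with respect to the $z$-grading. Concretely, for $v'<v\leq w$ with $\ell(v')=\ell(v)-1$, I would show that the restriction map $\rho_{v,v'}\colon H^{2i}(\overline{X(v)})\to H^{2i}(\overline{X(v')})$ sends the summand $H(v)_z$ into $H(v')_z$ when $z\leq v'$ and annihilates it otherwise. The plan is to use the iterated blow-up description of $\overline{X(\cox_n)}$ from the proof of Proposition \ref{cohomology_Coxeter_compactification} and Remark \ref{rk_cox_chow}: the classes in $H(v)_z$ are (Tate-twisted) fundamental classes of strict transforms of $\mF_q$-rational linear subspaces under the tower of blow-ups, and such classes behave functorially under closed embeddings $\overline{X(v')}\hookrightarrow\overline{X(v)}$ of subvarieties of the same iterated-blow-up form. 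For $v$ without full support, one reduces via Proposition \ref{cohomology_not_full} and the exactness of parabolic induction to smaller rank, and the assertion follows by induction on $n$.

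Once block-diagonality is in hand, on each graded summand the differential is a $G$-equivariant map between sub-quotients of induced representations $i^G_{P_v}$. By Howe's theorem and Frobenius reciprocity (Remark \ref{connection_symmetric group}), the space $\Hom_G(H(v)_z,H(v')_z)$ is one-dimensional, spanned by the map attached to the double coset of $e\in W_{P_v}\backslash W/W_{P_{v'}}$ -- which is exactly the map used to define the differential of (\ref{complex_F}). Hence, after rescaling each summand by a non-zero scalar (an isomorphism of complexes), the block-diagonal part of the differential of (\ref{complex1}) agrees with that of (\ref{complex2}); the signs match because both conventions are of alternating \v{C}ech type.

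The main obstacle is the block-diagonality statement in the second paragraph: i.e.~that cohomological restriction preserves the $z$-grading. This is where the geometry of the blow-up tower has to be exploited carefully, since the parabolics $P_v$ and $P_{v'}$ may differ and the grading is not a priori canonical. The reduction strategy is inductive: restricting the blow-up tower for $\overline{X(\cox_n)}$ to a cell $\overline{X(v')}\subset\overline{X(v)}$ factors, via Proposition \ref{cohomology_not_full}, through smaller blow-up towers for Levi factors of lower-rank general linear groups, and the classes $H(v)_z$ pull back to classes of the same shape (or to zero). Once this geometric compatibility is verified, the algebraic identification in paragraphs two and three finishes the proof, and the quasi-isomorphism is actually an isomorphism of complexes up to a diagonal rescaling.
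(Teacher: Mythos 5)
Your central step --- that for the grading of Proposition \ref{cohomology_Coxeter_compactification} the restriction maps $H^{2i}(\overline{X(v)})\to H^{2i}(\overline{X(v')})$ are block-diagonal, i.e.\ send $H(v)_z$ into $H(v')_z$ and kill $H(v)_z$ when $z\not\leq v'$ --- is false, and it is exactly the difficulty the proposition is designed to get around (the introduction of the paper stresses that these restriction maps are ``highly not graded''). Already for $G=\GL_3$ and $w=s_1s_2$ one has $H^2(\overline{X(w)})=H(w)_{s_1}\oplus H(w)_{s_2}$ with $H(w)_{s_1}=i^G_G(-1)$ and $H(w)_{s_2}=i^G_{P_{(1,2)}}(-1)$, and the restriction to $\overline{X(s_1)}$ carries $H(w)_{s_2}$ isomorphically onto $H(s_1)_{s_1}=i^G_{P_{(2,1)}}(-1)$ instead of annihilating it (your rule would demand annihilation since $s_2\not\leq s_1$), while $H(w)_{s_1}$ is embedded into the same piece. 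What the paper proves is only a \emph{triangularity} statement, obtained from the blow-up description: each contribution $H(w)_z$ with $z=v'\,s_{j+1}\cdots s_{j+l}$, $j\geq 1$, maps isomorphically onto its image $H(v)_z$, and can in addition intertwine only with contributions $H(v)_{z'}$ for $z'=v''s_{k+1}\cdots s_{k+l}$ with $k<j$, while the remaining pieces $H(w)_z=i^G_G(-\ell(z))$, $z\in\{e,s_1,s_1s_2,\ldots\}$, do not intertwine at all; induction on $n$ reduces to the first differential, and the quasi-isomorphism follows from this triangular form, not from strict gradedness. Your argument contains no mechanism for controlling (or even acknowledging) the off-diagonal terms, so the ``main obstacle'' you flag is not a technical verification but the place where your approach breaks.

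There is a second gap: the multiplicity-one claim $\dim\Hom_G(H(v)_z,H(v')_z)=1$ is not true in general. By Remark \ref{connection_symmetric group}, $\Hom_G(i^G_Q,i^G_P)\cong C[W_Q\backslash W/W_P]$, whose dimension is the number of double cosets and usually exceeds one, and the graded pieces $H(v)_z=J(v)_z$ are induced representations of exactly this kind; so even on diagonal blocks you cannot identify the geometric map with the double-coset map of (\ref{complex_F}) by a dimension count --- the paper instead reads these maps off from the blow-up and intersection geometry (compare the proof of Proposition \ref{boundary_Coxeter}). Consequently your final assertion that (\ref{complex1}) and (\ref{complex2}) are isomorphic as complexes after a diagonal rescaling is unsupported; the proposition claims, and its proof yields, only a quasi-isomorphism.
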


\begin{proof}
We have a natural  morphism of complexes $(\ref{complex2}) \to (\ref{complex1})$. The claim follows now from the 
foregoing lemma and Proposition \ref{cohomology_Coxeter}.
\end{proof}

 More generally, we believe that the natural generalization holds true.

\begin{conj}\label{conjecture_coxeter_u}
Let $u < w$ and $i\in \mN$. Then for all $v\in W$ with $u \leq v \leq w$,  there are  gradings
$$H^{2i}(\overline{X(v)})=\bigoplus_{z \leq v \atop \ell(z)=i} H(v)_z$$ 
into induced representations $H(v)_z=i^G_{P^v_z}$ for certain parabolic subgroups $P^v_z$
(here the $H(v)_z$ do not necessarily coincide with the expressions in (\ref{complex2})) such that the complex
\begin{equation}
 H^{2i}(\overline{X(w)})\to \!\!\!\!\bigoplus_{u \leq v \leq w  \atop \ell(v)=\ell(w)-1} H^{2i}(\overline{X(v)})\to \cdots \to
\bigoplus_{u \leq  v \leq  w  \atop \ell(v)=\ell(u)+1} H^{2i}(\overline{X(v)})\to  H^{2i}(\overline{X(u)})
\end{equation}
is quasi-isomorphic to a graded complex
\begin{eqnarray}
 & & \bigoplus_{z \leq w \atop \ell(z)=i} H(w)_z\to \!\!\!\!\bigoplus_{u \leq v \leq w  \atop \ell(v)=\ell(w)-1} \bigoplus_{z \leq v \atop \ell(z)=i} H(v)_z\to \cdots
\to \bigoplus_{u \leq v \leq w  \atop \ell(v)=1} \bigoplus_{z \leq v \atop \ell(z)=i} H(v)_z\to \bigoplus_{z \leq u \atop \ell(z)=i} H(u)_z\\
 \nonumber & = & \bigoplus_{z \leq w \atop \ell(z)=i}\Big(H(w)_z\to  \bigoplus_{u \leq v \leq w  \atop \ell(v)=\ell(w)-1} H(v)_z\to \cdots
\to \bigoplus_{u \leq v \leq w  \atop \ell(v)=1}  H(v)_z\to \bigoplus_{z \leq u \atop \ell(z)=i} H(u)_z \Big).
\end{eqnarray}
\end{conj}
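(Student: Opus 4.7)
The plan is to extend the proof of Proposition~\ref{Coxeter_complex} by induction on $\ell(u)$, with the base case $u = e$ being that proposition. Throughout I would take $w$ to be a Coxeter element (the general case then follows from Proposition~\ref{cohomology_not_full} and Remark~\ref{rk_cox_chow}(i) applied to each Levi factor). I will initialize the gradings using the canonical decomposition of Proposition~\ref{cohomology_Coxeter_compactification}, setting $H(v)_z = J(z)(-\ell(z))$ whenever $z \leq v$, and then allow automorphisms of each $H^{2i}(\overline{X(v)})$ to adjust the grading at intermediate $v$'s so that the differentials become block-triangular. The content of the conjecture then reduces to showing that, after this adjustment, the displayed complex splits as a direct sum of subcomplexes indexed by $z$ with $\ell(z) = i$, each quasi-isomorphic to the expected shape.

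The core argument splits contributions according to whether $z \leq u$ or not. For $z \leq u$, the $z$-subcomplex
\[
H(w)_z \to \bigoplus_{\substack{u \leq v \leq w,\; z \leq v \\ \ell(v) = \ell(w) - 1}} H(v)_z \to \cdots \to H(u)_z
\]
has an honest nonzero target, and its differentials can be analysed using the blow-up description of Proposition~\ref{cohomology_Coxeter_compactification} together with the triangularity already established in Proposition~\ref{Coxeter_complex}: each contribution $J(z)(-\ell(z))$ either restricts identically or to zero between adjacent $\overline{X(v)}$'s. For $z \not\leq u$, the target $H(u)_z$ is absent, and I would need to show that the truncated $z$-subcomplex (running over $v$ with $u \leq v \leq w$ and $z \leq v$) is acyclic. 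The idea is to view it as the cone of two pieces of the full complex $C^\bullet_z$ from Proposition~\ref{Coxeter_complex} (the part $v \geq u$ and the part $v \not\geq u$), and to compute the cohomology of each piece using the inductive hypothesis applied to pairs $(u', w)$ with $\ell(u') < \ell(u)$.

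The main obstacle I anticipate is precisely this acyclicity for $z \not\leq u$: it amounts to a purely combinatorial statement about the contribution of the order-theoretic complement $\{v : v \leq w,\; v \not\geq u\}$ to the Coxeter complex attached to $z$, and verifying it will likely require a Koszul-type or inclusion-exclusion analysis of intervals in the Bruhat order on $W$, in the spirit of the simplicial arguments underlying Proposition~\ref{IIIaltsum}. A secondary technical difficulty is that the gradings $H(v)_z$ may need to be modified away from the naive choice $J(z)(-\ell(z))$ at intermediate $v$ in order to force \emph{strict} block-triangularity of the differentials rather than mere triangularity up to quasi-isomorphism; this should be tractable by an iterative construction mimicking the definition of the $P_v$ preceding the sequence (\ref{complex_F}), using Frobenius reciprocity to split off the ``unwanted'' $z'$-components from each $H(v)_z$ one step at a time as $v$ decreases from $w$ toward $u$.
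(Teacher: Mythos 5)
Be aware first that the statement you are proving is left as a conjecture in the paper: the only part that is actually established there is the special case $\ell(u)=\ell(w)-1$, $i=1$ (Lemma \ref{qi_restriction}), and that proof proceeds not by an inductive splitting scheme but by exhibiting explicit new gradings via cycle classes of rational subspaces (using the cycle map of Remark \ref{rk_cox_chow} and transversality of the divisors $\overline{X(v)}$ inside $\overline{X(w)}$), precisely because the naive blow-up grading $H(v)_z=J(z)(-\ell(z))$ does not make the restriction maps graded. So your proposal is not being measured against an existing argument, and it has to stand on its own.

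As it stands it does not: the two difficulties you yourself flag are exactly the content of the conjecture, and your plan does not close them. (1) For $z\not\leq u$ you need acyclicity of the truncated $z$-subcomplex over $\{v\in[u,w]\mid z\leq v\}=[u\vee z,w]$ with no bottom term. This is not a Koszul complex of a single representation: by Proposition \ref{cohomology_not_full} the terms $H(v)_z$ are induced representations $i^G_{P}$ with parabolic varying with $\supp(v)$, the maps $H(w)_z\to H(v)_z$ are proper injections, and the complex is of the shape (\ref{complex_F}), whose acyclicity the paper never establishes (it only checks it is a complex, and Proposition \ref{IIIaltsum} covers only the full Coxeter chain $u=e$). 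Your suggested cone decomposition into the pieces $\{v\geq u\}$ and $\{v\not\geq u\}$ of $C^\bullet_z$ does not feed into the induction, because $\{v\leq w,\,v\not\geq u,\,z\leq v\}$ is the complement of an upper set and is not an interval $[u',w]$, so the inductive hypothesis for pairs $(u',w)$ with $\ell(u')<\ell(u)$ says nothing about it. (2) The ``iterative adjustment'' of the gradings to force strict block-triangularity is circular: to know that splitting off the unwanted $z'$-components yields a complex quasi-isomorphic to the stated direct sum you already need to control the cohomology of the individual $z$-subcomplexes, i.e.\ point (1), and triangularity up to automorphisms of each $H^{2i}(\overline{X(v)})$ alone does not produce the asserted quasi-isomorphism. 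Until the acyclicity statement for $z\not\leq u$ (or an equivalent combinatorial/representation-theoretic input, e.g.\ a double-coset analogue of the simplicial identities (\ref{IIIP-inters}), (\ref{IIIsum_intersection}) valid over arbitrary Bruhat intervals $[u,w]$ below a Coxeter element) is proved, the proposal is a strategy outline rather than a proof.
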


\bigskip
We shall prove a more concrete version of  the conjecture in the case where $\ell(u)=\ell(w)-1$.

\begin{prop}\label{qi_restriction}
Let $w$ be a Coxeter element and let $w'\in W$ with $w'< w$ and $\ell(w')=\ell(w)-1.$ There  are gradings
$H^{2j}(\overline{X(w)})=\bigoplus_{z \leq w \atop \ell(z)=j} H(w)_z$ and $H^{2j}(\overline{X(w')})=\bigoplus_{z \leq w' \atop \ell(z)=j} H(w')_z$
into induced representations such that the  homomorphism
$H^{2j}(\overline{X(w)}) \to H^{2j}(\overline{X(w')})$ coincides with the graded one. Moreover, the maps
$H(w)_z \to H(w')_z$ are injective for all $z\leq w'.$
\end{prop}

As for the proof we note that by Remark \ref{rk_cox_chow}  the cycle map $A^j(\overline{X(w)})_{\overline{\mQ}_\ell}\to H^{2j}(\overline{X(w)})$ is an isomorphism. 
The same holds true for the subvariety $\overline{X(w')}.$ 
Moreover for the Chow groups, we have by the constructive proof an integral version of Proposition 
\ref{cohomology_Coxeter_compactification}, i.e. the induced representations $i^G_P$ appearing in loc.cit. are
replaced by their integral models $i^G_P(\mZ)=\{f: G \to \mZ \mid f(gp)=f(g)\; \forall p \in P\}$.
Thus by  Poincar\'e duality, the above result follows from the following statement.

\begin{prop}
Let $w$ be a Coxeter element and let $w'\in W$ with $w'< w$ and $\ell(w')=\ell(w)-1.$ 
There are gradings $A_{j}(\overline{X(w)})=\bigoplus_{z \leq w \atop \ell(z)=j} A(w)_z$ and 
 $A_{j}(\overline{X(w')})=\bigoplus_{z \leq w' \atop \ell(z)=j} A(w')_z$ into induced representations $i^G_P(\mZ)$
such that the homomorphism
$A_{j}(\overline{X(w')}) \to A_{j}(\overline{X(w)})$ coincides with the graded one. Moreover, the maps
$A(w')_z \to A(w)_z$ are surjective for all $z\leq w'.$
\end{prop}

\begin{proof}
We may assume that $w=\cox_n$ is the standard Coxeter element. If $w'=s_2\cdots s_{n-1}\in W$, the claim is a result 
of the proof of Proposition \ref{cohomology_Coxeter_compactification}
following inductively  the process of blow ups. Here we may consider on the cohomology groups $A_{j}(\overline{X(w)})$ and
$A_{j}(\overline{X(w')})={\rm Ind}^G_{P_{(1,n-1)}}\big(A_{j}(\overline{X_{\GL_1\times \GL_{n-1}}(w')})\big)$  the natural
gradings given by loc.cit. 

 If on the other extreme $w'=s_1 \cdots s_{n-2}$, then we identify $\overline{X(\cox_n)}$ with the variety $\overline{X(s_{n-1}s_{n-2}\cdots s_2s_1)}$
 (cf. Remark \ref{rk_cox_chow} i))  and argue in the same way as above using the variant of Proposition \ref{cohomology_Coxeter_compactification} (cf. Remark \ref{rk_cox_chow} iii))
  describing the latter space as a sequence of blow ups using hyperplanes. Here the gradings are induced by mirroring the Dynkin diagram and 
  the   induced representations.

In general, let $w'=s_1\cdots \hat{s_i} \cdots s_{n-1}.$ Here the reasoning is a kind of mixture of the previous cases. We consider
the exact sequences \cite[Prop. 6.7 e)]{Fu}
$$0 \to A_i(Y_{n-3}) \to A_i(\tilde{Y}_{n-3}) \oplus A_i(B_{n-3}) \to A_i(\overline{X}(w))\to 0 $$
and
$$0 \to A_i(Y_{n-3}') \to A_i(\tilde{Y}_{n-3}') \oplus A_i(B_{n-3}') \to A_i(\overline{X}(w'))\to 0 $$
where $B_{n-3}'=B_{n-3} \cap \overline{X}(w')$, $Y_{n-3}'=Y_{n-3} \cap \overline{X}(w')$, 
$\tilde{Y}_{n-3}=\overline{X}(s_1s_2 \ldots s_{n-3}s_{n-1} )$ is the preimage of $Y_{n-3}$ in $\overline{X}(w)$
and  $\tilde{Y}_{n-3}'=\tilde{Y}_{n-3}' \cap \overline{X}(w').$
By induction on $n$ and the steps in the blow up process it suffices to
show that the claim of the statement is true for the maps $i_\ast:A_j(Y_{m-1}) \to  A_j(B_{m-1})$ with $m\neq 1,n-1.$ 

Recall that  
$$B_{m-1}=\{(0) \subset V^1 \subset V^2 \subset \cdots \subset V^{m-1} \subset V^{m} \subset V \mid F(V^j) \subset V^{j+1} ,j=1,\ldots ,m-1\}$$ 
and 
$$Y_{m-1} =\{V^\bullet \in B_{m-1} \mid F(V^m)=V^m\}.$$
We can consider the equivalent situation given by 
$$B'_{m-1}=\{(0) \subset V^1 \subset V^2 \subset \cdots \subset V^{m-1} \subset V^{m}\subset V \mid V^j \subset F(V^{j+1}) ,j=1,\ldots ,m-1\}$$ 
and 
$$Y'_{m-1} =\{V^\bullet \in B'_{m-1} \mid F(V^m)=V^m\}.$$
Similarly as above this situation is induced by successive blow ups starting with 
$B=\{(0) \subset V^{m-1} \subset V^{m}\subset V\mid V^{m-1} \subset F(V^{m}) \}$ and $Y=\{V^\bullet \in B \mid F(V^m)=V^m\},$ respectively.
Hence the claim follows from the next lemma.
\end{proof}

\begin{lemma}
 Let $m\geq 1$ and let $B=B_{(m)}=\{(0) \subset V^{m-1} \subset V^{m}\subset V\mid F(V^{m-1}) \subset V^{m} \}.$ 
 
 i) The  cycle map induces for all $j\geq 0$,  an isomorphism $A^j(B)_{\bar{\mQ}_\ell} \cong H^{2j}(B)$. Set 
 $I=\{z\leq \cox_n \mid z=s_{k+1}s_{k+2}\ldots s_{k+l} \mbox { for $l\geq 1$ and $k\leq m-1$ }  \}$ Then for $m\leq \frac{n}{2},$ we further have
 $A_j(B)=\bigoplus_{z\in I\atop \ell(z)=j} A_B(z)$ with $A_B(z)\cong i^G_{P_{(k,n-k)}}(\mZ)$ for 
 $z=s_{k+1}s_{k+2}\ldots s_{k+l}.$

 ii) Let $Y=\{V^\bullet \in B \mid F(V^m)=V^m\}\subset B.$ Then there are gradings on $A_j(B)$ and $A_j(Y)$ such the the induced homomorphism 
 $A_j(Y) \to A_{j}(B)$ is in diagonal form.
\end{lemma}

\begin{proof}
 The proof is by induction on $m$. By symmetry we may assume that $m \leq \frac{n}{2}.$ If $m=1$, then the first claim follows from the proof
 of Proposition \ref{cohomology_Coxeter_compactification}.
 In general, we consider the diagram

 \medskip
\noindent $\begin{matrix}
  & &  \big\{(0)\subset V^{m-1} \subset V^m \subset V^{m+1} \subset V \mid F(V^k)\subset V^{k+1}, k=m-1,m \big\}  & & \\ & \swarrow & & \searrow & \\
  B_{(m)} & & &  & B_{(m+1)}  
 \end{matrix}$
 
 \medskip
 \noindent where the maps are the projections. 
 By induction me may suppose that the statement is true for $B_{(m)}.$
 All the appearing varieties are smooth and projective. 
 Thus we get the desired formula in i) by first blowing up and then blowing down and using \cite[Prop. 6.7]{Fu}.
 
 Concerning the second statement, we have $Y=\coprod_{W\in \Gr_m(V)(k)} \check{\mP}(W).$ Hence 
 $A_j(Y)=\Ind^G_{P_{(m,n-m)}}\big(A_j(\check{\mP}(W)\big)$ which is labeled by the (single) element $s_{m-j}\cdots s_{m-1}\leq \cox_n$ 
 (which is  $e$ for $j=0$) and which is identified with the $\ell(z)$-dimensional cycle 
$$\bigoplus_{H\in {\rm Gr}_m(V)(\mF_q)} \!\!\!\!\!\big\{(0)\subset V^{m-1} \subset H  \subset V\mid V^{m-1} \mbox{ contains  an $m-j-1$-dim. rational subspace}\big\}  .$$
Apriori we identify (by the very construction) for $z=s_m\cdots s_{m+l}$  the $\ell(z)$-dimensional cycle  $A_B(z)$ with
$$\bigoplus_{W\in {\rm Gr}_{m-1}(V)(\mF_q)}  \!\!\!\!\!\!\!\!\!\! \big\{(0)\subset W\subset V^{m} \subset V\mid V^{m} 
\mbox { is contained in an $m+l+1$-dim. rational subspace} \big\}.$$
We replace $A_B(z)$ by the cycle
\begin{multline*}
\bigoplus_{W\in {\rm Gr}_{m-1}(V)(\mF_q)} \sum_{H\in {\rm Gr}_{m}(V)(\mF_q) \atop W\subset H}  \big\{(0) \subset V^{m-1} \subset H \subset V \mid  V^{m-1} \subset H  \subset V\mid V^{m-1} \\ \mbox{ contains  an $m-j-1$-dim. rational subspace}\big\}$$ 
\end{multline*}
and relabel it with $s_{m-l-1}\dots s_{m-1}$. On the other hand, we replace the cycle $A_B(z)$
where $z=s_{m-l-1}\dots s_{m-1}$  with
\begin{multline*} \bigoplus_{L\in {\rm Gr}_{m-l-2}(V)(\mF_q)}\sum_{W\in {\rm Gr}_{m-1}(V)(\mF_q)\atop L\subset W}\!\!\! \big\{(0) \subset W 
\subset V^m \subset V\mid V^m \mbox{ is contained }  \\  \mbox{in a $m+l+1$-dim. rational subspace}\big\} 
\end{multline*}
and relabel it  by $s_m\cdots s_{m+l}.$ Hence we get a new grading $A_j(B)=\bigoplus_{z\in I \atop \ell(z)=j} A'_B(z).$
Then it is clear that the map $i_\ast$ is in diagonal form and  maps surjectively  $A_Y(z)$ onto $A'_B(z)$ (since $i\leq \frac{n}{2}$) for all $z\leq \cox_{m-1}.$
The claim follows.  
\end{proof}

\begin{egs}\label{Example_cycle}
i) Let $j=1.$ If $w'=s_2\cdots s_{n-1},$ then we have the natural
gradings given by loc.cit., i.e.
$$H(w)_{s_1}=i^G_G(-1) \mbox{ and } H(w)_{s_{i+1}}=i^G_{P_{(i,n-i)}}(-1), \; i\geq 1,$$
resp.
$$H(w')_{s_2}=i^G_{P_{(1,n-1)}}(-1) \mbox{ and } H(w')_{s_{i+1}}=i^G_{P_{(1,i-1,n-i)}}(-1),\;  i\geq 2 .$$
 If on the other extreme $w'=s_1 \cdots s_{n-2}$, then we identify as explained above $\overline{X(\cox_n)}$ with 
 $\overline{X(s_{n-1}s_{n-2}\cdots s_2s_1)}$  and argue in the same way as above using the variant of 
 Proposition \ref{cohomology_Coxeter_compactification} We set 
  $$H(w)_{s_{n-1}}=i^G_G(-1) \mbox{ and } H(w)_{s_{i}}=i^G_{P_{(i+1,n-i-1)}}(-1), \; i < n-1,$$
resp.
$$H(w')_{s_{n-2}}=i^G_{P_{(n-1,1)}}(-1) \mbox{ and } H(w')_{s_{i}}=i^G_{P_{(i+1,n-i-2,1)}}(-1),\;  i< n-2 .$$

Now let $w'=s_1\cdots \hat{s_i} \cdots s_{n-1}\in W$ for some $1<i<n-2$. 
On $H^{2}(\overline{X(w')})$ we consider the natural grading induced by Proposition \ref{cohomology_Coxeter_compactification}, i.e.,
we set 
$$H(w')_{s_{j}}=\left\{\begin{array}{lc}
                              i^G_{P_{(j-1,i-(j-1),n-i)}}(-1) & ; \; 1 < j < i  \\ \\
                              i^G_{P_{(i,n-i)}}(-1) & ; \;j=1, j=i+1\\ \\
                              i^G_{P_{(i, j-(i+1),n-j+1)}}(-1) & ;\; i+1  <  j < n
                            \end{array} \right. .$$
As for $w$ we set,
$$H(w)_{s_{j}}=\left\{\begin{array}{lc}
                              i^G_{P_{(j-1,n-j+1)}}(-1) & ;\; j \neq i, \neq 1 \\ \\
                              i^G_G(-1) &  ;\; j = i \\ \\
                              i^G_{P_{(i-1,n-i+1)}}(-1) & ; \; j = 1
                            \end{array} \right.$$
Here the contribution  $i^G_{P_{(k,n-k)}}(-1)$ is induced by the cycles $\{V^\bullet \in \overline{X}(w)\mid V^k \mbox{ is rational }\}.$

ii) Let $j=n-2$ and $w'=s_1\cdots \hat{s_i} \cdots s_{n-1}\in W$ for some $1\leq i \leq n-2$.  Using Poincar\'e duality we treat the 
equivalent situation  by considering the homomorphism $i_\ast:A_{n-2}(\overline{X}(w')) \to A_{n-2}(\overline{X}(w)).$ We supply the latter object with 
the basis given by
the cycles  $\{V^\bullet \in \overline{X}(w)\mid V^k \mbox{ is rational }\}$ labeled by $s_1s_2\cdots \hat{s_k} \cdots s_{n-1}$
for $1 \leq k \leq n-2.$ We realize the missing trivial representation by the cycle $f^\ast(\mP(H))$ labeled as usual by 
$s_1s_2\cdots s_{n-2}$. Here
$f:\overline{X}(w) \to \mP(V)$ is composite of all blow up maps, i.e., the projection map onto the first filtration step and $H\subset V$ is
a (rational) hyperplane. Hence we have a grading $A_{n-2}(\overline{X}(w))=\bigoplus_k A(w)_k.$

If $i<n-2$, then the map  $A_{n-2}(\overline{X}(w')) \to A_{n-2}(\overline{X}(w))$
is graded by the choice of bases. If $i=n-1$ then  $i_\ast= \oplus_{k} i_k$ where $i_k:A(w') \to A(w)_k$ is induced by Frobenius reciprocity
by the double coset of $e\in W$ multiplicated by $-1$.  The reason is that $f^\ast(\mP(H))$ coincides with  the cycle 
$$\overline{X(w)}_H + \sum_{i=1}^{n-2} \sum_{W\in \Gr_i(V)(k) \atop W \subset H} \overline{X(w)}_W ,$$
where $\overline{X(w)}_W:=\{V^\bullet \in \overline{X(w)} \mid V^i=W\}$ when $\dim W=i.$

If we want to have a graded morphism, then we simply use the bases on $A_{n-2}(\overline{X}(w))$ by using the approach
via  the dual projective space, cf. Remark \ref{rk_cox_chow} iii).

\end{egs}

To the end of this section we recall the definition of Deligne-Lusztig varieties attached to elements of the Braid monoid $B^+$ of $W$
and to the description of smooth compactifications of them, cf. \cite{DL,DMR,L4}.
The Braid monoid  $B^+$ is the quotient of ${F^+}$ given by the relations
$(st)^{m_{s,t}}=1$ where $s,t\in S$ with $s\neq t.$ Here $m_{s,t}\in \mZ$ is the order of the element $st\in W$. Thus we have surjections
$${F^+} \stackrel{\alpha}{\to} B^+ \stackrel{\beta}{\to} W$$ with $\gamma=\beta\circ \alpha$.
There is a section $W \hookrightarrow B^+$ of $\beta$ which identifies $W$ with the subset
$$B^+_{\rm red}=\{w\in B^+\mid \ell(w)=\ell(\beta(w))\}$$ of
reduced elements in $B^+$, cf. \cite{GKP}. In the sequel we will identify $W$ with $B^+_{\rm red}.$

For any element $w=s_{i_1}\cdots s_{i_r} \in {F^+},$ set
\begin{eqnarray*}X(w) & := & X(s_{i_1},\ldots,s_{i_r}) \\ & := & \big\{x=(x_0,\ldots, x_r) \in X^{r+1}
 \mid {\rm inv}(x_{j-1},x_{j})=s_{i_j}, j=1,\ldots, r,\; x_r=F(x_0)\big \}.
\end{eqnarray*}
This is a smooth variety over $k$ equipped with an action of $G$.
If $w\in W$  and $w=s_{i_1}\cdots s_{i_r}$ is a fixed reduced decomposition, then we also simply write $X^{{F^+}}(w)$
for $X(s_{i_1},\ldots,s_{i_r})$. For any $w\in W$, the  map
\begin{eqnarray}\label{isomorphism}
 X(s_{i_1},\ldots,s_{i_r}) & \to &  X(w) \\
\nonumber (x_0,\ldots,x_r) & \mapsto &  x_0
\end{eqnarray}
   defines  a $G$-equivariant  isomorphism of varieties over $k.$
Moreover by Brou\'e, Michel \cite{BM} and Deligne \cite{De} the variety $X^{{F^+}}(w)$ depends up to an unique isomorphism only on
the image of $s_{i_1}\cdots s_{i_r}$ in $B^+.$

Finally, we consider compactifications of the varieties $X(w)$. More generally, we associate to certain elements of the completed braid monoid 
$\underline{B}^+$ a DL-variety. Here we do not treat the general machinery as developed in \cite{DMR}, not to mention the definition
of $\underline{B}^+$, as we need later on only some of them.
Let $\hat{F}^+$  be the free monoid generated by the set $\hat{S}$ consisting of $S$ and of all reflections in $W$ of the shape $sts$
with $s,t\in S, st\neq ts \in W.$ In order
to distinguish the generator $sts \in \hat{S}$ where $s,t\in S$ and $st\neq ts \in W$ from the product $sts  \in F^+\subset \hat{F}^+ ,$  
we also write $\widehat{sts}$ for this element.

For any product $t=t_1\ldots t_r \in \hat{F}^+$ with $t_i\in \hat{S}$, we set
\begin{eqnarray*}
\overline{X}(t) & := & \overline{X}(t_1,\ldots, t_r)\\ &:= & \big\{x=(x_0,\ldots, x_r) \in X^{r+1} \mid {\rm inv}(x_{j-1},x_{j})
\leq t_j\},j=1,\ldots, r, x_r=F(x_0)\big\}.
\end{eqnarray*}
Again this is a $k$-variety with an action of $G.$

\begin{prop}\label{smooth_and_proj}
The variety $\overline{X}(t)$ is smooth and projective.
\end{prop}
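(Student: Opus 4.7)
The plan is to realize $\overline{X}(w)$ as a transverse pullback of the diagonal inside a Bott--Samelson type variety. First I would introduce the auxiliary variety
\begin{equation*}
\widetilde{X}(w):=\bigl\{(x_0,\ldots,x_r)\in X^{r+1}\mid \mathrm{inv}(x_{j-1},x_j)\in\{e,s_{i_j}\},\ j=1,\ldots,r\bigr\}
\end{equation*}
obtained from $\overline{X}(w)$ by dropping the Frobenius condition $x_r=F(x_0)$. Since the closure $\overline{\O(s)}=\O(e)\cup \O(s)$ inside $X\times X$ is a $\mP^1$-bundle over $X$ via either projection, both forgetful morphisms $\widetilde{X}(s_{i_1},\ldots,s_{i_r})\to \widetilde{X}(s_{i_1},\ldots,s_{i_{r-1}})$ (dropping $x_r$) and $\widetilde{X}(s_{i_1},\ldots,s_{i_r})\to \widetilde{X}(s_{i_2},\ldots,s_{i_r})$ (dropping $x_0$) are $\mP^1$-bundles. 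Induction on $r$ then shows that $\widetilde{X}(w)$ is smooth and projective, and iteration of the second type of forgetful map shows that the projection $q\colon \widetilde{X}(w)\to X$, $(x_0,\ldots,x_r)\mapsto x_r$, is smooth of relative dimension $r$.

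Next, I would identify $\overline{X}(w)$ with the fiber product
\begin{equation*}
\overline{X}(w)\;=\;\widetilde{X}(w)\times_{X\times X}X,
\end{equation*}
where $\widetilde{X}(w)\to X\times X$ is $y\mapsto (F(p(y)),q(y))$ with $p(x_0,\ldots,x_r)=x_0$, and $X\hookrightarrow X\times X$ is the diagonal embedding. Projectivity of $\overline{X}(w)$ follows at once, since it is closed inside the projective variety $\widetilde{X}(w)$. For smoothness, I would verify that $(F\circ p,q)\colon \widetilde{X}(w)\to X\times X$ is transverse to $\Delta_X$ at every point $y\in \overline{X}(w)$. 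Writing $x_j=x_j(y)$, the tangent space $T_{(x_r,x_r)}\Delta_X$ is the diagonal copy of $T_{x_r}X$, while the differential of $(F\circ p,q)$ at $y$ equals $(dF\cdot dp,dq)_y=(0,dq_y)$ because the Frobenius has vanishing differential. Consequently, transversality reduces to surjectivity of $dq_y$, which is guaranteed by smoothness of $q$, yielding that $\overline{X}(w)$ is smooth.

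The main subtlety I anticipate is securing the smoothness of the projection $q$ rather than merely of the more obvious projection $p$; this hinges on the symmetry of the two $\mP^1$-bundle structures on $\overline{\O(s)}\to X$. Once this is in place, the transversality step is an essentially formal consequence of the identity $dF=0$, which is the same mechanism that makes any variety defined by a Lang-type equation smooth.
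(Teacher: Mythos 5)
Your proof is correct: realizing the Bott--Samelson variety $\widetilde{X}(w)$ as an iterated $\mP^1$-bundle (so that in particular the last-coordinate projection $q$ is smooth) and then cutting out $\overline{X}(w)$ by the condition $x_r=F(x_0)$, which is transverse because $dF=0$, gives smoothness and projectivity, and you correctly spotted that it is surjectivity of $dq$ (not of $dp$) that the transversality argument needs. The paper itself offers no argument but only cites \cite[9.10]{DL} and \cite[Prop. 2.3.5, 2.3.6]{DMR}, and your argument is essentially the standard proof found in those references.
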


\begin{proof}
See \cite[Prop. 2.3.5, 2.3.6]{DMR}.
\end{proof}

Let $w=s_{i_1},\ldots, s_{i_r}\in F^+.$ Then the variety $\overline{X}(w)$ includes $X(w)$ as an open subset so that we get a compactification of $X(w).$
More precisely, for all $v\preceq w$ we can identify $X(v)$ with a locally closed subvariety of $\overline{X}(w)$ and we get in this way
a stratification $\overline{X}(w)= \bigcup_{v\preceq w} X(v).$

Hence  if $w\in W$ and $w=s_{i_1} \cdots s_{i_r}$ is a reduced decomposition, then the
variety
$$\overline{X}^{F^+}(w):=\overline{X}(s_{i_1},\ldots, s_{i_r})$$ is a smooth compactification of $X(w).$ The map (\ref{isomorphism})
extends to a surjective proper birational  morphism
$$\pi:\overline{X}^{F^+}(w) \to \overline{X(w)}$$ which we call the Bott-Samelson or  Demazure resolution of $X(w)$ with respect to the reduced
decomposition.

\begin{rk}
 If $w$ is a Coxeter element, then the map $\pi$ is an isomorphism. In fact, this follows easily by considering the natural stratifications
on both sides.
\end{rk}

\begin{rk}\label{cohomology_not_full_F}
When $w\in F^+$ is not full, then the obvious variant of Proposition \ref{cohomology_not_full} does also hold true for the DL-varieties $X(w)$ and their 
compactifications
$\overline{X}(w)$, cf. \cite[Cor. 3.1.3]{DMR}.
\end{rk}

\vspace{0.5cm}

\section{Squares}

We consider the action of $W$ on itself by conjugation. The set of conjugacy classes $C$ in $W$ is in bijection with
the set of partitions $\P$ of $n.$ For a partition $\mu \in \P$, let $C^\mu$ be the corresponding conjugacy class.
Let $C_{\rm min}$ be the set of minimal elements in a given conjugacy class $C.$ 

\begin{coro}
Let $C$ be a conjugacy class and let $v,w\in C_{\rm min}.$ Then $H^\ast_c(X(v)) \cong H^\ast_c(X(w)).$
\end{coro}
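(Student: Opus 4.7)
The plan is to combine Theorem \ref{GKP} with Proposition \ref{Prop_homeomorphic}\,i). First I would invoke a refinement of Theorem \ref{GKP} (also due to Geck--Pfeiffer) asserting that any two minimal-length elements $v,w\in C_{\min}$ can be joined inside $C_{\min}$ by a finite chain
\[
v = u_0 \to u_1 \to \cdots \to u_r = w,
\]
where each successive pair is related by an elementary length-preserving cyclic conjugation $u_{i+1}=t_i u_i t_i^{-1}$ with $t_i\in S$. Since the relation $H^*_c(X(-)) \cong H^*_c(X(-))$ is transitive, it suffices to establish the corollary for a single such step.

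Second, for a fixed step $u \to sus$ with $\ell(sus)=\ell(u)$ and $sus\neq u$, I would use the minimality of $\ell(u)$ in its conjugacy class to constrain the possible descents of $u$: since any length-decreasing cyclic conjugation would contradict $u\in C_{\min}$, the length-preserving assumption together with $sus\neq u$ forces $s$ to be a left descent or a right descent (or both) of $u$. Assume first that $s$ is a left descent but not a right descent (the right-descent case is treated symmetrically). Write $u=sw'$ with $\ell(w')=\ell(u)-1$, so that $sus=w's$; the hypothesis $\ell(us)=\ell(u)+1$ then yields $\ell(sw's)=\ell(w')+2$, placing us in the exact setting of Proposition \ref{Prop_homeomorphic}. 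That proposition supplies a universal homeomorphism $X(u)=X(sw')\to X(w's)=X(sus)$, and in particular an $H$-equivariant isomorphism $H^*_c(X(u))\cong H^*_c(X(sus))$. Composing these isomorphisms along the Geck--Pfeiffer chain gives the desired isomorphism $H^*_c(X(v))\cong H^*_c(X(w))$.

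The main technical obstacle is the degenerate case where $s$ is simultaneously a left and a right descent of $u$: here one has $u=sw'=w''s$ with $\ell(w')=\ell(w'')=\ell(u)-1$, and a direct length computation shows $\ell(sw's)=\ell(w')$ rather than $\ell(w')+2$, so Proposition \ref{Prop_homeomorphic} does not apply as stated. To handle this, I would either invoke the general cyclic-shift principle that $H^*_c(X(ab))\cong H^*_c(X(ba))$ whenever $\ell(ab)=\ell(a)+\ell(b)$ (obtained by iterating Proposition \ref{Prop_homeomorphic} along a reduced decomposition of $a$, passing through $F^+$ to keep the relevant lengths under control), or invoke directly the corresponding statement at the level of the braid monoid as in \cite{BM, De}. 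With this ingredient granted, the proof proceeds as outlined.
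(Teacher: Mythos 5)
Your argument is correct, but it follows a genuinely different route from the paper. The paper first checks the statement for Coxeter elements of $G$ (using Proposition \ref{Prop_homeomorphic}), then uses the type-$A$ fact recorded at the start of Section 4 that $C_{\min}$ consists exactly of the Coxeter elements of a Levi subgroup $W_\mu$, observes that the Levi subgroups attached to $v$ and $w$ are associate, and concludes with the parabolic-induction formula of Proposition \ref{cohomology_not_full}. You instead invoke the stronger Geck--Pfeiffer theorem (in \cite{GP,GP'}, beyond the part quoted as Theorem \ref{GKP}) that any two elements of $C_{\min}$ are joined by a chain of length-preserving elementary conjugations inside $C_{\min}$, and treat each step $u\mapsto sus$ directly via Proposition \ref{Prop_homeomorphic}; your descent analysis for such a step is correct (it is exactly an application of Lemma \ref{lemma_DL}). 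What each approach buys: yours avoids the type-$A$ description of minimal elements and the induction through Levi subgroups, so it works verbatim for any split reductive group, whereas the paper's argument is shorter given the Coxeter and non-full-support results already in place. One simplification of your write-up: the "degenerate" case you flag (where $s$ is a descent of $u$ on both sides and $\ell(sus)=\ell(u)$) is in fact vacuous — writing $u=su_1$ reduced, the hypotheses give $\ell(su_1)=\ell(u_1s)=\ell(u_1)+1$ and $\ell(su_1s)=\ell(u_1)$, so Lemma \ref{lemma_DL} yields $su_1=u_1s$ and hence $sus=u$ — so the extra input from the cyclic-shift principle in $F^+$ or the braid monoid (\cite{BM,De}, cf. Proposition \ref{cyclic_shift}) is not actually needed, although invoking it is also legitimate.
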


\begin{proof}
By using the previous proposition, it is easily verified  that the statement is true for all Coxeter elements in $G$.
Hence by by Proposition \ref{cohomology_not_full} the same is true for all Coxeter elements in a fixed Levi subgroup.
On the other hand,  the Levi subgroups to the minimal elements $v$ and $w\in C_{\rm min}$  are conjugated by an element in $G$ which induces
the  isomorphism. 
\end{proof}


In order to deal with non-minimal elements we recall the following result of Geck, Kim and Pfeiffer.  Let $w,w' \in W$ and $s\in S$. Set
$w \stackrel{s}{\rightarrow} w'$ if $w'=sws$ and $\ell(w') \leq \ell(w)$. We write
$w \rightarrow w'$ if $w=w'$ or if there are elements $s_1,\ldots,s_r\in S$ and $w=w_1,\ldots, w_r=w'\in W$  with
$w_i \stackrel{s_i}{\rightarrow} w_{i+1}, \,i=1,\ldots,r-1.$

\begin{thm}\label{GKP}{\rm ((\cite{GKP}, Thm. 2.6)}
Let $C$ be a conjugacy class of $W$. For any $w\in C$, there exists some $w' \in C_{min}$ such that $w\rightarrow w'.$\qed
 \end{thm}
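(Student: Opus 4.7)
The plan is to induct on the difference $\ell(w) - \ell_{\min}(C)$, where $\ell_{\min}(C) := \min\{\ell(v) : v \in C\}$. If this difference is zero then $w \in C_{\min}$ and we may take $w' = w$. For the inductive step it suffices to produce a finite chain
$$w = w_0 \stackrel{t_1}{\to} w_1 \stackrel{t_2}{\to} \cdots \stackrel{t_k}{\to} w_k$$
with $t_i \in S$ and $\ell(w_k) < \ell(w)$; applying the inductive hypothesis to $w_k$ then yields $w' \in C_{\min}$ with $w_k \to w'$, and concatenating the chains gives $w \to w'$.

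First I would exploit the type $A$ description given at the start of the section: an element $v \in C^\mu$ lies in $C^\mu_{\min}$ iff each cycle in its disjoint cycle decomposition is supported on an interval of consecutive integers and acts on that interval as the standard Coxeter cycle. Equivalently $v$ is $W_\mu$-conjugate to $\prod_i \cox_{\mu_i}$, and $\ell_{\min}(C^\mu) = n - r(\mu)$ where $r(\mu)$ is the number of parts. Next I would record the effect of conjugation by a simple reflection $s_i = (i, i+1)$ on cycle data: depending on whether $i$ and $i+1$ lie in the same or in different cycles of $w$, such a conjugation either splits, merges, or internally rearranges cycles, and a direct inversion count shows $\ell(s_i w s_i) - \ell(w) \in \{-2, 0, +2\}$ with the sign determined by the local neighbours of $i$ and $i+1$ in the cycle decomposition.

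The main obstacle, and the substance of the Geck--Kim--Pfeiffer theorem, is that for a given non-minimal $w$ there need not exist a single $s \in S$ with $\ell(sws) < \ell(w)$; one may first need to perform length-preserving cyclic shifts to reposition the cycles before a length-drop becomes available. To control this, I would introduce a combinatorial monovariant on cycle data such as $\Phi(w) := \sum_j (\max I_j - \min I_j - |I_j| + 1)$, where $I_1, \ldots, I_r$ are the supports of the cycles of $w$. One checks that $\Phi(w) = 0$ precisely when all cycles are on consecutive intervals, that $\Phi$ is invariant under certain length-preserving conjugations but strictly decreases under others that rearrange a non-consecutive cycle. Since $\Phi \geq 0$ and cannot decrease forever, after finitely many length-preserving $t_i$'s we reach a state either already in $C_{\min}$ (contradicting $\ell(w) > \ell_{\min}(C)$ in view of the $W_\mu$-conjugacy above) or admitting a length-decreasing $s_{i}$-conjugation, providing the required $w_k$. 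This is the heart of the argument; the rest follows by induction and transitivity of $\to$.
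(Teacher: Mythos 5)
The paper does not prove this statement at all: it is quoted verbatim from Geck--Kim--Pfeiffer \cite{GKP}, Thm.\ 2.6, and used as a black box, so there is no internal proof to compare yours against. What you have written is therefore an attempt to reprove the cited result, and as it stands it has a genuine gap precisely at the point where the theorem is hard.

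The inductive frame (reduce to producing a chain of elementary conjugations $w_0\stackrel{t_1}{\to}\cdots\stackrel{t_k}{\to}w_k$ with $\ell(w_k)<\ell(w)$) and the observation $\ell(s_iws_i)-\ell(w)\in\{-2,0,+2\}$ are fine, but the entire content of the theorem is the assertion that such a chain always exists, and your treatment of it is the sentence ``One checks that $\Phi$ \ldots strictly decreases under others.'' No argument is given that, for a non-minimal $w$, there always exists a simple reflection whose conjugation either drops the length or preserves it while strictly decreasing $\Phi$; that existence statement is exactly what \cite{GKP} prove (by a nontrivial case analysis, even in type $A$), so assuming it is circular. Worse, the termination logic is wrong as stated: $\Phi(w)=0$ does not imply $w\in C_{\min}$. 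For example $w=(1,3,2,4)\in S_4$ is a $4$-cycle whose support is the full interval $\{1,2,3,4\}$, so $\Phi(w)=0$, yet $\ell(w)=5$ while the minimal length in its class is $3$; hence reaching $\Phi=0$ does not place you in the dichotomy ``already minimal or a length-drop is available'' without a further argument (here one happens to have $\ell(s_2ws_2)=3<5$, but nothing in your setup guarantees this in general). A smaller point: your characterization of $C^\mu_{\min}$ is too restrictive -- the cycles of a minimal element need only be Coxeter cycles of the corresponding Young subgroup on consecutive blocks, not the standard cycle $i\mapsto i+1$ -- though this is easily repaired. To make the approach work you would need to exhibit an explicit procedure (or a correct monovariant together with a proof that a decreasing move always exists) showing that any $w$ with some cycle support not an interval, or with a cycle that is not a Coxeter element of its block, admits an elementary conjugation that does not increase length and makes strict progress; absent that, the proposal assumes the theorem it sets out to prove.
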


As a consequence, for any $w\in W$, which is not minimal in its conjugacy class there exists a finite set of cyclic shifts (i.e. elementary 
conjugations $w\to sws$, $s\in S)$ such that the resulting element  has the shape $sw's$ with $\ell(w)=\ell(w')+2.$

Let $s\in S$ and let $w,w'\in W$ with $w=sw's$. Suppose that $\ell(w)=\ell(w')+2$. We put
$$Z=X(w) \cup X(sw') \mbox{ and } Z'=X(w's) \cup X(w')$$
and
$$\tilde{Z}=X(w) \cup X(w's) \mbox{ and } \tilde{Z}'=X(sw') \cup X(w').$$

\begin{prop}(Deligne-Lusztig)\label{Prop_homeomorphic}
i) The varieties $X(w's)$ and $X(sw')$ are  universally homeomorphic by maps $\sigma:X(sw') \to X(w's)$ and 
$\tau: X(w's) \to X(sw')$
with $\tau \circ \sigma=F$ and $\sigma \circ \tau=F $. Hence $H^\ast_c(X(w's)) \cong H^\ast_c(X(sw'))$.

ii) The above maps extend to morphisms $\tau: Z' \to \tilde{Z}'$ and  $\sigma: \tilde{Z}'\to Z'$ with $\sigma|X(w')=id$ and $\tau|X(w')=F.$
\end{prop}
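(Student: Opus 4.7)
The plan is to define $\sigma$ and $\tau$ pointwise via the uniqueness clause in property (1)(b), read off their composition from the same uniqueness, and then globalise via a chain-variety construction.

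For (i), let $x\in X(sw')$, so $\mathrm{inv}(x,F(x))=sw'$. Since the factorisation $sw'=s\cdot w'$ is length-additive, property (1)(b) produces a unique $x'\in X$ with $(x,x')\in\O(s)$ and $(x',F(x))\in\O(w')$; I set $\sigma(x):=x'$. This is a morphism because the chain condition cuts out a locally closed subvariety of $X^3$ whose first projection is an isomorphism onto $X(sw')$. Applying $F$ gives $(F(x),F(x'))\in\O(s)$; combined with $(x',F(x))\in\O(w')$ and property (1)(a) for the length-additive factorisation $w's=w'\cdot s$, this yields $(x',F(x'))\in\O(w's)$, so $\sigma(x)\in X(w's)$. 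The morphism $\tau:X(w's)\to X(sw')$ is defined symmetrically. To compute $\tau\sigma$ on $X(sw')$, start from the chain $(x,\sigma(x),F(x))$: the image $\tau(\sigma(x))$ is characterised as the unique $z$ with $(\sigma(x),z)\in\O(w')$ and $(z,F(\sigma(x)))\in\O(s)$. But $(\sigma(x),F(x),F(\sigma(x)))$ is such a chain with $z=F(x)$, and uniqueness forces $\tau\sigma(x)=F(x)$. The identity $\sigma\tau=F$ follows symmetrically. Since $F$ is a universal homeomorphism and $\sigma,\tau$ are morphisms whose compositions are $F$, both $\sigma$ and $\tau$ must be universal homeomorphisms, and they induce mutually inverse isomorphisms on compactly supported $\ell$-adic cohomology.

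For (ii), I would globalise the chain construction. Define
\[
\mathcal C_\sigma=\bigl\{(x_0,x_1)\in X\times X\mid (x_0,x_1)\in\overline{\O(s)},\ (x_1,F(x_0))\in\O(w')\bigr\}.
\]
Using $\overline{\O(s)}=\O(e)\sqcup\O(s)$, the variety $\mathcal C_\sigma$ splits into two strata: the diagonal stratum $\{(x_0,x_0):x_0\in X(w')\}$, on which the condition reduces to $(x_0,F(x_0))\in\O(w')$; and the open stratum where $(x_0,x_1)\in\O(s)$, which is the graph of the previously constructed $\sigma:X(sw')\to X(w's)$. Hence the first projection $p_0:\mathcal C_\sigma\to \tilde Z'=X(sw')\cup X(w')$ is a bijective morphism, while the second projection lands in $Z'=X(w's)\cup X(w')$, restricting to the identity on the diagonal stratum. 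Once $p_0$ is known to be an isomorphism, the composite $\sigma:=p_1\circ p_0^{-1}:\tilde Z'\to Z'$ extends $\sigma$ with $\sigma|_{X(w')}=\id$. The map $\tau:Z'\to\tilde Z'$ is built symmetrically from $\mathcal C_\tau=\{(y_0,y_1)\mid (y_0,y_1)\in\O(w'),\ (y_1,F(y_0))\in\overline{\O(s)}\}$; on its diagonal stratum (where $(y_1,F(y_0))\in\O(e)$) one has $y_1=F(y_0)$, so $\tau|_{X(w')}=F$ as required.

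The main obstacle is verifying that the bijective morphism $p_0:\mathcal C_\sigma\to\tilde Z'$ is actually an isomorphism of varieties, since in characteristic $p$ bijectivity is not enough. I would handle this by showing $\mathcal C_\sigma$ is smooth of dimension $\ell(w')+1$, matching the smooth target $\tilde Z'$ (whose smoothness is noted in the excerpt for any union $X(w)\cup X(w')$ with $\ell(w)=\ell(w')+1$); for instance, by exhibiting $\mathcal C_\sigma$ as a locally closed piece of a smooth Bott--Samelson-type variety covered by Proposition \ref{smooth_and_proj}. A bijective morphism between smooth varieties of equal dimension is then automatically an isomorphism, which delivers the extension.
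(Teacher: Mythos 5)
Your part (i) is essentially the construction the paper itself recalls from Deligne--Lusztig: you define $\sigma(x)$ by the uniqueness clause of property (1)(b) for the length-additive factorisation $sw'=s\cdot w'$ (using that $w=sw's$ with $\ell(w)=\ell(w')+2$ forces $\ell(sw')=\ell(w's)=\ell(w')+1$), check $\sigma(x)\in X(w's)$ with (1)(a), and read off $\tau\circ\sigma=F$ and $\sigma\circ\tau=F$ from the same uniqueness; your deduction that $\sigma,\tau$ are universal homeomorphisms is also sound. This matches the paper, which only recalls this construction and defers the proofs to Deligne--Lusztig, and which disposes of (ii) as a consequence of the definitions, i.e.\ exactly the kind of chain-variety identification you set up.

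The genuine gap is the last step of your argument for (ii). The assertion that a bijective morphism between smooth varieties of equal dimension is automatically an isomorphism is false in characteristic $p$: the Frobenius $\mA^1\to\mA^1$, $x\mapsto x^q$, is such a morphism and is not an isomorphism. This is not a hypothetical danger here, since purely inseparable maps occur in this very proposition ($\tau|_{X(w')}=F$), so the difficulty you correctly flagged — bijectivity is not enough — is not removed by smoothness and equidimensionality of $\mathcal C_\sigma$ and $\tilde Z'$. What does close it: first, $p_0\colon\mathcal C_\sigma\to\tilde Z'$ is proper, because a short Bruhat-order check (using $(1)$ and $sw'\not\le w'$) shows that any point of the closure of $\mathcal C_\sigma$ in $X\times X$ lying over $\tilde Z'$ already satisfies the defining conditions; being also quasi-finite, $p_0$ is finite. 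Second, over the open stratum $X(sw')$ it is an isomorphism, not merely a bijection, by the scheme-theoretic isomorphism $\O(s)\times_X\O(w')\cong\O(sw')$ recorded in property (1); since $X(w')\subset\overline{X(sw')}$, injectivity of $p_0$ forces the diagonal stratum into the closure of the open one, so $p_0$ is finite, bijective and birational onto the smooth, hence normal, variety $\tilde Z'$, and Zariski's main theorem (or, alternatively, a direct check that $p_0$ is unramified) gives that it is an isomorphism. With that replacement your construction of the extensions, including $\sigma|_{X(w')}=\id$ and $\tau|_{X(w')}=F$, goes through.
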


\begin{proof}
i) For later use we just recall the construction of the maps and refer for the proofs to \cite{DL}. Let $B\in X(sw').$ Then there is by
property (1) b) of the varieties $O(-)$ a unique
Borel subgroup $\sigma(B) \in X$ with $(B,\sigma (B)) \in \mcO(s)$ and $(\sigma B , F(B)) \in \mcO(w').$ An immediate computation  shows that
$(\sigma(B) ,F(\sigma B)) \in \mcO(w's)$, cf. \cite[Thm 1.6]{DL}.  The map $\tau$  is defined analogously.

ii) This follows easily from the definitions of the maps $\sigma,\tau.$
\end{proof}

The following statement bases on a observation made in Theorem 1.6 in \cite{DL} .

\begin{prop}
There is a $\mathbb A^1$-bundle  $\gamma : Z \to Z'$.
\end{prop}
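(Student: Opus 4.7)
The plan is to build $\gamma$ from the ``left $s$-factorization'' of Frobenius invariants on $Z$, and to recognise $\gamma$ as the restriction to $Z' \hookrightarrow X$ of the natural $\mathbb{A}^1$-bundle $\pi_2: \O(s) \to X$ given by the second projection.

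First I would define $\gamma$ on each stratum using property (1)(b) from the start of the section. Both invariants $sw's$ and $sw'$ admit reduced factorisations starting with $s$, namely $sw's = s \cdot (w's)$ and $sw' = s \cdot w'$; the scheme-theoretic isomorphisms $\O(w) \cong \O(s) \times_X \O(w's)$ and $\O(sw') \cong \O(s) \times_X \O(w')$, applied to $(B, F(B))$, produce a unique Borel $x_1 =: \gamma(B)$ with $(B, x_1) \in \O(s)$, depending algebraically on $B$. To check $\gamma(B) \in Z'$, apply $F$ to $(B, x_1) \in \O(s)$ to get $(F(B), F(x_1)) \in \O(s)$ and compose along $x_1 \to F(B) \to F(x_1)$: the non-reduced composition rule (if $(B_1, B_2) \in \O(u)$, $(B_2, B_3) \in \O(s)$ and $\ell(us) = \ell(u) - 1$, then $(B_1, B_3) \in \O(u) \cup \O(us)$) yields $(x_1, F(x_1)) \in \O(w's) \cup \O(w')$ when $B \in X(w)$ (using $w's \cdot s = w'$ in $W$ with $\ell(w'ss) < \ell(w's) + 1$) and $\in \O(w's)$ when $B \in X(sw')$ (since $\ell(w's) = \ell(w') + 1$, the composition is already reduced). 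Either way $\gamma(B) \in Z'$.

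Next I would identify $Z$ with the incidence variety
$$\widetilde{Z} := \pi_2^{-1}(Z') = \{(B, B') \in \O(s) \mid B' \in Z'\}$$
via the mutually inverse morphisms $B \mapsto (B, \gamma(B))$ and $p_1: (B, B') \mapsto B$. Two checks are needed. First, $p_1(\widetilde{Z}) \subseteq Z$: given $(B, B') \in \widetilde{Z}$, the chain $B \to B' \to F(B') \to F(B)$ carries invariants $s, u, s$ with $u \in \{w', w's\}$, and successive applications of the composition rule land $(B, F(B))$ in $\O(w) \cup \O(sw')$, so $B \in Z$. Second, $p_1$ is bijective onto $Z$: existence follows from the first step; for uniqueness, any $B'' \neq \gamma(B)$ with $(B, B'') \in \O(s)$ satisfies, by running the same argument ``backwards'' through the chain $B'' \to B \to F(B) \to F(B'')$, $(B'', F(B'')) \in \O(w) \cup \O(sw') = Z$, and since the four Bruhat strata $X(w), X(sw'), X(w's), X(w')$ are pairwise disjoint, $B'' \notin Z'$.

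Finally, $\pi_2: \O(s) \to X$ is a Zariski-locally trivial $\mathbb{A}^1$-bundle: its closure $\overline{\O(s)} \to X$ is the $\mathbb{P}^1$-bundle of Borels contained in the type-$s$ parabolic, and the diagonal $\O(e) \subset \overline{\O(s)}$ provides a section whose complement is precisely $\O(s)$. Pulling back along $Z' \hookrightarrow X$ yields $p_2: \widetilde{Z} \to Z'$, and under the isomorphism $Z \cong \widetilde{Z}$ this is $\gamma$. The main obstacle will be the case analysis above, where one must carefully track which stratum of $\overline{\O(w)}$ the composition of three consecutive paths lands in, using repeatedly the length identities $\ell(w's) = \ell(sw') = \ell(w') + 1 = \ell(w) - 1$ and the fact that $sw' \neq w's$ (which follows from $\ell(w) = \ell(w') + 2$, since $sw' = w's$ would force $w = w'$).
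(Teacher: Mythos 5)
Your proof is correct and takes a genuinely different route from the paper. The paper's proof follows Deligne--Lusztig directly: it decomposes $X(w)=X_1\cup X_2$, recalls that $\gamma|_{X_1}\colon X_1\to X(w')$ is an $\mA^1$-bundle, shows that $\gamma|_{X_2}$ lands in $X(w's)$, glues with the homeomorphism $\sigma\colon X(sw')\to X(w's)$, and then asserts that the resulting map is ``clearly'' an $\mA^1$-bundle without exhibiting the bundle structure. Your argument sidesteps the $X_1/X_2$ decomposition altogether and instead realises $Z$ as the pullback $\pi_2^{-1}(Z')\subset\O(s)$ of the canonical $\mA^1$-bundle $\pi_2\colon\O(s)\to X$ (the $\mP^1$-bundle $\overline{\O(s)}\to X$ minus the diagonal section), with $\gamma$ becoming the restriction of $\pi_2$; this makes the $\mA^1$-bundle structure manifest rather than asserted. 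The case analysis using the non-reduced composition rule for $\O(\cdot)$'s is carried out correctly, including the crucial injectivity step: for $B''\neq\gamma(B)$ in the $\O(s)$-line through $B$, running the chain $B''\to B\to F(B)\to F(B'')$ forces $(B'',F(B''))\in\O(w)\cup\O(sw')$, so $B''\in Z$ and hence $B''\notin Z'$ (the four strata being disjoint). One point left implicit, as also in the paper, is that the stratumwise formula $B\mapsto(B,\gamma(B))$ actually glues to a morphism on all of $Z$ rather than just on each locally closed piece; this can be justified, e.g., via the closure $\overline{\O(s)}\times_X\overline{\O(w's)}\to\overline{\O(w)}$ of the factorization map, over which the middle Borel varies algebraically, but it deserves at least a sentence. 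Apart from that, the route you take is tidier than the paper's and in fact supplies the justification that the paper elides.
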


\begin{proof}
As in the proof of loc.cit., we may write $X(w)$ as a (set-theoretical) disjoint union
$$X(w)=X_1 \cup X_2$$
where $X_1$ is closed in $X(w)$ and $X_2$ is its open complement
(Note that we have interchanged the role of $w$ and $w'$ compared to \cite{DL}).
Recall the definition of $X_i, i=1,2$. For $B\in X(w)$ there are unique Borel subgroups $\delta(B),\gamma(B) \in X$ with
$(B,\gamma(B)) \in \mcO(s)$, $(\gamma(B),\delta(B))\in \mcO(w')$ and $(\delta(B), F(B))\in \mcO(s).$  Then
$$X_1=\big\{B\in X(w) \mid \delta(B)=F(\gamma(B)) \big\} \mbox{ resp. } X_2=\big\{B \in X(w) \mid\delta(B)\neq F(\gamma(B)) \big\}.$$
In \cite{DL} it is shown that the map $\gamma:X_1 \to X(w')$ is a $\mathbb A^1$-bundle whereas $\delta: X_2 \to X(w's)$ is
a $\mathbb G_m$-bundle.

On the other hand, if  $B\in X_2$, then $(\delta(B), F(B))\in \mcO(s)$, $(F(B),F(\gamma B))\in \mcO(s), \delta(B) \neq F(\gamma B)$, hence
$(\delta(B), F(\gamma(B))) \in \mcO(s)$. This was already shown in \cite{DL}.
It follows that $(\gamma(B), F(\gamma(B)))\in \mcO(w's)$, hence $\gamma(B) \in X(w's).$ Thus we have a morphism
$\gamma:X(w) \to Z'$ of varieties compatible with the action of $G$ and $F.$

By Proposition \ref{Prop_homeomorphic} there is a homeomorphism
$\sigma: X(sw') \to X(w's)$. This map is compatible with $\gamma: X(w) \to Z'$
in the sense that both maps glue to a morphism
$\gamma: Z \to Z'.$ This map is clearly an $\mA^1$-bundle.
\end{proof}

\begin{coro}\label{cohomology_vb}
There is an isomorphism $H_c^i(Z)=H_c^{i-2}(Z')(-1)$ for all $i\geq 2.$ \qed
\end{coro}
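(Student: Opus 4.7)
The plan is to deduce this directly from the preceding proposition, which gives an $\mathbb{A}^1$-bundle $\gamma: Z \to Z'$, by invoking homotopy invariance of $\ell$-adic cohomology with compact support along the affine-line fibres.

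First I would reduce the statement to computing $R\gamma_! \overline{\mathbb{Q}}_\ell$. Since $\gamma$ is a morphism of varieties over $k$, the Leray spectral sequence
\begin{equation*}
E_2^{p,q} = H^p_c(Z', R^q\gamma_! \overline{\mathbb{Q}}_\ell) \Longrightarrow H^{p+q}_c(Z)
\end{equation*}
converges, and it suffices to identify the complex $R\gamma_! \overline{\mathbb{Q}}_\ell$ on $Z'$. By proper base change, the stalk of $R^q\gamma_! \overline{\mathbb{Q}}_\ell$ at a geometric point $\bar z \in Z'$ is $H^q_c(\gamma^{-1}(\bar z), \overline{\mathbb{Q}}_\ell)$, and since the fibre is isomorphic to $\mathbb{A}^1$ this vanishes unless $q = 2$, in which case it equals $\overline{\mathbb{Q}}_\ell(-1)$.

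Next I would upgrade this pointwise computation to a global identification of sheaves. Because $\gamma$ is a Zariski-locally trivial $\mathbb{A}^1$-bundle, one can cover $Z'$ by open subsets $U_i$ on which $\gamma^{-1}(U_i) \cong U_i \times \mathbb{A}^1$ and apply the Künneth formula (or simply homotopy invariance, $H^*_c(U \times \mathbb{A}^1) \cong H^{*-2}_c(U)(-1)$) on each piece. The transition maps are translations in the fibre, so the local trivialisations glue to give a canonical isomorphism of complexes $R\gamma_! \overline{\mathbb{Q}}_\ell \cong \overline{\mathbb{Q}}_\ell(-1)[-2]$ on $Z'$. Substituting into the Leray spectral sequence then collapses it to
\begin{equation*}
H^i_c(Z) \cong H^{i-2}_c(Z', \overline{\mathbb{Q}}_\ell(-1)) = H^{i-2}_c(Z')(-1).
\end{equation*}

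No serious obstacle arises: the main point is merely that the $\mathbb{A}^1$-bundle structure produced in the previous proposition is (at least Zariski-locally) trivial, which was already implicit in the construction of $\gamma$ via the unique Borel subgroups $\delta(B), \gamma(B)$ in \cite[Thm.~1.6]{DL}. Once that local triviality is noted, the computation is formal and the Tate twist $(-1)$ is automatic from $H^2_c(\mathbb{A}^1) = \overline{\mathbb{Q}}_\ell(-1)$.
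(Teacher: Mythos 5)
Your argument is correct and is exactly the deduction the paper intends: the corollary is stated with \qed as an immediate consequence of the $\mA^1$-bundle $\gamma: Z \to Z'$, and your Leray/proper-base-change computation of $R\gamma_!\overline{\mQ}_\ell \cong \overline{\mQ}_\ell(-1)[-2]$ is the standard way to make that implicit step explicit. The only cosmetic remark is that you do not even need Zariski-local triviality: for any affine fibration in curves the trace map identifies $R^2\gamma_!\overline{\mQ}_\ell$ with $\overline{\mQ}_\ell(-1)$ directly.
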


\begin{rk}
 This corollary and the upcoming results Corollary \ref{cohomology_vb}, Corollary \ref{cohomology_pb}, Proposition \ref{hypersquares_surjective} and  Remark \ref{P1_closures} 
 were already proved in \cite[Prop. 3.2.10]{DMR}). Indeed, they consider the situation where  a cyclic shift is applied to $w=sw's.$
 \end{rk}

The map $\gamma$ even extends to a larger locally closed subvariety as follows.
We set $\hat{Z}:= Z \cup Z'.$

\begin{lemma}\label{subvariety}
The set $\hat{Z}$ is  locally closed in $X.$
\end{lemma}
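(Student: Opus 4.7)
The plan is to realize $\hat{Z}$ as the intersection of the closed subvariety $\overline{X(w)} \subset X$ with an open subvariety $U \subset X$, which would immediately yield the local closedness.

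For the open part, I would set $U := \bigcup_{v \geq w'} X(v)$. To see this is open, one observes that its complement $\bigcup_{v \not\geq w'} X(v)$ is indexed by a subset of $W$ that is downward-closed in the Bruhat order (if $v_0 \leq v_1$ and $v_0 \geq w'$, then $v_1 \geq w'$), and then one applies the Schubert-type stratification~(\ref{stratification}) to conclude that this complement is a union of closures of strata, hence closed in $X$.

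Intersecting with $\overline{X(w)} = \bigcup_{v \leq w} X(v)$ then gives $\overline{X(w)} \cap U = \bigsqcup_{v \in [w', w]} X(v)$, where $[w', w]$ denotes the closed Bruhat interval. To identify this with $\hat{Z}$ it remains to pin down $[w', w]$: any $v$ strictly between $w'$ and $w$ must have length $\ell(w') + 1 = \ell(w) - 1$, and the diamond property of the Bruhat order in a Coxeter group (cf.\ \cite{Hu}) guarantees that there are exactly two such elements, which must then be the obvious ones, $sw'$ and $w's$. Hence $[w', w] = \{w', sw', w's, w\}$ and $\overline{X(w)} \cap U = \hat{Z}$, as required.

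The only nontrivial ingredient beyond the stratification~(\ref{stratification}) is the diamond property, which is where I expect the substance of the argument to lie; everything else is routine bookkeeping with the Bruhat order and the stratification of $X$ by Deligne-Lusztig varieties. An alternative approach, avoiding any appeal to general Coxeter theory, would be to argue directly inside the symmetric group using the explicit cycle descriptions of $w$ and $w'$, but this does not seem to produce a cleaner argument.
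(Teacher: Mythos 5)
Your proof is correct and follows essentially the same route as the paper: both reduce the statement to the fact that the Bruhat interval $[w',w]$ is exactly $\{w',sw',w's,w\}$, which is the diamond property of Lemma \ref{square} (quoted from \cite{BGG}). Your explicit realization of $\hat{Z}$ as $\overline{X(w)}\cap U$ with $U$ the union of strata $X(v)$, $v\geq w'$, merely spells out what the paper compresses into the phrase ``by considering topological closures of DL-varieties.''
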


\begin{proof}
For proving the assertion, it suffices to show (by considering topological closures of DL-varieties) that there is no
element $v\in W$ different from $sw'$ resp. $w's$ with $w' \leq v \leq w$
This proof is a consequence of  the following result.
\end{proof}

\begin{lemma}\label{square}
 Let $w_1,w_2 \in W$ with $w_1 \leq w_2$ and $\ell(w_2)=\ell(w_1)+2.$ Then there are uniquely determined elements $v_1,v_2 \in W$ with
$w_1 \leq v_i \leq w_2$ and $\ell(v_i)=\ell(w_1)+1.$
\end{lemma}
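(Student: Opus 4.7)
The plan is to prove the statement by induction on $\ell(w_2)$, using the standard lifting property of the Bruhat order: for $u < v$ in a Coxeter group and any $s \in S$ with $sv < v$, one has $su \leq v$; moreover $su \leq sv$ whenever $su < u$, and $u \leq sv$ whenever $su > u$. The base case $\ell(w_2) = 2$ forces $w_1 = e$ and $w_2 = st$ for distinct simple reflections $s, t$; the only elements of length $1$ below $w_2$ are then exactly $s$ and $t$, so there are precisely two intermediates.

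For the inductive step, I will fix some $s \in S$ with $sw_2 < w_2$ and split into two cases according to whether $sw_1 > w_1$ or $sw_1 < w_1$. In the first case (Case A), lifting simultaneously yields $w_1 \leq sw_2$ and $sw_1 \leq w_2$, so both $sw_1$ and $sw_2$, each of length $\ell(w_1)+1$ and distinct since $w_1 \neq w_2$, are intermediates. Conversely, any intermediate $v$ has either $sv < v$, in which case lifting applied to $w_1 \leq v$ forces $sv = w_1$ and hence $v = sw_1$; or $sv > v$, in which case lifting applied to $v \leq w_2$ yields $v \leq sw_2$, and the length constraint forces $v = sw_2$.

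In Case B ($sw_1 < w_1$), lifting yields $sw_1 \leq sw_2$ with $\ell(sw_2) - \ell(sw_1) = 2$, so by the induction hypothesis there are exactly two intermediates $y_1, y_2$ of $[sw_1, sw_2]$. Any intermediate $v$ of $[w_1, w_2]$ falls into one of two types: if $sv < v$, then $sv$ is an intermediate of $[sw_1, sw_2]$ satisfying $s\cdot(sv) > sv$, so $v = s y_i$ for some $y_i$ with $sy_i > y_i$; if $sv > v$, then lifting forces $v \leq sw_2$, and by length $v = sw_2$, which is a genuine intermediate precisely when $w_1 \leq sw_2$. The crux is the observation that $w_1 \leq sw_2$ holds if and only if $w_1$ itself is one of the $y_i$ (say $y_{i_0}$, and note that automatically $sy_{i_0} < y_{i_0}$); and in that situation the remaining $y_{i_1}$ must satisfy $sy_{i_1} > y_{i_1}$, for otherwise a further application of lifting to $sw_1 \leq y_{i_1}$ yields $sw_1 = sy_{i_1}$ and hence $w_1 = y_{i_1}$, contradicting $y_{i_0} \neq y_{i_1}$. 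Combining the two types therefore produces exactly two intermediates in every subcase.

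The main obstacle will be the delicate bookkeeping in Case B: one must carefully track how the two intermediates $y_1, y_2$ of the smaller interval lift, either via left multiplication by $s$ or via the external candidate $sw_2$, to intermediates of $[w_1, w_2]$, and verify that the contributions from the two sources never overcount nor undercount. Everything reduces to repeated and careful application of the lifting property; there is no geometric input needed, so the statement holds in the full generality of arbitrary Coxeter groups and not merely for $W = S_n$.
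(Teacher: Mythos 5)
Your argument is correct, but it is genuinely different from what the paper does: the paper gives no proof at all, simply citing Lemma 10.3 of Bernstein--Gelfand--Gelfand, whereas you supply a self-contained induction on $\ell(w_2)$ using only the lifting (Z-)property of the Bruhat order. This buys generality (the diamond property for arbitrary Coxeter groups, not just $W=S_n$ or Weyl groups) and keeps the paper independent of the reference; the cost is exactly the Case B bookkeeping you flag. Two points there still need to be written out, and both are handled by one more application of the same lifting property: first, when $y_i$ satisfies $sy_i>y_i$ you must check that $sy_i$ really lies in $[w_1,w_2]$ (from $y_i\le sw_2<w_2$ and $sy_i>y_i$ lifting gives $sy_i\le w_2$, and from $sw_1\le y_i< sy_i$ with $s\cdot sw_1=w_1>sw_1$ lifting gives $w_1\le sy_i$), so that your classification yields not just at most two but at least two intermediates; second, in the subcase $w_1\not\le sw_2$ the same contradiction you use for $y_{i_1}$ shows that \emph{both} $y_1,y_2$ satisfy $sy_i>y_i$ (otherwise lifting forces $sw_1=sy_i$, i.e.\ $y_i=w_1\le sw_2$), giving the two distinct intermediates $sy_1,sy_2$. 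With these verifications inserted, the count is exactly two in every case and the proof is complete.
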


\begin{proof}
See \cite[Lemma 10.3]{BGG}.
\end{proof}

In the above situation, Bernstein, Gelfand, Gelfand call the quadruple $Q=\{w_1,v_1,v_2,w_2 \}$ a square in $W.$
Here we use sometimes the graphical illustration of \cite{Ku} (resp. for technical reasons sometimes without arrows) to indicate this kind of object:

$$
Q:\,\,\, \begin{array}{ccccc}
 & & w_2 & & \\ & \nearrow & & \nwarrow & \\ v_1 & & & & v_2  \\ & \nwarrow & & \nearrow & \\ & & w_1 & &
\end{array} \hspace{2cm} (\mbox{ resp. }
 \begin{array}{ccc}
   & w_2  & \\ v_1  & &  v_2  \\ & w_1  &
\end{array}).
$$

\medskip
Lemma \ref{subvariety} generalizes as follows.

\begin{lemma}
For any square  $Q=\{w_1,v_1,v_2,w_2 \}$ in $W$, the subset
$$X(Q):=X(w_2) \cup X(v_2) \cup X(v_1) \cup X(w_1)$$ is locally closed in $X$. \qed
\end{lemma}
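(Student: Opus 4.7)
The plan is to show that $X(Q)$ is open in its Zariski closure in $X$. Since $w_1, v_1, v_2 \leq w_2$, the stratification (\ref{stratification}) gives $\overline{X(w_i)} \subseteq \overline{X(w_2)}$ for each of the four strata involved, so
\begin{equation*}
\overline{X(Q)} \;=\; \overline{X(w_2)} \;=\; \bigcup_{v \leq w_2} X(v).
\end{equation*}
Hence it suffices to prove that the complement $\overline{X(w_2)} \setminus X(Q) = \bigcup_{v \in A} X(v)$, where $A := \{v \in W : v \leq w_2,\ v \notin Q\}$, is closed in $\overline{X(w_2)}$.

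First I would record the general principle that, given the stratification (\ref{stratification}), a union of strata $\bigcup_{v \in A} X(v) \subseteq \overline{X(w_2)}$ is closed if and only if $A$ is \emph{downward closed} under the Bruhat order (i.e.\ $v \in A$ and $v' \leq v$ imply $v' \in A$). The ``if'' direction, which is all that is needed, follows because downward closure gives the identity $\bigcup_{v \in A} X(v) = \bigcup_{v \in A} \overline{X(v)}$, a finite union of closed sets.

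Thus the proof reduces to checking that the set $A$ above is downward closed. Fix $v \in A$ and $v' \leq v$; I need to verify $v' \notin Q$ by ruling out each of the four possibilities. If $v' = w_2$, then $v' \leq v \leq w_2$ forces $v = w_2$, contradicting $v \in A$. If $v' \in \{v_1, v_2\}$, then $\ell(v') = \ell(w_2) - 1$, so $v' \leq v \leq w_2$ with $v \neq w_2$ forces $v = v'$, again contradicting $v \in A$. The remaining case $v' = w_1$ is where the square structure enters: $w_1 = v' \leq v \leq w_2$, but Lemma \ref{square} asserts that the Bruhat interval $[w_1, w_2]$ consists of exactly the four elements of $Q$, so $v \in Q$, a contradiction.

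The main (and only real) obstacle is the case $v' = w_1$, which is precisely the obstruction to extending $X_1 \cup X(w')$ from Lemma \ref{subvariety} to the full square; the key input that makes the argument go through is the BGG square lemma (Lemma \ref{square}), which guarantees that no ``hidden'' element of length $\ell(w_1) + 1$ (or indeed any length) lies between $w_1$ and $w_2$ outside of $Q$. The other three cases are pure length/order bookkeeping.
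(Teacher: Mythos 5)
Your proof is correct and follows essentially the same route as the paper: the paper treats this lemma as the immediate generalization of Lemma \ref{subvariety}, whose proof is exactly your argument — use the closure relation (\ref{stratification}) and Lemma \ref{square} to see that the Bruhat interval $[w_1,w_2]$ is precisely $Q$, so the complement of $X(Q)$ in $\overline{X(w_2)}$ is a union of closures and hence closed. Your write-up merely makes the downward-closedness bookkeeping explicit.
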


For later use, we also mention the given-below property.

\begin{lemma}\label{vanishing_square}
For any square  $Q=\{w_1,v_1,v_2,w_2 \}  \subset W,$ let $$\delta^{i-1}_{w_1,v_1\cup v_2}: H^{i-1}_c(X(w_1)) \to H^i_c(X(v_1)\cup X(v_2))$$ and
$$\delta_{v_1\cup v_2, w_2}^i: H^i_c(X(v_1)\cup X(v_2)) \to H^{i+1}_c(X(w_2))$$ be the corresponding  boundary homomorphism.
Then $\delta_{v_1\cup v_2, w_2}^i\circ \delta_{w_1,v_1\cup v_2}^{i-1}=0.$
\end{lemma}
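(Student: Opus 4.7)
The plan is to recognize $\delta^{i-1}_{w_1,v_1\cup v_2}$ and $\delta^i_{v_1\cup v_2, w_2}$ as two consecutive $d_1$-differentials in the spectral sequence attached to a three-step filtration of $X(Q)$ by closed subschemes, so that the desired vanishing reduces to the formal identity $d_1\circ d_1=0$. I would make this concrete through an explicit factorization of the first boundary map.

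First I would set $A=X(w_1)$, $B=X(v_1)\cup X(v_2)$, $C=X(w_2)$ and $U=B\cup C$. Combining the Schubert-type stratification $(\ref{stratification})$ with Lemma \ref{square} -- which tells us that among Weyl group elements in the interval $[w_1,w_2]$ only $w_1, v_1, v_2, w_2$ occur -- one verifies the topology of $X(Q)$: the set $A$ is closed in $X(Q)$ with open complement $U$, the set $A\cup B$ is closed in $X(Q)$ with open complement $C$, and inside $U$ the set $B$ is closed with open complement $C$. Hence $\emptyset\subset A\subset A\cup B\subset X(Q)$ is a filtration by closed subschemes with successive strata $A$, $B$, $C$, and the two maps in the statement are precisely the boundary homomorphisms associated with the two inner inclusions.

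Next I would introduce the auxiliary boundary $\delta'\colon H^{i-1}_c(A)\to H^i_c(U)$ coming from the closed-open pair $(A,U)\subset X(Q)$, and the map $r\colon H^i_c(U)\to H^i_c(B)$ appearing in the long exact sequence of the closed-open pair $(B,C)\subset U$. By the naturality of the distinguished triangle $j_!j^*\to\id\to i_*i^*\to[1]$ under the closed inclusion $A\cup B\hookrightarrow X(Q)$ -- whose intersection with the open subset $U$ recovers $B\hookrightarrow U$ and whose intersection with $A$ is the identity -- one obtains the factorization
\[
\delta^{i-1}_{w_1,v_1\cup v_2}\;=\;r\circ \delta'.
\]
On the other hand, two consecutive maps in the long exact sequence of $(B,C)\subset U$ satisfy $\delta^i_{v_1\cup v_2,w_2}\circ r=0$. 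Combining the two identities,
\[
\delta^i_{v_1\cup v_2,w_2}\circ \delta^{i-1}_{w_1,v_1\cup v_2}\;=\;\delta^i_{v_1\cup v_2,w_2}\circ r\circ\delta'\;=\;0.
\]

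The main -- and essentially only -- obstacle is correctly locating each boundary map as arising from the right closed-open pair inside the filtration of $X(Q)$, and verifying the required naturality of the long exact sequence with respect to closed restriction. Once the stratification is pinned down via Lemma \ref{square}, the remainder is a short diagram chase, with no arithmetic content specific to Deligne-Lusztig varieties entering.
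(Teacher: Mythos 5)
Your proof is correct and follows essentially the same route as the paper, which disposes of the statement by observing that the two boundary maps are consecutive $d_1$-differentials in the spectral sequence of the three-step stratification $X(Q)=X(w_2)\,\dot\cup\,\bigl(X(v_1)\cup X(v_2)\bigr)\,\dot\cup\,X(w_1)$, so the composite vanishes formally. Your explicit factorization $\delta^{i-1}_{w_1,v_1\cup v_2}=r\circ\delta'$ merely spells out, by the standard naturality of the open-closed triangle, why $d_1\circ d_1=0$ at this spot, which is a fine (if slightly more detailed) rendering of the same argument.
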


\begin{proof}
 This is clear as the map $\delta_{v_1\cup v_2, w_2}^i\circ \delta_{w_1,v_1\cup v_2}^{i-1}$ is just the composite of the corresponding differentials in the $E_1$-term associated to the stratification
 $X(Q)=X(w_2) \stackrel{\cdot}{\bigcup} \big(X(v_1) \cup X(v_2)\big) \stackrel{\cdot}{\bigcup} X(w_1).$
\end{proof}

Now we come back to the the locally closed subvariety $\hat{Z}\subset X.$

\begin{prop}
The map $\gamma$ extends to a $\mP^1$-bundle $\hat{Z} \to Z'$ with $\gamma|_{Z'} = id_{Z'}.$
\end{prop}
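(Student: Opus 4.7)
\emph{Plan.} I realize $\hat Z$ as the preimage of $Z'$ in the canonical $\mP^1$-bundle $\pi_s \colon X \to X_s$ to the type-$s$ partial flag variety; its fibers are the $s$-lines $L_B := \{B'' \in X : (B, B'') \in \overline{\O(s)}\}$. The goal is to show (a) $\pi_s|_{Z'}$ is an isomorphism onto a locally closed subvariety of $X_s$, and (b) $\hat Z = \pi_s^{-1}(\pi_s(Z'))$. Given these, the bundle map $\hat Z \to Z'$ is obtained by pulling back $\pi_s$, equals the identity on the section $Z' \subseteq \hat Z$ tautologically, and equals $\gamma$ on $Z$ because the unique point of $L_B \cap Z'$ (uniqueness from (a)) must be $\gamma(B)$ by construction.

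\emph{Set-theoretic injectivity of $\pi_s|_{Z'}$ [toward (a)].} Suppose $B_1 \ne B_2 \in Z'$ satisfy $(B_1, B_2) \in \O(s)$, and write $(B_i, F(B_i)) \in \O(v_i)$ with $v_i \in \{w', w's\}$; by Lemma~\ref{square}, $sw', w's, w', w$ are four pairwise distinct Weyl elements with $\ell(sw') = \ell(w's) = \ell(w')+1$ and $\ell(w) = \ell(w')+2$. I compute $\mathrm{inv}(B_1, F(B_2))$ via the two chains
$$B_1 \xrightarrow{s} B_2 \xrightarrow{v_2} F(B_2) \quad\text{and}\quad B_1 \xrightarrow{v_1} F(B_1) \xrightarrow{s} F(B_2),$$
applying property (1)(a) when the concatenation is reduced and property (1)(b) in the single non-reduced case $v_1 = w's$ (which yields an intermediate Borel confining $\mathrm{inv}(B_1, F(B_2))$ to $\O(w') \cup \O(w's)$). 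A direct $4$-case analysis places the two answers in disjoint Bruhat cells --- a contradiction. Promoting this set-theoretic injection to a scheme-theoretic immersion uses $G$-equivariance and the standard fact that a $G$-equivariant, set-theoretically injective morphism between smooth $G$-varieties of the same dimension is an immersion once one verifies unramifiedness at a single representative.

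\emph{Identification $\hat Z = \pi_s^{-1}(\pi_s(Z'))$ [for (b)] and conclusion.} The inclusion $\hat Z \subseteq \pi_s^{-1}(\pi_s(Z'))$ is immediate: $B \in Z'$ lies in its own fiber, and $B \in Z$ has $\gamma(B) \in Z'$ with $(B, \gamma(B)) \in \O(s)$. Conversely, pick $B' \in Z'$ and $B \in L_{B'} \setminus \{B'\}$, so $(B, B') \in \O(s)$. If $B' \in X(w')$, the reduced factorization $w = s \cdot w' \cdot s$ with property (1)(a) on $B \to B' \to F(B') \to F(B)$ forces $(B, F(B)) \in \O(w)$, so $B \in X(w)$. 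If $B' \in X(w's)$, reducedness of $w = s \cdot w's$ first gives $(B, F(B')) \in \O(w)$; property (1)(b) applied to $w = sw' \cdot s$ then produces a unique intermediate $B_1$ with $(B, B_1) \in \O(sw')$ and $(B_1, F(B')) \in \O(s)$, and the dichotomy $B_1 = F(B)$ versus $B_1 \ne F(B)$ places $B$ in $X(sw')$ respectively $X(w)$. Either way $B \in \hat Z$. Combining (a) and (b), the restriction of the ambient $\mP^1$-bundle $\pi_s$ to $\pi_s^{-1}(\pi_s(Z')) = \hat Z$ realizes $\gamma \colon \hat Z \to Z'$ as a $\mP^1$-bundle. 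The main obstacle is the combinatorial case analysis in the injectivity step --- specifically the bookkeeping of reduced versus non-reduced chain compositions in the square $\{w', sw', w's, w\}$ --- which is ultimately controlled by the non-degeneracy of the square (Lemma~\ref{square}).
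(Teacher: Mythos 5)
Your set-theoretic analysis is sound: the four-case computation of $\mathrm{inv}(B_1,F(B_2))$ does yield a contradiction in each case (the two answers land in $\O(sw')$ vs.\ $\O(w's)$, $\O(w)$ vs.\ $\O(w's)$, $\O(sw')$ vs.\ $\O(w')\cup\O(w's)$, $\O(w)$ vs.\ $\O(w')\cup\O(w's)$, and $sw'\neq w's$ since otherwise $w=w'$), and your verification that every point of an $s$-line through a point of $Z'$ lies in $X(w)\cup X(sw')\cup Z'$ is correct. So set-theoretically $\hat Z=\pi_s^{-1}(\pi_s(Z'))$, the fibers are $s$-lines, and each fiber meets $Z'$ in exactly one point, namely $\gamma(B)$.

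The genuine gap is the promotion of this to the scheme-theoretic statement in your step (a). The ``standard fact'' you invoke does not apply here: $G={\bf G}(\mF_q)$ is a \emph{finite} group, so $G$-equivariance plus a check ``at a single representative'' propagates nothing (Deligne--Lusztig varieties are not homogeneous under any algebraic group), the map $\pi_s|_{Z'}\colon Z'\to X_s$ is not a map between varieties of the same dimension (and its image is not known in advance to be a smooth, or even locally closed, subvariety --- that is part of what must be proved), and you never actually verify unramifiedness anywhere. This is not a cosmetic omission: in characteristic $p$ a bijective morphism need not be an immersion, and this very section exhibits the phenomenon --- the maps $\sigma,\tau$ of Proposition \ref{Prop_homeomorphic} are bijective morphisms with $\sigma\circ\tau=F$, hence not isomorphisms. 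Without step (a) you also do not yet know that the piecewise-defined map $\gamma$ on $\hat Z$ (a morphism on the open part $Z$ by Deligne--Lusztig, the identity on the closed part $Z'$) is a morphism at all, since a map that is regular on an open subscheme and on its closed complement need not be regular on the union. The paper sidesteps all of this by realising $Z'$ (and correspondingly $\hat Z$) as an incidence variety in $X^4$, namely $\{(B_0,B_1,B_2,B_3)\mid (B_0,B_1)\in\O(e),\,(B_1,B_2)\in\O(w'),\,(B_2,B_3)\in\overline{\O(s)},\,B_3=F(B_0)\}$ and its analogue with $(B_0,B_1)\in\overline{\O(s)}$; there the extension of $\gamma$ is a coordinate projection and the $\mP^1$-bundle structure is the visible pullback of $X\times_{X_s}X\to X$, so no bijective morphism ever has to be inverted. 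To repair your argument you would either have to follow that route, or genuinely prove that $\pi_s|_{Z'}$ is unramified at every point (e.g.\ by a tangent-space computation showing the $s$-fiber direction is nowhere tangent to $Z'$) and then argue separately that the image is locally closed and the induced map is an isomorphism onto it; as written, that entire layer is missing.
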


\begin{proof}
 This is a direct consequence of the definitions of $\gamma$ and the variety $Z'$ realising the latter space as the set
 $\{(B_0,B_1,B_2,B_3) \in X^4\mid (B_0,B_1)\in \mcO(e), (B_1,B_2)\in \mcO(w'), (B_2,B_3)\in \overline{\mcO(s)}, B_3=F(B_0)\}.$
\end{proof}

\begin{coro}\label{cohomology_pb}
There is an isomorphism of $H$-modules $$H_c^i(\hat{Z})=H_c^i(Z') \oplus H_c^{i-2}(Z')(-1)$$ for all $i\geq 0.$ \qed
\end{coro}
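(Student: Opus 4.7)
The plan is to deduce this corollary directly from the preceding proposition (that $\gamma:\hat{Z}\to Z'$ is a $\mP^1$-bundle with $\gamma|_{Z'}=\id$) by invoking the projective bundle formula for $\ell$-adic cohomology with compact supports.

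First I would apply proper and smooth base change to $\gamma$: since $\gamma$ is smooth and proper with fibers isomorphic to $\mP^1$, the higher direct images $R^q\gamma_!\overline{\mQ}_\ell = R^q\gamma_\ast\overline{\mQ}_\ell$ are locally constant sheaves on $Z'$, vanish unless $q\in\{0,2\}$, and satisfy $R^0\gamma_!\overline{\mQ}_\ell\cong\overline{\mQ}_\ell$ and $R^2\gamma_!\overline{\mQ}_\ell\cong\overline{\mQ}_\ell(-1)$. Hence the Leray spectral sequence
$$E_2^{p,q}=H^p_c(Z',R^q\gamma_!\overline{\mQ}_\ell)\Longrightarrow H^{p+q}_c(\hat{Z})$$
has exactly two nonzero rows, with $E_2^{p,0}=H^p_c(Z')$ and $E_2^{p,2}=H^p_c(Z')(-1)$.

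Next I would show that this spectral sequence degenerates at $E_2$ by exhibiting an explicit splitting. The description of $\hat{Z}$ given in the preceding proposition --- as the locus of $4$-tuples $(B_0,B_1,B_2,B_3)$ with $(B_0,B_1)\in\O(e)$, $(B_1,B_2)\in\O(w')$, $(B_2,B_3)\in\overline{\O(s)}$ and $B_3=F(B_0)$ --- realises $\gamma$ as $\mP(\E)\to Z'$ for a canonical rank-$2$ bundle $\E$ on $Z'$ (whose fiber at $(B_0,B_1,B_2,F(B_0))$ encodes the $\mP^1$ of Borels $B_3$ in the correct relative position to $B_2$). Let $\xi\in H^2(\hat{Z},\overline{\mQ}_\ell(1))$ be the first Chern class of $\O_{\mP(\E)}(1)$. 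Because $\gamma$ is proper, $\gamma^\ast$ is defined on compactly supported cohomology, and I would check that
$$\Phi:H^i_c(Z')\oplus H^{i-2}_c(Z')(-1)\longrightarrow H^i_c(\hat{Z}),\qquad(\alpha,\beta)\longmapsto\gamma^\ast\alpha+\xi\cup\gamma^\ast\beta,$$
is an isomorphism. Fiberwise, this reduces to the classical fact that $\{1,\xi|_{\mP^1}\}$ is a basis of $H^\ast(\mP^1,\overline{\mQ}_\ell)$; globally, one argues that $\xi\cup(-)$ identifies $E_\infty^{\bullet,0}$ with $E_\infty^{\bullet,2}$ and thereby forces $d_3=0$.

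Finally, the $H=G\times\Gamma$-equivariance of $\Phi$ is automatic: $\gamma$ is $G$-equivariant and defined over $\mF_q$, so $\gamma^\ast$ commutes with the $H$-action, while the first Chern class $\xi$ is canonical, hence $H$-invariant. The only real step requiring care is realising $\gamma$ as $\mP(\E)$ in order to define $\xi$; everything else is a standard application of the projective bundle formula. This step, however, is already implicit in the moduli-theoretic construction used in the proof of the preceding proposition, so no essential new geometric input is needed.
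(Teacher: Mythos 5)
Your argument is correct and in the same spirit as the paper's, which leaves the corollary as an immediate consequence of the $\mP^1$-bundle structure (i.e.\ the projective bundle formula), exactly what you invoke via the Leray spectral sequence and the Chern class. One remark: the detour through $\mP(\E)$ and $\xi=c_1(\O_{\mP(\E)}(1))$ asks for slightly more than is needed, since the preceding proposition already hands you a section. Because $\gamma$ is proper and $\gamma|_{Z'}=\id_{Z'}$, the map $\gamma^*\colon H^i_c(Z')\to H^i_c(\hat Z)$ splits the restriction to the closed subvariety $Z'$, so the long exact sequence for the open--closed decomposition $\hat Z = Z\,\cup\,Z'$ breaks into split short exact sequences; combining this with the $\mA^1$-bundle isomorphism $H^i_c(Z)\cong H^{i-2}_c(Z')(-1)$ of the previous corollary gives the decomposition directly, without having to recognise $\gamma$ as the projectivisation of a vector bundle or to manipulate the Leray spectral sequence.
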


For the next statement, we consider the open subset $Y:=X(w) \cup X(sw') \cup X(w's)$ of $\hat{Z}.$

\begin{coro}\label{splitting}
There is a natural splitting $H_c^i(Y)=H_c^i(Z) \oplus H_c^{i}(X(w's))$ as $H$-modules  for all $i\geq 0.$
\end{coro}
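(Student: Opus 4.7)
The plan is to exploit the open-closed stratification $Y = Z \sqcup X(w's)$ and compare with $\hat{Z} = Z \sqcup Z'$. First observe that $X(w')$ is the minimum-length element in the square $Q = \{w',sw',w's,w\}$ and hence is the closed stratum inside the locally closed subvariety $\hat{Z}$; this makes $Y = \hat{Z}\setminus X(w')$ open in $\hat{Z}$, and $X(w's) = Y \cap Z'$ closed in $Y$ with open complement $Z$. The associated long exact sequence of $H$-modules reads
\[
\cdots \to H_c^i(Z) \to H_c^i(Y) \xrightarrow{\rho} H_c^i(X(w's)) \xrightarrow{\delta_Y} H_c^{i+1}(Z) \to \cdots,
\]
so it suffices to prove that $\delta_Y = 0$ and to produce an $H$-equivariant section of $\rho$.

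For the vanishing of $\delta_Y$, I would compare this sequence with the one for $\hat{Z} = Z \sqcup Z'$ via the open inclusions $Y \hookrightarrow \hat{Z}$ and $X(w's) \hookrightarrow Z'$. Functoriality of the open-closed triangle yields a commutative square
\[
\begin{array}{ccc}
H_c^i(X(w's)) & \xrightarrow{\delta_Y} & H_c^{i+1}(Z) \\
\downarrow \kappa_! & & \| \\
H_c^i(Z') & \xrightarrow{\delta_{\hat{Z}}} & H_c^{i+1}(Z),
\end{array}
\]
where $\kappa_!$ is extension by zero along the open inclusion $X(w's) \hookrightarrow Z'$. By Corollary \ref{cohomology_pb}, the closed section $\mathrm{id}_{Z'}$ of the $\mP^1$-bundle $\gamma$ makes the restriction $H_c^i(\hat{Z}) \to H_c^i(Z')$ split surjective, so $\delta_{\hat{Z}} = 0$, whence $\delta_Y = 0$.

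To produce the splitting, I would use the open subvariety $\Pi := \gamma^{-1}(X(w's)) = X_2 \cup X(sw') \cup X(w's) \subset Y$. The restriction $\gamma_\Pi : \Pi \to X(w's)$ is a $\mP^1$-bundle, and the global section $\mathrm{id}_{Z'}$ restricts to a closed section $s: X(w's) \hookrightarrow \Pi$. Denoting by $j: \Pi \hookrightarrow Y$ the open inclusion and by $i: X(w's) \hookrightarrow Y$ the closed inclusion, set
\[
\sigma := j_! \circ \gamma_\Pi^{*} \colon H_c^i(X(w's)) \to H_c^i(Y).
\]
Since $\Pi \cap X(w's) = X(w's)$ in $Y$, the base change formula applied to the Cartesian square
\[
\begin{array}{ccc}
X(w's) & \xrightarrow{s} & \Pi \\
\| & & \downarrow j \\
X(w's) & \xrightarrow{i} & Y
\end{array}
\]
gives $i^{*} \circ j_! = s^{*}$, and therefore $\rho \circ \sigma = s^{*} \circ \gamma_\Pi^{*} = (\gamma_\Pi \circ s)^{*} = \mathrm{id}$. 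Because $\gamma$, its section, $j$ and $i$ are all $H$-equivariant, so is $\sigma$, and we obtain the desired $H$-equivariant splitting $H_c^i(Y) = H_c^i(Z) \oplus H_c^i(X(w's))$.

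The only delicate point in the argument is the correct bookkeeping of which stratum is open or closed in which ambient variety; once the stratifications of $\hat{Z}$, $Y$ and $\Pi$ are sorted out, the rest is a formal application of the six-functor formalism, and $H$-equivariance is automatic.
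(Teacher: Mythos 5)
Your proof is correct and follows essentially the same route as the paper: the same comparison of the long exact sequences for $\hat{Z}=Z\cup Z'$ and $Y=Z\cup X(w's)$ (with the boundary killed by Corollary \ref{cohomology_pb}), and the same open $\mP^1$-bundle $U=X_2\cup X(sw')\cup X(w's)\to X(w's)$ with its tautological section to obtain naturality. The only difference is presentational: you spell out the section as $j_!\circ\gamma_\Pi^{*}$ and check $\rho\circ\sigma=\id$ by base change, which the paper leaves implicit.
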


\begin{proof}
The existence of a splitting is easily verified by considering the diagram of long exact cohomology sequences

\begin{equation*} \begin{array}{ccccccccc}
\cdots \to & H^i_c(Z) & \to &  H_c^{i}(\hat{Z}) & \to & H^i_c(Z') & \stackrel{\delta^i}{\to} & H^{i+1}_c(Z)  & \to \cdots \\
 & \lVert & & \uparrow & & \uparrow & & \lVert &  \\
\cdots \to &  H_c^{i}(Z) & \to &  H_c^{i}(Y) &  \to & H^{i}_c(X(w's)) &  \to & H^{i+1}_c(Z)  & \to \cdots \\
\end{array}
\end{equation*}
together with the fact that the differential map $\delta^i$ vanishes. That it is natural comes about from the fact
that the subset $U:=X_2(w)\cup X(sw')\cup X(w's)$ is open in $Y$ and we have a $\mP^1$-bundle $\gamma: U \to X(w's)$ with
$\gamma \circ i=\id$ where $i:X(w's) \hookrightarrow U$ is the inclusion.
 \end{proof}

In the sequel, we denote by $$r^i_{w,sw'}: H^i_c(Z) \to H^i_c(X(sw'))$$
the map which is induced by the closed immersion $X(sw') \hookrightarrow Z.$ We consider the corresponding long exact cohomology
sequence
$$\cdots \to H^{i-1}_c(X(sw')) \to H^i_c(X(w)) \to H^i_c(Z) \to H^i_c(X(sw')) \to \cdots $$
which by Corollary \ref{cohomology_vb} identifies with the sequence
\begin{eqnarray}\label{long_exact_coh_sequence}
 \cdots & \to &  H^{i-1}_c(X(sw')) \to H^i_c(X(w)) \to H^{i-2}_c(X(w's) \cup X(w'))(-1)  \\ \nonumber
 & \to & H^i_c(X(sw')) \to \cdots .
\end{eqnarray}

\medskip

\begin{rk}\label{contribute}
In \cite{DMR} it is proved that there is a long exact cohomology sequence
\begin{eqnarray*}
\cdots \to & H_c^{i}(X(w)) & \to H_c^{i-2}(X(w'))(-1)  \to   H_c^{i-1}(X(sw'))(-1) \oplus H^{i}_c(X(sw')) \\
\to & H_c^{i+1}(X(w)) & \to \cdots
\end{eqnarray*}
which relies on the fact that the natural maps $\delta^{i}: H^{i-2}_c(X(sw'))(-1) \to H^i_c(X(sw'))$ induced by the $\mG_m$-bundle $X_2$ over
$X(sw')$ are trivial (as already stated in \cite[Thm. 1.6]{DL}). In particular, it follows that the cokernel of the boundary map
$H^{i-3}_c(X(w')) \to H^{i-2}_c(X(w's))$ always contributes to $H^i_c(X(w)).$
\end{rk}

\begin{rk}\label{same_is_true_for_F}
 The same statements presented here (Prop. 4.4. - Cor. 4.12) are true if we work with elements in  ${F^+}$ instead  in $W$, cf. also \cite{DMR}.
 More precisely, if $w=s w' s$ for $w,w'\in {F^+}$ and $s\in S$, then we can define subsets $X_1, X_2 \subset X(w)$ such that
$X_1$ is a $\mA^1$-bundle over $X(w')$ and such that $X_2$ is an $\mG_m$-bundle over $X(w's).$ With the same reasoning, the subset
$X(w) \cup X(sw')$ is an $\mA^1$-bundle over $X(w's) \cup X(w'),$ etc.
\end{rk}

\begin{eg}\label{Example3}
We reconsider Example \ref{Example_1} (which is also discussed in \cite[ch. 4]{DMR}). So let  $w=sw's=(1,3)$ with $s=s_2=(2,3), w'=s_1=(1,2).$ We are going to determine the
cohomology of the DL-variety $X(w).$
The cohomology of $X(s_1s_2)$ resp. $X(s_2s_1)$
is given by Proposition \ref{cohomology_Coxeter} by
$$H^\ast_c(X(s_2s_1))=H^\ast_c(X(s_1s_2))= v^G_B[-2] \oplus v^G_{P_{(2,1)}}(-1)[-3] \oplus i^G_G(-2)[-4].$$
Furthermore we have $H^\ast_c(X(w'))= i^G_B/i^G_{P_{(2,1)}}[-1] \oplus i^G_{P_{(2,1)}}(-1)[-2].$
Now the variety $Z'=X(w's) \cup X(w')$ coincides with the set $\{V^\bullet \mid F(V^1) \subset V^2, V^1\neq F(V^1)\}$ which we may
identify with the open subset  $\mP(V)\setminus \mP(V)(k)$ of $\mP(V).$
Hence we obtain (which follows also by applying Proposition \ref{boundary_Coxeter})
$$H^\ast_c(Z') = v^G_{P_{(2,1)}}[-1] \oplus i^G_G(-1)[-2] \oplus i^G_G(-2)[-4]$$
and therefore  $$H^\ast_c(Z) = v^G_{P_{(2,1)}}(-1)[-3] \oplus i^G_G(-2)[-4] \oplus i^G_G(-3)[-6]$$
by Corollary \ref{cohomology_vb}. We claim that the maps $r^3_{w,sw'}, r^4_{w,sw'}$ are surjective. Indeed  for $i=4$
this is clear since $H^4_c(X(sw'))$ is the top cohomology group  of $X(sw').$
As for $i=3$ we consider the boundary  map $H^2_c(X(s)) \to H^3_c(X(sw'))$ which 
is surjective since $X(s) \cup X(sw')$ has the same cohomology as $Z'.$ Let $\tilde{Z}=X(s^2)\cup X(s)$ . Then the map $H^2_c(\tilde{Z}) \to H^2_c(X(s))$ is surjective,
as well, since the RHS is  the top cohomology degree and both varieties have the same number of connected components.
We consider the resulting commutative diagram
\begin{eqnarray*}
 H^3_c(Z) & \to & H^3_c(X(sw')) \\
 \uparrow & & \uparrow \\
 H^2_c(\tilde{Z}) &  \to & H^2_c(X(s)).
\end{eqnarray*}
It follows that the map $r^3_{w,sw'}:H^3_c(Z) \to H^3_c(X(sw'))$ is surjective.
Hence we get
$$H^\ast_c(X(w))=v^G_B[-3] \oplus i^G_G(-3)[-6].$$
\end{eg}

\vspace{0.5cm}

\section{Cohomology of DL-varieties of height one}

In this section we determine the cohomology of DL-varieties attached to Weyl group elements which are slightly larger than Coxeter elements,
i.e. to elements which are  of height one. For the  definition of the height function we recall that by Theorem \ref{GKP}  there is for any 
$w\in W$ some element $w'\in W$ with $\ell(w)=\ell(w')+2$ and $w \to w'.$

\begin{defn}
We define the height of $w$ inductively by $\h(w)=\h(w')+1.$ Here we set $\h(w)=0$ if
$w$ is minimal in its conjugacy class.
\end{defn}

In order to extend the definition of the height function to $F^+$, we use the following statement.

\begin{lemma}\label{doppelts}
Let $w\in B^+$ such that $\ell(\beta(w))<\ell(w)$, i.e. such that $w \in B^+\setminus W$. Then $w$  has the shape $w=w_1s s w_2$ for some $s\in S$ and
$w_1,w_2\in B^+.$
\end{lemma}

\begin{proof}
See \cite[Exercise 4.1]{GP}.
\end{proof}

\begin{defn}
i) Let $w\in B^+.$ We define the height inductively by

$$\h(w):=\left\{\begin{array}{cc}
\h(w) & \mbox{ if } w\in W \\ \\
 h(w_1w_2)+1 & \mbox{ if } w= w_1s s  w_2 \mbox{ is as above.}
 \end{array}\right.$$

\medskip
ii) For $w\in F^+$, we set $\h(w):=\h(\alpha(w)).$
\end{defn}

Thus we may write all elements in $B^+$ modulo cyclic shift  in the shape $w=sw's$ for some $s\in S$ and $w'\in B^+.$ 
The proof of the next statement is immediate.

\begin{lemma}
Let $w\in F^+$ and let $w_\min\in W$ be a minimal element lying in the conjugacy class of $\gamma(w).$ Then $\ell(w)=\ell(w_\min)+ 2\h(w).$ \qed
\end{lemma}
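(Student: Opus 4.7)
The plan is to prove this by induction on the height $\h(w)$.

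For the base case $\h(w)=0$, the element $w$ is itself minimal in its conjugacy class. Since all elements of $C_{\min}$ have the same length (as noted right after the description of $C^\mu_{\min}$ as the set of Coxeter elements in $W_\mu$), we may take $w_{\min}=w$ and the identity $\ell(w)=\ell(w_{\min})+0$ is immediate.

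For the inductive step, suppose $\h(w)\geq 1$. By the discussion following Theorem \ref{GKP}, we may apply a sequence of cyclic shifts $w\to s_1ws_1\to\cdots$ (each of which preserves length, being a conjugation by a simple reflection that does not decrease length) until we reach an element $v$ of the shape $v=sw's$ with $\ell(v)=\ell(w')+2$, and by the very definition of the height function, $\h(w)=\h(w')+1$. The first thing to observe is that $v$ is conjugate to $w$ through length-preserving steps, so $\ell(v)=\ell(w)$ and $v$ lies in the same conjugacy class as $w$; consequently $w'$, being conjugate to $v$ via $s$, lies in the same conjugacy class as $w$ as well. In particular we may choose the same $w_{\min}$ for both $w$ and $w'$.

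Now applying the inductive hypothesis to $w'$ (which has strictly smaller height $\h(w)-1$) yields $\ell(w')=\ell(w_{\min})+2\h(w')$. Combining with $\ell(w)=\ell(v)=\ell(w')+2$ gives
\[
\ell(w)=\ell(w')+2=\ell(w_{\min})+2\h(w')+2=\ell(w_{\min})+2\h(w),
\]
which completes the induction.

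No serious obstacle is anticipated: the only point that needs a moment's attention is the invariance of the conjugacy class under the cyclic shifts used to bring $w$ into the shape $sw's$, and the fact that all elements of $C_{\min}$ share a common length so that the choice of $w_{\min}$ is immaterial. Both are directly available from the material preceding Theorem \ref{GKP}.
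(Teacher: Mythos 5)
Your proof is correct and is precisely the ``immediate'' argument the paper has in mind (the paper states the lemma with only a \qed and no written proof): induct on $\h(w)$ using the defining relation $\h(w)=\h(w')+1$ with $\ell(w)=\ell(w')+2$, noting that the elementary conjugations keep $w'$ in the conjugacy class of $w$ and that all elements of $C_{\min}$ have the same length. Nothing further is needed.
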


For any irreducible $H$-representation
$V=j_\mu(i)$, $\mu\in \P,i\in \mZ$, we set $t(V)=i$.

\begin{prop}\label{prop_misc}
Let $v,w\in F^+$, $i,j,m\in \mZ_{\geq 0}$ and suppose that $\h(v)=0.$ Let $V \subset H^i_c(X(w))$ be a subrepresentation such that
$V(m)\subset H^j_c(X(v)).$   Then $$\ell(w)-\ell(v) + m \geq i - j \geq \ell(w) -\ell(v) + m - \h(w).$$
\end{prop}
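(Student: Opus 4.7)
The plan is induction on $\ell(w)$, with base case $\h(w)=0$ handled by the explicit Coxeter computations of Section~3 and inductive step driven by the long exact sequence relating $X(w)$ to the varieties $X(sw')$, $X(w's)$, $X(w')$ appearing in Section~4.

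For the base case $\h(w)=0$, the element $w$ is (the product of) Coxeter elements in the standard Levi subgroup $M_P$ corresponding to its support. Propositions \ref{cohomology_not_full} and \ref{cohomology_Coxeter}, combined with K\"unneth, give the explicit decomposition
\[H^\ast_c(X(w)) = \Ind^G_P\bigotimes_k H^\ast_c(X_{\GL_{i_k}}(\cox_{i_k}))\]
and one reads off that every irreducible constituent $j_\mu(-t)$ appearing in $H^i_c(X(w))$ satisfies the rigid relation $i=\ell(w)+t$. Since $\h(v)=0$ as well, the same rigid relation holds for $v$. Writing $V=j_\mu(-t)$, so that $V(m)=j_\mu(-(t-m))$, the hypothesis $V(m)\subset H^j_c(X(v))$ forces $j=\ell(v)+t-m$. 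Hence $i-j=\ell(w)-\ell(v)+m$, and both inequalities in the statement hold with equality.

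For the inductive step assume $\h(w)\ge 1$. Then some $s\in S$ satisfies $\ell(sws)=\ell(w)-2$, and one may write $w=sw's$ with $w'=sws$; we have $\ell(w')=\ell(w)-2$, and $\h(w')=\h(w)-1$ since $w$ and $w'$ are conjugate. I would invoke the long exact sequence (\ref{long_exact_coh_sequence}) supplied by Corollary \ref{cohomology_vb} applied to $Z=X(w)\cup X(sw')$ and $Z'=X(w's)\cup X(w')$, combined with the closed/open long exact sequence for $X(w')\hookrightarrow Z' \hookleftarrow X(w's)$. By the semisimplicity of the $H$-action (part iii of the main theorem in the introduction), every irreducible constituent of a given $V\subset H^i_c(X(w))$ embeds into at least one of (A) $H^{i-1}_c(X(sw'))$, (B) $H^{i-2}_c(X(w's))(-1)$, or (C) $H^{i-2}_c(X(w'))(-1)$. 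In each case the inductive hypothesis applies to a DL variety of length strictly less than $\ell(w)$; bookkeeping with the Tate-twist shifts $V\rightsquigarrow V(1)$ and $m\rightsquigarrow m-1$ in (B) and (C), together with the length relations $\ell(sw')=\ell(w's)=\ell(w)-1$, $\ell(w')=\ell(w)-2$, and $\h(w')=\h(w)-1$, puts $i-j$ into the respective ranges $[\ell(w)-\ell(v)+m-\h(sw'),\,\ell(w)-\ell(v)+m]$, $[\ell(w)-\ell(v)+m-\h(w's),\,\ell(w)-\ell(v)+m]$, and $[\ell(w)-\ell(v)+m-\h(w),\,\ell(w)-\ell(v)+m-1]$.

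The hard part will be the auxiliary monotonicity bounds $\h(sw')\le \h(w)$ and $\h(w's)\le \h(w)$, without which the ranges from Cases (A) and (B) do not obviously sit inside the claimed interval at the lower end. Using the identity $\h(u)=\frac{1}{2}(\ell(u)-\ell_{\min}(C_u))$ furnished by the lemma just before the proposition (where $C_u$ denotes the conjugacy class of $u$), these reduce to $\ell_{\min}(C_{sw'}),\,\ell_{\min}(C_{w's}) \ge \ell_{\min}(C_w)-1$, comparisons of minimal conjugacy-class lengths across classes related by right multiplication by a simple reflection. I expect these to be established via a careful analysis of the cyclic-shift reductions of Theorem \ref{GKP} applied simultaneously to $w$, $sw'$ and $w's$. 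As a sanity check on the program, the upper bound $i-j\le \ell(w)-\ell(v)+m$ follows independently from Deligne's weight bound for smooth varieties (which forces $H^i_c(X(w))$ to have weights $\ge 2i-2\ell(w)$), so only the lower bound genuinely depends on the inductive/combinatorial analysis.
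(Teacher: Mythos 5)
Your proposal follows the same route as the paper: the rigid twist--degree relation for $\h(v)=0$ (resp.\ $\h(w)=0$) from Propositions \ref{cohomology_Coxeter} and \ref{cohomology_not_full} as the base case, and then induction on $\ell(w)$ through the long exact sequence (\ref{long_exact_coh_sequence}), splitting an irreducible constituent into the three cases (A) $H^{i-1}_c(X(sw'))$, (B) $H^{i-2}_c(X(w's))(-1)$, (C) $H^{i-2}_c(X(w'))(-1)$ with exactly the paper's height bookkeeping. Two places, however, where what you wrote is not yet a proof.

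First, the monotonicity $\h(sw')\le \h(w)$ (equivalently for $w's$, since $sw'=ws$ and $w's=sw$ are conjugate) that you single out as "the hard part" is indeed needed — the paper invokes $\h(w)\ge\h(sw')$ without comment — but it does not require any analysis of the cyclic-shift reductions of Theorem \ref{GKP}. In $W\cong S_n$ the minimal length in a conjugacy class is $n$ minus the number of cycles, so $\h(u)=\tfrac{1}{2}\bigl(\ell(u)-n+c(u)\bigr)$ with $c(u)$ the cycle count; right multiplication by a simple transposition changes $c$ by exactly one, whence $\ell_{\min}(C_{ws})\ge \ell_{\min}(C_w)-1$ and $\h(ws)\le\h(w)$. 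So the fact is a one-line lemma in type $A$, but as written you leave it unestablished. Second, in cases (B) and (C) you replace $m$ by $m-1$, which is only legitimate when $m\ge 1$: for $m=0$ the inductive hypothesis, stated for $m\in\mZ_{\ge 0}$, does not apply to $V(1)$, and this subcase is not vacuous. The paper avoids it by first normalizing: since $\h(v)=0$, every constituent of $H^j_c(X(v))$ has Tate twist $j-\ell(v)$, so $m=t(V)-t(V(m))$ and one may assume $v=1$, $j=0$, $m=t(V)$; then $V\subset H^{i-2}_c(Z')(-1)$ forces $m\ge 1$ automatically. You already use this rigidity in your base case — make it the first step of the induction and your argument closes; without it the $m=0$ instance of (B)/(C) is a genuine hole.
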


\begin{proof}
As $\h(v)=0$ we deduce by Proposition \ref{cohomology_Coxeter} and Proposition \ref{cohomology_not_full} that
$j=\ell(v)-t(V(m)).$  In a first step we may suppose that $V(m)$ sits in the top cohomology degree
of $X(v)$. Then $\ell(v)=-t(V(m)).$ As any unipotent representation is realized in $H^0(X(e))$ we may assume that $v=1$ and therefore $j=0.$

We start with the case where $\h(w)=0.$  In this case one has even - by looking again at Proposition \ref{cohomology_Coxeter} -
the stronger identity $$i= \ell(w) + m.$$

\noindent Now let $\h(w)\geq 1$ and suppose that $w=sw's.$ Consider the long exact cohomology sequence (\ref{long_exact_coh_sequence}). We distinguish the following cases:

\noindent Case a) Let $V\subset H^{i-1}_c(X(sw'))$. By induction on the length we deduce that $\ell(sw')+m \geq i-1 \geq   \ell(sw') + m -\h(sw').$
As $\ell(sw')=\ell(w)-1$ and $\h(w) \geq \h(sw')$, we see that
$\ell(w)+m \geq i \geq \ell(w) + m -h(w).$

\noindent Case b) Let $V\subset H^i_c(Z).$ Then we must have $m\geq 1.$

Subcase i)  Let $V(1) \subset H^{i-2}_c(X(sw')).$  By induction on the length we deduce that $\ell(sw') + m-1 \geq i-2 \geq
\ell(sw') + m-1 - h(sw').$ As $\ell(sw')=\ell(w)-1$ and $\h(w) \geq \h(sw')$, we see that
$\ell(w)+m \geq i \geq \ell(w) + m -\h(w).$

Subcase ii) Let $V(1) \subset H^{i-2}_c(X(w')).$  By induction on the length we deduce that $\ell(w')+m-1 \geq i-2  \geq   \ell(w') + m-1-\h(w').$
As $\ell(w')=\ell(w)-2$ and $h(w)=h(w')+1$, we see that even the stronger identity
$\ell(w)+m-1 \geq i\geq  \ell(w)  + m -h(w)$ holds true.
\end{proof}

\begin{coro}
Let $v,w\in W$ and $i,j,m \in\mZ_{\geq  0}.$ Let $V \subset H^i_c(X(w))$ be a subrepresentation such that
$V(m)\subset H^j_c(X(v)).$ Then $$\ell(w) -\ell(v) + m + \h(v)\geq i - j \geq \ell(w) -\ell(v) + m - \h(w).$$
\end{coro}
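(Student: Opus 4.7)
The plan is to induct on $\ell(v)$, with $V$ treated as an irreducible $H$-subrepresentation throughout (the general case follows by decomposing into irreducibles, since both $H^i_c(X(w))$ and $H^j_c(X(v))$ are semisimple $H$-modules by Theorem iii) of the introduction). When $\h(v)=0$ the desired inequality is exactly the content of Proposition \ref{prop_misc}, so the base case is settled. For the inductive step, assuming $\h(v)\geq 1$, I would (after applying cyclic shifts if necessary, which preserve length and induce isomorphisms on the cohomology of the associated DL-varieties) fix a direct decomposition $v=tv't$ with $t\in S$ and $\ell(v)=\ell(v')+2$, so that $\h(v')=\h(v)-1$.

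The key tool is the long exact sequence (\ref{long_exact_coh_sequence}) applied to $v=tv't$, namely
\begin{equation*}
\cdots \to H^{j-1}_c(X(tv')) \to H^j_c(X(v)) \to H^{j-2}_c(X(v't)\cup X(v'))(-1) \to H^j_c(X(tv')) \to \cdots,
\end{equation*}
combined with the open/closed long exact sequence for $X(v't)\cup X(v')$ in which $X(v')$ is the closed stratum. Using semisimplicity so that subobjects and quotients are interchangeable, the irreducible $V(m)\subset H^j_c(X(v))$ must embed into one of three places: (a) $V(m)\hookrightarrow H^{j-1}_c(X(tv'))$; (b.i) $V(m+1)\hookrightarrow H^{j-2}_c(X(v't))$; or (b.ii) $V(m+1)\hookrightarrow H^{j-2}_c(X(v'))$.

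In each case I would invoke the inductive hypothesis for the pair $(w,tv')$, $(w,v't)$, or $(w,v')$, all having length strictly less than $\ell(v)$. Substituting $\ell(tv')=\ell(v't)=\ell(v)-1$, $\ell(v')=\ell(v)-2$, and $\h(v')=\h(v)-1$, the lower bound $i-j\geq \ell(w)-\ell(v)+m-\h(w)$ comes out at least as strong as required in every case (with a gain of $1$ in case (b.ii)). For the upper bound, case (b.ii) is immediate from $\h(v')=\h(v)-1$, while cases (a) and (b.i) leave me needing the auxiliary inequalities $\h(tv')\leq \h(v)$ and $\h(v't)\leq \h(v)$.

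The main obstacle, which I would write out most carefully, is precisely this height inequality; I would handle it by exploiting the explicit formula available in type $A$. Combining Theorem \ref{GKP} with the description of conjugacy classes in $S_n$ by cycle type, one has $\ell_{\min}(\text{conj class of } w)=n-\ell(\lambda(w))$, where $\lambda(w)$ is the cycle-type partition of $w$ and $\ell(\lambda(w))$ its number of parts, whence $\h(w)=(\ell(w)-n+\ell(\lambda(w)))/2$. Multiplication by a simple transposition alters both $\ell(\cdot)$ and the number of cycles by $\pm 1$, so a direct case check using $\ell(tv')=\ell(v't)=\ell(v)-1$ forces $\h(tv'),\h(v't)\in\{\h(v)-1,\h(v)\}$. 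Feeding this back into cases (a) and (b.i) yields the desired two-sided bound and completes the induction.
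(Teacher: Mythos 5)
Your argument is correct — I checked the bookkeeping in your three cases (the lower bound comes out exactly right in (a) and (b.i) and with a gain of $1$ in (b.ii), the upper bound is exact in (b.ii)), and the auxiliary inequality you isolate does hold in type $A$: since $tv'=vt$ and $v't=tv$ are $v$ times a simple transposition with $\ell=\ell(v)-1$, the cycle count changes by $\pm1$ and $\h=(\ell-n+\#\mathrm{cycles})/2$ gives $\h(vt),\h(tv)\in\{\h(v)-1,\h(v)\}$ — but you have taken a genuinely different and much heavier route than the paper. The paper's proof is pure arithmetic: Proposition \ref{prop_misc}, applied with the trivial element in the second slot (whose height is $0$, so the hypothesis $\h=0$ is automatic), yields the absolute bound $\ell(x)+t(U)\geq \deg \geq \ell(x)+t(U)-\h(x)$ for an irreducible $U$ inside $H^{\ast}_c(X(x))$; applying this once to $V\subset H^i_c(X(w))$ and once to $V(m)\subset H^j_c(X(v))$, and subtracting using $t(V(m))=t(V)-m$, gives the corollary with no further geometric input. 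Your induction on $\ell(v)$ instead replays the geometric argument of Proposition \ref{prop_misc} on the $v$-variable, and therefore needs extra ingredients the subtraction argument avoids: cyclic-shift invariance of the cohomology to reach the shape $v=tv't$, the type-$A$-specific height formula, and semisimplicity of the $H$-action (Theorem iii) is only proved in Section 7) to promote constituents appearing in the long exact sequences to subrepresentations — admittedly the same liberty the paper takes inside the proof of Proposition \ref{prop_misc} itself, so I would not call it a gap. What your route buys is the pleasant monotonicity of $\h$ under length-decreasing multiplication by a simple reflection; what it costs is length and generality, since the paper's two-application-and-subtract argument works verbatim whenever Proposition \ref{prop_misc} does, independently of the $S_n$ combinatorics.
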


\begin{proof}
We apply the foregoing proposition where $w$ is replaced by $v$ and $v$ by $1.$ Then $\ell(v)+t(V(m)) \geq j \geq \ell(v) + t(V(m)) - \h(v).$
Multiplying this term with $-1$ and adding the result to the sequence of inequalities $\ell(w)+t(V) \geq i \geq \ell(w) +t(V) - \h(w)$
gives the statement.
\end{proof}

For $w'\in W$ with $w' \leq w$ and $\ell(w')=\ell(w)-1,$ the set $Z'=X(w) \cup X(w')$ is a subvariety of $X$ as already observed above.
Here $X(w')$ is closed and $X(w)$ is open in $Z'.$ We denote by
$$\delta^\ast_{w',w}: H_c^\ast(X(w')) \to H_c^{\ast+1}(X(w))$$
the associated boundary map.

\begin{prop}\label{boundary_Coxeter}
Let $w$ be a Coxeter element and let $w'\in W$ with $w'\leq w$ and $\ell(w')=\ell(w)-1.$
Then the boundary homomorphism $\delta_{w',w}^j: H_c^{j}(X(w')) \to H_c^{j+1}(X(w))$ is surjective for all $j \leq 2\ell(w')=2(n-2) .$
(In particular, $\delta^j_{w',w}$ is si-surjective for all $j=0,\ldots, 2\ell(w)=2(n-1).$)
\end{prop}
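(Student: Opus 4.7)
The plan is to reformulate the surjectivity of $\delta^j_{w',w}$ via Poincar\'e duality and to settle it by a combination of weight filtration analysis and $G$-equivariance on the top weight piece.

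First, I identify source and target. By Proposition \ref{cohomology_Coxeter} the target $H^{j+1}_c(X(w))=j_{(k,1,\ldots,1)}(-(k-1))$ with $k=j-n+3\in[1,n-1]$ is a single irreducible $(G\times\Gamma)$-module, and by Corollary \ref{coh_kleiner_Cox} the source $H^j_c(X(w'))$ is a direct sum of induced representations $A_{k',l}(-(k-1))$ with $k'+l=k+1$, $1\le k'\le i$, $1\le l\le n-i$, carrying the same Tate twist. Because the target is irreducible and $\delta^j_{w',w}$ is $(G\times\Gamma)$-equivariant, surjectivity of $\delta^j_{w',w}$ reduces to showing it is nonzero.

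Second, I dualize geometrically: Poincar\'e duality applied to the smooth varieties $X(w)$ and $X(w')$ (of respective dimensions $n-1$ and $n-2$) identifies $\delta^j_{w',w}$ with the residue map of the Gysin sequence for the smooth divisor $X(w')\subset Z'=X(w)\cup X(w')$. Consequently the surjectivity of $\delta^j_{w',w}$ for $j\le 2n-4$ is equivalent to the vanishing of the restriction
\begin{equation*}
H^i(Z')\longrightarrow H^i(X(w))
\end{equation*}
for every $i=2n-3-j\ge 1$.

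Third, I combine weights with $G$-equivariance. By Proposition \ref{cohomology_Coxeter} combined with Poincar\'e duality, $H^i(X(w))$ is pure of weight $2i$ and is a nontrivial irreducible $G$-representation for $1\le i\le n-1$ (the partition $(n-i,1^i)$ differs from $(n)$). On the other hand, $Z'$ sits as an open subvariety of the smooth projective Demazure compactification $\overline{X(w)}$ with strict normal crossings complement $\bigcup_{i'\ne i}\overline{X(w/s_{i'})}$. By Deligne's mixed Hodge theory, the weight filtration on $H^i(Z')$ has top graded piece $\mathrm{gr}^W_{2i}H^i(Z')$ which is a subquotient of $H^0(D^{(i)})(-i)$, where $D^{(i)}$ is the $i$-fold intersection stratum of the boundary; each connected component of $D^{(i)}$ is, by Remark \ref{cohomology_not_full_F} and an induction on $n$ using Proposition \ref{cohomology_Coxeter_compactification}, a Demazure compactification of a subword of $w$, hence connected and $G$-invariant, so $H^0(D^{(i)})$ carries the trivial $G$-action. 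Now the restriction $H^i(Z')\to H^i(X(w))$ respects the weight filtration: on $W_{2i-1}H^i(Z')$ it maps into $W_{2i-1}H^i(X(w))=0$, and on $\mathrm{gr}^W_{2i}H^i(Z')$ it is a $G$-equivariant map from a trivial $G$-representation to a nontrivial irreducible one, hence zero. Therefore $H^i(Z')\to H^i(X(w))$ vanishes for every $i\ge 1$, and the desired surjectivity of $\delta^j_{w',w}$ follows.

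The main obstacle is making the weight analysis rigorous — specifically verifying that $\mathrm{gr}^W_{2i}H^i(Z')$ is a subquotient of $H^0(D^{(i)})(-i)$ with trivial $G$-action. This requires a careful application of Deligne's mixed Hodge spectral sequence (in the $\ell$-adic setting) for the SNC boundary $\bigcup_{i'\ne i}\overline{X(w/s_{i'})}$, together with the connectedness of each iterated intersection $\bigcap_{i'\in I}\overline{X(w/s_{i'})}$ — which is itself a Demazure compactification of a Coxeter-type element in a Levi subgroup and is thus $G$-invariant and connected.
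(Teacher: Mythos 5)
Your reductions at the start are fine: since each nonzero $H^{j+1}_c(X(w))$ is an irreducible $H$-module, surjectivity is equivalent to nonvanishing, and the Poincar\'e-duality translation of $\delta^j_{w',w}$ into the vanishing of the restriction $H^i(Z')\to H^i(X(w))$ for $i=2n-3-j\geq 1$ is legitimate (both $Z'$ and $X(w')$ are smooth and $X(w')$ is a closed divisor in $Z'$). The genuine gap is in the final equivariance step: it is not true that $H^0(D^{(i)})$ carries the trivial $G$-action. The irreducible components of the boundary of $Z'$ in $\overline{X(w)}$ are the divisors $D_W$ indexed by $\mF_q$-rational subspaces $W$ (of dimension $\neq i$), and the connected components of the $i$-fold intersection locus are the pieces $D_{W^\bullet}$ indexed by rational flags. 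These components are connected but are \emph{not} $G$-invariant; $G$ permutes them (transitively on each flag type), so $H^0(D^{(i)})$ is a direct sum of permutation modules $i^G_P$ for suitable standard parabolics $P$. You have conflated the $G$-stable but highly disconnected strata closures $\overline{X(v)}=\bigcup_{W\in\Gr_{i'}(V)(k)}D_W$ with the connected but non-$G$-stable components $D_{W^\bullet}$. Since $i^G_P$ in general contains the irreducible constituent $j_{(n-i,1^i)}$ of $H^i(X(w))$ (already for $n=3$, $i=1$ one has $\mathrm{gr}^W_2H^1(Z')$ a subquotient of $i^G_{P_{(1,2)}}(-1)$, which contains $j_{(2,1)}$), the argument "trivial source, nontrivial irreducible target, hence zero" collapses, and with it the whole proof of the vanishing of $H^i(Z')\to H^i(X(w))$.

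This is not a cosmetic defect: the vanishing you need is exactly the nontrivial content of the proposition, and it cannot be obtained from weight considerations plus connectivity of boundary strata alone. The paper establishes the nonvanishing of $\delta^j_{w',w}$ by working with the explicit divisor resolution of the constant sheaf on $\overline{X(w)}$ (equivalently the rows of the weight spectral sequence you invoke), and then arguing concretely with the blow-up description of $\overline{X(w)}$, the projection formula and transversal intersection products to control which classes of $H^{2\ell(w')}(\overline{X(w)})$ restrict nontrivially, together with an induction on $n$ and Lemma \ref{vanishing_square} to handle the degrees $j<2\ell(w')$. If you want to keep your dual formulation, you would have to replace the triviality claim by an actual computation of the top weight quotient $\mathrm{gr}^W_{2i}H^i(Z')$ as a $G$-submodule of $\bigoplus_{W^\bullet}\overline{\mQ}_\ell(-i)$ and of the edge map to $H^i(X(w))$, which amounts to the same combinatorial/intersection-theoretic analysis the paper carries out.
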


\begin{proof}
We may suppose that $w=\cox_n.$  Since all the  representations $H_c^i(X(w))\neq (0)$ are irreducible, it suffices to show
that the boundary maps $\delta_{w',w}^i$ for $i<2\ell(w)-1,$ are non-trivial.
Let $w'=s_1\cdots \hat{s_i} \cdots s_h$ be as above. In terms of flags the DL-varieties in question have the following description
\begin{eqnarray*}
X(w) & = &\big\{V^\bullet \mid F(V^j) \subset V^{j+1}, F(V^j) \neq V^j, \;  1\leq j \leq n-1 \big\}, \\
& & \\
X(w')& = & \big\{V^\bullet \mid F(V^j) \subset V^{j+1}, F(V^i)=V^i, F(V^j) \neq V^j, \;  1\leq j\neq i \leq n-1  \big\}.
\end{eqnarray*}
Their Zariski closures are given by
\begin{eqnarray*}
\overline{X(w)} & = & \big\{V^\bullet \mid F(V^j) \subset V^{j+1}, \;  1\leq j \leq n-1 \big\}, \\
& & \\
\overline{X(w')}& = & \big\{V^\bullet \mid F(V^j) \subset V^{j+1}, F(V^i)=V^i,  \;  1\leq j\neq i \leq n-1  \big\}.
\end{eqnarray*}

The complement of $X(w)$ in $\overline{X(w)}$ is a divisor $D=\bigcup_W D_W$ where the union is over all $k$-rational subspaces $W$  of $V$.
For any rational flag $W^\bullet= (0)\subsetneq W^{i_1} \subsetneq W^{i_2} \subsetneq \cdots \subsetneq W^{i_k} \subsetneq V$ of $V$, we set
$D_{W^\bullet}= D_{W^{i_1}}\cap D_{W^{i_2}} \cap \cdots \cap D_{W^{i_k}}$
and ${\rm lg}(W^\bullet)=k.$ This construction gives rise for any constant sheaf $A$ on $\overline{X(w)}$ to a resolution
$$A \to  \bigoplus_{W} A_{D_W} \to \bigoplus_{W^\bullet, {\rm lg}\,(W^\bullet)=2} A_{D_{W^\bullet}} \to \cdots \to
\bigoplus_{W^\bullet,\, {\rm lg}(W^\bullet)=n-1} A_{D_{W^\bullet}}.$$
of $A_{X(w)}.$ On the other hand, we have $\overline{X(w')}=\bigcup_{W\in {\rm Gr}_i(V)(k)} D_W.$ Similarly as above, we get
a resolution
$$A_{\overline{X(w')}}\to \bigoplus_{\genfrac{}{}{0pt}{2}{W^\bullet, {\rm lg}\,(W^\bullet)=2}{W^i \in W^\bullet}} A_{D_{W^\bullet}} \to
\bigoplus_{\genfrac{}{}{0pt}{2}{W^\bullet, {\rm lg}\,(W^\bullet)=3}{W^i \in W^\bullet}} A_{D_{W^\bullet}} \to \cdots \to
\bigoplus_{\genfrac{}{}{0pt}{2}{W^\bullet,\, {\rm lg}(W^\bullet)=n-1}{W^i \in W^\bullet}} A_{D_{W^\bullet}}$$
of $A_{X(w')}.$
The second complex is a subcomplex of the first one and this inclusion induces just the boundary map.
In other terms, applying $H^{2i}(-)$ to both resolutions (strictly speaking to injective resolutions of $A=\mZ/l^n\mZ$, $n\in \mN)$), we just get the complexes
\begin{equation*}
 H^{2i}(\overline{X(w)})\to \bigoplus_{v < w  \atop \ell(v)=\ell(w)-1} H^{2i}(\overline{X(v)})\to \cdots \to
\bigoplus_{v < w  \atop \ell(v)=1} H^{2i}(\overline{X(v)})\to  H^{2i}(\overline{X(e)})
\end{equation*}
and
\begin{equation*}
 H^{2i}(\overline{X(w')})\to \bigoplus_{v < w'  \atop \ell(v)=\ell(w')-1} H^{2i}(\overline{X(v)})\to \cdots \to
\bigoplus_{v < w'  \atop \ell(v)=1} H^{2i}(\overline{X(v)})\to  H^{2i}(\overline{X(e)}).
\end{equation*}

If $w'=s_1s_2\cdots s_{n-2}$, then $X(w')\cong \coprod_H\Omega(H)$ with $H$ running through all rational hyperplanes in $V=\mF^n$.
Further we may identify $X(w)$ with $\Omega(V)\subset \mP(V)$.
Here the result is well-known in the setting of period domains. In fact, by considering also the varieties $\Omega(E)$ with $E$ a rational
subspace of $V$, we get a stratification of the projective space $\mP(V)$. Then the result follows by weight reasons and the cohomology formula
in Proposition \ref{cohomology_Coxeter} with respect to
the varieties $\Omega(E).$ Alternatively, one might use the fundamental complex in \cite{O}. By symmetry the same reasoning applies to
$w'=s_2s_3\cdots s_{n-1}.$

In general we distinguish the cases whether $j=2\ell(w')$ or $j<2\ell(w')$.
Suppose first that $j=2\ell(w').$ Here the claim follows by Example \ref{Example_cycle} ii) since
the contribution $H^{j}(\overline{X}(w'))$ does not lie in the image of the map
$H^{j}(\overline{X(w)})\to \bigoplus_{v < w  \atop \ell(v)=\ell(w)-1} H^{j}(\overline{X(v)}).$

If $j<2\ell(w')$ then we argue as follows.  Let $v=s_1s_2\cdots s_{n-2}$ and $v'={\rm gcd}(w',v)=s_1\cdots \hat{s_i} \cdots s_{n-2}.$
By induction on $n$ the map $H^{j-1}_c(X(v')) \to H^{j}_c(X(v))$ is surjective. On the other hand, by what we have
observed above the map $H^{j}_c(X(v)) \to H^{j+1}_c(X(w))$ is surjective, as well.
Using Lemma \ref{vanishing_square} we deduce that the map $H^{j}_c(X(w')) \to H^{j+1}_c(X(w))$ is non-trivial.
\end{proof}

The next two statements give the cohomology of all Weyl group elements having full support and which are of height 1.
Arbitrary elements of height one are handled by Proposition \ref{cohomology_not_full}.

\begin{prop}\label{height_one}
Let   $w=sw' s$ where $w'\in W$ is a Coxeter element in some Levi subgroup  of a proper maximal parabolic subgroup in $G$ .
Then the maps $r^j_{w,sw'}: H^j_c(Z) \to H^j_c(X(sw'))$ are all surjective for $j> \ell(sw')=n-1$.
(In particular they are si-surjective for all $j\geq 0$.)
\end{prop}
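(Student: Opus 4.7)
The plan is to translate the surjectivity of $r^j_{w,sw'}$ for $j>h$ into the vanishing of the boundary $\delta^j:H^j_c(X(sw'))\to H^{j+1}_c(X(w))$ in the long exact sequence (\ref{long_exact_coh_sequence}), and then prove this vanishing via a Frobenius argument on two splittings of $H^j_c(Y)$ in the spirit of Example \ref{Example3}. First I observe that the hypothesis forces $sw'$ to be a Coxeter element of $W$: writing $P=P_{(k,n-k)}$ for the proper maximal parabolic whose Levi contains $w'$, we have $\supp(w')=S\setminus\{s_k\}$ and $\ell(w')=n-2$, so the condition $\ell(w)=\ell(w')+2$ forces $s=s_k$, and $sw'$ has full support with length $h$. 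By Proposition \ref{cohomology_Coxeter}, the $H$-module $H^j_c(X(sw'))=j_{(j-h+1,1,\ldots,1)}(-(j-h))$ is irreducible, with non-trivial Tate twist precisely when $j>h$; by irreducibility and Schur, the surjectivity of $r^j_{w,sw'}$ for such $j$ reduces to mere non-vanishing.

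The core argument uses two splittings of $H^j_c(Y)$ for $Y=X(w)\cup X(sw')\cup X(w's)$. The first is $H^j_c(Y)=H^j_c(Z)\oplus H^j_c(X(w's))$ of Corollary \ref{splitting}, induced by the $\mP^1$-bundle $\gamma:U\to X(w's)$ on $U=X_2\cup X(sw')\cup X(w's)$; under restriction to the closed subvariety $X(sw')\subset Y$, the first summand maps by $r^j_{w,sw'}$ and the second by $\sigma^*$, since $\gamma|_{X(sw')}=\sigma$. By the symmetric "look-alike" construction swapping the roles of $sw'$ and $w's$ in the DL-type decomposition of $X(w)$, I would obtain a second splitting $H^j_c(Y)=H^j_c(\tilde Z)\oplus H^j_c(X(sw'))$ under which restriction to $X(w's)$ equals $\tau^*$. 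For $\alpha\in H^j_c(X(sw'))$ embedded in $H^j_c(Y)$ via the second splitting, decompose $\alpha=\alpha_Z+\alpha_{w's}$ in the first splitting; comparing the two restriction maps $H^j_c(Y)\to H^j_c(X(sw'))$ and $H^j_c(Y)\to H^j_c(X(w's))$ yields $\alpha_{w's}=\tau^*(\alpha)$ and $\alpha=r^j_{w,sw'}(\alpha_Z)+\sigma^*(\alpha_{w's})$. Using $\tau\circ\sigma=F$ from Proposition \ref{Prop_homeomorphic}, I obtain
\[
r^j_{w,sw'}(\alpha_Z)=\alpha-\sigma^*\tau^*(\alpha)=(1-F^*)(\alpha).
\]
The Frobenius $F^*$ acts on the irreducible $H^j_c(X(sw'))$ by a scalar $\neq 1$ for $j>h$ (the Tate twist $-(j-h)$ being non-trivial), so $1-F^*$ is invertible and $\alpha\in\im(r^j_{w,sw'})$.

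The hardest step will be producing the second splitting in full generality. When $w$ admits a literal second decomposition $w=s'w''s'$ with $s'w''=w's$ and $w''s'=sw'$ (as in Example \ref{Example3} for $\GL_3$), it comes by applying Corollary \ref{splitting} a second time. However, already for certain height-one elements in $\GL_4$ (e.g.\ $w=s_2s_1s_3s_2$, whose unique left descent is $s_2$) the expression $w=sw's$ is unique, so the dual $\mP^1$-bundle over $X(sw')$ must be constructed by hand: one decomposes $X(w)$ as the union of an $\mA^1$-bundle over $X(w')$ and a $\mG_m$-bundle over $X(sw')$, dual to the paper's $X_1\cup X_2$ decomposition with the roles of $sw'$ and $w's$ swapped, and glues as in Section 4 to produce the required splitting.
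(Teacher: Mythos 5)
Your reduction to non-vanishing (irreducibility of $H^j_c(X(sw'))$ plus the non-trivial Tate twist for $j>h$) is fine, and your formula $r^j(\alpha_Z)=(1-F^*)(\alpha)$ would indeed finish the proof \emph{if} you had a class in $H^j_c(Y)$ restricting to $\alpha$ on $X(sw')$ and to $\tau^*\alpha$ on $X(w's)$. The genuine gap is exactly the step you flag as "the hardest": producing the second splitting when $w$ admits no second presentation $w=s'w''s'$ with $s'\neq s$. Your proposed repair — build "by hand" the dual picture in which $X(w)$ is an $\mA^1$-bundle over $X(w')$ union a $\mG_m$-bundle over $X(sw')$ and glue as in Section 4 — does not give what you need. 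The $\mG_m$-bundle $\delta\colon X_2\to X(sw')$ exists, but on $X_2$ one has $\delta=\tau\circ\gamma$, and the unique natural proper extension of $\delta$ to $U=X_2\cup X(sw')\cup X(w's)$ is $\tau\circ p$, where $p$ is the paper's $\mP^1$-bundle and $\tau$ is extended as in Proposition \ref{Prop_homeomorphic} ii) with $\tau|_{X(w')}=F$: concretely, the boundary point in $X(sw')$ of the $\delta$-fibre over $x$ is $F^{-1}(x)$, so any morphism extending $\delta$ restricts on the stratum $X(sw')$ to the \emph{Frobenius}, not to the identity (a map restricting to the identity there would involve $F^{-1}$, which is not algebraic). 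Hence your candidate section is $\iota_2=\iota_1\circ\tau^*$, its decomposition in the first splitting gives $\alpha_{w's}=\tau^*\alpha$ and $F^*\alpha=r^j(\alpha_Z)+\sigma^*\tau^*\alpha=r^j(\alpha_Z)+F^*\alpha$, i.e.\ the identity degenerates to $0=0$ and yields no surjectivity. The two-splittings/Frobenius trick really does require the hypothesis of Proposition \ref{surjective_on_tatetwist} ($w=s'w''s'$ with $s'\neq s$), and elements violating it lie squarely inside the scope of the present proposition — your own example $w=s_2s_1s_3s_2$ in $\GL_4$, with $w'=s_1s_3$ the Coxeter element of the Levi of $P_{(2,2)}$, is such an element.

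This is precisely why the paper argues differently: assuming $r^j$ not surjective, the irreducible $H^j_c(X(sw'))$ would inject into $H^{j+1}_c(X(w))$ via $\delta^j_{sw',w}$; one then introduces the auxiliary Coxeter element $w''=s_iw's_{n-1}$ of the maximal Levi $W_{(n-1,1)}$, uses the si-surjectivity of $\delta^{j-1}_{w'',s_iw'}$ (Proposition \ref{boundary_Coxeter}), the vanishing of $\delta^{j-1}_{w'',w''s_i}$ (by induction on $n$, start of induction Example \ref{Example3}), and Lemma \ref{vanishing_square} to get a contradiction. To salvage your approach you would either have to prove a genuinely new statement supplying classes of $H^j_c(Y)$ with restriction pair $(\alpha,\tau^*\alpha)$, or switch to an argument of the paper's type; as written, the key construction fails. (A minor additional point: your claim that $\ell(w)=\ell(w')+2$ alone forces $s=s_k$ is not correct — e.g.\ in $S_5$ take $w'=s_3s_2s_1$ and $s=s_2$; what forces $s=s_k$, and hence $sw'$ Coxeter in $W$, is the implicit full-support assumption under which this proposition is applied.)
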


\begin{proof}
We start with the observation that $sw'$ and $w's$ are  both Coxeter elements in $W$. We may suppose that
$w'=s_1\cdots \hat{s_i}\cdots s_{n-1}$ and $s=s_i.$ It is clear that
$v^G_B \not\in \supp H^{n-1}_c(Z).$
So let $j>n-1$ and suppose that $r^j_{w,sw'}$ is not surjective. Then the irreducible module $H^j_c(X(sw'))$ maps injectively into
$H^{j+1}_c(X(w))$ via the boundary homomorphism $\delta^j_{sw',w}.$ First let $i<n-1.$
Set $$w'':=s_i s_1s_2 \cdots \hat{s_i}\cdots  s_{n-2}=s_iw's_{n-1}.$$
This is a Coxeter element in the parabolic subgroup $W_{(n-1,1)}$ of $W$ with $w''\leq s_iw'$ and $\ell(w'')=\ell(w').$ Consider the square

\hspace{2cm} $
 \begin{array}{ccccc}
 & & w & & \\ & \nearrow & & \nwarrow & \\ s_iw' & & & & w''s_i . \\ & \nwarrow & & \nearrow & \\ & & w'' & &
\end{array}
$

\noindent The boundary homomorphism
$\delta_{w'',s_iw'}^{j-1}: H^{j-1}_c(X(w'')) \to H^j_c(X(s_iw'))$ is si-surjective by  Proposition \ref{boundary_Coxeter}.
On the other hand, the boundary map $\delta_{w'',w''s_i}^{j-1}: H^{j-1}_c(X(w'')) \to H^j_c(X(w''s_i))$ vanishes as the map
$r^j_{w''s_i,w''}: H^{j-1}_c(X(w''s_i)\cup X(w'')) \to H^{j-1}_c(X(w''))$ is (si-)surjective by induction on $n$.
Indeed, both elements $w'',w''s_i$ are of the shape above and in the Weyl group of $\GL_{n-1}.$ The start of induction is 
given by Example \ref{Example3}.
By Lemma \ref{vanishing_square} the composite $\delta^j_{s_iw',w}\circ \delta_{w'',s_iw'}^{j-1}$
vanishes, a contradiction. The result follows in this special case.

If $i=n-1$, then we set $w'':=s_1 w' s_{n-1}$ and consider $w' s_{n-1}$ instead of $s_{n-1}w'$ and $s_{n-1} w''$
instead of $w'' s_{n-1}.$ Then the same argument goes through.
\end{proof}

By Proposition \ref{boundary_Coxeter} we are able to give a formula for the cohomology of these height 1 elements.
Here we could give the description of the induced representation
$H^\ast_c(X(w'))=\Ind^G_{P_{(i,n-i)}}(H^\ast_c(X_{{\bf M}}(w'))$ by using Littlewood-Richardson
coefficients, cf. \cite[\S A]{FH}
(Note that the structure or combinatoric of unipotent $G$- and $W$-representations is the same, cf. Remark \ref{connection_symmetric group}).
Instead we prefer to use the notation which is common in the Grothendieck group of $G$-representations. Hence if we write $V-W$ for two
$G$-representations $V,W$, then we mean implicitly that $W$ is a subrepresentation of $V$.

\begin{coro}\label{coro_height1_a} In the situation of the foregoing proposition, we have for $j\in \mathbb N$, with $\ell(w)<j<2\ell(w)-1,$
$$H^j_c(X(w)) = \big(H^{j-2}_c(X(w')) - j_{(j+1-n,1,\ldots,1)}(n-j)\big)(-1) -
j_{(j+2-n,1\ldots,1)}(n-j-1).$$
Moreover, we have $H^{\ell(w)}_c(X(w)) = v^G_B \bigoplus \big(v^G_{P(s)} - j_{(2,1\ldots,1)}\big)(-1),\;\; H^{2\ell(w)-1}_c(X(w))=0$ and $H^{2\ell(w)}_c(X(w))=i^G_G(-\ell(w)).$
\end{coro}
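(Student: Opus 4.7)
The plan is to exploit the long exact cohomology sequence (\ref{long_exact_coh_sequence}) together with the surjectivity statement in Proposition \ref{height_one}. First, I would record the geometric setup: $w'$ is a Coxeter element of a maximal Levi $M=\GL_i\times\GL_{n-i}$, so $w'=s_1\cdots\hat{s_i}\cdots s_{n-1}$ and $s=s_i$, whence both $sw'$ and $w's$ are genuine Coxeter elements of $W$ and $w'\leq w's$ with $\ell(w's)=n-1$. For degrees $j$ with $\ell(w)<j<2\ell(w)-1$, Proposition \ref{height_one} guarantees that both $r^{j-1}_{w,sw'}$ and $r^{j}_{w,sw'}$ are surjective, so the connecting maps in (\ref{long_exact_coh_sequence}) into and out of $H^j_c(X(w))$ drop out and we obtain a short exact sequence
\begin{equation*}
0 \to H^j_c(X(w)) \to H^{j-2}_c(Z')(-1) \to H^j_c(X(sw')) \to 0.
\end{equation*}
Combined with Proposition \ref{cohomology_Coxeter} applied to the Coxeter element $sw'$, the right-hand term supplies precisely the subtracted irreducible $j_{(j+2-n,1,\ldots,1)}(n-j-1)$.

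Next I would compute $H^{j-2}_c(Z')$ from the long exact sequence attached to the stratification $Z'=X(w's)\sqcup X(w')$ in which $X(w')$ is closed and $X(w's)$ open. By Proposition \ref{boundary_Coxeter} applied to the Coxeter $w's$ and its subexpression $w'$, the boundary $\delta^{k}_{w',w's}$ is surjective for all $k\le 2\ell(w')=2n-4$. For $j\le 2n-2$ this yields
\begin{equation*}
0 \to H^{j-2}_c(X(w's)) \to H^{j-2}_c(Z') \to \ker\delta^{j-2}_{w',w's} \to 0,
\end{equation*}
and the contribution of $X(w's)$ is non-zero only in degree $j-2=\ell(w's)$ (i.e.\ $j=n$). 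Thus for $n<j<2n-1$ we have $H^{j-2}_c(Z')=H^{j-2}_c(X(w'))-H^{j-1}_c(X(w's))$, the subtracted Coxeter piece being $j_{(j+1-n,1,\ldots,1)}(n-j)$. Substituting gives exactly the displayed formula.

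For the three boundary degrees I would argue separately. At $j=2\ell(w)=2n$, one has $H^{2n-1}_c(X(sw'))=H^{2n}_c(X(sw'))=0$ for dimension reasons and $H^{2n-2}_c(Z')=H^{2n-2}_c(X(w's))=i^G_G(-(n-1))$, giving $H^{2n}_c(X(w))=i^G_G(-n)$. At $j=2n-1$, one uses that $H^{2n-3}_c(X(w'))=0$ and that the surjectivity of $\delta^{2n-4}_{w',w's}$ forces $H^{2n-3}_c(Z')=0$, so that $H^{2n-1}_c(Z)=0$; then the long exact sequence together with the fact that $H^{2n-2}_c(X(sw'))=i^G_G(-(n-1))$ lands in $H^{2n-1}_c(X(w))$ only through a map that must vanish by Lemma \ref{vanishing_square} applied to the evident square, forcing $H^{2n-1}_c(X(w))=0$. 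At $j=\ell(w)=n$, the triviality $H^{n-3}_c(Z')=0$ gives $H^{n-1}_c(Z)=0$, so $H^{n-1}_c(X(sw'))=v^G_B$ injects into $H^n_c(X(w))$, and the remaining short exact sequence is $0\to v^G_B\to H^n_c(X(w))\to \ker r^n_{w,sw'}\to 0$.

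The main obstacle will be the identification of $\ker r^n_{w,sw'}$, i.e.\ of the virtual piece $H^{n-2}_c(Z')(-1)-j_{(2,1,\ldots,1)}(-1)$, with $(v^G_{P_{\mu(w)}}-j_{(2,1,\ldots,1)})(-1)$. By Corollary \ref{coh_kleiner_Cox} one has $H^{n-2}_c(X(w'))=\Ind^G_{P_{(i,n-i)}}(v^{\GL_i}_B\boxtimes v^{\GL_{n-i}}_B)$, and combining with the Steinberg contribution $H^{n-2}_c(X(w's))=v^G_B$ and the surjectivity of $\delta^{n-2}_{w',w's}$ onto $j_{(2,1,\ldots,1)}(-1)$, the proof reduces to a purely representation-theoretic identity in the Grothendieck group of $G$ expressing the induced Steinberg of the Levi $M_P$ in terms of $v^G_{P_{\mu(w)}}$ and of $v^G_B$. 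This identity should follow via Howe's bijection (Remark \ref{connection_symmetric group}) from the parallel identity for $S_n$-representations relating $\mathrm{Ind}^{S_n}_{S_i\times S_{n-i}}(\mathrm{sign}\boxtimes\mathrm{sign})$ to the generalized sign representation indexed by the partition $\mu(w)$.
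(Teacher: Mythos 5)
Your overall route is the paper's own: use the long exact sequence (\ref{long_exact_coh_sequence}) together with the surjectivity of $r^j_{w,sw'}$ from Proposition \ref{height_one} to identify $H^j_c(X(w))$ with $\ker r^j$ for $j>\ell(w)$ (with the extra $v^G_B$ entering at $j=\ell(w)$), then evaluate $H^j_c(Z)=H^{j-2}_c(Z')(-1)$ via Proposition \ref{boundary_Coxeter} and read off the Coxeter groups from Proposition \ref{cohomology_Coxeter}; your handling of $j=\ell(w)$, $j=2\ell(w)$, and the reduction of the $v^G_{P_{\mu(w)}}$-identification to the induced-Steinberg identity are in line with the paper (which itself only says ``one verifies easily'' there).

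There is, however, a flawed step in your computation of $H^{j-2}_c(Z')$. The sequence you display, $0 \to H^{j-2}_c(X(w's)) \to H^{j-2}_c(Z') \to \ker\delta^{j-2}_{w',w's} \to 0$, is not exact on the left: the kernel of $H^{j-2}_c(X(w's)) \to H^{j-2}_c(Z')$ is the image of $\delta^{j-3}_{w',w's}$. Moreover the claim you use to discard the left-hand term, namely that $H^{j-2}_c(X(w's))$ is non-zero only for $j-2=\ell(w's)$, is false: $w's$ is a Coxeter element, so by Proposition \ref{cohomology_Coxeter} $H^k_c(X(w's))\neq 0$ for every $k$ with $n-1\le k\le 2n-2$, hence throughout your range $\ell(w)<j<2\ell(w)-1$ (and your parenthetical ``$j=n$'' is also off, since $\ell(w's)=n-1$). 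Taken literally, your sequence would add a spurious Coxeter summand to every $H^j_c(X(w))$ in that range. The correct reason the $X(w's)$-term drops out is supplied by the very proposition you cite: $\delta^{j-3}_{w',w's}$ is surjective for $j-3\le 2\ell(w')$, so the map $H^{j-2}_c(X(w's))\to H^{j-2}_c(Z')$ vanishes and $H^{j-2}_c(Z')=\ker\delta^{j-2}_{w',w's}=H^{j-2}_c(X(w'))-H^{j-1}_c(X(w's))$, which is exactly what the paper uses. A smaller point: at $j=2\ell(w)-1$ you do not need Lemma \ref{vanishing_square} (whose applicability to the top-degree class you do not justify); since $2\ell(w)-2>\ell(sw')$, Proposition \ref{height_one} makes $r^{2\ell(w)-2}_{w,sw'}$ surjective, and combined with $H^{2\ell(w)-3}_c(Z')=0$ this already forces $H^{2\ell(w)-1}_c(X(w))=0$.
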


\begin{proof}
By Proposition \ref{height_one}, we deduce that $H^j_c(X(w))=\ker\big(H^j_c(Z) \to H^j_c(X(sw'))$ for all $j>\ell(w)=n.$
By Proposition \ref{cohomology_Coxeter} we have $H^j_c(X(sw'))=j_{(j+2-n,1\ldots,1)}(n-j-1).$
Further $H^j_c(Z)=H^{j-2}_c(X(w's) \cup X(w'))(-1)$
and the boundary map $$H^{j-2}_c(X(w')) \to H^{j-1}_c(X(w's))=j_{(j+1-n,1,\ldots,1)}(n-j)$$ is surjective for $j-2\leq 2\ell(w')=2\ell(w)-4$ by
Proposition \ref{boundary_Coxeter}. Hence we get the first identity in the statement.

If $j=\ell(w)=n$ then one verifies easily that $$H^{j-2}_c(X(w')) - j_{(j+1-n,1,\ldots,1)}(n-j)=H^{\ell(w')}_c(X(w'))-v^G_B=v^G_{P(s)}.$$
In addition the Steinberg representation appears as a summand in $H^{n}_c(X(w))$. It is induced via the boundary map 
$H^{\ell(w's)}_c(X(w's)) \to H^{\ell(w)}_c(X(w))$
from $H^{\ell(w)-1}_c(X(w's))=H^{\ell(w's)}_c(X(w's))=v^G_B.$

The remaining identities for $j=2\ell(w)-1,2\ell(w)$ are easily verified in the same way.
\end{proof}

\begin{rk}\label{vanishing_coh}
Let  $w\in W$ have full support. Then we always have $H^{\ell(w)}_c(X(w))\supset v^G_B=j_{(1,\ldots,1)}$ and $H^{2\ell(w)}_c(X(w))=v^G_G(-\ell(w))=j_{(n)}(-\ell(w)),$
cf. \cite[Prop. 1.22]{L2}, \cite[Prop. 3.3.14, 3.3.15]{DMR}. More precisely, these are the only cohomology degrees where these extreme
unipotent representations appear. Further $H^i_c(X(w))=0$ for all $i<\ell(w).$
\end{rk}

\begin{eg}\label{Example_(1,3,4)}
Let $n=4$, $w=(1,2)(2,3)(3,4)(1,2)\in W.$ Then $$H_c^\ast(X(w))=v^G_B[-4] \oplus j_{(2,2)}(-2)[-5] \oplus i^G_G(-4)[-8].$$
\end{eg}

\smallskip
\begin{eg}\label{Example_(1,3)(2,4)}
Let $n=4$, $w=(2,3)(1,2)(3,4)(2,3)\in W.$ Then
\begin{eqnarray*}
H_c^\ast(X(w)) & = & v^G_B[-4] \oplus j_{(2,2)}(-1)[-4] \oplus j_{(2,1,1)}(-2)[-5] \\
& & \oplus j_{(3,1)}(-2)[-5] \oplus j_{(2,2)}(-3)[-6] \oplus i^G_G(-4)[-8].
\end{eqnarray*}
\end{eg}

The remaining elements with full support and which are of height 1 are treated by the next result.

\begin{coro}\label{coro_height1_b}
Let   $w=s w'  s\in W$ with $\ell(w)=\ell(w')+2$ for some Coxeter element $w'\in W$ and $s\in S.$ Then the map
$r^j_{w,sw'}: H^j_c(Z) \to H^j_c(X(sw'))$
vanishes for all $j\neq 2\ell(w)-2$ and is an isomorphism for $j=2\ell(w)-2$. Hence we have
$$H^j_c(X(w))=H^{j-2}_c(X(w' s)\cup X(w'))(-1) \oplus H^{j-1}_c(X(s w'))$$ for all $j\neq 2\ell(w)-1, 2\ell(w)-2$
and  $H^{2\ell(w)-1}_c(X(w))=H^{2\ell(w)-2}_c(X(w))=0.$
\end{coro}

\begin{proof}
By Prop. \ref{height_one} the boundary map $H^{j-2}_c(X(w')) \to H^{j-1}_c(X(w's))$ vanishes for all $j\in \mN$ with $j-2\neq \ell(w')=n-1.$
If $j-2=\ell(w')$, then it is an injection, since on the LHS we have  the Steinberg representation $v^G_B.$
Hence
\begin{eqnarray*}
H^j_c(X(s w' s) \cup X(s w')) & \cong & H^{j-2}_c(X(w' s)\cup X(w'))(-1) \\ & = &  H^{j-2}_c(X(w' s))(-1)\oplus H^{j-2}_c(X(w'))(-1)
\end{eqnarray*}
for all $j>\ell(w')+2=n+1.$
By Remark \ref{contribute} we know that  $r^j_{w,sw'}$ applied to a contribution of $H^{j-2}_c(X(w' s))(-1)$ vanishes.
On the other hand, by comparing weights we see that the Tate twist of a contribution in $H^j_c(X(s w' s) \cup X(s w'))$
induced by $H^{j-2}_c(X(w'))(-1)$ is different from the Tate twist of $H^j_c(X(sw')$,
except for $j=2\ell(w)-2.$ Here we have the trivial representation on both sides. The result follows.
\end{proof}

\begin{eg}
Let $n=4$, $w=(2,3)(1,2)(2,3)(3,4)(2,3)\in W.$ Then one verifies that
\begin{eqnarray*}
 H_c^\ast(X(w)) &=& v^G_B[-5] \oplus j_{(2,2)}(-2)[-6] \oplus
j_{(2,1,1)}(-2)[-6]\\ & & \oplus j_{(3,1)}(-3)[-7] \oplus j_{(2,2)}(-3)[-7] \oplus  i^G_G(-5)[-10].
\end{eqnarray*}
\end{eg}

\vspace{0.5cm}
Although the previous results do not apply directly to the element $w=(1,4)\in W$, we are able to compute the cohomology
of  $X((1,4)).$ Indeed, we write $w=sw's$ with $w'=(1,3)$ and $s=(3,4).$ 
The cohomology groups of $X((1,3))$ and $X((1,3)(3,4))$ behave disjointly, cf. Examples \ref{Example3}, 
\ref{Example_(1,3,4)}. We deduce that 
\begin{eqnarray*}
H_c^\ast(X(w')\cup X(w's)) & = & j_{(2,1,1)}[-3] \oplus j_{(2,2)}(-2)[-5] \oplus j_{(3,1)}(-3)[-6] \\ & \oplus &   i^G_G(-3)[-6] 
 \oplus  i^G_G(-4)[-8]
\end{eqnarray*}
and so
\begin{eqnarray*}
H_c^\ast(X(w) \cup X(sw')) & = &  j_{(2,1,1)}(-1)[-5] \oplus j_{(2,2)}(-3)[-7] \oplus j_{(3,1)}(-4)[-8] \\ & \oplus &  i^G_G(-4)[-8] \oplus  i^G_G(-5)[-10].
\end{eqnarray*}
But these groups behave again disjointly (apart from the top cohomology) from those of $X(sw').$ Hence we get
\begin{eqnarray*}
 H_c^\ast(X(w)) &=& v^G_B[-5] \oplus j_{(2,1,1)}(-1)[-5] \oplus j_{(2,2)}(-2)[-6] \oplus
\\ & & \oplus j_{(2,2)}(-3)[-7] \oplus j_{(3,1)}(-4)[-8] \oplus  i^G_G(-5)[-10].
\end{eqnarray*}

For  determining the cohomology of the DL-variety attached to the longest element in the Weyl group of $\GL_4$ we refer to the appendix. 

\vspace{0.5cm}

\section{Hypersquares}

Here we generalize some of the results of the previous section to hypersquares.

For elements $v,w\in W$ with $v\leq w$, we let $I(v,w)=\{z\in W \mid v \leq z \leq w \}\subset W$ be the interval between $v$ and $w$. Analogously
we define $I^{F^+}(v,w)=I(v,w)$ for $v,w\in F^+.$ Note that if we have fixed reduced decompositions of $v,w\in W$, the set $I^{F^+}(v,w)$ is in
general not compatible with $I(v,w)$ in the sense that $\gamma(I^{F^+}(v,w))=I(v,w)$.  Further we set for any interval $I=I(v,w)$,
$$\head(I)=w \mbox{ and } \tail(I)=v.$$
\begin{defn}
Let $v \leq w \in W$ with  $\ell(w)-\ell(v)=d.$ We say that $I(v,w)$ is a hypersquare of dimension $d$ in $W$
if  $$\#\big\{z \in I(v,w) \mid \ell(z)=\ell(w)-i\big\} ={d \choose i }$$ for all $1\leq i \leq d.$
\end{defn}
If $I(v,w)$ is a hypersquare, then $\#I(v,w)=2^d$ (the converse is also true). In this case we also write $Q(v,w)=I(v,w).$

The definition of a hypersquare in ${F^+}$ is similar but easier in the sense that for all $v,w\in F^+$ with $v \preceq w$
the cardinality of $I({v,w})$  is always $2^{\ell(w)-\ell(v)}.$

\begin{defn}
Let $v \preceq w\in F^+$ with  $\ell(w)-\ell(v)=d.$ The associated hypersquare of dimension $d$ in ${F^+}$ is given by the set
$Q(v,w)=I(v,w)$.
\end{defn}

If we consider $v,w\in W$ with $v\leq w$ and with fixed reduced decompositions $w=s_{i_1}\cdots s_{i_r}$ and $v=s_{j_1}\cdots s_{j_s}$, then we also
write $Q^{F^+}(v,w)$ for $Q(s_{j_1}\cdots s_{j_s},s_{i_1}\cdots s_{i_r}).$
For a hypersquare resp. interval $I=I(v,w)$ of $W$ (resp.  $F^+$), let
$$X(v,w):=X(I):= \bigcup_{w\in I} X(w)$$
be the induced locally closed subvariety of $X$ (resp. of $X^{\ell(w)+1}$ where $w=\head(I)$).
In particular, for $w\in F^+$ the compactification $\overline{X}(w)$ of $X(w)$ can be rewritten as
$$\overline{X}(w)=X(Q(1,w)).$$

\begin{lemma}
 Let $Q \subset W$ (resp. $Q\subset F^+)$ be a hypersquare. Then the variety $X(Q)$ is smooth.
\end{lemma}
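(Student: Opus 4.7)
The plan is to handle the $F^+$ case directly by exhibiting $X(Q)$ as an open subvariety of a known smooth variety, and then reduce the $W$ case to the $F^+$ case by a Demazure resolution argument.

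First I would treat the $F^+$ case. Fix a reduced expression $w = s_{i_1}\cdots s_{i_r}$ and suppose $v \preceq w$ corresponds to a sub-expression at positions $J^c \subseteq \{1,\ldots,r\}$. Unfolding the definitions of $X(z)$ and $\overline{X}(w)$, one checks that
\[
X(Q(v,w)) = \{(x_0,\ldots,x_r) \in \overline{X}(w) : {\rm inv}(x_{j-1},x_j) = s_{i_j} \text{ for all } j \in J^c\}.
\]
Since $\{e\}$ is closed in $\overline{\O(s_{i_j})} = \{e\} \cup \O(s_{i_j})$, each of these conditions is open on $\overline{X}(w)$. Hence $X(Q(v,w))$ is an open subvariety of the smooth projective variety $\overline{X}(w)$ (Proposition \ref{smooth_and_proj}), and is therefore smooth.

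For the $W$ case with $Q = I(v,w)$ a hypersquare of dimension $d$, I would pick a reduced decomposition $w = s_{i_1}\cdots s_{i_r}$ together with a reduced sub-expression of $v$ at positions $J^c$, and consider the Demazure resolution $\pi\colon \overline{X}^{F^+}(w) \to \overline{X(w)}$. It restricts to a proper $G$-equivariant morphism
\[
\pi \colon X(Q^{F^+}(v,w)) \longrightarrow X(I(v,w)),
\]
whose source is smooth by the $F^+$ case. On each Deligne-Lusztig stratum $X(z) \subseteq X(I(v,w))$, the map $\pi$ coincides with the isomorphism (\ref{isomorphism}). The Boolean condition $\#I(v,w) = 2^d = \#Q^{F^+}(v,w)$, combined with the standard lifting property for Boolean Bruhat intervals, ensures that (for an appropriately chosen sub-expression) $\gamma\colon Q^{F^+}(v,w) \to I(v,w)$ is bijective; hence $\pi$ is bijective on the underlying set of strata. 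A proper, birational, bijective morphism from a smooth variety onto a separated target is then an isomorphism by Zariski's Main Theorem, and smoothness of $X(I(v,w))$ follows from that of $X(Q^{F^+}(v,w))$.

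The main obstacle will be producing a reduced expression for $w$ containing a reduced sub-expression of $v$ at positions $J^c$ for which every $z \in I(v,w)$ is hit by some $K \supseteq J^c$; this is precisely where the hypersquare/Boolean structure of $I(v,w)$ has to be used in an essential way, and I expect it is the only non-formal step. A safer fallback is induction on $d$: for $d = 1$ the claim is the remark already made after (\ref{stratification}), and for $d \geq 2$ one uses the $\mathbb{P}^1$-bundle structure of Proposition \ref{cohomology_pb} (applied to each square sitting inside $Q$) to realize $X(Q)$ as a bundle over a hypersquare variety of dimension $d-1$, closing the induction.
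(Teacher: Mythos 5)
Your treatment of the $F^+$ case is correct and is exactly the paper's argument: $X(Q(v,w))$ is open in the smooth projective variety $\overline{X}(\head(Q))$ of Proposition \ref{smooth_and_proj}. The $W$ case, however, has genuine gaps. First, the combinatorial input you defer — a reduced expression of $w$ with a distinguished reduced subexpression for $v$ such that every position-set containing it gives a \emph{reduced} subword and these subwords biject onto $I(v,w)$ — is not a technicality but the heart of your argument: without it the stratum-by-stratum comparison breaks down (already for $w=s_2s_1s_2$, $v=s_2$ the wrong choice of positions produces the non-reduced subword $s_2s_2$, whose stratum maps with positive-dimensional fibres), and a bijection between the \emph{index sets} of strata does not by itself give a bijection on points. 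Second, properness of $\pi$ restricted to $X(Q^{F^+}(v,w))$ is asserted, not proved: this set can be strictly smaller than $\pi^{-1}(X(I(v,w)))$ (the paper meets precisely this phenomenon in Appendix A, Case D), and the restriction of a proper map to a non-saturated open subset need not be proper. Third, and most seriously, ``proper $+$ bijective $+$ birational $\Rightarrow$ isomorphism'' is false without normality of the target (the normalization of the cuspidal cubic is proper, bijective and birational); Zariski's Main Theorem only identifies the source with the normalization, so your argument as written yields at best that $X(Q)$ is unibranch with smooth normalization, not that it is smooth. To repair it you would need normality of $X(I(v,w))$ (say via normality of $\overline{X(w)}$, of which $X(I(v,w))$ is an open subset), a point of essentially the same depth as the lemma and nowhere addressed.

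The fallback induction does not close either: Corollary \ref{cohomology_pb} concerns the special squares $\{sw's,sw',w's,w'\}$, whereas a general square in $W$ with head $w=sw's$ may have tail of the form $su'=u's$ or $su's$ (the Cases 4 and 5 squares discussed in Appendix A), and such squares carry no $\mathbb{A}^1$- or $\mathbb{P}^1$-bundle structure over a smaller hypersquare; so the induction already fails at $d=2$. For comparison, the paper disposes of the $W$ case purely combinatorially: by rigidity the hypersquare coincides with the Bruhat graph of $[\tail(Q),\head(Q)]$, which is the $d$-regular hypercube, and the Carrell--Peterson type criterion of Billey--Lakshmibai \cite[Thm.~6.2.10]{BL} then gives (rational, hence for $\GL_n$ actual) smoothness of $X(Q)$, with no resolution and no appeal to Zariski's Main Theorem.
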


\begin{proof}
If $Q\subset F^+$, then the claim follows from Proposition \ref{smooth_and_proj} since $X(Q)$ is an open subset of $\overline{X}(\head(Q)).$
If $Q\subset W$, then the claim follows from \cite[Theorem 6.2.10]{BL}. Indeed, let $d:=\dim Q$. By the rigidity of $Q$ it has to coincide with the 
Bruhat graph
$B(\tail(Q),\head(Q)).$ Then loc.cit. says that that the Schubert type analogue $X^{\rm Sch}(Q):=\bigcup_{w\in Q} X^{\rm Sch}(w)$
(where $X^{\rm Sch}(w)$ is the Schubert cell to $w$)
is (rationally) smooth if each vertex in the Bruhat graph has exactly $d$ edges. But each vertex in $Q$ has already by definition $d$ edges.
Now the argumentation is completely analogous as for ordinary DL-varieties. In fact, next
we deduce that $\mathcal{O}(Q):=\bigcup_{w\in Q} \mathcal{O}(w)\subset X\times X$ is smooth. Since $\mathcal{O}(Q)$ is transversal to 
the graph of the Frobenius, we see that $X(Q)$ is smooth.
\end{proof}

\begin{defn}\label{special_square}
A square $Q\subset W$ (resp. $Q\subset F^+$) is called special if it has the shape $Q=\{sw's,sw',w's,w'\}$ for some $w,w' \in W$, $s\in S$ (resp. $F^+$)
with $\ell(w)=\ell(w')+2.$  In this case we also write $Q_w=Q_{w,s}=Q.$
\end{defn}

The generalization of Proposition \ref{cohomology_vb} is given by the next result.

\begin{prop}\label{hypersquares_surjective}
Let $Q'=Q(v',w')\subset W$ be a hypersquare of dimension $d.$ Suppose that for $s\in S$, the sets $sQ', Q's$ and $Q:=sQ's$ are hypersquares,
 as well, and that $\ell(sw's)=\ell(w')+2$, $\ell(sv's)=\ell(v')+2.$ Then $X(Q) \cup X(sQ')$ is an $\mA^1$-bundle over $X(Q's) \cup X(Q').$ Consequently,
$$H_c^i(X(Q) \cup X(sQ')) \cong H^{i-2}_c(X(Q's) \cup X(Q'))(-1).$$
Moreover $X(Q)\cup X(sQ') \cup X(Q's) \cup X(Q')=X(Q(v',sw's))$ is a $\mP^1$-bundle over $X(Q's) \cup X(Q')$.
\end{prop}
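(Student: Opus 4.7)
The plan is to reduce the statement to the special-square case established earlier (the $\mA^1$-bundle proposition and its $\mP^1$-extension) via a stratification along $Q'$, and then patch the resulting bundle maps into a single globally defined morphism. First I unpack the combinatorics: since $Q$, $sQ'$, $Q's$ and $Q'$ are all hypersquares of the same dimension with $Q=sQ's$, left- and right-multiplication by $s$ furnish order-preserving bijections $Q' \to sQ'$, $Q' \to Q's$ and $Q' \to Q$. The hypotheses $\ell(sw's) = \ell(w')+2$ and $\ell(sv's) = \ell(v')+2$, combined with the subword (rigidity) characterization of the Bruhat intervals cut out by a hypersquare, propagate to give $\ell(sz's) = \ell(z')+2$ for every $z' \in Q'$. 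Consequently, for each such $z'$ the quadruple $\{sz's, sz', z's, z'\}$ is a special square in the sense of Definition~\ref{special_square}.

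For each $z' \in Q'$, the special-square $\mA^1$-bundle result supplies a morphism $\gamma_{z'}: X(sz's) \cup X(sz') \to X(z's) \cup X(z')$, characterized by the recipe that $\gamma_{z'}(B)$ is the unique Borel with $(B, \gamma_{z'}(B)) \in \O(s)$ and $(\gamma_{z'}(B), F(B)) \in \O(z's) \cup \O(z')$. This recipe depends only on the datum $(B,F(B))$ and not on the stratum label, so the family $\{\gamma_{z'}\}_{z'\in Q'}$ assembles into a single map $\gamma: X(Q) \cup X(sQ') \to X(Q's) \cup X(Q')$. Its graph in $X \times X$ is cut out by the closed incidence $(B, \gamma(B)) \in \overline{\O(s)}$ intersected with the locally closed condition $\gamma(B) \in X(Q's) \cup X(Q')$, so $\gamma$ is a morphism of varieties. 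Stratumwise $\gamma$ restricts to $\gamma_{z'}$, and combined with the smoothness of source and target from the preceding lemma together with the expected dimension count, this yields that $\gamma$ is globally an $\mA^1$-bundle. The cohomology identity then follows from the usual formula for an $\mA^1$-bundle.

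For the final assertion, I would realize $X(Q)\cup X(sQ') \cup X(Q's) \cup X(Q')$ as the incidence variety
\begin{equation*}
\{(B_0, B_1) \in X \times X : (B_0, B_1) \in \overline{\O(s)},\; B_1 \in X(Q's) \cup X(Q')\}
\end{equation*}
and project to $B_1$, generalizing the proof of the $\mP^1$-bundle statement for a single special square. Each fiber is the Schubert curve $\{B_0 : (B_0,B_1) \in \overline{\O(s)}\} \cong \mP^1$, giving the $\mP^1$-bundle structure, with $\gamma|_{X(Q's)\cup X(Q')} = \id$ corresponding to the diagonal section. To identify this incidence variety with $X(Q(v', sw's))$, I would factor $(B_0, F(B_0)) = (B_0, B_1) \cdot (B_1, F(B_1)) \cdot (F(B_1), F(B_0))$ in the Bruhat calculus and invoke $\ell(sz's) = \ell(z')+2$ to conclude that the product lies in $\O(u)$ for a unique $u \in \{sz's, sz', z's, z'\} \subset Q(v', sw's)$, hence in $X(Q(v', sw's))$.

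The main obstacle I anticipate is upgrading the stratumwise $\mA^1$-bundle structures to global Zariski local triviality for $\gamma$. I would handle this by trivializing $\gamma$ over the open stratum $X(w's) \cup X(w')$ via the special-square proposition applied to $z'=w'$, and then propagating the trivialization across the lower strata of the base using smoothness together with the combinatorial fact that the transition data along adjacent special squares in $Q'$ agree on their common face (which itself is a special-square bundle of one lower dimension). A second, purely combinatorial obstacle is the length propagation $\ell(sz's) = \ell(z')+2$ for all $z' \in Q'$ from the two extremal cases $z' \in \{v',w'\}$; I expect this to fall out of the hypersquare rigidity lemma combined with the observation that $s$-conjugation preserves the covering relations across the hypersquare.
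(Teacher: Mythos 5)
Your proposal is correct and follows essentially the same route as the paper: the paper's proof simply observes that $Q \cup sQ' \cup Q's \cup Q'$ is paved by the special squares $\{sz's,sz',z's,z'\}$, $z'\preceq Q'$, and invokes the $\mA^1$- and $\mP^1$-bundle results already established for special squares. Your additional details (the length propagation $\ell(sz's)=\ell(z')+2$ across $Q'$, and the gluing of the fibrewise maps via the uniqueness of $\gamma(B)$ determined by $(B,F(B))$) are exactly what the paper leaves implicit.
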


\begin{proof}
 The claim follows easily from the fact the hypersquare $Q \cup sQ' \cup Q's \cup Q'$ is paved by special squares
 together with Corollary \ref{cohomology_pb}.
\end{proof}

\begin{rk}\label{P1_closures}
The same statement is true if we work in ${F^+}$ where the assumptions are automatically satisfied. In particular,
if $w=sw's\in F^+,$ then $$H^i(\overline{X}(w))=H^i(\overline{X}(w's)) \oplus H^{i-2}(\overline{X}(w's))(-1)$$
for all $i \geq 2$ and $H^0(\overline{X}(w))=H^0(\overline{X}(w's)).$ Analogously, we have
$$H^i(\overline{X}(w))=H^i(\overline{X}(sw')) \oplus H^{i-2}(\overline{X}(sw'))(-1)$$
for all $i \geq 2$ and $H^0(\overline{X}(w))=H^0(\overline{X}(sw')).$

This statement is already proved  more generally in \cite[Prop. 3.2.3]{DMR}. In fact, the notion of a hypersquare
can be expressed by using elements of the completed braid monoid $\underline{B}^+.$
\end{rk}

For later use we mention the following statement. Recall that  we denote for any $\Gamma$-module $V$ and any integer $i$,
by $V\langle i \rangle$ the eigenspace of the arithmetic  Frobenius  with eigenvalues of absolute value $q^i.$

\begin{lemma}\label{vanishing_odd_degree}
Let $w=sw's\in F^+$ with $\h(sw')=0.$ Then $H^{2i+1}_c(X(s^2,w))\langle - i \rangle=0.$
\end{lemma}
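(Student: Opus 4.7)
The plan is to set up an open-closed long exact sequence for $X(s^2,w)\subset \overline{X}(w)$, reduce the statement to a surjectivity in pure weight $2i$, and verify it via the $\mP^1$-bundle structure on $\overline{X}(w)$.

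Since $\h(sw')=0$, the word $sw'\in F^+$ is reduced and its image in $W$ is Coxeter in the Levi $W_{P(sw')}$, so all its letters are pairwise distinct; in particular $s\notin\supp(w')$. Hence the subwords of $w=sw's$ not containing $s^2$ are exactly the subwords of $sw'$ or of $w's$, and the closed complement of $X(s^2,w)$ in $\overline{X}(w)$ is
$$C=\overline{X}(sw')\cup\overline{X}(w's),\qquad C_0:=\overline{X}(sw')\cap\overline{X}(w's)=\overline{X}(w').$$
In the associated long exact sequence
$$H^{2i}(\overline{X}(w))\xrightarrow{r} H^{2i}(C)\to H^{2i+1}_c(X(s^2,w))\to H^{2i+1}(\overline{X}(w)),$$
the last term vanishes: the three elements $sw'$, $w's$, $w'$ all have height zero, so by Proposition \ref{cohomology_Coxeter_compactification} combined with Remark \ref{cohomology_not_full_F} their Demazure compactifications have no odd cohomology, and Remark \ref{P1_closures} propagates this to $H^{2i+1}(\overline{X}(w))=0$. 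Therefore $H^{2i+1}_c(X(s^2,w))=\Coker(r)$. Since $\overline{X}(w)$ is smooth and projective, $H^{2i}(\overline{X}(w))$ is pure of weight $2i$; and Mayer--Vietoris together with $H^{2i-1}(\overline{X}(w'))=0$ injects $H^{2i}(C)$ into the pure weight-$2i$ space $H^{2i}(\overline{X}(sw'))\oplus H^{2i}(\overline{X}(w's))$. Consequently $\Coker(r)$ is pure of weight $2i$, entirely contained in the Frobenius $\langle -i\rangle$-eigenspace, and the lemma reduces to showing that $r$ is surjective.

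To prove this surjectivity, I would apply the $\mP^1$-bundle $p:\overline{X}(w)\to \overline{X}(w's)$ of Proposition \ref{hypersquares_surjective} together with its two natural sections: $\sigma_1:\overline{X}(w's)\hookrightarrow\overline{X}(w)$, the inclusion, and $\sigma_2$, the composite of the universal homeomorphism $\tau:\overline{X}(w's)\to\overline{X}(sw')$ of Proposition \ref{Prop_homeomorphic} with the inclusion $\overline{X}(sw')\hookrightarrow\overline{X}(w)$. Leray--Hirsch with divisor class $D=[\overline{X}(w's)]\in H^2(\overline{X}(w))$ gives
$$H^{2i}(\overline{X}(w))=p^*H^{2i}(\overline{X}(w's))\oplus p^*H^{2i-2}(\overline{X}(w's))\cdot D,$$
and the two restrictions take the form $(\alpha,\beta)\mapsto \alpha+\beta\,c_j$, where $c_j:=\sigma_j^* D\in H^2(\overline{X}(w's))$. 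Since $\sigma_1=\sigma_2$ on $\overline{X}(w')$, the class $c_1-c_2$ vanishes on $\overline{X}(w')$ and therefore equals a nonzero scalar multiple of $[\overline{X}(w')]$ by the Gysin long exact sequence in degree $2$. For any equalizer pair $(\gamma_1,\gamma_2)$ in the Mayer--Vietoris description of $H^{2i}(C)$, the difference $\gamma_2-\gamma_1$ lies in $\ker(H^{2i}(\overline{X}(w's))\to H^{2i}(\overline{X}(w')))=i_*H^{2i-2}(\overline{X}(w'))(-1)$, so by the projection formula the lifting problem reduces to the surjectivity of the restriction $i^*:H^{2i-2}(\overline{X}(w's))\to H^{2i-2}(\overline{X}(w'))$.

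The main obstacle will be this last surjectivity. Since both $w's$ and $w'$ are of height zero and $w'$ is obtained from $w's$ by erasing the unique letter $s$, Proposition \ref{cohomology_Coxeter_compactification} combined with Remark \ref{cohomology_not_full_F} describes the two Demazure compactifications as explicit iterated blow-ups induced from the relevant Levi subgroups. The desired surjectivity should then follow by a degree $2i-2$ analogue of Lemma \ref{qi_restriction}, tracking the blow-up contributions compatibly on $\overline{X}(w's)$ and $\overline{X}(w')$.
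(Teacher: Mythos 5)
Your reduction is sound and genuinely different from the paper's argument: you identify the closed complement of $X(s^2,w)$ in $\overline{X}(w)$ as $C=\overline{X}(sw')\cup\overline{X}(w's)$ with intersection $\overline{X}(w')$, kill $H^{2i+1}(\overline{X}(w))$ via Remark \ref{P1_closures} and the height-zero pieces, and conclude $H^{2i+1}_c(X(s^2,w))=\Coker\bigl(r:H^{2i}(\overline{X}(w))\to H^{2i}(C)\bigr)$, which is of Tate type; this uniformly covers both the reduced and non-reduced cases that the paper treats separately (the paper instead stratifies $X(s^2,w)$ by the $X(s^2,sv's)$, reduces by induction to a class in $H^{2i+1}_c(X(w))$, pins down $i=\ell(w)-2$ via the height-one computations, and kills it by explicit weight and intertwining arguments). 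But be aware that, with your Leray--Hirsch parametrization, surjectivity of $r$ in degree $2i$ is in fact \emph{equivalent} to the inclusion $\ker\bigl(i^*:H^{2i}(\overline{X}(w's))\to H^{2i}(\overline{X}(w'))\bigr)\subset (c_1-c_2)\cdot H^{2i-2}(\overline{X}(w's))$, i.e.\ your reformulation carries the whole content of the lemma; everything therefore rests on the final step.

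That final step is where the argument breaks. First, $\sigma_2$ is not a section: $p\circ\sigma_2=F$, and likewise $\sigma_1\neq\sigma_2$ on $\overline{X}(w')$ (they differ by Frobenius); since the statement is precisely about Frobenius eigenvalues, these twists cannot be suppressed, though they are repairable. Second, the inference ``$c_1-c_2$ vanishes on $\overline{X}(w')$, hence is a nonzero multiple of $[\overline{X}(w')]$'' is not what the Gysin sequence gives (it controls the kernel of restriction to the \emph{open} complement, not to the divisor). Third, and fatally, the asserted identity $\ker\bigl(H^{2i}(\overline{X}(w's))\to H^{2i}(\overline{X}(w'))\bigr)=i_*H^{2i-2}(\overline{X}(w'))(-1)$ is false already in the smallest instance of the lemma: take $G=\GL_3$, $w=s_2s_1s_2$, $i=1$. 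Then $\overline{X}(w's)$ is the blow-up of $\mP^2$ in its $N=q^2+q+1$ rational points (Proposition \ref{cohomology_Coxeter_compactification}) and $\overline{X}(w')$ is the disjoint union of the strict transforms $\tilde L$ of the rational lines, with $[\tilde L]=h-\sum_{x\in L}e_x$. The kernel of restriction on $H^2$ is one-dimensional, spanned by $\kappa=(q+1)h-\sum_x e_x$, whereas a relation $\kappa=\sum_L c_L\,[\tilde L]$ would give $\sum_{L\ni x}c_L=1$ for all $x$ and $\sum_L c_L=q+1$, hence $(q+1)^2=q^2+q+1$, absurd; so $\ker(i^*)\cap \im(i_*)=0$ there. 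Consequently $c_1-c_2$ must be proportional to $\kappa$, not to $[\overline{X}(w')]=\sum_L[\tilde L]$, and the projection-formula mechanism (together with surjectivity of $i^*$ in degree $2i-2$) cannot produce the required lifts. Finally, the ``degree $2i-2$ analogue of Lemma \ref{qi_restriction}'' you invoke at the end is exactly the kind of statement the paper only conjectures in general (cf.\ Conjecture \ref{conjecture_coxeter_u}), so even that ingredient is not available. The skeleton is attractive, but the surjectivity of $r$ — the actual content of the lemma — is not proved.
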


\begin{proof}
For proving the assertion  we may assume that $w$ is full.
If $\ell(w)>\ell(\gamma(w))$ then  $s$ commutes with every simple reflection in $w'$. Hence the variety $X(s^2,w)$ is 
homeomorphic to $X(s^2) \times \overline{X}(w')$. One computes easily that
$$H^\ast_c(X(s^2))=i^G_B/i^G_{P(s)}[-2] \bigoplus i^G_{P(s)}(-2)[-4].$$
Further  the cohomology of $\overline{X}(w')$ vanishes in odd degree by Proposition 
\ref{cohomology_Coxeter_compactification}, thus we get
$H^{2i+1}_c(X(s^2,w))=(0)$ by the K\"unneth formula.

So let $\ell(w)=\ell(\gamma(w))$ and suppose that  $V=j_\lambda(-i)\subset H^{2i+1}_c(X(s^2,w))\neq 0.$ 
Suppose first that $V$ is induced by $H^{2i+1}_c(X(w)).$ 
By Corollary \ref{coro_height1_a} and  by Proposition \ref{prop_misc}  it follows that $i=\ell(w)-2$.

\noindent {\it 1. Case:} $w=s_1s_2 s_3\cdots s_{n-1}s_1$ (or $w=s_{n-1}s_1s_2\cdots s_{n-2}s_{n-1}$ etc.) i.e., $s=s_1$ or $s=s_{n-1}.$

By Corollary \ref{coro_height1_a}  we conclude that $j_\lambda=j_{(n-2,2)}.$ We consider the square
$$Q=\{w,sv'_1s,sv_2's,su's\}\subset F^+$$ with
\begin{eqnarray*}
v_1'& = & s_2s_3\cdots s_{n-3}s_{n-2}, \\
v_2' & = & s_2s_3\cdots s_{n-3} s_{n-1},\\
u' & = & s_2s_3\cdots s_{n-3}.
\end{eqnarray*}
Now $H^{2i}_c(X(su's))=H^{2\ell(su's)}_c(X(su's))=i^G_ {P(su's)}(-i)=i^G_{P_d}(-i)$ with $d=(n-2,1,1)\in \D.$ 
For proving our claim in this special situation, it is enough to see that the 
$j_{(n-2,2)}$-isotypic part in
$H^{2i+1}_c(X(Q))$ vanishes.
For this we consider the boundary map $H^{2i}_c(X(sv_2's) \cup X(su's)) \to H^{2i+1}_c(X(sv_1's)\cup X(w))$
and moreover the extended boundary map
$$H^{2i}_c(X(sv_2's) \cup X(su's) \cup X(sv_2') \cup X(su')) \to H^{2i+1}_c(X(w)\cup X(sv_1's) \cup X(sv_1') \cup X(sw'))$$
which identifies by Proposition \ref{hypersquares_surjective} with
$$H^{2i-2}_c(X(v_2's)\cup X(u's) \cup X(v_2') \cup X(u'))(-1) \to H^{2i-1}_c(X(w's)\cup X(v_1's) \cup X(v_1') \cup X(w'))(-1).$$
By weight reasons we deduce that
$$H^{2i}_c(X(sv_2's) \cup X(su's))\subset H^{2i}_c(X(sv_2's) \cup X(su's) \cup X(sv_2') \cup X(su')).$$
On the other hand, we have $V\not\subset H^{2i}_c(X(sv_1') \cup X(sw'))$ as $H^{2i}_c(X(sv_1'))=i^G_{P_{(n-1,1)}}(-i)$ and 
$H^{2i}_c(X(sw'))=j_{(n-2,,1,1)}(-i)$ by
Proposition  \ref{cohomology_Coxeter}. Hence it suffices to see that $V$ does not appear in the cokernel of the extended boundary map.
Now $$V(-1) \subset H^{2i-2}_c(X(u's))=i^G_{P(u's)}(-i+1)=i^G_{P_{(n-2,1,1)}}(-i+1)$$ resp.
$$V(-1)\subset  H^{2i-2}_c(X(v_2'))=i^G_{P(v_2')}(-i+1))=i^G_{P_{(1,n-3,2)}}(-i+1).$$
On the other hand,
$$V(-1)\subset  H^{2i-1}_c(X(w'))=i^G_{P_{(1,n-2,1)}}/i^G_{P_{(1,n-1)}}(-i+1),$$
$$V(-1)\subset  H^{2i-1}_c(X(v_1's))=i^G_{P_{(n-2,1,1)}}/i^G_{P_{(n-1,1)}}(-i+1)$$
and
$$V(-1)\subset  H^{2i-2}_c(X(v_1'))=i^G_{P_{(1,n-2,1)}}(-i+1).$$
The result follows now easily by intertwining arguments as the contribution  $V(-1) \subset H^{2i-2}_c(X(v_1'))$
maps diagonally to $H^{2i-1}_c(X(v_1's))\bigoplus H^{2i-1}_c(X(w'))$ and $H^{2i-2}_c(X(v_2'))$ maps surjectively onto 
$H^{2i-1}_c(X(w'))$ by Proposition \ref{boundary_Coxeter}.

\noindent {\it 2. Case:} $w=s_is_1s_2\cdots \widehat{s_i}\cdots s_{n-1}s_i$ with $2\leq i \leq n-2.$

By Corollary \ref{coh_kleiner_Cox} we see that $H^{2i+1}_c(X(w))$ is a direct sum of quotients of induced representations $i^G_P(-i)$ where
$P$ is not a proper maximal subgroup. But $H^{2i}_c(X(sw'))$ does not kill $H^{2i+1}_c(X(w))$ by
Prop. \ref{height_one}. Further  $H^{2i}_c(X(sv'))=H^{2\ell(w)-4}_c(X(sv'))=H^{2\ell(sv')}_c(X(sv'))=i^G_{P(sv')}(-i)$ for all $v'\prec w'$
with $\ell(v')=\ell(w')-1,$ where $P(sv')\subset G$ is a proper maximal subgroup. Hence the representations of the
first kind have to be  killed by weight reasons by  some
$H^{2i}_c(X(su's))$ with $\ell(u')=\ell(w')-2.$
The claim follows.

If $V$ is not necessarily induced by $H^{2i+1}_c(X(w)),$ then we argue as follows.
In the first case above we consider the subsquares  $X(s_1^2,s_1s_3s_4\ldots s_{n-1}s_1)$, $X(s_1s_2s_1,s_1s_2s_4s_5\ldots s_{n-1}s_1)$,
$X(s_1s_2s_3s_1,s_1s_2s_3s_5s_6\ldots s_{n-1}s_1)$ within $X(s_1^2,w)$. These are homeomorphic to
$X(s_1^2)\times \overline{X(s_3s_4\ldots s_{n-1})}$, $X(s_1s_2s_1)\times \overline{X(s_4s_5\ldots s_{n-1})}$, 
$X(s_1s_2s_3s_1)\times \overline{X(s_5s_6\ldots s_{n-1})}$ etc. 
As for the first two subvarieties $U$ we have $H^{2i+1}_c(U)\langle - i \rangle=0.$ On the other hand,
for the other varieties $U$, it follows by induction that we have $H^{2i+1}_c(U)\langle - i \rangle=0.$
Hence we get a contradiction since there has to be one out of these varieties which induce $V.$

In the second case one proceeds similarly.
\end{proof}

\vspace{0.5cm}

\section{Some further results on the cohomology of DL-varieties}

We shall proof some further statements given in the introduction.

Recall we may write all elements in $B^+$ modulo cyclic shift  in the shape $w=sw's$ for some $s\in S$ and $w'\in B^+.$ In particular,
for $w\in {F^+}$, there is always an element $sw's \in {F^+}$ with  $\ell(w)=\ell(sw's)$ and with $H^\ast_c(X(w))=H^\ast_c(X(sw's)).$
We shall use this property to prove the next statement.

\begin{lemma}\label{vanish}
 Let $w\in {F^+}$. Then $H^i(\overline{X}(w))=0$ for $i$ odd.
\end{lemma}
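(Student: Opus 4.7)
The plan is to proceed by induction on $\ell(w)$, with the base case $\ell(w)=0$ being immediate since $\overline{X}(e)$ is the finite set of $\mF_q$-rational Borel subgroups, whose cohomology is concentrated in degree zero.

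For the inductive step, I would first dispose of two easy subcases. If $\supp(w)\neq S$, then Remark \ref{cohomology_not_full_F} identifies $H^\ast(\overline{X}(w))$ with an induced representation built from the cohomology of $\overline{X}_{{\bf M_P}}(w)$ for a proper standard Levi ${\bf M_P}$; a parallel induction on the rank $n$ applied via K\"unneth to each factor of the product $\overline{X}_{{\bf M_P}}(w)$ yields the required vanishing. If $\supp(w)=S$ and $\h(w)=0$, then $w$ is a reduced expression of a Coxeter element $\gamma(w)\in W$ of $G$; the Demazure morphism $\overline{X}^{F^+}(w)\to \overline{X(\gamma(w))}$ is then an isomorphism (cf.\ the remark after Proposition \ref{smooth_and_proj}), and Proposition \ref{cohomology_Coxeter_compactification} gives the desired vanishing directly.

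The main case is $\h(w)\geq 1$. Here Lemma \ref{doppelts} writes $\alpha(w)=w_1 s^2 w_2$ in $B^+$, and a cyclic shift in $B^+$ produces a representative of the form $s\tilde{v}s$ with $\tilde{v}\in B^+$ and $\ell(\tilde{v})=\ell(w)-2$. Lifting to $w^\ast=s\tilde{w}s\in F^+$, Remark \ref{P1_closures} yields
\[
H^i(\overline{X}(w^\ast))=H^i(\overline{X}(s\tilde{w}))\oplus H^{i-2}(\overline{X}(s\tilde{w}))(-1),
\]
and since $\ell(s\tilde{w})=\ell(w)-1$, the inductive hypothesis annihilates both summands in odd degrees. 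To transport this back to $w$, I would invoke the explicit cyclic-shift isomorphism
\[
\overline{X}(s_{i_1}\cdots s_{i_r})\stackrel{\sim}{\longrightarrow}\overline{X}(s_{i_2}\cdots s_{i_r}s_{i_1}),\quad (x_0,\ldots,x_r)\mapsto (x_1,\ldots,x_r,F(x_1)),
\]
which preserves the cohomology of the compactification, together with the extension to compactifications of the Brou\'e-Michel-Deligne invariance of $X^{F^+}(-)$ under braid moves, a principle signaled by the author immediately above the statement of the lemma.

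The hard part will be justifying that $H^\ast(\overline{X}^{F^+}(w))$ depends only on the class of $\alpha(w)$ in $B^+$: the cited Brou\'e-Michel-Deligne result covers only the open strata $X^{F^+}(w)$, whereas the present statement concerns the compactifications. In the favourable subcase where $w$ already admits two cyclically adjacent occurrences of some simple reflection (which is automatic whenever one can choose an explicit $F^+$-representative of $\alpha(w)$ containing an $ss$-subword), the cyclic shift isomorphism alone suffices to reach the $sw's$ form in $F^+$ and no braid invariance for $\overline{X}^{F^+}$ is required.
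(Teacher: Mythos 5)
Your proposal correctly disposes of the easy subcases (non-full support, height zero) and correctly identifies Remark \ref{P1_closures} as the key geometric input when $\h(w)\geq 1$, but the reduction to the form $sw's$ has a genuine gap that your own final paragraph partly acknowledges and does not repair.

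The problem is that $H^\ast(\overline{X}^{F^+}(w))$ is \emph{not} invariant under the braid relation $sts\rightsquigarrow tst$, and the paper never claims it is. The passage you cite ``immediately above the lemma'' asserts only $H^\ast_c(X(w))=H^\ast_c(X(sw's))$ --- invariance for the \emph{open} DL-variety, via Brou\'e--Michel--Deligne together with Proposition \ref{cyclic_shift}. For the compactification the paper proves cyclic-shift invariance (Proposition \ref{cyclicshift_coh}) and commuting-move invariance (Proposition \ref{coh_iso_comm}), but Proposition \ref{coh_sts} shows explicitly that the move $sts\rightsquigarrow tst$ \emph{changes} $H^\ast(\overline{X}(-))$ by correction terms; moreover, the proof of Proposition \ref{coh_sts} already uses odd-degree vanishing, so invoking it here would be circular. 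Your ``favourable subcase'' is not a general reduction either: take $G=\GL_4$ and $w=s_1s_2s_1s_3s_2\in F^+$, which has full support, $\h(w)=1$, and $\gamma(w)$ reduced of length $5$. No cyclic shift of $w$ (nor any composition of cyclic shifts and commuting moves) yields a word with an $ss$-block or one beginning and ending with the same letter; only the genuine braid move $s_1s_2s_1\rightsquigarrow s_2s_1s_2$ produces $s_2(s_1s_2s_3)s_2$. A side point: your appeal to Lemma \ref{doppelts} is off, since it only applies when $\alpha(w)\in B^+\setminus W$; when $w$ is a reduced word with $\h(w)\geq 1$ (as in this example) one must first invoke Theorem \ref{GKP}, which itself involves braid relations in $W$.

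The paper avoids all of this by not touching $\overline{X}(w)$ directly. Since $\overline{X}(w)$ is smooth and projective, purity (Weil II) implies that it suffices to prove all Frobenius eigenvalues on $H^\ast(\overline{X}(w))$ lie in $q^{\mZ}$; purity then forces them into even degrees. The stratification spectral sequence for $\overline{X}(w)=\bigcup_{v\preceq w}X(v)$ reduces this integrality claim to the analogous claim for each $H^\ast_c(X(v))$ --- and \emph{these} groups depend only on the image of $v$ in $B^+$ and are cyclic-shift invariant, so one may freely assume $v=sv's$ (with the minimal-length/height-zero elements handled by the explicit Coxeter computations as the base case). The $\mA^1$-bundle $X(v)\cup X(sv')\to X(v's)\cup X(v')$ and two long exact sequences then close the induction. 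This purity detour, trading the rigid word $w\in F^+$ for its braid class on the open strata, is the idea your proposal is missing.
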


\begin{proof}
Since $\overline{X}(w)$ is smooth and projective it suffices to show that all eigenvalues of the Frobenius on the cohomology groups
$H^\ast(\overline{X}(w))$ are integral powers of $q.$ By considering the spectral sequence to the  stratification
$\overline{X}(w)=\bigcup_{v\preceq w}X(v)$ it suffices to show that this property is valid for the cohomology groups $H^\ast_c(X(v)).$
By what we have said above,  we may suppose that $v=sv's$ for some  $s\in S$ and $v'\in {F^+}$. By induction on the length we know that the assertion
is true for $H_c^\ast(X(v's))$ and $H^\ast_c(X(v')),$ hence for $H^\ast_c(X(v's)\cup X(v'))$. But $X(v) \cup X(sv')$ is an
$\mA^1$-bundle over $X(v's) \cup X(v')$ by Remark \ref{same_is_true_for_F}. Thus the assertion is true for $H^\ast_c(X(v)\cup X(sv'))$. Finally,
by considering again the corresponding long exact cohomology sequence  the claim follows.
\end{proof}

Let $X(w)$ be a DL-variety
attached to an element $w\in W$ and let $w=s_{i_1}\cdots s_{i_r}$ be a reduced decomposition of $w.$
In order to  compute the cohomology of $X(w)\cong X^{F^+}(w)$ we consider the stratification
$\overline{X}(w)=\bigcup_{v\preceq w}X^{{F^+}}(v)$ in which $X(w)$ appears
as an open stratum. Write
$$\overline{X}(w)=X^{{F^+}}(w) \stackrel{\cdot}{\cup} Y$$
where $Y=\bigcup_{\genfrac{}{}{0pt}{2}{v \prec w}{\ell(v)=\ell(w)-1}} \overline{X}(v).$
We consider the induced
spectral sequence $$ E_1^{p,q} \Longrightarrow H^{p+q}_c(X^{{F^+}}(w))$$
with
$$E_1^{p,q}=\bigoplus_{\genfrac{}{}{0pt}{2}{\{v_1,\ldots, v_p\}}{v_i \prec w, \ell(v_i)=\ell(w)-1}} H^q_c\Big(\bigcap_{i=1}^p \overline{X}(v_i)\Big)$$
for $p\geq 1$ and $E_1^{0,q}=H^q(\overline{X}(w))$ .
Note that the intersection $\bigcap_1^p \overline{X}(v_i) $ is nothing else but
$\overline{X}(v)$ where $v\in {F^+}$ is the unique element of length $\ell(w)-p$ with $v \preceq v_i, \, i=1,\ldots,p$.

\begin{rk}
The element $v$ could be considered as the greatest common divisor or the meet of the elements $v_1,\ldots,v_p.$ Note that the set
$Q(1,w)$ is a bounded distributive lattice.
\end{rk}

Hence the $i$th row of $E_1$ is given by the complex
\begin{equation}\label{E_1_term}
 0 \to H^i(\overline{X}(w)) \to  \bigoplus_{\genfrac{}{}{0pt}{2}{v\prec w}{\ell(v)=\ell(w)-1}} H^{i}(\overline{X}(v)) \to
\bigoplus_{\genfrac{}{}{0pt}{2}{v\prec w}{\ell(v)=\ell(w)-2}} H^{i}(\overline{X}(v))
\to \cdots
\end{equation}
We shall analyse this spectral sequence. As all varieties $\overline{X}(v)$ are smooth and projective their cohomology is pure. We conclude
that $E_2=E_\infty$ and hence by weight reasons that
$$H^i_c(X(w))= \bigoplus_{p+q=i} E_2^{p,q}.$$

\begin{prop}\label{Frobss}
 The representations  $H^\ast(\overline{X}(w))$ and $H_c^\ast(X(w))$ are Frobenius semisimple  for all $w\in F^+.$
\end{prop}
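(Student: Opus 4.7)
The plan is to leverage the fact, implicit in the proof of Lemma~\ref{vanish}, that every Frobenius eigenvalue on $H^*(\overline{X}(w))$ is an integer power of $q$. Combined with Deligne's purity for the smooth projective variety $\overline{X}(w)$ (Prop.~\ref{smooth_and_proj}), this pins eigenvalues down completely: each must have complex absolute value $q^{i/2}$, and the only integer power of $q$ with this absolute value is $q^{i/2}$, which moreover forces $i$ even. Hence Frobenius acts on $H^{2j}(\overline{X}(w))$ by the single scalar $q^{j}$, which is trivially semisimple, settling the assertion for $\overline{X}(w)$.

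For $X(w)$ I would use the long exact cohomology sequence
$$\cdots \to H^{i-1}(Y) \xrightarrow{\delta} H^i_c(X(w)) \to H^i(\overline{X}(w)) \to H^i(Y) \to \cdots$$
arising from the open--closed pair $X(w) \subset \overline{X}(w) \supset Y$, where $Y := \bigcup_{v \prec w,\, \ell(v)=\ell(w)-1} \overline{X}(v)$. The cohomology of $Y$ is handled by the Mayer--Vietoris spectral sequence whose $E_1^{p,q}$ is a direct sum of $H^q(\overline{X}(v'))$ for certain $v' \prec w$; by the first paragraph each of these is Frobenius semisimple with scalar action $q^{q/2}$. The differentials $d_r$ for $r\ge 2$ shift the weight from $q$ to $q-r+1$ and hence vanish between distinct-weight pure modules, so $E_2 = E_\infty$, and the resulting filtration on $H^j(Y)$ has graded pieces of distinct weights $q = j-p$. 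Since $\mathrm{Ext}^1$ between Frobenius modules with disjoint eigenvalues vanishes, the filtration splits and $H^*(Y)$ is Frobenius semisimple.

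Finally, in the long exact sequence $\mathrm{im}(\delta) \subset H^i_c(X(w))$ is a quotient of $H^{i-1}(Y)$, hence Frobenius semisimple with all eigenvalues of absolute value at most $q^{(i-1)/2}$, while the quotient $H^i_c(X(w))/\mathrm{im}(\delta)$ embeds into $H^i(\overline{X}(w))$ and is semisimple with eigenvalue $q^{i/2}$ (zero for $i$ odd). The two eigenvalue sets are disjoint, so the short exact sequence
$$0 \to \mathrm{im}(\delta) \to H^i_c(X(w)) \to H^i_c(X(w))/\mathrm{im}(\delta) \to 0$$
splits in the category of Frobenius modules, giving Frobenius semisimplicity of $H^i_c(X(w))$. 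The only step that is not routine is the integer-power eigenvalue input; this is exactly what the inductive argument behind Lemma~\ref{vanish} provides, and everything else reduces to weight purity together with the vanishing of $\mathrm{Ext}^1$ between Frobenius modules with disjoint eigenvalues.
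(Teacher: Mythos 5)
The argument breaks at its very first step. From the proof of Lemma \ref{vanish} you may indeed extract that every Frobenius eigenvalue on $H^{2j}(\overline{X}(w))$ is an integer power of $q$, and purity then forces every eigenvalue to equal $q^{j}$; but the jump from ``the only eigenvalue is $q^{j}$'' to ``Frobenius acts by the scalar $q^{j}$, which is trivially semisimple'' is a non sequitur. Knowing the eigenvalues says nothing about the unipotent part of Frobenius: a nontrivial Jordan block with eigenvalue $q^{j}$ is perfectly compatible with purity and with integrality of eigenvalues, and excluding such blocks is exactly the content of the proposition. (Frobenius semisimplicity for general smooth projective varieties over finite fields is an open problem, so it cannot follow formally from purity plus eigenvalue considerations.) Since your second and third paragraphs — the weight-degeneration of the spectral sequence for $Y$, the splitting of the long exact sequence by disjointness of weights — take semisimplicity of the groups $H^{*}(\overline{X}(v))$ as input, the whole proof collapses at this point; note also that, were your first paragraph correct, no induction over $v\prec w$ would be needed at all, which is a warning sign.

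What is actually needed, and what the paper supplies, is an induction on $\ell(w)$ to prove semisimplicity of $H^{i}(\overline{X}(w))$ itself: Poincar\'e duality reduces to $i\leq \ell(w)$; for $i<\ell(w)$ the vanishing $H^{i}_c(X(w))=0$ (Remark \ref{vanishing_coh}) shows $H^{i}(\overline{X}(w))$ injects into $H^{i}(Y)$, which is semisimple by the induction hypothesis applied to the smaller Demazure varieties; and the middle degree $i=\ell(w)$ requires a separate argument (reduction to $w=sw's$ and the $\mP^1$-bundle decomposition of Remark \ref{P1_closures}) showing that $H^{\ell(w)}_c(X(w))$ contains no irreducible subrepresentation with Tate twist $-\ell(w)/2$, so that again the relevant part of $H^{\ell(w)}(\overline{X}(w))$ embeds into $H^{\ell(w)}(Y)$. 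Your treatment of $H^{*}(Y)$ and of $H^{*}_c(X(w))$ by weight-splitting essentially reproduces the paper's reduction, but the case you dismissed as ``trivially semisimple'' is precisely where the real work lies; alternatively one would have to prove first that the cycle class map surjects onto $H^{2j}(\overline{X}(w))$, which is not something you can cite at this stage.
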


\begin{proof}
Again the proof is by induction on $\ell(w).$ The start of induction is given by Proposition \ref{cohomology_Coxeter_compactification}.
As the weights of $H^{i-1}(Y)$ are different from $H^i(\overline{X}(w)),$ it is enough to prove that both of these objects are Frobenius semisimple.
But by considering the $E_2$-term of the obvious  spectral sequence converging to the cohomology of $Y$ and by induction hypothesis  it suffices to show that
$H^i(\overline{X}(w))$ is Frobenius semisimple.

Since  $\overline{X}(w)$ is smooth and projective we get by Poincar\'e duality the identity $H^i(\overline{X}(w))=H^{2\ell(w)-i}(\overline{X}(w))(-\ell(w)+i)$ for $i \leq \ell(w).$
So it suffices to consider the case $i\leq \ell(w).$ Further we know that $H^i_c(X(w))=(0)$ for all $i<\ell(w)$, cf. Remark \ref{vanishing_coh}.
Hence we deduce that
$H^i(\overline{X}(w))\subset H^i(Y)$ for all $i< \ell(w).$ In the latter case the claim follows by induction considering again the spectral
sequence to $Y$.

If $i=\ell(w)$ (is even and positive), then we consider the long exact sequence
\begin{eqnarray*}
0 \to & H^{\ell(w)-1}(Y) & \to H^{\ell(w)}_c(X(w)) \to H^{\ell(w)}(\overline{X}(w)) \\ \to & H^{\ell(w)}(Y) & \to H^{\ell(w)+1}_c(X(w)) \to 0.
\end{eqnarray*}
We claim that if there is some irreducible subrepresentation $V=j_\lambda(-j) \subset H^{\ell(w)}_c(X(w))$, then $j<\frac{i}{2}.$
Here we may suppose that $w=sw's.$ If $V\subset H^{\ell(w)-1}(Y)$, then  the claim follows
by weight reasons. If $V\subset H^{\ell(w)}(\overline{X}(w))$, i.e. $j=\frac{i}{2}$, then it is in the kernel of the map
$$H^{\ell(w)}(\overline{X}(w)) \to \bigoplus_{v \prec w \atop \ell(v)=\ell(w)-1} H^{\ell(w)}(\overline{X}(v)).$$
Since $w's$  appears as index in this direct sum, the kernel  is by Remark \ref{P1_closures} the same as the kernel of the map
$$H^{\ell(w)-2}(\overline{X}(w's))(-1) \to H^{\ell(w)}(\overline{X}(sw'))\bigoplus \bigoplus_{v' \prec w's \atop \ell(v')=\ell(w')-1} H^{\ell(w)-2}(\overline{X}(v's))(-1).$$
In particular, it is contained in the kernel of the map
$$H^{\ell(w)-2}(\overline{X}(w's))(-1) \to  \bigoplus_{v' \prec w's \atop \ell(v')=\ell(w')-1} H^{\ell(w)-2}(\overline{X}(v's))(-1).$$
Since the contribution of $w'$ is missing on the RHS, we deduce that
$$V(1)=j_\lambda(-j+1)\subset H_c^{\ell(w)-2}(X(w's) \cup X(w')).$$ But  $H_c^{\ell(w)-2}(X(w's))=(0)$, as
$\ell(w')=\ell(w)-2< \ell(w's).$ Hence $V(1)\subset H_c^{\ell(w')}(X(w')).$ Again by induction we know that $j-1<\frac{\ell(w')}{2}.$
But $\frac{\ell(w')}{2}=\frac{\ell(w)-2}{2}=\frac{\ell(w)}{2}-1$. Hence we get a contradiction.
\end{proof}

\begin{coro}(of the proof)
 Let $w\in F^+\setminus\{e\}$ and let $V=j_\lambda(-i)\subset H^{\ell(w)}_c(X(w))$ for some $\lambda \in \P.$ Then $2i < \ell(w).$
\end{coro}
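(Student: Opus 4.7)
The plan is to extract the statement from the proof of Proposition \ref{Frobss}, essentially repeating the weight argument in a slightly more general setting so as to cover both parities of $\ell(w)$. First, I would invoke Theorem \ref{GKP} (together with the observation that cyclic shifts and the Br\'e--Michel/Deligne invariance identify $X(w)$ with $X(sw's)$ whenever they give the same element of $B^+$) to reduce to the case $w=sw's$ with $\ell(w)=\ell(w')+2$, induction being on $\ell(w)$. The base case $\h(w)=0$ is handled by Proposition \ref{cohomology_Coxeter} together with Remark \ref{cohomology_not_full_F}: for $w\neq e$ a Coxeter element in some Levi, the irreducible constituents of $H^{\ell(w)}_c(X(w))$ all carry Tate twist $0$, so $2i=0<\ell(w)$.

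For the inductive step I would use the stratification $\overline{X}(w)=X^{F^+}(w)\stackrel{\cdot}{\cup}Y$ with $Y=\bigcup_{v\prec w,\,\ell(v)=\ell(w)-1}\overline{X}(v)$ and the resulting exact sequence
\begin{equation*}
H^{\ell(w)-1}(Y)\to H^{\ell(w)}_c(X(w))\to H^{\ell(w)}(\overline{X}(w)).
\end{equation*}
If $V\subset H^{\ell(w)}_c(X(w))$ comes from $H^{\ell(w)-1}(Y)$, then by the spectral sequence computing $H^\ast(Y)$ in terms of the pure pieces $H^q(\overline{X}(u))$ (smooth and projective, hence pure of weight $q$), $V$ is pure of weight at most $\ell(w)-1$, i.e.\ $2i\le \ell(w)-1<\ell(w)$.

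The remaining case, which is the main obstacle, is when $V$ maps nontrivially to $H^{\ell(w)}(\overline{X}(w))$, which forces $\ell(w)$ to be even and $2i=\ell(w)$. Here I would repeat the argument already carried out in the proof of Proposition \ref{Frobss}: by Remark \ref{P1_closures}, $\overline{X}(w)$ is a $\mP^1$-bundle over $\overline{X}(w's)$, so
\begin{equation*}
H^{\ell(w)}(\overline{X}(w))=H^{\ell(w)}(\overline{X}(w's))\oplus H^{\ell(w)-2}(\overline{X}(w's))(-1),
\end{equation*}
and analogously for each $\overline{X}(v)$ with $v=sv's\prec w$, $\ell(v)=\ell(w)-1$. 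Since $w's$ itself appears among the indices of $Y$, tracing $V$ through these $\mP^1$-bundle decompositions shows that $V$ lies in the kernel of the differential
\begin{equation*}
H^{\ell(w)-2}(\overline{X}(w's))(-1)\to \bigoplus_{v'\prec w's,\,\ell(v')=\ell(w')-1}H^{\ell(w)-2}(\overline{X}(v's))(-1),
\end{equation*}
i.e.\ $V(1)\subset H^{\ell(w)-2}_c(X(w's)\cup X(w'))$. As $\ell(w's)>\ell(w)-2$, Remark \ref{vanishing_coh} forces $H^{\ell(w)-2}_c(X(w's))=0$, so $V(1)\subset H^{\ell(w')}_c(X(w'))$. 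The inductive hypothesis applied to $w'$ (of strictly smaller length) yields $2(i-1)<\ell(w')=\ell(w)-2$, and hence $2i<\ell(w)$, completing the argument.
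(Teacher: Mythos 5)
Your argument is correct and is essentially the paper's own: the corollary is extracted from the weight argument in the proof of Proposition \ref{Frobss}, namely reduction to $w=sw's$, the exact sequence $H^{\ell(w)-1}(Y)\to H^{\ell(w)}_c(X(w))\to H^{\ell(w)}(\overline{X}(w))$, the $\mP^1$-bundle decomposition over $\overline{X}(w's)$ to show a weight-$\ell(w)$ constituent would force $V(1)\subset H^{\ell(w')}_c(X(w'))$, and induction on the length. The only (harmless) difference is that you make explicit the parity discussion and the Coxeter base case, which the paper leaves implicit.
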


\begin{rks}
i) The latter result is proved in \cite[Prop. 3.3.31 (iv)]{DMR} for arbitrary reductive groups.

ii) It was pointed out to me by O. Dudas, that the semi-simplicity of $H^\ast(\overline{X}(w))$, Lemma \ref{vanish} and the upcoming
Proposition \ref{cycle_isom} can be  deduced from \cite{L4}.
\end{rks}

We further have the following vanishing result.

\begin{prop}\label{vanishtop}
 Let $w\in F^+$ with $\h(w)\geq 1.$ Then $H^{2\ell(w)-1}_c(X(w))=0.$
\end{prop}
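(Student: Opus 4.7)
The plan is to argue by strong induction on $\ell(w)$. For the base case $\h(w) = 1$, the vanishing of $H^{2\ell(w)-1}_c(X(w))$ follows from Corollary \ref{coro_height1_a} and Proposition 5.10, whose Weyl-group arguments extend verbatim to elements of $F^+$ in view of Remark \ref{same_is_true_for_F}. So assume $\h(w) \geq 2$ and that the statement holds for all $v \in F^+$ with $\ell(v) < \ell(w)$ and $\h(v) \geq 1$. By Remark \ref{cohomology_not_full_F} I reduce to the case where $w$ has full support (since parabolic induction preserves both the height and the relevant cohomological vanishing), and by a sequence of cyclic shifts (which preserve $H^\ast_c(X(w))$) I further arrange $w = sw's$ with $\ell(w) = \ell(w')+2$.

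A key auxiliary observation is that $\h(w's) \geq 1$ and $\h(sw') \geq 1$. Indeed, if $\h(w's) = 0$ then $w's$ is minimal in its conjugacy class, so it is reduced as a word in $B^+$ with pairwise distinct letters, and its image in $W$ is a Coxeter element of some Levi subgroup. Deleting the last letter shows that $w'$ is likewise a Coxeter element in a (possibly smaller) Levi, whence $\h(w') = 0$ and $\h(w) = \h(w') + 1 = 1$, contradicting $\h(w) \geq 2$. The argument for $sw'$ is symmetric.

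Now invoke the long exact sequence (\ref{long_exact_coh_sequence}) in degree $i = 2\ell(w)-1$. Since $H^{2\ell(w)-1}_c(X(sw')) = 0$ (above the top), the relevant portion reads
\begin{equation*}
H^{2\ell(w)-2}_c(X(sw')) \xrightarrow{\delta} H^{2\ell(w)-1}_c(X(w)) \to H^{2\ell(w)-3}_c(Z')(-1) \to 0.
\end{equation*}
The right-hand term vanishes: from the long exact sequence of $Z' = X(w's) \cup X(w')$ with the closed stratum $X(w')$ one has $H^{2\ell(w)-3}_c(X(w')) = 0$ (since $2\ell(w)-3 > 2\ell(w')$), so $H^{2\ell(w)-3}_c(Z')$ is a quotient of $H^{2\ell(w's)-1}_c(X(w's))$, which vanishes by the inductive hypothesis applied to $w's$ (whose length is strictly smaller and which satisfies $\h(w's) \geq 1$ by the auxiliary observation).

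It remains to show $\delta = 0$; this I expect to be the main obstacle. By the long exact sequence for $Z = X(w) \cup X(sw')$ (with $X(sw')$ closed), the vanishing of $\delta$ is equivalent to the surjectivity of the restriction $r^{2\ell(sw')}_{w,sw'} \colon H^{2\ell(sw')}_c(Z) \to H^{2\ell(sw')}_c(X(sw'))$ at the top cohomological degree of $X(sw')$. The strategy is to exploit the splitting $H^\ast_c(Y) = H^\ast_c(Z) \oplus H^\ast_c(X(w's))$ of Corollary \ref{splitting} together with its symmetric counterpart $H^\ast_c(Y) = H^\ast_c(\tilde{Z}) \oplus H^\ast_c(X(sw'))$ (obtained from $\tilde{Z} = X(w) \cup X(w's)$): the top cohomology of $X(sw') \cong X(w's)$ embeds into $H^{2\ell(sw')}_c(Y)$ in two different ways, related via the universal homeomorphism $\sigma$ of Proposition \ref{Prop_homeomorphic}; as in the proof of Proposition \ref{surjective_on_tatetwist}, the non-triviality of the Tate twist forces these two copies not to coincide, which yields the desired surjectivity. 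Combining this with the vanishing of $H^{2\ell(w)-3}_c(Z')(-1)$ established above gives $H^{2\ell(w)-1}_c(X(w)) = 0$, completing the induction.
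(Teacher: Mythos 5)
Your overall skeleton matches the paper's proof: reduce to $w=sw's$, run the long exact sequence of the pair $(Z,X(sw'))$ in degree $2\ell(w)-1$, identify $H^{2\ell(w)-1}_c(Z)$ with $H^{2\ell(w)-3}_c(X(w's)\cup X(w'))(-1)$ via Corollary \ref{cohomology_vb}, and kill that term by the inductive hypothesis applied to $w's$ (whose height equals that of $sw'$, as height only depends on the cyclic shift class) together with the trivial degree bound $2\ell(w)-3>2\ell(w')$; the height-one base case is indeed covered by the computations of Section 5. The problem is the remaining step, $\delta=0$, equivalently the surjectivity of $r^{2\ell(w)-2}_{w,sw'}$, which you yourself flag as the main obstacle and for which your proposed mechanism does not work in general.

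The double-splitting/Tate-twist argument you sketch is exactly the proof of Proposition \ref{surjective_on_tatetwist}, and it requires the hypothesis there: a second presentation $w=s'w''s'$ with $s'\neq s$ and $w''=w'$. Corollary \ref{splitting} applied to the given presentation yields only the one splitting $H^i_c(Y)=H^i_c(Z)\oplus H^i_c(X(w's))$; the ``symmetric counterpart'' $H^i_c(Y)=H^i_c(\tilde{Z})\oplus H^i_c(X(sw'))$ comes precisely from applying the corollary to the alternative presentation, and for a general $w$ of height $\geq 2$ no such presentation exists. In $F^+$ it can never exist (the first and last letters of the word $w=sw's$ are both $s$), and even after passing to $B^+$ via Brou\'e--Michel--Deligne it exists only for special elements (this is why Proposition \ref{surjective_on_tatetwist} carries that hypothesis, used together with Lemma \ref{square}). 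So your key step is a genuine gap. The paper closes it by a different, representation-theoretic observation: since $\supp(sw')=\supp(w)$, after your reduction to full support one has $H^{2\ell(w)-2}_c(X(sw'))=H^{2\ell(sw')}_c(X(sw'))=i^G_G(-\ell(sw'))$, the top cohomology group, a twist of the trivial representation; by Remark \ref{vanishing_coh} the trivial representation occurs in $H^\ast_c(X(w))$ only in degree $2\ell(w)$, so the boundary map $\delta$ into $H^{2\ell(w)-1}_c(X(w))$ must vanish and $r$ is surjective. Substituting this isotypic/top-degree argument for your Tate-twist comparison makes the rest of your induction go through.
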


\begin{proof}
By Corollary \ref{coro_height1_a} we may suppose that $\h(sw')\geq 1.$
 We consider the long exact cohomology sequence
\begin{eqnarray*}
 \cdots \to  & H^{2\ell(w)-2}_c(X(w) \cup X(sw')) & \stackrel{r}{\to} H^{2\ell(w)-2}_c(X(sw')) \to H^{2\ell(w)-1}_c(X(w)) \\
\to &  H^{2\ell(w)-1}_c(X(w) \cup X(sw')) & \to \cdots.
\end{eqnarray*}
The map $r=r^{2\ell(w)-2}_{w,sw'}$ has to be surjective since $H^{2\ell(w)-2}_c(X(sw'))=H^{2\ell(sw')}_c(X(sw'))=i^G_G(-\ell(sw'))$ is the top cohomology group.
On the other hand, by Proposition \ref{cohomology_vb} we know  that $H^{2\ell(w)-1}_c(X(w) \cup X(sw'))=H^{2\ell(w)-3}_c(X(w's) \cup X(w'))(-1).$
But $$H^{2\ell(w)-3}_c(X(w's))=H^{2\ell(w's)-1}_c(X(sw'))=0$$ by induction. Further $H^{2\ell(w)-3}_c(X(w'))=H^{2\ell(w')+1}_c(X(w'))=0.$
Hence we conclude the claim.
\end{proof}

This vanishing result has the following consequences.

\begin{coro}\label{coro_l(w)-1}
Let $w\in F^+$ with $\h(w)\geq 1.$ Then
\begin{eqnarray*}
 H^{2\ell(w)-2}(\overline{X}(w)) & = & H^{2\ell(w)-2}_c(X(w))\bigoplus \bigoplus_{v \preceq w \atop \ell(v)= \ell(w)-1} H^{2\ell(w)-2}_c(X(v)) \\
& = & H^{2\ell(w)-2}_c(X(w))\bigoplus \bigoplus_{v \preceq w \atop \ell(v)= \ell(w)-1} i^G_{P(v)}(-(\ell(w)-1)).\qed
\end{eqnarray*}
\end{coro}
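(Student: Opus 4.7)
The plan is to derive the decomposition from the cohomology long exact sequence of the pair $(\overline{X}(w), Y)$, where $Y := \bigcup_{v \prec w,\, \ell(v) = \ell(w)-1} \overline{X}(v)$ is the boundary divisor of $X(w)$ in its Demazure compactification. Since $\h(w) \geq 1$, Proposition \ref{vanishtop} gives $H^{2\ell(w)-1}_c(X(w)) = 0$, so the relevant stretch of the sequence reads
$$H^{2\ell(w)-3}(Y) \xrightarrow{\delta} H^{2\ell(w)-2}_c(X(w)) \to H^{2\ell(w)-2}(\overline{X}(w)) \to H^{2\ell(w)-2}(Y) \to 0.$$

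First I would compute $H^{2\ell(w)-2}(Y)$ by Mayer--Vietoris. Every $(k+1)$-fold intersection of distinct components of $Y$ is again some $\overline{X}(u)$ with $\ell(u) \leq \ell(w)-1-k$, so for $k \geq 1$ its cohomology in degree $2\ell(w)-2$ vanishes by dimension; and each term $H^{2\ell(w)-3}(\overline{X}(v))$ vanishes by Lemma \ref{vanish}. The Mayer--Vietoris spectral sequence therefore collapses to
$$H^{2\ell(w)-2}(Y) = \bigoplus_v H^{2\ell(v)}(\overline{X}(v)) = \bigoplus_v i^G_{P(v)}(-(\ell(w)-1)),$$
where the second equality uses the $F^+$-version of Proposition \ref{cohomology_not_full} (Remark \ref{cohomology_not_full_F}) to identify the top cohomology of each boundary stratum.

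The crux is to show $\delta = 0$, equivalently that $H^{2\ell(w)-2}_c(X(w))$ is pure of weight $2\ell(w)-2$. The same Mayer--Vietoris analysis shows that $H^{2\ell(w)-3}(Y)$ is pure of weight $2\ell(w)-4$ (only the position $(p,q)=(1, 2\ell(w)-4)$ contributes), so purity of $H^{2\ell(w)-2}_c(X(w))$ would immediately force $\delta=0$ by weight comparison. To prove purity I would induct on $\ell(w)$ using the weight spectral sequence $E_1^{p,q} = \bigoplus_{\ell(v) = \ell(w)-p} H^q(\overline{X}(v)) \Rightarrow H^{p+q}_c(X(w))$ of the introduction, which degenerates at $E_2$. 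The only potential weight $<2\ell(w)-2$ contribution to $H^{2\ell(w)-2}_c(X(w))$ is $E_2^{2, 2\ell(w)-4} = \Coker(d_1)$ for
$$d_1 : \bigoplus_{\ell(v) = \ell(w)-1} H^{2\ell(v)-2}(\overline{X}(v)) \longrightarrow \bigoplus_{\ell(v') = \ell(w)-2} i^G_{P(v')}(-(\ell(w)-2)).$$
Applying the corollary inductively to each $v$ on the left exhibits $i^G_{P(v')}(-(\ell(v)-1))$ as a direct summand of $H^{2\ell(v)-2}(\overline{X}(v))$ for every $v' \prec v$ with $\ell(v')=\ell(v)-1$; by the $\mathbb{P}^1$-bundle description of Remark \ref{P1_closures}, this summand restricts isomorphically onto the corresponding target. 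Hence every target summand is hit, $d_1$ is surjective, and $E_2^{2, 2\ell(w)-4}=0$. The base case $\h(w)=1$ is furnished by Corollary \ref{coro_height1_a} and the subsequent proposition in Section 5.

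Granted $\delta = 0$, the resulting short exact sequence
$$0 \to H^{2\ell(w)-2}_c(X(w)) \to H^{2\ell(w)-2}(\overline{X}(w)) \to \bigoplus_v i^G_{P(v)}(-(\ell(w)-1)) \to 0$$
consists of Frobenius-semisimple modules (Proposition \ref{Frobss}), all pure of weight $2\ell(w)-2$, so it splits and yields the claimed decomposition. The main obstacle is the surjectivity of $d_1$ in the purity step: tracing the inductive summand $i^G_{P(v')}(-(\ell(v)-1)) \subset H^{2\ell(v)-2}(\overline{X}(v))$ to its image under the \v{C}ech--restriction map requires a careful compatibility check with the $\mathbb{P}^1$-bundle structure of Remark \ref{P1_closures}, together with a separate verification in the height-one base case.
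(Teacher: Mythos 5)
Your overall frame -- the long exact sequence of the pair $(\overline{X}(w),Y)$ with $Y=\bigcup_{\ell(v)=\ell(w)-1}\overline{X}(v)$, the Mayer--Vietoris identification of $H^{2\ell(w)-2}(Y)$ and the weight bound on $H^{2\ell(w)-3}(Y)$ via Lemma \ref{vanish}, the use of Proposition \ref{vanishtop} to make the restriction to $Y$ surjective, and the splitting via Proposition \ref{Frobss} -- is exactly the mechanism the paper has in mind: the corollary is stated there with no separate proof, as an immediate consequence of Proposition \ref{vanishtop} and the stratification spectral sequence. You are right, and more careful than the paper, in observing that the literal statement also requires the connecting map $\delta\colon H^{2\ell(w)-3}(Y)\to H^{2\ell(w)-2}_c(X(w))$ to vanish (equivalently, the vanishing of the incoming $d_1$ from $\bigoplus_{\ell(v)=\ell(w)-1}H^{2\ell(v)-1}_c(X(v))$), and your reduction of this to purity of $H^{2\ell(w)-2}_c(X(w))$, hence to the surjectivity of $d_1\colon\bigoplus_{\ell(v)=\ell(w)-1}H^{2\ell(v)-2}(\overline{X}(v))\to\bigoplus_{\ell(v')=\ell(w)-2}i^G_{P(v')}(-(\ell(w)-2))$, is sound (your check that only $E_2^{2,2\ell(w)-4}$ could carry lower weight is correct).

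The gap is in your proof of that surjectivity. You ``apply the corollary inductively to each $v$ on the left'', but the corollary only applies when $\h(v)\geq 1$, whereas subwords $v\prec w$ of length $\ell(w)-1$ with $\h(v)=0$ (Coxeter words of Levi subgroups) occur, and for a fixed target $v'$ \emph{all} intermediate $v$ can be of this kind: for $w=s_1s_1s_2$ in $\GL_3$ (so $\h(w)=1$) and $v'=s_2$, every length-two subword containing $s_2$ is the Coxeter word $s_1s_2$. For such $v$ one has $H^{2\ell(v)-1}_c(X(v))\neq 0$ in general (Proposition \ref{cohomology_Coxeter}), so neither the inductive corollary nor Proposition \ref{vanishtop} yields the surjectivity of $H^{2\ell(v)-2}(\overline{X}(v))\to H^{2\ell(v')}(\overline{X}(v'))$; and the appeal to Remark \ref{P1_closures} does not help here, both because a Coxeter word is not of the form $sv's$ and because that remark concerns the projection to the base $\overline{X}(w's)$, not restriction to an arbitrary boundary divisor. (When $\h(v)\geq 1$ the appeal is also unnecessary: once $H^{2\ell(v)-1}_c(X(v))=0$, the map $H^{2\ell(v)-2}(\overline{X}(v))\to H^{2\ell(v)-2}(Y_v)$ is surjective and hence so is each component map.) To close the induction you need a separate argument in the height-zero case -- for instance via the blow-up description of $\overline{X}(\cox)$ in Proposition \ref{cohomology_Coxeter_compactification} (in the spirit of Lemma \ref{qi_restriction}), where the exceptional-divisor classes restrict onto the top cohomology of each boundary component -- or else establish the purity of $H^{2\ell(w)-2}_c(X(w))$ directly, e.g.\ along the lines of the inductive formula in the corollary that follows this one in the paper.
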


\begin{proof}
 We consider the long exact cohomology sequence
$$ \cdots \to   H^{i-1}(Y)  \to H^{i}_c(X(w)) \to H^{i}(\overline{X}(w)) \to H^{i}(Y)  \to \cdots$$
 which coincides  in degree $i=\ell(w)-2$ with
 \begin{eqnarray*}
 \cdots \to  & \bigoplus\limits_{v\prec w \atop \ell(v)=\ell(w)-1}H^{2\ell(w)-3}_c(X(v)) & \stackrel{\delta}{\to} H^{2\ell(w)-2}_c(X(w)) \to H^{2\ell(w)-2}(\overline{X}(w)) \\
\to & \bigoplus\limits_{v\prec w \atop \ell(v)=\ell(w)-1} H^{2\ell(w)-2}_c(X(v)) & \to H^{2\ell(w)-1}_c(X(w)) \to \cdots
\end{eqnarray*}
Now $H^{2\ell(w)-1}_c(X(w))=0$. If $\h(v)\geq 1$ then we conclude by induction that $H^{2\ell(w)-3}_c(X(v))=H^{2\ell(v)-1}_c(X(v))=0.$
Thus if this latter condition is satisfied, we are done. Otherwise $w$ is of height one and there is some $v\prec w$
with $\h(v)=0.$. Then we deduce from Corollary \ref{coro_height1_a} and weight reasons that the map $\delta$ vanishes.
\end{proof}

\begin{coro}
 Let $w=sw's\in F^+$ with $\h(w')\geq 1$ and $\supp(w)=S.$
 Then $$H^{2\ell(w)-2}_c(X(w))=H^{2\ell(w's)-2}_c(X(w's))(-1) \bigoplus (i^G_{P(w')}-i^G_G)(-\ell(w)+1).$$
\end{coro}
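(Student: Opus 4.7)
The plan is to compute $H^{2\ell(w)-2}(\overline{X}(w))$ in two independent ways and cancel the common contributions. Since $\h(w)=\h(w')+1\geq 2$, Corollary~\ref{coro_l(w)-1} applies to $w$ and yields
\begin{equation*}
H^{2\ell(w)-2}(\overline{X}(w)) \;=\; H^{2\ell(w)-2}_c(X(w))\;\oplus\bigoplus_{\genfrac{}{}{0pt}{2}{v\preceq w}{\ell(v)=\ell(w)-1}} i^G_{P(v)}(-\ell(w)+1).
\end{equation*}
The length-$(\ell(w)-1)$ subwords of $w=s\cdot w'\cdot s$ in $F^+$ are obtained by deleting a single letter, and thus split into three classes: $w's$ (initial $s$ deleted), $sw'$ (final $s$ deleted), and $sv's$ for each $v'\prec w'$ with $\ell(v')=\ell(w')-1$ (a letter of $w'$ deleted). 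Since $\supp(w)=S$ we have $\supp(w's)=\supp(sw')=S$, so $P(w's)=P(sw')=G$; and since $\supp(sv's)=\{s\}\cup\supp(v')=\supp(v's)$, one gets $P(sv's)=P(v's)$. Hence the direct sum simplifies to $2\cdot i^G_G(-\ell(w)+1) \oplus \bigoplus_{v'} i^G_{P(v's)}(-\ell(w)+1)$.

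On the other hand, Remark~\ref{P1_closures} identifies $\overline{X}(w)$ with a $\mathbb{P}^1$-bundle over $\overline{X}(w's)$, giving
\begin{equation*}
H^{2\ell(w)-2}(\overline{X}(w)) \;=\; H^{2\ell(w's)}(\overline{X}(w's))\;\oplus\; H^{2\ell(w's)-2}(\overline{X}(w's))(-1).
\end{equation*}
The top summand equals $i^G_G(-\ell(w)+1)$, since $\overline{X}(w's)$ is smooth, projective and geometrically connected because $w's$ has full support. Applying Corollary~\ref{coro_l(w)-1} a second time, now to $w's$, and using that its length-$(\ell(w's)-1)$ subwords are exactly $w'$ and the $v's$, gives
\begin{equation*}
H^{2\ell(w's)-2}(\overline{X}(w's)) \;=\; H^{2\ell(w's)-2}_c(X(w's))\;\oplus\; i^G_{P(w')}(-\ell(w)+2)\;\oplus\bigoplus_{v'} i^G_{P(v's)}(-\ell(w)+2).
\end{equation*}

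Equating the two expressions for $H^{2\ell(w)-2}(\overline{X}(w))$ and cancelling the common summands $\bigoplus_{v'}i^G_{P(v's)}(-\ell(w)+1)$ together with one copy of $i^G_G(-\ell(w)+1)$---which is permissible by Proposition~\ref{Frobss} combined with the semisimplicity of the $G$-representations involved---leaves
\begin{equation*}
H^{2\ell(w)-2}_c(X(w))\;\oplus\; i^G_G(-\ell(w)+1) \;=\; H^{2\ell(w's)-2}_c(X(w's))(-1)\;\oplus\; i^G_{P(w')}(-\ell(w)+1),
\end{equation*}
which is the claimed identity once $i^G_G$ is viewed as the canonical subrepresentation of $i^G_{P(w')}$ corresponding to the double coset of $1\in W_{P(w')}\backslash W/W_G$. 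The main obstacle in executing this plan is verifying that Corollary~\ref{coro_l(w)-1} indeed applies to $w's$, i.e.\ that $\h(w's)\geq 1$. I would deduce this from $\h(w')\geq 1$ by a short case analysis using Lemma~\ref{doppelts}: if $\alpha(w')\in B^+\setminus W$, then $\alpha(w's)=\alpha(w')\cdot s$ still contains a squared letter, so $\h(w's)\geq 1$ directly; while if $\alpha(w')\in W$, then assuming $h_W(\alpha(w's))=0$ would force $\alpha(w's)$ to be a Coxeter element in some Levi $W_I$ admitting $s$ as a right descent, whence $\alpha(w')=\alpha(w's)\cdot s$ would be a Coxeter element in the smaller Levi $W_{I\setminus\{s\}}$, contradicting $\h(w')\geq 1$.
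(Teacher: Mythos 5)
Your argument is correct, but it takes a genuinely different route from the paper's. The paper works directly with compactly supported cohomology of the locally closed pieces: using Proposition \ref{vanishtop} (for $w$ and for $sw'$) it splits the long exact sequence of the pair $\bigl(X(w)\cup X(sw'),\,X(sw')\bigr)$ into a short exact sequence $0\to H^{2\ell(w)-2}_c(X(w))\to H^{2\ell(w)-2}_c(X(w)\cup X(sw'))\to i^G_G(-\ell(w)+1)\to 0$, rewrites the middle term as $H^{2\ell(w)-4}_c(X(w's)\cup X(w'))(-1)$ via the $\mA^1$-bundle of Corollary \ref{cohomology_vb}, and then splits that group as $H^{2\ell(w's)-2}_c(X(w's))\oplus H^{2\ell(w')}_c(X(w'))$ using $H^{2\ell(w')-1}_c(X(w'))=H^{2\ell(w's)-1}_c(X(w's))=0$. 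You instead compute $H^{2\ell(w)-2}(\overline{X}(w))$ twice, once by Corollary \ref{coro_l(w)-1} applied to $w$ and once via the $\mP^1$-bundle of Remark \ref{P1_closures} combined with Corollary \ref{coro_l(w)-1} applied to $w's$, and cancel in the semisimple category of $G\times\Gamma$-modules (Proposition \ref{Frobss}); since Corollary \ref{coro_l(w)-1} is itself a consequence of Proposition \ref{vanishtop}, both proofs ultimately rest on the same vanishing input, yours being a repackaging through the compactification. What your route buys is that it makes explicit the hypothesis $\h(w's)\geq 1$ (the paper needs $\h(sw')\geq 1$ and $\h(w's)\geq 1$ as well but passes over them in silence), and your descent argument for it is essentially right; just record also the easy sub-case where $w'$ is a reduced word but $w's$ is not, which gives $\h(w's)\geq 1$ directly from the definition. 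What the paper's route buys is brevity and that it avoids the connectedness point: rather than invoking geometric connectedness of $\overline{X}(w's)$, it is cleaner to observe that $H^{2\ell(w's)}(\overline{X}(w's))$ is a nonzero quotient of $H^{2\ell(w's)}_c(X(w's))=i^G_G(-\ell(w's))$ (Remark \ref{vanishing_coh} in its $F^+$-form), because the boundary strata have strictly smaller dimension.
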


\begin{proof}
 Since $\h(sw')\geq 1$ we have an exact sequence
 $$0 \to H^{2\ell(w)-2}_c(X(w)) \to H^{2\ell(w)-2}_c(X(w)\cup X(sw')) \to H^{2\ell(w)-2}_c(X(sw'))\to 0.$$
 But the assumption $\supp(w)=S$ also implies  that $\supp(sw')=S.$ Hence we get
 $$H^{2\ell(w)-2}_c(X(sw'))=H^{2\ell(sw')}_c(X(sw'))=i^G_G(-\ell(w)+1).$$
 Further the identity $H^{2\ell(w)-2}_c(X(w) \cup X(sw'))=H^{2\ell(w)-4}_c(X(w's) \cup X(w'))$ is satisfied
 by Corollary \ref{cohomology_vb} But since $\h(w')\geq 1$ we deduce
 that $$H^{2\ell(w)-5}_c(X(w'))=H^{2\ell(w')-1}_c(X(w'))=0.$$ 
 Now the result follows easily.
\end{proof}

We reconsider  the spectral sequence
$$ E_1^{p,q} \Rightarrow H^{p+q}(H^\ast(\overline{X}(w)))$$
which is induced by the stratification
$\overline{X}(w) =  \;\stackrel{\cdot}{\bigcup}_{v \preceq w} X(v).$
The $q$th line in the  $E_1$-term is  the complex
$$\bigoplus_{v'\preceq w \atop \ell(v')=q} H_c^{2q}(X(v')) \to \cdots \to \bigoplus_{v'\preceq w \atop \ell(v')=q+j} H_c^{2q+j}(X(v'))
\to \bigoplus_{v\preceq w \atop \ell(v)=q+j+1} H_c^{2q+j+1}(X(v)) \to \cdots$$
where the homomorphisms are induced by  the boundary maps $\delta_{v',v}.$

\vspace{0.5cm}
{\tt Picture}: $$\begin{array}{ccccccccccccccc}
 \noindent         & & & &     & &  &  & & & & &   \vdots & & \vdots  \\
     & & & & H^4 & & H^5 & & H^6 & & \cdots & & H^{i+2} & & H^{i+3} \cdots \\ \\
    & & H^2 & & H^3 & & H^4 & & H^5 & & \cdots & & H^{i+1} & & H^{i+2} \cdots \\ \\
H^0 & & H^1 & & H^2 & & H^3 & & H^4 & & \cdots & & H^i & & H^{i+1} \cdots\\
\hline \\
e & & \ell=1 & & \ell=2 & & \ell=3 & & \ell=4 & & \cdots & & \ell=i & & \ell=i+1
\end{array}$$

\bigskip

Of course this spectral sequence degenerates and we may write by weight reasons and by Proposition \ref{Frobss} for all $0\leq i\leq \ell(w)$,
$$H^{2i}(\overline{X}(w))=\bigoplus_{j=i}^{\ell(w)}H^{2i}_c(X(w)(j))'$$
where
$$X(w)(j):=\bigcup_{v\preceq w\atop \ell(v)=j}X(v)$$ and where
$H^{2i}_c(X(w)(j))'\subset H^{2i}_c(X(w)(j))=\bigoplus_{v\preceq w\atop \ell(v)=j} H^{2i}_c(X(v)).$

For $v \preceq w$ with $\ell(v)=i$, we have $H^{2i}_c(X(v))=H^{2i}(\overline{X}(v))=i^G_{P(v)}(-i)$. Here $P(v)\subset G$ is the std psgp attached to
$v$, cf. (\ref{psgp}.)
By Remark \ref{vanishing_square}  the trivial representation does appear in the top cohomology degree of a DL-variety. Hence the subrepresentation
$i^G_G(-i)\subset i^G_{P(v)}(-i)$ survives the spectral sequence. Thus there is grading
$$H^{2i}(\overline{X}(w))=\bigoplus_{z\preceq w\atop \ell(v)=i} H(w)_z$$
with $H(w)_z\supset i^G_{G}(-i)$ for certain representations $H(w)_z.$

In the sequel we shall see that we may suppose that the objects $H(w)_z$ are induced representations from parabolic subgroups.
Recall that the  following three operations $C,K,R$  on elements in $F^+$ allows us to transform an
arbitrary element $w\in F^+$ into the shape $w=sw's$ with $s\in S$ and $w' \in F^+.$

\noindent (I) (Cyclic shift) If $w=sw'$ with $s\in S$, then we set $C(w)=w's.$ \smallskip

\noindent (II) (Commuting relation). If $w=w_1 s t w_2$ with $s,t\in S$ and $st=ts.$ Then we set $K(w)=w_1tsw_2.$\smallskip

\noindent (III) (Replace $sts$ by $tst$) If $w=w_1stsw_2$ with $s,t\in S$  and $sts=tst$.
Then we  set $R(w)=w_1tst w_2.$\smallskip

We shall analyse the induced behaviour on the cohomology of Demazure varieties.
In \cite{DMR} the following generalization of Proposition \ref{Prop_homeomorphic} is proved.

\begin{prop}\label{cyclic_shift}
 Let $w=s_{i_1}\cdots s_{i_r} \in \hat{F}^+$. Then for all $i \geq 0,$ there are isomorphisms of $H$-modules 
 $$H^i_c(X(w)) \to H^i_c(X(C(w)))$$ and 
  $$H^i(\overline{X}(w)) \to H^i(\overline{X}(C(w)))$$ 
\end{prop}

\begin{proof}
 These are special cases of \cite[Proposition 3.1.6]{DMR}.
\end{proof}

\begin{coro}
Let $sw's\in F^+.$ Then $H^i(\overline{X}(sw'))=H^i(\overline{X}(w's))$ for all $i\geq 0.$ \qed
\end{coro}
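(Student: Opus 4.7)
The plan is to derive this corollary as an immediate application of Proposition \ref{cyclicshift_coh}. Since $sw's\in F^+$, the subword $sw'$ lies in $F^+$ and begins with the simple reflection $s$, so the cyclic shift operator $C$ is applicable and yields $C(sw')=w's$ by the very definition of $C$. Taking $w=sw'$ in Proposition \ref{cyclicshift_coh} then produces an isomorphism of $H$-modules $H^i(\overline{X}(sw'))=H^i(\overline{X}(C(sw')))=H^i(\overline{X}(w's))$ for all $i\geq 0$, which is precisely the assertion of the corollary.

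There is essentially no obstacle to overcome here: the entire content is packaged inside Proposition \ref{cyclicshift_coh}, whose proof identifies the strata of both compactifications via the bijection $Q(1,C(w))=\{C(v)\mid v\in Q(1,w)\}$ together with the stratum-wise isomorphism supplied by Proposition \ref{cyclic_shift}. One could alternatively argue symmetrically starting from $w's$ (the inverse cyclic shift sends $w's$ to $sw'$), or iterate the cyclic shift to see that both cohomologies agree with that of any cyclic rearrangement of the word $sw's$; these variants give nothing new but confirm that the identity is insensitive to which end of $sw'$ we attach the remaining $s$ to.
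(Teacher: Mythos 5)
Your argument is precisely the intended one: the corollary is an immediate instance of Proposition \ref{cyclicshift_coh} applied to $w=sw'$, for which $C(w)=w's$ by definition of the cyclic shift operator. This is exactly why the paper appends \qed directly to the statement without further proof.
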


\begin{rk}
Consider  the equivalence relation $\sim$ on $F^+$ leading to cyclic shift classes in the sense of \cite{GP}, i.e., $v,w\in F^+$ are equivalent if there is some integer
$i\geq 0$ with $C^i(w)=v.$ Thus we can associate to any element in $C^+=F^+ \slash \sim \;$ its cohomology. Moreover, the height function $\h$
on $F^+$ depends  only on the cycle shift class. Sometimes it is useful to interpret in what follows
the image of an element  $w=s_{i_1}s_{i_2}\cdots s_{i_{r-1}}s_{i_r}$ in $C^+$ as a circle, i.e.,

$$\begin{array}{ccc}
  ×& s_{i_1} & \\
  s_{i_r} & & s_2 \\
  \vdots & & \vdots \\
  s_{i_{j+1}} & & s_{i_{j-1}}\\
  & s_{i_j} &
 \end{array}$$

\end{rk}

\bigskip

As we have observed above the Cyclic shift operator does not affect the cohomology of a Demazure variety. The
same holds true for operation (II).

\begin{prop}\label{coh_iso_comm}
Let $w=w_1 s t w_2 \in \hat{F}^+$ with $s,t\in S$ (or $s\in S, t\in \hat{S}$) such that $st=ts.$ Set $K(w)=w_1tsw_2.$ Then
$H^i(\overline{X}(w))=H^i(\overline{X}(K(w)))$ for all $i \geq 0.$
\end{prop}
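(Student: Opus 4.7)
The plan is to construct an explicit $G$-equivariant isomorphism of varieties $\Phi: \overline{X}(w) \xrightarrow{\sim} \overline{X}(K(w))$; the cohomological statement will then follow tautologically.

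Write $a = \ell(w_1)$ and $r = \ell(w)$, so that in the presentation
\[
\overline{X}(w) = \bigl\{(x_0,\ldots,x_r) \in X^{r+1} \bigm| \mathrm{inv}(x_{j-1},x_j)\in\{e,s_{i_j}\},\ x_r=F(x_0)\bigr\}
\]
the $(a{+}1)$-th factor corresponds to $s$ and the $(a{+}2)$-th to $t$. First I would define $\Phi$ by leaving $(x_0,\ldots,x_a,x_{a+2},\ldots,x_r)$ unchanged and replacing $x_{a+1}$ by the unique Borel subgroup $y_{a+1}$ characterised by $\mathrm{inv}(x_a,y_{a+1}) \in \{e,t\}$ and $\mathrm{inv}(y_{a+1},x_{a+2}) \in \{e,s\}$. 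To see this is well-defined, I would prove the key local claim: given $B_0, B_2 \in X$ with $\mathrm{inv}(B_0,B_2) \in \{e, s, t, st\}$ (where $st = ts$), there is a unique $B_1' \in X$ with $\mathrm{inv}(B_0, B_1') \in \{e, t\}$ and $\mathrm{inv}(B_1', B_2) \in \{e, s\}$.

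The case $\mathrm{inv}(B_0, B_2) = st = ts$ follows directly from property (1)(b) of the orbits $\mathcal{O}(\cdot)$ recalled in Section~3 (unique factorization when lengths are additive), applied to the factorization $ts$. In the remaining three cases $\mathrm{inv}(B_0, B_2) \in \{e, s, t\}$ one checks by a short case-by-case analysis: if $\mathrm{inv}(B_0, B_2) = e$, then $B_1' = B_0 = B_2$; if $\mathrm{inv}(B_0, B_2) = s$, then $B_1' = B_0$ is forced (the alternative $\mathrm{inv}(B_0, B_1') = t$ would force $\mathrm{inv}(B_0, B_2) = ts = st$ by property~(1)(a), contradicting the assumption); and the case $\mathrm{inv}(B_0, B_2) = t$ is symmetric, giving $B_1' = B_2$. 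The very same argument with the roles of $s$ and $t$ interchanged supplies a morphism $\Psi: \overline{X}(K(w)) \to \overline{X}(w)$, and the uniqueness statement shows $\Psi \circ \Phi = \mathrm{id}$ and $\Phi \circ \Psi = \mathrm{id}$.

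Since $\Phi$ is a morphism of schemes over $k$ (the assignment $(B_0,B_2)\mapsto B_1'$ is algebraic as it is defined locally by the unique splitting of morphisms of orbit varieties), commutes with the $G$-action on all factors, and is compatible with the Frobenius condition $x_r = F(x_0)$ because the first and last coordinates are untouched, the isomorphism $\Phi$ induces an isomorphism of $H = G \times \Gamma$-modules $H^i(\overline{X}(w)) \cong H^i(\overline{X}(K(w)))$ for all $i$. The main obstacle is verifying the local claim above in a rigorous scheme-theoretic way (not merely set-theoretically), ensuring that the assignment $(B_0,B_2)\mapsto B_1'$ is a morphism and not just a bijection on points; this comes down to noting that on each stratum of $\mathrm{inv}(B_0,B_2)$ the map $B_1'$ is given by one of the tautological projections from the relevant incidence variety.
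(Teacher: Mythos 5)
Your proof is correct, and it takes a genuinely different (and in fact stronger) route than the paper. The paper's own argument is purely cohomological: it observes that the stratifications $\overline{X}(w)=\bigcup_{v\preceq w}X(v)$ and $\overline{X}(K(w))=\bigcup_{v\preceq w}X(K(v))$ are ``essentially the same'' (the subwords correspond under $K$, and $X(v)\cong X(K(v))$ because $st=ts$ holds already in the braid monoid, by Brou\'e--Michel/Deligne), so the associated spectral sequences, hence the cohomologies, agree. You instead produce an explicit $G$- and $F$-equivariant isomorphism $\overline{X}(w)\cong\overline{X}(K(w))$ of the compactified varieties themselves, from which the $H$-module statement is immediate; this buys more than the proposition asks for (e.g.\ compatibility of the isomorphism with the closed substrata $\overline{X}(v)$ and with restriction maps, which is exactly the kind of compatibility the paper needs later when it transports gradings along the operation (II)). Your local uniqueness claim is verified correctly in all four cases using properties (1)(a),(b) of the orbits $\O(-)$, and the only issue you flag, algebraicity of $(B_0,B_2)\mapsto B_1'$, closes cleanly without any stratum-by-stratum gluing: the two convolution varieties $\overline{\O(s)}\times_X\overline{\O(t)}$ and $\overline{\O(t)}\times_X\overline{\O(s)}$ both project properly, birationally and bijectively onto the smooth variety $\overline{\O(st)}$ (a bundle over $X$ whose fibre is the flag variety $\mP^1\times\mP^1$ of the $A_1\times A_1$-Levi, the two factors recording relative position $\leq s$ resp.\ $\leq t$), hence isomorphically by Zariski's main theorem; $B_1'$ is then the composite of the inverse of one projection with the other, a genuine morphism, and your $\Phi$ is algebraic globally.
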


\begin{proof}
This property is implicitly contained in the definition of a generalized Deligne-Lusztig variety attached to elements of the completed
Braid monoid \cite{DMR}.
The reason is that the stratifications of the  varieties $\overline{X}(w)$ and $\overline{X}(K(w))$ are essentially the same in the obvious sense.
\end{proof}

Let $w=w_1stsw_2\in F^+.$ In the sequel we also write $w=w_1s_lts_rw_2$ ($l$ for left, $r$ for right) in order
to distinguish the reflection $s$ in its appearance in $w$. Further we write $w_1s^2w_2$ for the subword
$w_1s_ls_rw_2.$

\begin{prop}\label{coh_sts}
Let $s,t\in S$, $w_1,w_2 \in \hat{F}^+$ such that $st\neq ts$ in $W$. Set $w=w_1stsw_2$ and $v=R(w)=w_1tstw_2.$ Then for all $i\geq 0$,
there is an identity
$$H^i(\overline{X}(v))= H^i(\overline{X}(w)) - H^{i}_c(X(s_l,w_1s^2w_2)) + H^{i}_c(X(t_r,w_1t^2w_2))$$
$$ = H^i(\overline{X}(w)) - H^{i-2}(\overline{X}(w_1sw_2))(-1) + H^{i-2}(\overline{X}(w_1tw_2))(-1).$$
(Here we mean as before when writing $-H^{i-2}(\overline{X}(w_1sw_2))(-1)$ that $H^{i-2}(\overline{X}(w_1sw_2))(-1)$ appears canonically as a submodule in $H^i(\overline{X}(w))$.)
\end{prop}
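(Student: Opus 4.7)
\textit{Plan.} The idea is to compare the stratifications of $\overline{X}(w)$ and $\overline{X}(v)$ stratum by stratum, using the braid relation $sts = tst$ in $B^+$, and to identify precisely the strata that occur in one but not the other. First I would set up the stratification $\overline{X}(w) = \bigsqcup_{u \preceq w} X(u)$ by indexing each stratum by a triple $(a, \mu, b)$ with $a \preceq w_1$, $b \preceq w_2$, and $\mu$ ranging over the eight subwords of $s_l t s_r$, namely $\{e, s_l, t, s_r, s_l t, t s_r, s_l s_r, s_l t s_r\}$, and similarly for $\overline{X}(v)$ with $\nu$ ranging over subwords of $t_l s t_r$. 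By the result of Brou\'e--Michel and Deligne each stratum $X(a \cdot \mu \cdot b)$ depends only on the image of $a \cdot \mu \cdot b$ in $B^+$, and the braid identity $sts = tst$ in $B^+$ furnishes canonical isomorphisms $X(a \cdot s_l t s_r \cdot b) = X(a \cdot t_l s t_r \cdot b)$, $X(a \cdot s_l t \cdot b) = X(a \cdot s t_r \cdot b)$, and $X(a \cdot t s_r \cdot b) = X(a \cdot t_l s \cdot b)$. Modulo these identifications the two stratifications differ by exactly the strata with middle in $\{s_l, s_l s_r\}$ in $\overline{X}(w)$ versus middle in $\{t_r, t_l t_r\}$ in $\overline{X}(v)$; collectively these extra strata recover, respectively, the locally closed subvarieties $X(s_l, w_1 s^2 w_2) \subset \overline{X}(w_1 s^2 w_2) \subset \overline{X}(w)$ (the open complement of $\overline{X}(w_1 s w_2)$ inside the divisor $\overline{X}(w_1 s^2 w_2)$) and $X(t_r, w_1 t^2 w_2) \subset \overline{X}(w_1 t^2 w_2) \subset \overline{X}(v)$.

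Second, I would feed these stratifications into the spectral sequence attached to the closed Zariski filtration (as in \S 7), and use the purity and Frobenius semisimplicity of $H^\ast(\overline{X}(\cdot))$ provided by Lemma \ref{vanish} and Proposition \ref{Frobss} to conclude that the $E_1$-differentials match on the common strata under the Brou\'e--Michel--Deligne identification. Adding up the contributions of the common strata cancels, leaving the identity in the Grothendieck group of $H$-modules
\[
[H^i(\overline{X}(v))] = [H^i(\overline{X}(w))] - [H^i_c(X(s_l, w_1 s^2 w_2))] + [H^i_c(X(t_r, w_1 t^2 w_2))],
\]
which is the first asserted equality.

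Third, to derive the second stated identity I would compute $H^i_c(X(s_l, w_1 s^2 w_2))$ as follows. A suitable sequence of cyclic shifts (Proposition \ref{cyclicshift_coh}) transforms $w_1 s^2 w_2$ into an element of the form $s \cdot u' \cdot s$ satisfying the hypothesis of Remark \ref{P1_closures}; that remark then supplies the $\mP^1$-bundle decomposition
\[
H^i(\overline{X}(w_1 s^2 w_2)) = H^i(\overline{X}(w_1 s w_2)) \oplus H^{i-2}(\overline{X}(w_1 s w_2))(-1),
\]
in which the restriction to the divisor $\overline{X}(w_1 s w_2)$ is the projection onto the first summand. The long exact cohomology sequence associated to the open-closed decomposition $\overline{X}(w_1 s^2 w_2) = X(s_l, w_1 s^2 w_2) \sqcup \overline{X}(w_1 s w_2)$ then yields $H^i_c(X(s_l, w_1 s^2 w_2)) = H^{i-2}(\overline{X}(w_1 s w_2))(-1)$, and symmetrically $H^i_c(X(t_r, w_1 t^2 w_2)) = H^{i-2}(\overline{X}(w_1 t w_2))(-1)$.

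The main obstacle is the second step, namely the verification that the $E_1$-differentials of the two spectral sequences really do agree on the common strata. This amounts to a compatibility statement for the boundary homomorphisms $\delta_{u', u}$ between pairs of adjacent strata that differ only by the choice of reduced Bott--Samelson path in the middle $sts$/$tst$ block. For pairs involving the top stratum $X(a \cdot sts \cdot b) = X(a \cdot tst \cdot b)$ and its codimension-one neighbours one can check this directly from the definitions of the DL-varieties as incidence varieties; for deeper pairs the compatibility should follow by induction on $\ell(w_1) + \ell(w_2)$, ultimately reducing to the case $w_1 = w_2 = e$, where only the three middle letters of $sts \leftrightarrow tst$ are in play.
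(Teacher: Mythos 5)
Your first and third steps do line up with the paper's opening move: the strata of $\overline{X}(w)$ and $\overline{X}(v)$ are matched through their images in $B^+$ via Brou\'e--Michel--Deligne, the unmatched pieces are exactly $X(s_l,w_1s^2w_2)$ and $X(t_r,w_1t^2w_2)$, and each of these is an $\mA^1$-bundle over $\overline{X}(w_1sw_2)$ resp.\ $\overline{X}(w_1tw_2)$, giving $H^i_c(X(s_l,w_1s^2w_2))=H^{i-2}(\overline{X}(w_1sw_2))(-1)$ directly (no cyclic shift or $\mP^1$-bundle detour is needed for that). But your second step is both heavier than necessary and insufficient. It is heavier than necessary because the identity you are after at this stage only lives in the Grothendieck group of $H$-modules, and for that one never has to compare differentials: additivity of the Euler--Poincar\'e characteristic over the matched strata gives ${\rm EP}(\overline{X}(w))-{\rm EP}(X(s_l,w_1s^2w_2))={\rm EP}(\overline{X}(v))-{\rm EP}(X(t_r,w_1t^2w_2))$, and since all four terms have cohomology concentrated in even degrees with the expected Tate weights (Lemma \ref{vanish} and the $\mA^1$-bundle structure), the alternating sum already separates into the degreewise identity. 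The compatibility of $E_1$-differentials that you single out as the main obstacle is thus a detour, and the induction you sketch for it ("reduce to $w_1=w_2=e$") would not in any case produce more than the virtual identity.

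The genuine gap is that you never address the parenthetical part of the statement, namely that $H^{i-2}(\overline{X}(w_1sw_2))(-1)$ actually occurs as a submodule of $H^i(\overline{X}(w))$ (equivalently, that $H^{i}_c(X(t_r,w_1t^2w_2))$ sits inside $H^i(\overline{X}(v))$). This does not follow formally from the Grothendieck-group identity, even with semisimplicity: from $[A]+[C]=[B]+[D]$ one cannot conclude $C\subset B$, and this containment is precisely what is needed later (e.g.\ in Theorem \ref{coh_graded_induced}, Lemma \ref{restriction_graded} and Proposition \ref{reduced_complex_sw's}) for the differences in the formula to be honest complements. The paper devotes the bulk of its proof to exactly this point: an induction on the length and on the number of operations (I)--(III) needed to bring $w$ into the form $rw'r$, followed by a case analysis in which the delicate case ($w=stw''s$ with $s$ commuting with $w''$ and $w'$ multiplicity-free) is handled geometrically, by comparing $\overline{X}(w)$ with the closure $\overline{X(\gamma(w))}$ inside the flag variety via the proper map $\pi$ and using smoothness of $\overline{X(\gamma(w))}$ and a weight argument to force the required surjectivity of a boundary map. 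Some substitute for this entire second half of the argument is missing from your proposal.
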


\begin{proof}
A priori the varieties $X(w)$ and $X(v)$  differ by the locally closed subsets $X(s,w_1s^2w_2)$ $\subset \overline{X}(w)$ resp.
$X(t,w_1t^2w_2)\subset \overline{X}(v)$, i.e., the constructible subsets
$\overline{X}(w)\setminus X(s,w_1s^2w_2)$ and  $\overline{X}(v)\setminus X(t,w_1t^2w_2)$ have homeomorphic stratifications.
Hence we see that their Euler-Poincar\'e characteristics in the Grothendieck group of $G$-modules are the same. More precisely, we have
$${\rm EP}(\overline{X}(w))-{\rm EP}(X(s,w_1s^2w_2))={\rm EP}(\overline{X}(v))-{\rm EP}(X(t,w_1t^2w_2)),$$ where we set for a variety
$X$, ${\rm EP}(X)=\sum_i (-1)^i H^i_c(X)$. Moreover, the variety $X(s_l,w_1s^2w_2)$ (resp. $X(t_r,w_1t^2w_2))$
is a $\mA^1$-bundle over $\overline{X}(w_1sw_2)$ (resp. $\overline{X}(w_1tw_2)$). Hence the individual cohomology groups of
the varieties  vanish in odd degree. It follows immediately that for all $j\geq 0$, the above identity
$H^j(\overline{X}(w)) - H^j_c(X(s_l,w_1s^2w_2))=H^j(\overline{X}(v)) -H^j_c(X(t_r,w_1t^2w_2))$ holds true in the Grothendieck group of $H$-representations.

Finally we have to show that $H^{i-2}(\overline{X}(w_1sw_2))(-1)$ is a submodule of $H^i(\overline{X}(w))$. For this we apply
the following lemma which includes moreover what we have shown before.
\end{proof}

\begin{lemma}\label{lemma_DMR}
Let Let $w=w_1stsw_2 \in \hat{F}^+$ with $s,t\in S$ and  $st\neq ts$ in $W$. Then there is a canonical decomposition 
$H^{2i}(\overline{X}(w))=H^{2i}(X(w_1\widehat{sts}w_2)) \oplus 
H^{2i-2}(X(w_1sw_2))(-1).$ 
\end{lemma}

\begin{proof}
This is a special situation of \cite[Prop. 3.2.9]{DMR}. The argument is that the proper morphism
$\pi:\overline{X}(w)\to \overline{X}(w_1\widehat{sts}w_2)$ by forgetting appropriate entries is an isomorphism over the open subset 
$\overline{X}(w_1\widehat{sts}w_2)\setminus \overline{X}(w_1sw_2)$
and induces over $\overline{X}(w_1sw_2)$ a $\mP^1$-bundle. As both varieties are smooth and projective
the claim follows from considering long exact cohomology sequences.
\end{proof}

\begin{prop}\label{cycle_isom}
 Let $w\in F^+$. Then for all $i \geq 0$, the cycle map $A^i(\overline{X}(w))_{\overline{\mQ}_\ell}\to H^{2i}(\overline{X}(w))$ is an isomorphism
$\bigl($where $A^i(\overline{X}(w))$ is the Chow group of $\overline{X}(w) \mbox{ in degree $i$} \bigr)$.
\end{prop}

\begin{proof}
We may assume that $\supp(w)=S.$
If $w$ is a Coxeter element, then the claim follows from Remark \ref{rk_cox_chow}.
 If $w=sw's$ then $A^i(\overline{X}(w))=A^i(\overline{X}(w's)) \oplus A^{i-1}(\overline{X}(w's))$ and the claim follows by induction
 on $\ell(w).$
 
In general we have to consider the induced behaviour of the operations $K,C,R$ on the Chow groups. First we observe that
$A^i(\overline{X}(w))=A^i(\overline{X}(C(w)))$ since the generating cycles are rational. Further 
$A^i(\overline{X}(w))=A^i(\overline{X}(K(w)))$.
Let $w=w_1stsw_2$, $R(w)=w_1tstw_2$, $y=w_1\widehat{sts}w_2$  and suppose that the claim is true for $w.$ Then we consider as in Lemma \ref{lemma_DMR}
the map $\pi: \overline{X}(w) \to \overline{X}(y)$ which induces short  exact sequences
$$0 \to H^{2i-4}(\overline{X}(w_1sw_2))(-2) \to H^{2i}(\overline{X}(y)) \oplus H^{2i-2}(\overline{X}(w_1s^2w_2))(-1) \to H^{2i}(\overline{X}(w))\to 0$$
and 
$$0 \to A^{i-2}(\overline{X}(w_1sw_2)) \to A^{i}(\overline{X}(y)) \oplus A^{i-1}(\overline{X}(w_1s^2w_2)) \to A^{i}(\overline{X}(w))\to 0.$$
It follows by induction on the length that the cycle map for $\overline{X}(y)$ is an isomorphism, as well. Considering both exact 
sequences for $R(w)$ and using again
induction the claim is true for $R(w).$
\end{proof}

\begin{rk}
  For $m\geq 0$, the Tate twist $-i$ contribution of
 $H^{2i+m}_c(X(w))$ is Bloch's higher Chow group $CH^i(X(w),m)_{\overline{\mQ}_\ell}$, cf. \cite{Bl}. Indeed, this follows from the 
 spectral sequence (\ref{E_1_term}). 
\end{rk}

For $w\in F^+$ and $z\preceq w$, we denote by $i_{z,w}:A_{\ell(z)}(\overline{X}(z)) \to A_{\ell(z)}(\overline{X}(w))$ the  map 
induced by the  inclusion $\overline{X}(z) \subset \overline{X}(w).$ Moreover, we let 
$[\overline{X}(z)] \in A_{\ell(z)}(\overline{X}(z))$ be the sum of all the irreducible components appearing in 
$\overline{X}(z).$

\begin{defn}
 Let $w\in \hat{F}^+.$ A grading $A_{i}(\overline{X}(w))_{\overline{\mQ}_\ell}=\bigoplus_{z\preceq w \atop \ell(z)=i} A(w)_z$ is called geometrical
 if there is an order $z_1,\ldots, z_r$ on the set $ \{ z\preceq w \mid \ell(z)=i\}$  such that 
 $$\bigoplus_{j=1,\ldots,k}A(w)_{z_j} \supset 
 \sum_{j=1,\ldots,k} (i_{z_j,w})_\ast([\overline{X}(z_j)])$$ for all $k \leq r.$
\end{defn}

By using the cycle map we may speak of geometrical gradings on $H^{2i}(\overline{X}(w)).$

Let $w=sw's\in F^+$ as before. Consider the commutative diagram
\begin{equation}\begin{array}{ccccccc}
 & \vdots & & \vdots & & \vdots &  \\
 & \downarrow & & \downarrow & & \downarrow &  \\
\cdots \to &  H_c^{2i}(X(s^2,w)) & \to &  H_c^{2i}(X(s,w)) &  \to & H^{2i}_c(X(s,w's)) &  \to \cdots \\
 & \downarrow & & \downarrow & & \downarrow   &  \\
\cdots \to & H_c^{2i}(X(s,w)) & \to &  H^{2i}(\overline{X}(w)) & \to & H^{2i}(\overline{X}(w's)) & \to \cdots \\
 & \;\;\;\downarrow  g & & \downarrow  & & \downarrow  &   \\
\cdots \to &  H_c^{2i}(X(s,sw')) & \to &  H^{2i}(\overline{X}(sw')) &  \stackrel{f}{\to} & H^{2i}(\overline{X}(w')) &  \to \cdots \\
 & \downarrow & & \downarrow & & \downarrow &  \\
 & \vdots & & \vdots & & \vdots &  \\
\end{array}
\end{equation}
where the maps in this diagram are the natural ones. We slightly generalize this setup  as follows.
Let $v=v_1rv_2$ with $r\in S$ and $u=v_1v_2.$ Set $\breve{v}=v_1r^2v_2=v_1r_lr_rv_2$ and $\bar{v}=v_1r_rv_2.$ Then $v=v_1r_lv_2$ and we may form
the following diagram
\begin{equation}\label{commutative_dia}\begin{array}{ccccccc}
 & \vdots & & \vdots & & \vdots &  \\
 & \downarrow & & \downarrow & & \downarrow &  \\
\cdots \to &  H_c^{2i}(X(r^2,\breve{v})) & \to &  H_c^{2i}(X(r_r,\breve{v})) &  \to & H^{2i}_c(X(r_r,\bar{v})) &  \to \cdots \\
 & \downarrow & & \downarrow & & \downarrow   &  \\
\cdots \to & H_c^{2i}(X(r_l,\breve{v})) & \to &  H^{2i}(\overline{X}(\breve{v})) & \to & H^{2i}(\overline{X}(\bar{v})) & \to \cdots \\
 & \;\;\;\downarrow  g & & \downarrow  & & \downarrow  &   \\
\cdots \to &  H_c^{2i}(X(r_l,v)) & \to &  H^{2i}(\overline{X}(v)) &  \stackrel{f}{\to} & H^{2i}(\overline{X}(u)) &  \to \cdots \\
 & \downarrow & & \downarrow & & \downarrow &  \\
 & \vdots & & \vdots & & \vdots &  \\
\end{array}
\end{equation}

In the setting of Chow groups and Poincar\'e duality the above diagram reads as 

\begin{equation}\label{commutative_dia_chow}\begin{array}{ccccccc}
 & \vdots & & \vdots & & \vdots &  \\
 & \uparrow & & \uparrow & & \uparrow &  \\
\cdots \to &  A_{i}(X(r_r,\bar{v})) & \to &  A_{i}(X(r_r,\breve{v})) &  \to & A_{i}(X(r^2,\breve{v})) &  \to \cdots \\
 & \uparrow & & \uparrow & & \uparrow   &  \\
\cdots \to & A_{i}(\overline{X}(\bar{v})) & \to &  A_{i}(\overline{X}(\breve{v})) & \to & A_{i}(X(r_l,\breve{v})) & \to \cdots \\
 & \uparrow   & & \uparrow  & & \;\;\;\;\;\uparrow  g' &   \\
\cdots \to &  A_{i}(\overline{X}(u)) & \stackrel{f'}{\to} &  A_{i}(\overline{X}(v)) &  \to & A_{i}(X(r_l,v)) &  \to \cdots \\
 & \uparrow & & \uparrow & & \uparrow &  \\
 & \vdots & & \vdots & & \vdots &  \\
\end{array}
\end{equation}

The next result generalizes  Proposition \ref{qi_restriction} to arbitrary pairs of elements $v,u\in F^+$ with $u\prec v$ and 
$\ell(v)=\ell(u)+1$.

\begin{thm}\label{restriction_graded}

a) The cohomology of $\overline{X}(v)$ in degree $2i$ can be written as
$$H^{2i}(\overline{X}(v))=\bigoplus_{z\preceq v\atop \ell(v)=i} H(v)_z$$
with $H(v)_z=i^G_{P^v_z}(-i)$ for certain standard parabolic subgroups $P^v_z\subset G.$

b) There are geometric gradings
$H^{2i}(\overline{X}(v))=\bigoplus_{z\preceq v \atop \ell(z)=i}H(v)_z$, 
$H^{2i}(\overline{X}(u))=\bigoplus_{z\preceq u \atop \ell(z)=i}H(u)_z$ by induced representations as in part a), such that the map
$f:H^{2i}(\overline{X}(v)) \to  H^{2i}(\overline{X}(u))$ is in diagonal form, i.e. it coincides with the graded one. 
Further the induced homomorphisms $H(v)_z \to H(u)_z$ are injective or surjective for all $z\preceq u.$ 

c) There is a geometric grading  as in part a)  $H_c^{2i}(X(r_l,\breve{v}))=H^{2i-2}(\overline{X}(\bar{v}))(-1)=
\bigoplus_{z'\preceq \bar{v} \atop \ell(z')=i-1}H(\bar{v})_{z'}$ such that $H(\bar{v})_{z'}(-1)=H(v)_{z_1rz_2}$ for all 
elements $z'=z_1z_2 \preceq u.$
\end{thm}

We leave the translation of this theorem for covariant Chow groups to the reader.
We start with the following lemma.

\begin{lemma}\label{independence}
Suppose that the theorem is true. Then  statement b) is true if start with an arbitrary geometric grading 
on $H^{2i}(\overline{X}(u)).$
\end{lemma}

\begin{proof} We consider the viewpoint of Chow groups. Let $A_{i}(\overline{X}(u))_{\overline{\mQ}_\ell}=\bigoplus_{z\preceq u \atop \ell(z)=i}A(u)'_z$ be 
another grading.  Then by Corollary \ref{grading_unique}
for any $z\preceq u$ there is an element $z'=\phi(z)$ with $z'\preceq u$, $\ell(z)=\ell(z')=i$ and  with 
$A(u)_z \cong A(u)'_{z'}$. 
The map
$\pi^u_{z,\phi(z)}: A(u)_z \stackrel{\sim}{\to} A(u)'_{\phi(z)}$ which is induced by the neutral element, 
cf. Remark \ref{connection_symmetric group}, and the choice of $P^u_z$-invariant generating cycles for the induced 
representations $i^G_{P^u_z}$ gives rise to such  an isomorphism. 

Suppose first that the  maps $A(u)_z \to A(v)_z$ are  
surjective for all $z \preceq u$ with $\ell(z)=i.$ 
As $G$ acts semi-simple we may consider
$A(v)_z$ as a subrepresentation of $A(u)_z$. We define a new grading on 
$A_{i}(\overline{X}(v))_{\overline{\mQ}_\ell}=\bigoplus_{z\preceq v \atop \ell(z)=i}A(v)'_z$
by setting 
$$A(v)'_{\phi(z)}:=\pi^u_{z,\phi(z)}(A(v)_z)$$ for $z\preceq u$ and $A(v)'_z:=A(v)_z$ if $z \not\preceq u.$ 
We set $\pi^u:=\oplus_{z \prec u \atop \ell(z)=i} \pi^u_{z,\phi(z)}$ so that we get an endomorphism 
$$\pi^u:A_i(\overline{X}(u))_{\overline{\mQ}_\ell} \to  A_i(\overline{X}(u))_{\overline{\mQ}_\ell}.$$ Analogously we define an endomorphism 
$\pi^v:=\oplus_{z \prec v \atop \ell(z)=i} \pi^v_{z,\phi(z)}$ of $A_i(\overline{X}(v))_{\overline{\mQ}_\ell}$
with $\pi^v_{z,\phi(z)}=\id.$ Here we have extended $\phi$ to a function on the set $\{z \preceq v, \ell(z)=i\}$
by $\phi(z)=z$ for $z \not\preceq u.$ We 
get a commutative diagram
\begin{eqnarray*}
 A_i(\overline{X}(v))_{\overline{\mQ}_\ell} & \stackrel{\pi^v}{\to} &  A_i(\overline{X}(v))_{\overline{\mQ}_\ell} \\
 \uparrow f' & & \uparrow f'\\
 A_i(\overline{X}(u))_{\overline{\mQ}_\ell} & \stackrel{\pi^u}{\to} & A_i(\overline{X}(u))_{\overline{\mQ}_\ell}
\end{eqnarray*}
Now the statement follows easily.

In general we devide the set $\{z \preceq u \mid \ell(z)=i \}=A \stackrel{.}{\cup} B$ into two disjoint subsets
where $A$ consists of those $z\preceq u$ such that the map $A(u)_z \to A(v)_z$ is surjective.
To define a new grading on $A_i(\overline{X}(v))_{\overline{\mQ}_\ell}$ we proceed with the set $A$ as above. 
In particular the set $A$ covers the kernel of the map $f':A_i(\overline{X}(u))_{\overline{\mQ}_\ell} \to A_i(\overline{X}(v))_{\overline{\mQ}_\ell}.$ 
Hence for $z\in B$ the map $A(u)_{\phi(z)} \to A_i(\overline{X}(v))_{\overline{\mQ}_\ell}$ is (strictly) injective.
Then there is some induced representation $i^G_P \subset A_i(\overline{X}(v))_{\overline{\mQ}_\ell}$ 
which contains the image of the latter map \footnote{Indeed let $i^G_P \to i^G_Q \oplus i^G_R$ be an injective map. 
Then we may suppose w.l.o.g. that $i^G_P \to i^G_Q$ is injective, as well. We may extend the first map to an isomorphism
$i^G_Q \to i^G_Q \oplus i^G_R$ and the graph contains $i^G_P.$}. We let $i^G_P$  be a constituent of the disired grading. But the map $\oplus_{z\in B}A(u)_{\phi(z)} \to A_i(\overline{X}(v))_{\overline{\mQ}_\ell}$ is injective,
as well. The claim of the lemma follows by applying the former procedure sucessively.
\end{proof}

\begin{proof}
Part a) and b). The last statement of part b) is a consequence of Remark \ref{connection_symmetric group} ii). The remaining proof  is by induction on $\ell(v).$ By Remark \ref{cohomology_not_full_F} and K\"unneth arguments 
we may suppose that $v$ has full support.
If $v$ is  a Coxeter element then the statement follows from Propositions \ref{cohomology_Coxeter_compactification},  \ref{qi_restriction}. 
So let $\h(v)\geq 1.$

\noindent 1. Case. $v=sv's$ for some $s\in S.$

Hence $H^{2i}(\overline{X}(v))= H^{2i}(\overline{X}(v's)) \bigoplus H^{2i-2}(\overline{X}(v's))(-1)$. As for part a)  we may write  by induction
$$H^\ast(\overline{X}(v's))=\bigoplus_{z\preceq v's} i^G_{P^{v's}_z}(-\ell(z))[-2\ell(z)].$$
Then $$H^\ast(\overline{X}(v))=\bigoplus_{z\preceq v} i^G_{P^v_z}(-\ell(z))[-2\ell(z)]$$
where $P^v_z=P^{v's}_z$ if $z\preceq v's$ and $P^v_z=P^{sv'}_{z'}$ if $z\in Q(sv',v)$. Here $z=sz'.$
Concerning part b) we distinguish the following cases:

Subcase a). $u=v's$ (The case $u=sv'$ is symmetric to this one).

Here the proof is trivial since the map $H^{2i}(\overline{X}(v)) \to H^{2i}(\overline{X}(u))$ identifies
with the projection map. 


Subcase b). $u=su's$.

Then $H^{2i}(\overline{X}(u))= H^{2i}(\overline{X}(u's)) \bigoplus H^{2i-2}(\overline{X}(u's))(-1).$
The statements follow now by induction with respect to the homomorphism $H^j(\overline{X}(v's)) \to H^j(\overline{X}(u's))$
with $j\in\{2i-2,2i\}.$

\noindent 2. Case. $v$ is arbitrary.

Then we apply the operations (I) - (III) to arrange $v$ in the shape as in the first case.
Here we use  inner induction on the necessary operations. So let $w \in F^+$ and suppose that the statements are true for 
$w$ and for all $x\prec w$ with $\ell(x)=\ell(w)-1.$ 

The operations (I) and (II) are easy to handle by Propositions \ref{cyclic_shift} and \ref{coh_iso_comm}:

(I) Let $w=sw'$, with $s\in S$ and $w'\in F^+$ and $w's=C(w)$. Let 
$\tilde{C}:H^{2i}(\overline{X}(w))\to H^{2i}(\overline{X}(C(w)))$ be the 
cyclic shift isomorphism. Define for $z\preceq w,$
$$C(z)=\left\{ \begin{array}{cc}
              z's & \mbox{ if } z=sz' \\
	      z & \mbox{ if } z \preceq w' .
             \end{array}\right. $$
Then the assignment $H(C(w))_{C(z)}:=\tilde{C}(H(w)_{z})$ for $\ell(z)=i$,  defines the desired grading on $H^{2i}(\overline{X}(C(w))).$
In the same way we get a grading on $H^{2i}(\overline{X}(C(x))).$ Moreover, the homomorphism $H^{2i}(\overline{X}(C(w))) \to
H^{2i}(\overline{X}(C(x)))$ is graded.

(II) Let  $w=w_1 s t w_2$ and $K(w)=w_1tsw_2$. Let 
$\tilde{K}:H^{2i}(\overline{X}(w))\to H^{2i}(\overline{X}(K(w)))$ be the 
induced isomorphism. Define for $z\preceq w,$
$$K(z)=\left\{ \begin{array}{cc}
              v_1tsv_2 & \mbox{ if } z=v_1stv_2 \\
	      z & \mbox{ if } st\not| \;z.
             \end{array}\right. $$
Then the assignment $H(K(w))_{K(z)}:=\tilde{K}(H(w)_{z})$ for $\ell(z)=i$,  defines the desired grading on $H^{2i}(\overline{X}(K(w))).$
Part b) is proved in the same way as above.

(III) So let $w=w_1stsw_2$ and $v=R(w)=w_1tstw_2$.  Set $\hat{w}=\hat{v}=w_1\widehat{sts}w_2.$

We start with the observation that  we have a geometrical grading on 
$H^{2i}(\overline{X}(\hat{w}))$. Indeed by induction hypothesis applied to $w$ and $x=w_1s^2w_2$,  we have gradings on 
$H^{2i}(\overline{X}(w))$ and $H^{2i}(\overline{X}(w_1s^2w_2))$   such that the natural map  
$H^{2i}(\overline{X}(w)) \to  H^{2i}(\overline{X}(w_1s^2w_2))$ is in diagonal form. In particular it follows that
$H(w)_z=H(w_1s^2w_2)_z$ for all $z \in Q(s,w_1s^2w_2)$ 
(since $\overline{X}(\hat{w})\setminus \overline{X}(w_1sw_2) = \overline{X}(w) \setminus \overline{X}(w_1s^2w_2)$) and henceforth that we have a  geometrical grading on 
$H^{2i}(\overline{X}(\hat{w}))$ by induced representations. Hence we see that we have such a grading on 
$H^{2i}(\overline{X}(v))=H^{2i}(\overline{X}(\hat{v})) \oplus 
H^{2i-2}(\overline{X}(w_1tw_2)).$ More precisely, we set for $z_1\preceq w_1$
and $z_2 \preceq w_2$,
$$R(z)=\left\{ \begin{array}{cc}
              z_1tstz_2 & \mbox{ if } z=z_1stsz_2 \\
	      z_1t_lsz_2 & \mbox{ if } z=z_1ts_rz_2 \\
	      z_1st_rz_2 & \mbox{ if } z=z_1s_ltz_2 \\
	      z_1z_2 & \mbox{ if } z=z_1z_2.
             \end{array}\right. $$
Then  the assignment $H(R(w))_{R(z)}:=H(w)_{z}$  for $z\not\in Q(s_l,z_1s^2z_2)$
and $H(v)_z=H(v/t_r)_{z/t_r}$ for $z\in Q(t_r,w_1t^2w_2)$  with $\ell(z)=i$
defines the desired grading on $H^{2i}(\overline{X}(v)).$


Concerning part b) we distinguish the following situations.

\noindent Subcase a). $u=R(x)=w_1tsw_2.$ (The case $u=R(x)=w_1stw_2$ behaves symmetrically)

We consider the viewpoint of Chow groups.
By induction hypothesis there are geometric gradings  on $A_{i}(\overline{X}(w_1w_2))_{\overline{\mQ}_\ell}$ and 
$A_{i}(\overline{X}(w_1tw_2))_{\overline{\mQ}_\ell}$ such that the homomorphisms 
$A_{i}(\overline{X}(w_1w_2))_{\overline{\mQ}_\ell} \to A_{i}(\overline{X}(w_1tw_2))_{\overline{\mQ}_\ell}$ induced by the inclusion 
is in diagonal form. Further we may suppose that we have a geometric grading on $A_{i}(\overline{X}(u))_{\overline{\mQ}_\ell}$ such that the  
map $A_{i}(\overline{X}(w_1tw_2))_{\overline{\mQ}_\ell} \to A_{i}(\overline{X}(u))_{\overline{\mQ}_\ell}$  is in diagonal form, as well. 
Let $f_w:A_{i}(\overline{X}(u)) \to A_{i}(\overline{X}(w))$ resp. $f_v:A_{i}(\overline{X}(u)) \to A_{i}(\overline{X}(v))$ be 
the homomorphisms induced by the inclusion. Again by assumption there is a grading $A_i(\overline{X}(w))_{\overline{\mQ}_\ell}
=\oplus_z A(w)_z$ such the map 
$f_w$ is graded. We have a natural commutative diagram
\begin{eqnarray*}
 & & \overline{X}(w) \\ &  \nearrow & \downarrow \pi_w \\   \overline{X}(w_1stw_2) & \to & \overline{X}(\hat{w})  .
\end{eqnarray*}
where $\pi_w: \overline{X}(w) \to \overline{X}(\hat{w})$ is the map of Lemma \ref{lemma_DMR}. 
It follows that the map $A_{i}(\overline{X}(u)) \to A_{i}(\overline{X}(\hat{w}))$ is graded, as well, if we consider on $A_{i}(\overline{X}(\hat{w}))_{\overline{\mQ}_\ell}$ the induced 
grading, i.e., $A(\hat{w})_z=\max\{\pi_w(A(w)_{z_1s_lz_2}), \pi_w(A(w)_{z_1s_rz_2}) \}$ for $z=z_1sz_2 \preceq  \hat{w}$ and
$A(\hat{w})_z=\pi_w(A(w)_z)$ for the remaining $z\preceq  \hat{w}$, , cf. Remark \ref{connection_symmetric group} and  \cite[Prop. 6.7]{Fu}.
In order to define the grading on  $A_{i}(\overline{X}(v))_{\overline{\mQ}_\ell}$ we
consider again the  splitting 
\begin{equation}\label{splitting_proof}
 A_{i}(\overline{X}(v))_{\overline{\mQ}_\ell}=A_{i}(\overline{X}(\hat{w}))_{\overline{\mQ}_\ell} \oplus 
A_{i-1}(\overline{X}(w_1t_rw_2))_{\overline{\mQ}_\ell}.
\end{equation}
 Now we apply the induction hypothesis to deduce the existence of a grading on the vector space $A_{i-1}(\overline{X}(w_1t_rw_2))_{\overline{\mQ}_\ell}$ 
 such that under the map
$$A_{i}(\overline{X}(w_1t_lw_2))_{\overline{\mQ}_\ell} \to A_{i-1}(\overline{X}(w_1t_rw_2))_{\overline{\mQ}_\ell}$$ 
we have 
$$A(w_1t_lw_2)_z = A(w_1t_rw_2)_{z/t_l}$$   for all $z$ 
with $t_l\mid z.$ But the latter map factorizes over  the map 
$$r:A_{i}(\overline{X}(w_1tsw_2))_{\overline{\mQ}_\ell} \to A_{i-1}(\overline{X}(w_1t_rw_2))_{\overline{\mQ}_\ell}.$$
Moreover, the contributions $A(w_1t_lw_2)_{z/t_r}$ 
with $t_l\mid z$ do not lie in the image of the  map $r$ since the trivial subrepresentations 
$i^G_G\subset   A(w_1t_rw_2)_{z/t_l}$  are linearly
independent from those of $A_{i}(\overline{X}(w_1tsw_2))_{\overline{\mQ}_\ell}$, as one deduces from \cite[Prop. 2.7]{L4}.
The claim follows.

\noindent Subcase b). $u=R(x)=v_1tstw_2$  with $x=v_1stsw_2$ (or $u=R(x)=w_1tstv_2$).

Here the result follows by writing $f_v$ as the sum of the homomorphisms
$$H^{2i}(\overline{X}(\hat{v})) \to H^{2i}(\overline{X}(\hat{u}))$$
and
$$H^{2i-2}(\overline{X}(w_1tw_2))(-1)\to H^{2i-2}(\overline{X}(v_1tw_2))(-1)$$
where $\hat{u}=v_1\widehat{sts}w_2$.

\noindent Subcase c). $u=w_1t^2w_2.$

Here the homomorphism  $H^{2i}(\overline{X}(v))  \to H^{2i}(\overline{X}(w_1t^2w_2))$  contracts by deleting the summand $H^{2i-2}(\overline{X}(w_1tw_2))(-1)$ on  both sides to the map  $H^{2i}(\overline{X}(\hat{v}))  \to 
H^{2i}(\overline{X}(w_1tw_2)).$ The latter one factorizes over $H^{2i}(\overline{X}(w_1tsw_2))$. By induction
the maps $H^{2i}(\overline{X}(\hat{w}))  \to H^{2i}(\overline{X}(w_1tsw_2))$ and $H^{2i}(\overline{X}(w_1tsw_2))  \to 
H^{2i}(\overline{X}(w_1tw_2))$ are graded. By Lemma \ref{independence} the bases can be chosen in a compatible way. Hence
the composite map is graded.

Alternatively one can avoid this case  by the proceeding lemma.

Part c). The remaining part of the theorem can be proved by induction. Here Lemma \ref{vanishing_odd_degree} serves as the start of the 
induction process. However, part c) is already a  consequence of the diagram (\ref{commutative_dia_chow}). 
To explain this we consider the viewpoint of Chow groups.

As in part b) we choose geometric gradings on $A_{i}(\overline{X}(v))_{\overline{\mQ}_\ell}$ and 
$A_{i}(\overline{X}(u))_{\overline{\mQ}_\ell}$
auch that the natural homomorphim $A_{i}(\overline{X}(u))_{\overline{\mQ}_\ell} \to A_{i}(\overline{X}(v))_{\overline{\mQ}_\ell}$ 
is graded. We need to show that the intersection of $A_{i}(\overline{X}(v))_{\overline{\mQ}_\ell}$ and 
$A_{i}(\overline{X}(\bar{v}))_{\overline{\mQ}_\ell}$
within $A_{i}(\overline{X}(\breve{v}))_{\overline{\mQ}_\ell}$ is as small as possible, i.e., induced by the image of 
$A_{i}(\overline{X}(u))_{\overline{\mQ}_\ell}$. For this, suppose that there is a constituent $A(v)_z=i^G_P$  with $r \mid z$
such that an irreducible subrepresentation $j_\lambda$  of $i^G_P$  lies in $A_{i}(\overline{X}(\bar{v}))_{\overline{\mQ}_\ell}$.
Let $x \in j_\lambda$  and  write $x=y_1 +y_2$ with uniquely determined 
elements $y_1 \in A_{i}(\overline{X}(\bar{v}))_{\overline{\mQ}_\ell}$ and $y_2 \in A_{i}(X(r,\breve{v}))_{\overline{\mQ}_\ell}$. Here we have fixed the natural splittings.
By what we saw in Section 4, it follows that $y_1=\sigma_\ast(x)$ for the cyclic shift map $\sigma: \overline{X}(v)
\to \overline{X}(\bar{v})$. But the map $\sigma$ is an isomorphism, more precisely we have
by Proposition \ref{Prop_homeomorphic} that $\sigma\circ \tau$ is an scalar mulitple of
the identity. It follows that $\sigma$ is up to a scalar a multiple of the morphism induced
by the identity element $e \in W.$ The upshot is that if $j_\lambda$ lies in $A_{i}(\overline{X}(\bar{v}))_{\overline{\mQ}_\ell}$
the same is true for the induced representation $i^G_{P_\lambda} \subset i^G_P$. But all the trivial subrepresentations 
$i^G_G =[\overline{X}(y)]_{\overline{\mQ}_\ell} \subset A(\breve{v})_y$ induced by the cycles $\overline{X}(y)$ with  
$y\preceq \breve{v}$ and $\ell(y)=i$ are linear independent 
in $A_i(\overline{X}(\breve{v}))_{\overline{\mQ}_\ell}$ by the result of Lusztig \cite[Prop. 2.7]{L4}. Hence we get a contradiction.
\end{proof}


\bigskip

 Let $u,v\in F^+$ with $u\prec v$ with $\ell(u)=\ell(v)-1.$ There is the obvious notion of  simultaneous transformation applied to the pair $(v,u)$ with 
 respect to the operations (I) - (III), as long as the corresponding  subword $s,st,sts$ is part of $u,$ as well. Apart from the critical situation where $v=v_1stsv_2$ and $u=v_1ssv_2$,
 we extend the simultaneous transformation to the tuple $(v,u)$ by letting act $C,K,R$ trivially on $u.$

\begin{lemma}\label{reduce_sw's}
 Let $u,v\in F^+$ with $u\prec v$, $\ell(u)=\ell(v)-1$ and $\h(v)\geq 1.$ Then there exists a sequence $(v_i,u_i),i=0,\ldots,n$, of such tuples such that
$(v,u)=(v_0,u_0)$, $v_n$ is of the shape $sv's$
and $(v_{i+1},u_{i+1})$ is induced simultaneously from $(v_i,u_i)$ via one of the operations (I) - (III). Here the situation that
$(v_i,u_i)=(v_1stsv_2,v_1s^2v_2)$ and $(v_{i+1},u_{i+1})=(v_1tstv_2,v_1t^2v_2)$
does not occur.
\end{lemma}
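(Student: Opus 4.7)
The plan is to induct on $\ell(v)$, with an auxiliary measure given by the number of operations $(I)$--$(III)$ required to bring $v$ to the shape $sv's$. This number is finite by the remark following Theorem \ref{GKP} and its extension to $F^+$ via Lemma \ref{doppelts} applied to $\alpha(v)\in B^+$, which yields a repeated letter modulo cyclic shift whenever $\h(v)\geq 1$. The base case $v=sv's$ gives $n=0$; otherwise, I fix any sequence $v=v_0\to v_1\to\cdots\to v_m = s v'_m s$ realising the reduction and modify it step by step so that it lifts to a simultaneous sequence for $(v,u)$ avoiding the forbidden transition.

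Operations $(I)$ and $(II)$ are automatically compatible with $u$. A cyclic shift $C$ acts on $u$ either analogously, when the shifted letter is present in $u$, or as the identity, when the shifted letter is precisely the one deleted from $v$ to produce $u$. A commuting move $K$ applied to a subword $st=ts$ of $v$ admits a similarly clean case split according to whether the deleted letter lies outside $\{s,t\}$ or equals one of them. Consequently a critical configuration can arise only during an application of $(III)$ to a subword $v=v_1 sts v_2$: if the deleted letter lies outside the subword $sts$, then $u$ contains $sts$ at the corresponding position and $(III)$ transfers; if it is one of the outer $s$'s, then $u$ contains $st$ or $ts$ and $(III)$ replaces it by $ts$ or $st$ respectively; only if the deleted letter is the central $t$ does one reach the forbidden pair $(v_1 sts v_2,\, v_1 s^2 v_2)$.

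To bypass a critical step I would exploit the observation that $u = v_1 s^2 v_2$ already contains a repeated letter, so that a target pair whose first coordinate has the desired shape $sv'_n s$ can be reached by an alternative route. Concretely, I would first apply cyclic shifts and commuting moves to both $v$ and $u$ simultaneously in order to transport the contested block $sts$ in $v$ to the boundary of the word, producing a pair in which $v$ begins or ends with a letter matching its opposite boundary letter; every $(III)$ invoked in the re-routed sequence then acts on a different subword $s'ts'$ whose middle letter is present in $u$, hence is legitimate. The main obstacle will be verifying this re-routing in the degenerate subcase where no letters of $v_1$ or $v_2$ commute past $s$ or $t$; I expect the argument to close by a finite descent on the number of critical $(III)$-applications in the naive sequence, each occurrence being removed by first processing $v_1$ and $v_2$ independently before dealing with the contested block.
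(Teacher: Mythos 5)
You correctly isolate where the danger lies: operations (I) and (II) always transfer to the pair, and an application of (III) is problematic only when the deleted letter is the middle letter of the braid block, i.e.\ exactly the configuration $(v_1stsv_2,\,v_1s^2v_2)$. But the bypass you propose is not a proof. Cyclically shifting the contested block to the boundary is indeed the first step (after shifts the pair becomes $(tsv's,\,sv's)$ with $v'=v_2v_1$, so the deleted letter is the leading $t$ of $v$), but it settles nothing: one still has to make the first and last letters of $v$ coincide, and in general this forces further braid moves that can again involve the deleted position. Your key assertion — that in the re-routed sequence every (III) acts on a block whose middle letter is present in $u$ — is simply stated, not argued, and your ``finite descent on the number of critical (III)-applications'' comes with no measure that provably decreases. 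Note also that the repeated letter you want to exploit in $u=v_1s^2v_2$ is not stable: a cyclic shift conjugates the underlying Weyl group element, so the shifted word $sv's$ may well be reduced, and then there is no $r^2$ in $u$ to route through.

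That reduced case is precisely the hard core of the lemma, and it is what your ``degenerate subcase'' remark points at without resolving. The paper splits according to reducedness: if the word $u=sv's$ is non-reduced, Lemma \ref{doppelts} applied to $u$ (all moves lie inside the $u$-part of $v=tu$, hence are simultaneously legal) yields $u_1r^2u_2$, and cyclic shifts finish; if $u$ is reduced but $v=tu$ is not, one writes $\gamma(u)=tw'$ and again finishes by a shift. When both are reduced, the paper needs a genuinely global, type-$A$-specific extremal argument: among all reduced expressions and cyclic shifts of $v$ that never use the forbidden replacement, choose one maximizing the multiplicity vector $(m_{n-1},\ldots,m_{i+1})$ lexicographically, and show that any failure of the shape $s_jz's_j$ allows a move $s_js_{j+1}s_j\rightsquigarrow s_{j+1}s_js_{j+1}$ (possibly after a cyclic shift) which increases that vector — a contradiction. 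Nothing in your sketch substitutes for this argument: local transport by commuting moves can be impossible when neighbouring letters fail to commute, and processing $v_1,v_2$ ``independently'' does not prevent the final matching of boundary letters from recreating the forbidden configuration. So the proposal has the right setup but a genuine gap at the decisive step.
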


\begin{proof} 
By Theorem \ref{GKP} resp. Lemma \ref{doppelts} we may transform $v$ into the desired shape. We only have to analyse 
the critical case which we want to avoid.
After a series of cyclic shifts we may then suppose that  $v=tsv's$ and $u=sv's.$ If $\gamma(u)\not\in W$, then 
we may transform by Lemma \ref{doppelts} $u$ without cyclic shifts into the shape
$u_1r^2u_2$ for some $r\in S.$ But then $v$ can be transformed into the word $tu_1r^2u_2$ and the claim follows  using cyclic shifts again.
If $\gamma(u)\in W$ but $\gamma(v) \not\in W,$ then we may write $\gamma(u)=tw'$ for some $w'\in W$ and we are done 
again.

So it remains to consider the case where  $\gamma(v)\in W.$ We denote for any $z\in F^+$ and any 
$t=s_k\in S$ by $m_t(z)=m_k(z)\in \mZ_{\geq 0}$ its multiplicity in $z.$


Let $s=s_{m+1}$ and $t=s_m.$  Choose under all successive transformations (without using the forbidden replacement $sts \rightsquigarrow tst$) and all 
cyclic shifts (if we get an element which is not reduced, we are done by the first case)
starting with $v$ an element $z$ such that $m(z):=(m_{n-1}(z),m_{n-2}(z),\ldots,m_{2}(z),m_{1}(z))$ is maximal
for the lexicographical order. Then let $1 \leq i\leq n-1$  be the unique index   with $m_i(z) >m_k(z)$ for all $k>i$ and 
$m_i(z)\geq m_k(z)$ for all $k\leq i.$ Moreover, let $j\leq i$ such that $m_i(z)=m_{i-1}(z)=\cdots = m_j(z).$ 
We claim that $z$ has up to cyclic shift the desired shape $s_{j}z's_j.$
Suppose that the claim is wrong, i.e. $z=z_1s_jz_2s_jz_3$ with $z_1\neq e$ or 
$z_3\neq e$ and $s_j\not\in \supp(z_1) \cup \supp(z_3).$

\noindent {\it 1st Case:} $j>1.$ Consider two consecutive simple reflections $s_j$ together with the corresponding  subword $s_ju's_j$  of
$z$. If there is no simple reflection $s_{j-1}$ appearing in $u'$  there must be (as $\gamma(z)\in W$)  some simple reflection  $s_{j+1}$ in between.
Hence  we can finally replace an expression of the shape $s_ks_{k+1}s_k$ in $s_jus_j$ with $k\geq j$ by $s_{k+1}s_ks_{k+1}$. This gives a 
contradiction to the maximality of $m(z).$ On the other hand since $m_{j-1}< m_j$ there cannot be modulo cyclic shift such a reflection
between any such consecutive pair.

\noindent {\it 2nd Case:} $j={1}.$ In this case we easily see that it is possible to increase  the lexicographical order by a replacing
an expression $s_1s_2s_1$ by $s_2s_1s_2.$ Hence we get a contradiction, too.

The case $s=s_{m}$ and $t=s_{m+1}$ behaves symmetrically using the lexicographical order on 
$(m_{1}(z),m_{2}(z),\ldots,m_{n-2}(z),m_{n-1}(z))$.

\begin{eg}
 a) Let $(1,5)=(1,2)(2,3)(3,4)(4,5)(3,4)(2,3)(1,2)=sw's.$ Then $sw'=(1,2)(2,3)(3,4)(4,5)(3,4)(2,3)$. Shifting the simple
 reflection $(2,3)$ from the RHS to the LHS
 we get from $(v,u)=(sw',w')$ the tuple
$$((2,3)(1,2)(2,3)(3,4)(4,5)(3,4),(2,3)(2,3)(3,4)(4,5)(3,4))$$ which we want to avoid. Instead we consider the sequence
\begin{eqnarray*}
(v_1,u_1) & = & ((1,2)(2,3)(4,5)(3,4)(4,5)(2,3), (2,3)(4,5)(3,4)(4,5)(2,3))\\
& \vdots & \\
(v_4,u_4) & = &   ((4,5)(1,2)(2,3)(3,4)(2,3)(4,5), (4,5)(2,3)(3,4)(2,3)(4,5)).
\end{eqnarray*}

b) Let $v=sw'=(3,4)(2,3)(1,2)(3,4)(4,5)(2,3)$ and $u=(2,3)(1,2)(3,4)(4,5)(2,3).$ Again shifting the reflection $(2,3)$ from the RHS to the LHS, would yield
the (non-desirable) tuple.
Instead we consider the sequence
\begin{eqnarray*}
(v_1,u_1) & = & ((3,4)(2,3)(3,4)(1,2)(4,5)(2,3),(2,3)(3,4)(1,2)(4,5)(2,3)),\\
(v_2,u_2) & = & ((2,3)(3,4)(2,3)(1,2)(4,5)(2,3),(2,3)(3,4)(1,2)(4,5)(2,3)).
\end{eqnarray*}

\end{eg}

\end{proof}

We believe that the above theorem can be stated with respect to words in $\hat{F}^+.$
In particular the following conjecture should hold true.

\begin{conj}
Let $w\in \hat{F}^+$. The cohomology of $\overline{X}(w)$ in degree $2i$ can be written as
$$H^{2i}(\overline{X}(w))=\bigoplus_{z\preceq w\atop \ell(v)=i} H(w)_z$$
with $H(w)_z=i^G_{P^w_z}(-i)$ for certain standard parabolic subgroups $P^w_z\subset G.$ \qed
\end{conj}

\begin{rks}
i) The grading $$H^{2i}(\overline{X}(w))=\bigoplus_{z\preceq w\atop \ell(v)=i} H(w)_z$$ produced above  is  licentious
since it depends among other things on the chosen gradings with respect to the (relative) Coxeter elements in Levi subgroups.

ii) We can use Theorem \ref{restriction_graded}  in order to reprove the statement in
Remark \ref{vanishing_coh} concerning the appearance of  the Steinberg representation in the cohomology of a
DL-variety $X(w).$  Indeed, it is easily seen that
the induced representation $i^G_B$ occurs in the spectral sequence only in the contribution $H^0(\overline{X}(e)).$
Hence the $G$-representation $v^G_B$ occurs by Prop. \ref{qi_cox}  exactly in degree $\ell(w).$

iii) In \cite[Cor. 3.3.8]{DMR} the authors determine the character of $H^{2i}(\overline{X}(w))$ as $H$-representation. But for the author
it is not clear that their result leads to part a) of Theorem \ref{restriction_graded}.
\end{rks}

\vspace{0.5cm}

\section{The spectral sequence revisited}

In this section we reconsider the spectral sequence
$$E_1^{p,q}=\bigoplus_{v \preceq w \atop \ell(v)=\ell(w)-p} H^q(\overline{X}(v))
\Longrightarrow H^{p+q}_c(X(w))$$ of the previous paragraph
and treat the final aspect of the introduction.

\begin{conj}\label{complex_coh_w}
Let $w\in F^+$ and fix an integer $i\geq 0.$ For $v\preceq w$, there are geometric gradings
$H^{2i}(\overline{X}(v))=\bigoplus\limits_{z\preceq v \atop \ell(z)=i}i^G_{P^v_z}(-\ell(z))$ such that
the complex
\begin{equation*}
E_1^{\bullet,2i}: H^{2i}(\overline{X}(w)) \to  \bigoplus_{\genfrac{}{}{0pt}{2}{v\prec w}{\ell(v)=\ell(w)-1}} H^{2i}(\overline{X}(v)) \to \bigoplus_{\genfrac{}{}{0pt}{2}{v\prec w}{\ell(v)=\ell(w)-2}} H^{2i}(\overline{X}(v)) \to
\cdots \to  H^{2i}(\overline{X}(e))
\end{equation*}
is quasi-isomorphic to a direct sum $\bigoplus_{z\preceq w \atop \ell(z)=i} H(\,\cdot\,)_z$ of complexes of the shape
\begin{equation*}
H(\,\cdot\,)_z: i^G_{P^w_{z}} \rightarrow \bigoplus_{\genfrac{}{}{0pt}{2}{z \preceq v\preceq w}{\ell(v)=\ell(w)-1}}i^G_{P^v_{z}} \rightarrow
\bigoplus_{\genfrac{}{}{0pt}{2}{z\preceq v\preceq w}{\ell(v)=\ell(w)-2}}i^G_{P^v_{z}}
\rightarrow \dots \rightarrow i^G_{P^e_z}
\end{equation*}
as in section 1, cf. (\ref{complex_F}).
(Here the maps $i^G_{P^v_z} \to i^G_{P^{v'}_{z}}$ in the complex are induced - up to sign - by the double cosets of
$1$ in $W_{P^v_z}\setminus  W/W_{P^{v'}_{z}}$  via Frobenius reciprocity. Further $i^G_{P^v_{z}}=(0)$ if $z\npreceq v.$).
\end{conj}

By Proposition \ref{Coxeter_complex} the conjecture is true for Coxeter elements.
Thus we deal in what follows  with elements of positive height.

\begin{prop}\label{reduced_complex_sw's}
Let $w\in F^+$ with $\h(w)\geq 1.$ Then for proving the conjecture we may assume that $w$ is of the form $w=sw's.$
\end{prop}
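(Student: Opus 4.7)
The plan is to argue by induction on the pair $(\ell(w), N(w))$, ordered lexicographically, where $N(w)$ denotes the minimal number of operations of type (I)--(III) required to transform $w$ into an element of the form $sw's$. Since $\h(w)\geq 1$, such a reduction is always possible: Theorem \ref{GKP} brings an element of $W$ of positive height into the shape $sw's$ by cyclic shifts, and for $w\in F^+$ with $\alpha(w)\notin W$ Lemma \ref{doppelts} produces an $s^2$-subword that, after a sequence of cyclic shifts and commuting relations, can be moved to the two ends. If $N(w)=0$ there is nothing to prove; otherwise choose one operation taking $w$ to $\tilde{w}$ with $N(\tilde{w})=N(w)-1$, assume the conjecture for $\tilde{w}$ and for all elements of strictly smaller length, and transfer it back to $w$.

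For operations (I) and (II) the transfer is essentially formal. The bijections $C$ and $K$ on $Q(1,w)$ built in the proof of Theorem \ref{coh_graded_induced} are length-preserving; via Propositions \ref{cyclicshift_coh} and \ref{coh_iso_comm} they realise isomorphisms of $H$-modules $H^{2i}(\overline{X}(v))\stackrel{\sim}{\to} H^{2i}(\overline{X}(\tilde{v}))$ for every $v\preceq w$, and these isomorphisms commute with the restriction maps along the codimension-one strata that appear in (\ref{complex_coh_w}). Consequently the entire complex (\ref{complex_coh_w}) attached to $w$ is isomorphic, as a complex of $H$-modules, to the one attached to $\tilde{w}$, and the gradings and standard parabolics $P^{\tilde{v}}_z$ provided by the inductive hypothesis for $\tilde{w}$ pull back term by term to produce the gradings and $P^v_z$ demanded for $w$.

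The substantive case is operation (III), with $w=w_1stsw_2$ and $\tilde{w}=v=w_1tstw_2$. Here Proposition \ref{coh_sts} gives the identity
$$H^{2i}(\overline{X}(w)) = H^{2i}(\overline{X}(v)) + H^{2i-2}(\overline{X}(w_1sw_2))(-1) - H^{2i-2}(\overline{X}(w_1tw_2))(-1),$$
and the bijection $R:Q(1,w)\setminus Q(s_l,w_1s^2w_2)\to Q(1,v)\setminus Q(t_r,w_1t^2w_2)$ from the proof of Theorem \ref{coh_graded_induced} extends this identity to every subword $v'\preceq w$. The plan is to decompose the complex (\ref{complex_coh_w}) for $w$ into three subcomplexes: the subcomplex indexed by $R$-translates of subwords of $v$ (isomorphic to the complex (\ref{complex_coh_w}) for $v$), a subcomplex supported on $Q(s_l,w_1s^2w_2)$ (isomorphic up to Tate twist and homological shift to the complex (\ref{complex_coh_w}) for $w_1sw_2$), and the formally subtracted subcomplex supported on $Q(t_r,w_1t^2w_2)$ (isomorphic up to the same twist and shift to that for $w_1tw_2$). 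Since $\ell(w_1sw_2)=\ell(w_1tw_2)=\ell(w)-2$, the outer induction on $\ell$ supplies the conjecture for both of these, and the inner induction on $N$ supplies it for $v$, so all three subcomplexes are quasi-isomorphic to direct sums of complexes of shape (\ref{complex_F}); reassembling them yields the required graded decomposition for $w$.

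The main obstacle, and the point that requires genuine care, is showing that the above splitting is a decomposition at the level of complexes, with the differentials of the stratification spectral sequence for $\overline{X}(w)$ respecting the three pieces, rather than merely an additive identity in the Grothendieck group. This has to be extracted from the geometric content of Proposition \ref{coh_sts}: the locally closed identification $\overline{X}(w)\setminus X(s_l,w_1s^2w_2)=\overline{X}(v)\setminus X(t_r,w_1t^2w_2)$ yields a commutative diagram of long exact sequences matching the stratification spectral sequences of $\overline{X}(w)$ and $\overline{X}(v)$, in the spirit of the Mayer--Vietoris argument in Lemma \ref{restriction_graded}. It is this geometric compatibility that underwrites the desired direct sum decomposition and allows the inductive gradings to be glued into the sought-for gradings $P^w_z$ for $w$.
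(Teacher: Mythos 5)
Your proposal is correct and follows essentially the same route as the paper: transfer the statement along the operations (I)--(III) using Propositions \ref{cyclicshift_coh} and \ref{coh_iso_comm} for the formal cases, and for (III) use Proposition \ref{coh_sts} to strip off the $\mA^1$-bundle direct factors $H^{2i}_c(X(s_l,z_1s^2z_2))$, identify the reduced complexes for $w$ and $R(w)$, and add back the $t_r$-contributions with suitable gradings. Your explicit lexicographic induction on $(\ell(w),N(w))$ and your appeal to the smaller-length elements $w_1sw_2$, $w_1tw_2$ merely make precise what the paper leaves implicit as ``the obvious gradings,'' and the complex-level (rather than Grothendieck-group) compatibility you flag is exactly the point the paper also handles via Proposition \ref{coh_sts}.
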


\begin{proof}
We  apply again the operations (I) - (III) to the complex.
Suppose that the statement is true for $w$. We need to show that the assertion is true for the transformed element in $F^+$.

\smallskip

\noindent (I) Let $w=sw'$.
Then  we set  $H(C(v))_{C(z)}=H(v)_{z}$ for $v \preceq w$ and $z\preceq v.$  The corresponding complexes are clearly quasi-isomorphic.

\noindent (II) Let $w=w_1stw_2$ with $st=ts.$
We set  $H(K(v))_{K(z)}=H(v)_{z}$ for $v \preceq w$ and $z\preceq v.$  The corresponding resulting complexes are clearly quasi-isomorphic.

\noindent (III) Let $w=w_1stsw_2$ and $R(w)=w_1tstw_2$.
By Proposition \ref{coh_sts} we know that for $z_i \preceq w_i$, $i=1,2$,
the cohomology
$H^{2i}_c(X(s_l,z_1stsz_2))$ is a direct factor of $H^{2i}(\overline{X}(z_1stsz_2))$ and that
\begin{eqnarray*}
 & & H^{2i}(\overline{X}(z_1stsz_2))-H^{2i}_c(X(s_l,z_1s^2z_2)) =H^{2i}(\overline{X}(z_1\widehat{sts}z_2)) \\ 
 & = & H^{2i}(\overline{X}(z_1tstz_2))-H^{2i}_c(X(t_r,z_1t^2z_2)).
\end{eqnarray*}
Further $H^{2i}(\overline{X}(z_1s^2z_2))=H^{2i}(\overline{X}(z_1sz_2))\bigoplus H^{2i-2}(\overline{X}(z_1sz_2))(-1).$
Hence the complex $E_1^{\bullet,2i}$ for $w$ is quasi-isomorphic to

\smallskip
\noindent $\begin{array}{cccc}
& & \bigoplus\limits_{v_1 \prec w_1 \atop \ell(v_1)=\ell(w_1)-1} H^{2i}(\overline{X}(v_1\widehat{sts}w_2)) \to \cdots &
\\ & \nearrow & & \\
\\
H^{2i}(\overline{X}(\hat{w})) & \to &  H^{2i}(\overline{X}(w_1stw_2)) \oplus H^{2i}(\overline{X}(w_1tsw_2)) &  \to \cdots \\
\\ & \searrow & & \\
& & \bigoplus\limits_{v_2 \prec w_2 \atop \ell(v_2)=\ell(w_2)-1} H^{2i}(\overline{X}(w_1\widehat{sts}v_2)) \to \cdots &
\end{array}$

\medskip
\noindent which coincides with

\smallskip
\noindent
$\begin{array}{cccc}
& & \bigoplus\limits_{v_1 \prec w_1 \atop \ell(v_1)=\ell(w_1)-1} H^{2i}(\overline{X}(v_1\widehat{tst}w_2)) \to \cdots  &
\\ & \nearrow & & \\
\\
H^{2i}(\overline{X}(\hat{R(w)}))  & \to & H^{2i}(\overline{X}(w_1stw_2)) \oplus H^{2i}(\overline{X}(w_1tsw_2)) &  \to \cdots \\
\\ & \searrow & & \\
& & \bigoplus\limits_{v_2 \prec w_2 \atop \ell(v_2)=\ell(w_2)-1} H^{2i}(\overline{X}(w_1\widehat{tst}v_2)) \to \cdots  &
\end{array}$

By reversing the above argument, i.e. by adding the contributions $H_c^{2i}(X(t_r,z_1t^2z_2))$ and  $H^{2i-2}(X(z_1tz_2))(-1)$ to the complex
(with gradings chosen by induction) we see that the $E_1$-term of the transformed element $R(w)$  in degree $2i$ is quasi-isomorphic
to the complex $H(R(\cdot))_{\cdot}$.
\end{proof}

\begin{rk}
Let $w=sw's.$ For every $v'\preceq w'$, we have the identity
$H^{2i}(\overline{X}(sv's))=H^{2i}(\overline{X}(v's)) \bigoplus H^{2i-2}(\overline{X}(v's))(-1).$ Hence the complex (\ref{complex_coh_w}) is 
quasi-isomorphic to the complex

\begin{equation}\label{complex_reduced}\begin{array}{cccc}
& & H^{2i}(\overline{X}(sw')) \to \bigoplus\limits_{v' \prec w' \atop \ell(v')=\ell(w')-1} H^{2i}(\overline{X}(sv')) \bigoplus H^{2i}(X(w'))  &
\\ & \nearrow & \!\!\!\!\!\!\!\!\!\!\!\! \nearrow & \!\!\!\! \!\!\!\! \!\!\!\!  \to \cdots \\
\\
H^{2i-2}(\overline{X}(w's))(-1) & \to  \!\!\!\!\!\!\!\!\!\!\!\!\!\!\!\!\!\!\!\! \!\!\!\! \!\!\!\! \!\!\!\! \!\!\!\!  &  \bigoplus\limits_{v' \prec w'\atop \ell(v')=\ell(w')-1} H^{2i-2}(\overline{X}(v's))(-1) .& \\
\end{array}
\end{equation}

\noindent Moreover, by considering the middle column in the diagram (\ref{commutative_dia}), we see that  the lower line
which we may identify with the complex
$$H^{2i}_c(X(s,sw's)) \to \bigoplus\limits_{v' \prec w' \atop \ell(v')=\ell(w')-1} H^{2i}_c(X(s,sv's))  \to \cdots \to H^{2i}_c(X(s,ss))$$
is the direct sum of the complexes
$$H^{2i}_c(X(s^2,sw's))\langle -i \rangle \to \bigoplus\limits_{v' \prec w' \atop \ell(v')=\ell(w')-1} H^{2i}_c(X(s^2,sv's))\langle -i \rangle  \to \cdots \to H^{2i}_c(X(s^2))\langle -i \rangle$$
and
$$H^{2i}_c(X(s,sw')) \to \bigoplus\limits_{v' \prec w' \atop \ell(v')=\ell(w')-1} H^{2i}_c(X(s,sv'))  \to \cdots \to H^{2i}_c(X(s)).$$
\end{rk}

\vspace{0.5cm}

\begin{thm}\label{thm_conj}
 Let $w\in F^+$. Then Conjecture \ref{complex_coh_w} is true for $i=0,1,\ell(w)-1,\ell(w).$
\end{thm}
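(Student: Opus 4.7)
The cases $i=0$ and $i=\ell(w)$ are formal. For $i=0$, each $\overline{X}(v)$ is connected and projective so $H^0(\overline{X}(v))=i^G_G$; the only $z\preceq w$ with $\ell(z)=0$ is $z=e$, and one sets $P^v_e=G$ for every $v$, whereupon the two complexes agree term by term. For $i=\ell(w)$, purity and smoothness force $H^{2\ell(w)}(\overline{X}(v))=0$ when $\ell(v)<\ell(w)$, while $H^{2\ell(w)}(\overline{X}(w))=i^G_G(-\ell(w))$; the only $z$ of length $\ell(w)$ is $z=w$ itself, so both sides are concentrated in degree $0$.

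For $i=\ell(w)-1$ (with $\h(w)\ge 1$), the plan is to truncate the complex severely and read off the grading from the splitting of Corollary~\ref{coro_l(w)-1}. Purity gives $H^{2\ell(w)-2}(\overline{X}(v))=0$ for $\ell(v)\le\ell(w)-2$, collapsing the complex to
\begin{equation*}
H^{2\ell(w)-2}(\overline{X}(w))\;\longrightarrow\;\bigoplus_{v\prec w,\,\ell(v)=\ell(w)-1} i^G_{P(v)}(-\ell(w)+1).
\end{equation*}
Proposition~\ref{vanishtop} says the differential is surjective with kernel $H^{2\ell(w)-2}_c(X(w))$; the conjectural complex $H(\cdot)_z$ has only two terms $i^G_{P^w_z}\twoheadrightarrow i^G_{P^z_z}=i^G_{P(z)}$, one for each $z\prec w$ of length $\ell(w)-1$. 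Corollary~\ref{coro_l(w)-1} already yields
\begin{equation*}
H^{2\ell(w)-2}(\overline{X}(w))=H^{2\ell(w)-2}_c(X(w))\oplus\bigoplus_{v} i^G_{P(v)}(-\ell(w)+1),
\end{equation*}
and the second summand distributes one copy of $i^G_{P(v)}(-\ell(w)+1)$ to each $v$. What remains is to partition $H^{2\ell(w)-2}_c(X(w))$ among the $z$'s so that $P^w_z$ is the predicted parabolic in each $H(\cdot)_z$. For this one reduces to $w=sw's$ via Proposition~\ref{reduced_complex_sw's} and inducts on the length using the recursion
\begin{equation*}
H^{2\ell(w)-2}_c(X(w))=H^{2\ell(w's)-2}_c(X(w's))(-1)\oplus(i^G_{P(w')}-i^G_G)(-\ell(w)+1)
\end{equation*}
from the corollary just before Section~8.

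For $i=1$, I would first reduce to $w=sw's$ by Proposition~\ref{reduced_complex_sw's}. Remark~\ref{P1_closures} then gives $H^2(\overline{X}(v))=H^2(\overline{X}(v's))\oplus i^G_G(-1)$ for each $v=sv's\preceq w$, so the reduced complex (\ref{complex_reduced}) splits into two columns. The lower column, coming from $H^0(\overline{X}(v's))(-1)=i^G_G(-1)$, is a complex of trivial representations and constitutes the single summand $H(\cdot)_s$ with $P^v_s=G$ for all $v$. The upper column, coming from $H^2(\overline{X}(sw'))\to\bigoplus_{v'\prec w'} H^2(\overline{X}(sv'))\oplus H^2(\overline{X}(w'))$ together with the correction terms $H^2_c(X(s^2,sv's))$, is handled as follows: Lemma~\ref{vanishing_odd_degree} kills the relevant odd-degree Frobenius eigenspace of $H^\ast_c(X(s^2,\cdot))$, confining the correction to even degree, while the induction hypothesis for $sw'$ (of strictly smaller length) supplies gradings on each $H^2(\overline{X}(sv'))$. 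Lemma~\ref{restriction_graded} then delivers precisely the compatibility needed to match these gradings with the restriction maps $H^2(\overline{X}(sv's))\to H^2(\overline{X}(sv'))$ appearing in the diagram (\ref{commutative_dia}), yielding the remaining summands $H(\cdot)_t$ for simple reflections $t\preceq sw'$, $t\ne s$.

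The principal technical obstacle is in the $i=1$ case: the induction hypothesis produces gradings on $H^2(\overline{X}(v))$ for $v\preceq sw'$, but the complex for $w$ involves all $v\preceq w$, which split further according to whether $s$ divides $v$. Checking that the gradings from the $\mP^1$-bundle splitting and those from the induction hypothesis are simultaneously compatible with every differential in the complex is precisely what Lemma~\ref{restriction_graded} asserts; without that lemma, there would be no canonical way to reconcile the two. The $i=\ell(w)-1$ argument is comparatively cleaner, the complex being already truncated to two terms; but one still has to verify that the inductively produced distribution of $H^{2\ell(w)-2}_c(X(w))$ respects the restriction maps into the codimension-one strata, where weight considerations and Frobenius semisimplicity (Proposition~\ref{Frobss}) forbid any essential ambiguity.
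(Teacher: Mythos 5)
Your skeleton follows the paper's (the cases $i=0,\ell(w)$ handled directly, $i=\ell(w)-1$ via Corollary \ref{coro_l(w)-1}, $i=1$ by reducing to $w=sw's$ through Proposition \ref{reduced_complex_sw's} and inducting with Lemma \ref{restriction_graded}), but there is a genuine error at the root of your $i=0$ and $i=1$ arguments: you assert that $H^0(\overline{X}(v))=i^G_G$ because $\overline{X}(v)$ is connected and projective. This is false for $v$ without full support: by Remark \ref{cohomology_not_full_F} one has $H^0(\overline{X}(v))=i^G_{P(v)}$; for instance $\overline{X}(e)$ is the finite set of rational Borel subgroups, so $H^0(\overline{X}(e))=i^G_B$. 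Hence for $i=0$ the row is not a string of trivial representations with $P^v_e=G$; it is literally the complex (\ref{complex_0_F}), namely $i^G_{P(w)}\to\bigoplus_v i^G_{P(v)}\to\cdots\to i^G_B$, which is already of the conjectured shape with $P^v_e=P(v)$ and whose cohomology is the Steinberg representation $v^G_B$ (Proposition \ref{qi_cox}). With your choice $P^v_e=G$ the two complexes do not even have the same terms, and the $v^G_B$-contribution to $H^{\ell(w)}_c(X(w))$ disappears.

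The same misidentification derails $i=1$. In the reduced complex (\ref{complex_reduced}) the lower line consists of the groups $H^0(\overline{X}(v's))(-1)=i^G_{P(v's)}(-1)$, not of copies of $i^G_G(-1)$; it is exactly the complex (\ref{complex_0_F_s}), and the substance of the paper's proof is the dichotomy of Proposition \ref{qi_cox_s}: this line is acyclic when $s\in\supp(w')$, and is a resolution of the generalized Steinberg representation $v^G_{P(s)}$ when $s\notin\supp(w')$ --- which is precisely what yields the corollary on $H^\ast_c(X(w))\langle -1\rangle$. Declaring the lower line to be ``a complex of trivial representations constituting $H(\cdot)_s$ with $P^v_s=G$'' erases this case distinction and the $v^G_{P(s)}$-contribution, so your quasi-isomorphism would compute the wrong cohomology whenever $s\notin\supp(w')$. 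You also omit two ingredients the paper needs in the induction for $i=1$: the surjectivity of the last differential (\ref{last_differential}), proved as a separate claim and used to see that $v^G_{P(s)}$ actually survives, and the explicit base case $w=s_iw's_i$ with $w'=s_1\cdots \hat{s_i}\cdots s_{n-1}$, where the gradings are written down by hand before Lemma \ref{restriction_graded} is applied. Your $i=\ell(w)-1$ and $i=\ell(w)$ discussions are essentially consistent with the paper, modulo first reducing to full support (in general $H^{2\ell(w)}(\overline{X}(w))=i^G_{P(w)}(-\ell(w))$, not $i^G_G(-\ell(w))$).
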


\begin{proof}
We may suppose that $w$ has full support. If $\h(w)=0$, then the claim follows from Proposition \ref{Coxeter_complex}. So we assume in the sequel that $\h(w)\geq 1.$

If $i=0$ then the complex coincides with the complex (\ref{complex_0_F}) of section 1 which yields the Steinberg representation $v^G_B$.

If $i=\ell(w)$, the claim is trivial.

If $i=\ell(w)-1$ the assertion follows from Corollary \ref{coro_l(w)-1}.

So let $i=1.$ The proof is by induction on the length.  By Proposition \ref{reduced_complex_sw's} we may assume that $w=sw's.$
For the start of induction, let $w=sw's$ be as in Corollary \ref{coro_height1_a} with $s=s_i$ and
$$w'=s_1\cdots s_{i-1} s_{i+1}\cdots s_{n-1}.$$ 
Here the Tate twist -1 contribution of the cohomology of $X(w)$ is given by 
$$H^\ast_c(X(w))\langle -1 \rangle=H^n_c(X(w))\langle -1 \rangle =v^G_{P(s)}(-1) - v^G_{P_{(2,1,1,\ldots,1)}}(-1).$$
The lower line in the complex (\ref{complex_reduced})
is nothing else but the complex $(\ref{complex_0_F_s})$ which is a resolution of $v^G_{P(s)}$. In view of Theorem \ref{restriction_graded} we define a grading on the upper line
as follows. Set for all $j\neq i,$
\begin{eqnarray*}
H(ss_j)_s & = & H(s_js)_e(-1)=i^G_{P(s_js)}(-1) 
\end{eqnarray*}
and
$$H(t)_t  = H(t)_e(-1)= i^G_{P(t)}(-1)\,\,\,\, \forall t\in S.$$
In particular,  we have fixed $H(ss_j)_{s_j}$ in this way for all $j  \neq i.$ Further we set for $j<i-1,$
$$H(s_js_{j+1})_{s_j}=i^G_{P(s_j)}(-1)$$
and
$$H(s_js_{j+1})_{s_{j+1}}=i^G_{P(s_{j+1})}(-1)$$
for $j>i.$ If $s_js_k=s_ks_j$, then there is a canonical grading  on $H^2(\overline{X}(s_js_k))$. Thus we have defined gradings for all subwords of $sw'$ of length $\leq 2.$
Now we extend the above gradings to the complex
$$0 \to H^{2}(\overline{X}(w')) \to  \bigoplus_{\genfrac{}{}{0pt}{2}{v'\prec w'}{\ell(v')=\ell(w')-1}} H^{2}(\overline{X}(v')) \to
\cdots \to \bigoplus_{\genfrac{}{}{0pt}{2}{v'\prec w'}{\ell(v')=1}}  H^{2}(\overline{X}(v'))$$
 which is induced by the K\"unneth formula and (compatible) gradings with respect to the the relative  Coxeter elements $s_1\cdots s_{i-1}$ and $s_{i+1}\cdots s_{n-1}$. More  precisely,
for $s_{i+1}\cdots s_{n-1}$ we consider the grading described by Proposition \ref{cohomology_Coxeter_compactification} 
whereas for $s_1\cdots s_{i-1}$ we consider the dual grading, i.e.,  induced
by blowing up hyperplanes (For $v'=w'$ the resulting grading coincides with the one in Proposition \ref{qi_restriction}).
Finally we  apply Theorem \ref{restriction_graded} b) once again in order to get the remaining gradings on the upper line in 
the complex
(\ref{complex_reduced}).  Here we have to make a choice for the grading on $H^2(\overline{X}(ss_{i-1}s_{i+1}))$, say 
$H(ss_{i-1}s_{i+1})_{s_{i-1}}\subset H(ss_{i-1}s_{i+1})_{s_{i+1}}$. The resulting graded complex satisfies the claim. 
Indeed, for  any $t \mid s_{i+1}\cdots s_{n-1}$ the complex $H(\cdot)_t$ is acyclic as 
$H(sv')_t=H(sv'/s_{i+1})_t$ for all $v'\preceq w'$ with $s_{i+1}\mid v'$.
Similarly, for $t\mid s_1\cdots s_{i-1}$ the complex $H(\cdot)_t$ is acyclic as $H(sv')_t=H(sv'/s_{i-1})_t$ 
for all  $v'\preceq w'$ with $s_{i-1}\mid v'$. Moreover $H(sv')_{s_{i+1}}=H(v')_{s_{i+1}}$ for all $v'\preceq w'$ which
shows that the complex $H(\cdot)_{s_{i+1}}$ is acyclic.
Finally, one checks that the complex $H(\cdot)_{s_{i-1}}$ is a resolution of the representation $v^G_{P(s)}(-1) - v^G_{P_{(2,1,1,\ldots,1)}}(-1).$
Moreover, we see that the differential 
\begin{equation}\label{last_differential}
 \bigoplus_{v\preceq w \atop \ell(v)=2} H^{2}(\overline{X}(v)) \to  \bigoplus_{t\prec w \atop \ell(t)=1} H^{2}(\overline{X}(t))
\end{equation}
is surjective.

Let's proceed with the induction step. So let $w=sw's\in F^+$ with $\h(sw')\geq 1.$

\noindent {\it Claim:} The map (\ref{last_differential}) is surjective, as well.

Here  we may consider the complex (\ref{complex_reduced}) again. By induction hypothesis we deduce that the map
$\bigoplus_{v\preceq sw' \atop \ell(v)=2} H^{2}(\overline{X}(v)) \to  \bigoplus_{t\prec sw' \atop \ell(t)=1} H^{2}(\overline{X}(t))$
is surjective. On the other hand, we have a surjection $H^{2}(\overline{X}(s^2))\to H^{2}(\overline{X}(s_r)).$ The claim follows.

We distinguish finally the following cases.

Case a). $s\in \supp(w')$. Then the lower line in the complex (\ref{complex_reduced}) coincides with the complex (\ref{complex_0_F_s}).
It is contractible by Proposition \ref{qi_cox_s}. By induction hypothesis
the statement is true for the upper line. We extend the grading to the complex  with respect to   $w$ in the obvious way.


Case b). $s\not\in \supp(w')$. Then the lower line in the complex (\ref{complex_reduced}) coincides with the complex (\ref{complex_0_F_s})
and gives a resolution of the generalized Steinberg representation $v^G_{P(s)}.$  By induction hypothesis
the statement is true for the upper line. As the map
$\bigoplus_{v\preceq sw' \atop \ell(v)=2} H^{2}(\overline{X}(v)) \to  \bigoplus_{t\prec sw' \atop \ell(t)=1} H^{2}(\overline{X}(t))$
is surjective the representation $v^G_{P(s)}(-1)$ occurs in the cohomology of $X(w)$.
Here, we extend the grading to the complex with respect to  $w$ in the obvious way, as well.
\end{proof}

By the proof of the preceding theorem we get an inductively formula for the Tate twist $-1$ contribution of the cohomology of DL-varieties.

\begin{coro}
 Let $w=sw's\in F^+$ with $\h(sw')\geq 1.$ Then

$$H^\ast_c(X(w))\langle -1 \rangle = \left\{\begin{array}{cc}
                                                     H^\ast_c(X(sw'))\langle -1 \rangle[-1] & \mbox{ if } s\in \supp(w') \\ \\
  H^\ast_c(X(sw'))\langle -1 \rangle[-1] \bigoplus v^G_{P(s)}(-1)[-\ell(w)] & \mbox{ if } s \notin \supp(w')
                                                    \end{array} \right. .$$
\end{coro}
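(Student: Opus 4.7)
The plan is to read off the statement from the proof of Theorem \ref{thm_conj} in the case $i=1$. First I would identify $H^\ast_c(X(w))\langle -1 \rangle$ with the cohomology of the $q=2$ row of the $E_1$-spectral sequence of Section 7, that is, with the cohomology of the complex (\ref{complex_coh_w}) at $i=1$; its cohomology at position $p$ contributes precisely to $H^{p+2}_c(X(w))\langle -1 \rangle$ by weight reasons and Frobenius semisimplicity (Proposition \ref{Frobss}).

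Next, for $w=sw's$ I would apply Proposition \ref{reduced_complex_sw's} to replace (\ref{complex_coh_w}) at $i=1$ by the total complex of the double complex (\ref{complex_reduced}), consisting of an upper and a lower row. The upper row is, via cyclic shift and the $\mP^1$-bundle decomposition of Remark \ref{P1_closures}, canonically identified with the analogous $i=1$ complex for the smaller element $sw'$; the lower row is the complex $C^\bullet_{w's,s}$ of (\ref{complex_0_F_s}) applied to $w's$ and the simple reflection $s$, Tate-twisted by $(-1)$. Inside the total complex, the upper row sits one column to the right of the lower row, so that upper-row cohomology at position $q$ contributes to $H^{q+3}_c(X(w))\langle -1 \rangle$ while lower-row cohomology at position $q$ contributes to $H^{q+2}_c(X(w))\langle -1 \rangle$.

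By induction on $\ell(w)$, Theorem \ref{thm_conj} applied to $sw'$ gives that the cohomology of the upper row equals $H^\ast_c(X(sw'))\langle -1 \rangle$; the one-column shift in the embedding into the reduced complex then produces the summand $H^\ast_c(X(sw'))\langle -1 \rangle[-1]$. For the lower row, I would apply Proposition \ref{qi_cox_s} to $w's$ and $s$: since $\supp(w's/s)=\supp(w')$, the complex is acyclic precisely when $s\in\supp(w')$ (this is case (a) of the corollary); otherwise (case (b)) it is quasi-isomorphic to $C^\bullet_{\cox,s}$, whose cohomology is the generalized Steinberg $v^G_{P(s)}$ concentrated at the final term. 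That final term sits at column $\ell(w)-2$ of the lower row and hence contributes $v^G_{P(s)}(-1)$ to $H^{\ell(w)}_c(X(w))\langle -1 \rangle$, i.e.\ the summand $v^G_{P(s)}(-1)[-\ell(w)]$. The main obstacle is the careful bookkeeping of these degree shifts between the complex for $w$, the reduced complex (\ref{complex_reduced}), and the complex for $sw'$; this is, however, already implicit in the proof of Theorem \ref{thm_conj} and does not need to be redone.
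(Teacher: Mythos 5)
Your proposal matches the paper's derivation: the corollary is read off the $i=1$ case of the proof of Theorem \ref{thm_conj}, with the upper row of (\ref{complex_reduced}) giving the recursive term $H^\ast_c(X(sw'))\langle -1\rangle[-1]$ and the lower row, identified with $C^\bullet_{w's,s}(-1)$ and analyzed via Proposition \ref{qi_cox_s}, giving the case distinction, exactly as you lay out. The one point you subsume under ``bookkeeping'' is actually a substantive step: the total-complex cohomology splits as a direct sum of the shifted upper-row cohomology and the lower-row cohomology only because the connecting map out of $H^{\ell(w)-2}(L)=v^G_{P(s)}(-1)$ vanishes, and this relies on the surjectivity Claim proved inside Theorem \ref{thm_conj}, which (by induction applied to $sw'$) makes $H^{\ell(w)-2}$ of the upper row vanish — without it the Steinberg term could a priori cancel against the upper row in the mapping-cone long exact sequence.
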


\medskip
In view of the lower line in (\ref{complex_reduced}) we generalize Conjecture \ref{conjecture_coxeter_u}.

\begin{conj}
Let $w\in F^+$ and fix $u\prec w$. For $u\preceq v\preceq w$, there are geometric gradings $H^{2i}(\overline{X}(v))=\bigoplus_{z\preceq v \atop \ell(z)=i}H(v)_z$ such that
the complex
\begin{equation}\label{complex_coh_wu}
 0 \to H^{2i}(\overline{X}(w)) \to  \bigoplus_{\genfrac{}{}{0pt}{2}{u\preceq v\prec w}{\ell(v)=\ell(w)-1}} H^{2i}(\overline{X}(v)) \to
\cdots \to  H^{2i}(\overline{X}(u))
\end{equation}
is quasi-isomorphic to the graded direct sum of complexes of the shape

\begin{equation*}
0 \rightarrow  i^G_{P^w_{z}} \rightarrow \bigoplus_{\genfrac{}{}{0pt}{2}{z \preceq v\preceq w}{\ell(v)=\ell(w)-1}}i^G_{P^v_{z}} \rightarrow
\bigoplus_{\genfrac{}{}{0pt}{2}{z\preceq v\preceq w}{\ell(v)=\ell(w)-2}}i^G_{P^v_{z}}
\rightarrow \dots \rightarrow i^G_{P^e_z} \rightarrow 0
\end{equation*}
cf. (\ref{complex_F}).
\end{conj}

\begin{rk}
As in Proposition \ref{reduced_complex_sw's} one can reduce the conjecture to the case where $w$ is of the shape $w=sw's.$
Again we only have seriously to consider the operation (III) in this process of transformations.
So let $w=w_1stsw_2$ and $R(w)=\overline{w}=w_1tstw_2.$
If $$u=v_1stsv_2, v_1s^2v_2, v_1stv_2, v_1tsv_2, v_1v_2,$$ respectively, we
set $$\overline{u}=v_1tstv_2, v_1t^2v_2, v_1tsv_2, v_1stv_2, v_1v_2.$$  Then the complex (\ref{complex_coh_wu}) is quasi-isomorphic to
\begin{equation}
 0 \to H^{2i}(\overline{X}(\overline{w})) \to  \bigoplus_{\genfrac{}{}{0pt}{2}{u\preceq v\prec w}{\ell(v)=\ell(w)-1}} H^{2i}(\overline{X}(\overline{v})) \to
\cdots \to  H^{2i}(\overline{X}(\overline{u})).
\end{equation}
Similarly, we transform the lower complex in the conjecture above.
\end{rk}

\begin{rk}
If the conjecture is true then for  $w=sw's$ the gradings considered on $H^{2i}(\overline{X}(sw'))$ in the
upper line in (\ref{reduced_complex_sw's}) are not necessarily in the way that the induced complex is quasi-isomorphic to the complex
\begin{equation*}
 0 \to H^{2i}(\overline{X}(sw')) \to  \bigoplus_{\genfrac{}{}{0pt}{2}{v\prec sw'}{\ell(v)=\ell(w)-1}} H^{2i}(\overline{X}(v)) \to
\cdots \to  H^{2i}(\overline{X}(e)).
\end{equation*}
\end{rk}

\begin{rk}
Suppose that Conjecture \ref{complex_coh_w} is true. Then we can reprove the statement in Remark \ref{vanishing_coh} concerning the appearing of the trivial
representation in the cohomology of a DL-variety $X(w).$  Indeed, the multiplicity of the trivial representation $i^G_G$ in an induced
representation $i^G_P$ is always $1$. Hence for any $z\preceq w$, the  $v$-contribution
\begin{equation*}
0 \rightarrow  i^G_{P^w_{z}} \rightarrow \bigoplus_{\genfrac{}{}{0pt}{2}{z \preceq v\preceq w}{\ell(v)=\ell(w)-1}}i^G_{P^v_{z}} \rightarrow
\bigoplus_{\genfrac{}{}{0pt}{2}{z\preceq v\preceq w}{\ell(v)=\ell(w)-2}}i^G_{P^v_{z}}
\rightarrow \dots \rightarrow i^G_{P^e_z} \rightarrow 0
\end{equation*}
restricted to $i^G_G$ is acyclic  as the resulting index set   is contractible (a lattice).
It follows that the $i^G_G$ occurs only in the top cohomology group of $X(w).$
\end{rk}

\vspace{0.5cm}

\section{Examples}

Here we present some examples concerning Theorem \ref{thm_conj}.
In the following we omit the Tate twists for reasons of clarity. For $w\in F^+$ and $z\preceq w$, we write $i^G_P(z)$
instead of $H(w)_z=i^G_P.$

\bigskip

\noindent a) Let $G=\GL_3$ and let $w=(1,2)(2,3)(1,2)\in F^+$.

$\alpha)$ Let $i=2$. Here the complex (\ref{complex_reduced}) is

\medskip

\noindent
$\begin{array}{cccc}
& & H^{4}(\overline{X}((1,2)(2,3)))=i^G_G
\\ & \nearrow & & \\
\\
H^{2}(\overline{X}((2,3)(1,2)) & \to  &  H^{2}(\overline{X}((1,2)_r))=i^G_{P_{(2,1)}} .& \\
\end{array}$

\medskip

\noindent We consider the  grading $H^{2}(\overline{X}(2,3)(1,2))=i^G_G(2,3) \bigoplus i^G_{P_{(2,1)}}(1,2).$
Thus we see that the above complex is contractible, as it should be by Example \ref{Example3}.

\medskip
$\beta)$ Let $i=1$. Here the complex (\ref{complex_reduced}) is

\noindent
\begin{equation*}\begin{array}{ccc}
& \!\!\!\!\!\!\!\!\!\!\!\!\!\!\!\!\!\!\!\!\!\!\!\! \!\!\!\!\!\!\!\!\!\!\!\! \!\!\!\!\!\!\!\!\!\!\!\! \!\!\!\!\!\!\!\!\!\!\!\! \!\!\!\!\!\!\!\!\!\!\!\!\!\!\!\!\!\!\!\!\!\!\! \!\!\!\!\!\!\!\!\!\!\!\! \!\!\!\!\!\!\!\!\!\!\!\!  H^{2}(\overline{X}((1,2)(2,3))) \to & \!\!\!\!\!\!\!\!\!\!\!\! \!\!\!\!\!\!\!\!\!\!\!\! \!\!\!\!\!\!\!\!\!\!\!\! \!\!\!\!\!\!\!\!\!\!\!\!  H^{2}(\overline{X}((1,2))) \bigoplus H^{2}(X((2,3))) = i^G_{P_{(2,1)}} \bigoplus i^G_{P_{(2,1)}}
\medskip
\\ \nearrow & \!\!\!\!\!\!\!\!\!\!\!\! \nearrow &   \\
\\
H^{0}(\overline{X}((2,3)(1,2)))=i^G_G  \to &   H^{2}(\overline{X}((1,2)_r))= i^G_{P_{(2,1)}} .&  \\
\end{array}
\end{equation*}

\noindent We consider the grading $H^{2}(\overline{X}((2,3)(1,2)))=i^G_G(1,2) \bigoplus i^G_{P_{(2,1)}}(2,3).$
Thus we see that the above complex is contractible, as it should be by Example \ref{Example3}.

\medskip
\noindent b)  Let $G=\GL_4$ and let $w=(3,4)(1,2)(2,3)(3,4)\in F^+$.

$\alpha)$ Let $i=2.$ We have $H^4(\overline{X}((1,2)(2,3)))=i^G_{P_{(3,1)}}.$ We consider the grading
$$H^{4}(\overline{X}((3,4)(1,2)(2,3))=i^G_G((3,4)(2,3)) \bigoplus i^G_{P_{(3,1)}}((1,2)(2,3)) \bigoplus i^G_{P_{(2,2)}}((1,2)(3,4)).$$
The reduced complex (\ref{complex_reduced}) is given by

\medskip

\noindent $\begin{array}{cccc}
& &  H^{4}(\overline{X}((1,2)(2,3)))=i^G_{P_{(3,1)}}  &
\\ & \nearrow & & \\
H^{4}(\overline{X}((3,4)(1,2)(2,3))))  & \to &  H^{4}(\overline{X}((3,4)(1,2)))=i^G_{P_{(2,2)}}  &
\\ & \searrow & & \\
& &  H^{4}(\overline{X}((3,4)(2,3)))=i^G_{P_{(1,3)}} &
\end{array}$

\bigskip

\hspace{1cm} $\begin{array}{cccccc}
& & & \uparrow & & \\
& & & H^{2}(\overline{X}((1,2)(3,4)))  & &
\\ & \uparrow & \nearrow & & \searrow &   \\
\\ & H^2(\overline{X}(\cox)) & &  \uparrow & & H^2(\overline{X}((3,4)))=i^G_{P_{(1,1,2)}} .
\\ & & \searrow & & \nearrow & \\
& & & H^{2}(\overline{X}((2,3)(3,4)))   & &
\end{array}$

\bigskip

\noindent We consider the gradings
\begin{eqnarray*}
H^{2}(\overline{X}((2,3)(3,4))) & = & i^G_{P_{(1,3)}}(2,3) \bigoplus i^G_{P_{(1,1,2)}}(3,4),\\
H^{2}(\overline{X}((1,2)(3,4))) & = & i^G_{P_{(2,2)}}(1,2) \bigoplus i^G_{P_{(2,2)}}(3,4)
\end{eqnarray*}
and
$$H^{2}(\overline{X}(\cox))=i^G_G(2,3) \bigoplus i^G_{P_{(3,1)}}(3,4) \bigoplus i^G_{P_{(2,2)}}(1,2).$$

\noindent We get $H_c^\ast(X(w))\langle -2 \rangle = j_{(2,2)}[-5],$ as it should be by Example \ref{Example_(1,3,4)}.

\medskip

$\beta)$ Let $i=1.$ We consider for $w'=(1,2)(2,3)$ the graded complex

\medskip

\noindent $\begin{array}{cccc}
& &  H^{2}(\overline{X}((1,2)))=i^G_{P_{(2,1,1)}}  &
\\ & \nearrow & & \\
H^{2}(\overline{X}((1,2)(2,3)))  &  &   &
\\ & \searrow & & \\
& &  H^{2}(\overline{X}((2,3)))=i^G_{P_{(1,2,1)}}  &
\end{array}$

\bigskip

\noindent with $H^{2}(\overline{X}((1,2)(2,3)))=i^G_{P_{(3,1)}}(2,3) \bigoplus i^G_{P_{(2,1,1)}}(1,2).$ 

\noindent We consider further the gradings
\begin{eqnarray*}
 H^{2}(\overline{X}((3,4)(2,3))) & = & i^G_{P_{(3,1)}}(3,4) \bigoplus i^G_{P_{(1,2,1)}}(2,3),\\
H^{2}(\overline{X}((3,4)(1,2)) & = & i^G_{P_{(2,2)}}(1,2) \bigoplus i^G_{P_{(2,2)}}(3,4)
\end{eqnarray*}
\noindent and
$$H^{2}(\overline{X}((3,4)(1,2)(2,3)))=i^G_G(3,4) \bigoplus i^G_{P_{(3,1)}}(2,3) \bigoplus i^G_{P_{(2,2)}}(1,2).$$

\noindent The reduced complex (\ref{complex_reduced}) is given by

\medskip

\noindent $\begin{array}{cccccc}
& &  H^{2}(\overline{X}((1,2)(2,3))) & \to &  H^{2}(X((1,2)) = i^G_{P_{(2,1,1)}} &
\\ & \nearrow & & \nearrow \!\!\!\!\!\! \searrow \\
H^{2}(\overline{X}((3,4)(1,2)(2,3)))  & \to &  H^{2}(\overline{X}((3,4)(1,2))) &  \to &  H^{2}(\overline{X}((2,3)))=i^G_{P_{(1,2,1)}} &
\\ & \searrow & & \nearrow \!\!\!\!\!\! \searrow \\
& &  H^{2}(\overline{X}((3,4)(2,3))) & \to &  H^{2}(\overline{X}((3,4)))=i^G_{P_{(1,1,2)}} &
\end{array}$

\bigskip

\hspace{0.5cm} $\begin{array}{ccccc}
& & \uparrow & & \\
& & H^{0}(\overline{X}((1,2)(3,4)))=i^G_{P_{(2,2)}}  & &
\\ \uparrow & \nearrow & & \searrow &  \uparrow \\
\\H^0(\overline{X}(\cox))=i^G_G & &  \uparrow & & H^0(\overline{X}((3,4)))=i^G_{P_{(1,1,2)}}.
\\ & \searrow & & \nearrow & \\
& & H^{0}(\overline{X}((2,3)(3,4)))=i^G_{P_{(1,3)}}   & &
\end{array}$

\medskip

It follows that the complex is contractible, as it should be by Example \ref{Example_(1,3,4)}.

\medskip
\noindent  c)  Let $G=\GL_4$ and let $w=(2,3)(1,2)(3,4)(2,3)\in F^+$.

$\alpha)$ Let $i=2.$ We have $H^4(\overline{X}((1,2)(3,4)))=i^G_{P_{(2,2)}}.$ We consider the grading
$$H^{4}(\overline{X}((2,3)(1,2)(3,4))=i^G_G((2,3)(1,2)) \bigoplus i^G_{P_{(3,1)}}((2,3)(3,4)) \bigoplus i^G_{P_{(2,2)}}((1,2)(3,4)).$$
The reduced complex (\ref{complex_reduced}) is given by

\medskip

\noindent $\begin{array}{cccc}
& &  H^{4}(\overline{X}((1,2)(3,4)))=i^G_{P_{(2,2)}}  &
\\ & \nearrow & & \\
H^{4}(\overline{X}((2,3)(1,2)(3,4))))  & \to &  H^{4}(\overline{X}((2,3)(1,2)))=i^G_{P_{(3,1)}}  &
\\ & \searrow & & \\
& &  H^{4}(\overline{X}((2,3)(3,4)))=i^G_{P_{(1,3)}} &
\end{array}$

\bigskip

\hspace{-0.5cm} $\begin{array}{cccccc}
& & & \uparrow & & \\
& & & H^{2}(\overline{X}((1,2)(2,3)))  & &
\\ & \uparrow & \nearrow & & \searrow &   \\
\\ & H^2(\overline{X}((1,2)(3,4)(2,3))) & &  \uparrow & & H^2(\overline{X}((2,3)))=i^G_{P_{(1,2,1)}}.
\\ & & \searrow & & \nearrow & \\
& & & H^{2}(\overline{X}((3,4)(2,3)))   & &
\end{array}$

\bigskip

\noindent We consider the gradings
\begin{eqnarray*}
H^{2}(\overline{X}((3,4)(2,3))) & = & i^G_{P_{(1,3)}}(3,4) \bigoplus i^G_{P_{(1,2,1)}}(2,3),\\
H^{2}(\overline{X}((1,2)(2,3))) & = & i^G_{P_{(3,1)}}(1,2) \bigoplus i^G_{P_{(1,2,1)}}(2,3)
\end{eqnarray*}
and
$$H^{2}(\overline{X}((1,2)(3,4)(2,3)))=i^G_G(1,2) \bigoplus i^G_{P_{(3,1)}}(3,4) \bigoplus i^G_{P_{(2,2)}}(2,3).$$

\noindent We get $H_c^\ast(X(w))\langle -2 \rangle = i^G_{P_{(2,1,1)}}/i^G_{P_{(2,2)}}[-5],$ as it should be by Example \ref{Example_(1,3)(2,4)}.

\medskip

$\beta)$ Let $i=1.$ We consider for $w'=(1,2)(3,4)$ the graded complex

\medskip

\noindent $\begin{array}{cccc}
& &  H^{2}(\overline{X}((1,2)))=i^G_{P_{(2,1,1)}}  &
\\ & \nearrow & & \\
H^{2}(\overline{X}((1,2)(3,4)))  &  &   &
\\ & \searrow & & \\
& &  H^{2}(\overline{X}((3,4)))=i^G_{P_{(1,1,2)}}  &
\end{array}$

\bigskip

\noindent with $H^{2}(\overline{X}((1,2)(3,4)))=i^G_{P_{(2,2)}}(1,2) \bigoplus i^G_{P_{(2,2)}}(3,4).$
We consider further the gradings
\begin{eqnarray*}
 H^{2}(\overline{X}((2,3)(3,4))) & = & i^G_{P_{(1,3)}}(2,3) \bigoplus i^G_{P_{(1,1,2)}}(3,4),\\
H^{2}(\overline{X}((2,3)(1,2))) & = & i^G_{P_{(3,1)}}(2,3) \bigoplus i^G_{P_{(2,1,1)}}(1,2)
\end{eqnarray*}
\noindent and
$$H^{2}(\overline{X}((2,3)(1,2)(3,4)))=i^G_G(2,3) \bigoplus i^G_{P_{(3,1)}}(3,4) \bigoplus i^G_{P_{(2,2)}}(1,2).$$

\noindent The reduced complex (\ref{complex_reduced}) is given by

\medskip

\noindent $\begin{array}{cccccc}
& &  H^{2}(\overline{X}((2,3)(1,2))) & \to &  H^{2}(X((1,2))) = i^G_{P_{(2,1,1)}} &
\\ & \nearrow &  & \nearrow \!\!\!\!\!\! \searrow \\
H^{2}(\overline{X}((2,3)(1,2)(3,4)))  & \to &  H^{2}(\overline{X}(((1,2)(3,4))) &  \to &  H^{2}(\overline{X}((2,3)))=i^G_{P_{(1,2,1)}} &
\\ & \searrow & & \nearrow \!\!\!\!\!\! \searrow  \\
& &  H^{2}(\overline{X}((2,3)(3,4))) & \to &  H^{2}(\overline{X}((3,4)))=i^G_{P_{(1,1,2)}} &
\end{array}$

\bigskip

\hspace{-0.5cm} $\begin{array}{ccccc}
& & \uparrow & & \\
& & H^{0}(\overline{X}((1,2))(2,3))=i^G_{P_{(3,1)}}  & &
\\ \uparrow & \nearrow & & \searrow &  \uparrow \\
\\H^0(\overline{X}((1,2)(3,4)(2,3)))=i^G_G & &  \uparrow & & H^0(\overline{X}((2,3)))=i^G_{P_{(1,2,1)}}.
\\ & \searrow & & \nearrow & \\
& & H^{0}(\overline{X}((3,4)(2,3)))=i^G_{P_{(1,3)}}   & &
\end{array}$

\medskip

\noindent We get $H_c^\ast(X(w))\langle -1 \rangle = j_{(2,2)}[-4],$ as it should be by Example \ref{Example_(1,3)(2,4)}.

\vspace{0.5cm}

\section{Appendix A}\label{long_version}

In this section we present a different but more vague method for determining the cohomology of DL-varieties attached to elements in the
Weyl group. The analogous situation for elements of the monoid $F^+$ can be treated in the same way. Here the approach is the other way round compared
to the foregoing version. Some results presented here might be also treated by using Demazure resolutions and the results of the previous sections.

Let $w=sw's \in W$ with $\ell(w)=\ell(w')+2$ and $Z=X(w) \cup X(sw')\subset X$  as before. Reconsider for $i\geq 0,$
the natural map $r^i=r^i_{w,sw'}:H^{i}_c(Z) \to H^{i}_c(X(sw')).$ Now we write $H^{i}_c(Z)=H^{i-2}_c(Z')(-1)=A \bigoplus B$ where
$$A \cong {\rm coker} \big(H^{i-3}_c(X(w')) \to H^{i-2}_c(X(w's))\big)(-1)$$ and $$B=\ker \big(H^{i-2}_c(X(w')) \to H^{i-1}_c(X(w's))\big)(-1).$$
By Remark \ref{contribute} we know that $r^i_{\mid A}=0.$

Motivated by the Examples in the  $\GL_4$-case 
we pose the following conjecture.


\begin{conj}\label{si-surjective}
For $i\geq 0$, the map $r^i_{\mid B}: B  \to H^{i}_c(X(sw'))$ has si-full rang.
\end{conj}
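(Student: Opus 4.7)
The plan is to proceed by induction on $\ell(w)$, using the explicit computations of Section 5 as the base cases (height zero elements are Coxeter-like, and height one elements are completely treated by Corollary \ref{coro_height1_a} and the subsequent propositions, all of which are already consistent with si-full rank). At each step I would have full inductive knowledge of the cohomology of $X(w')$, $X(w's)$ and $X(sw')$, hence of $A$ and $B$ as $H$-modules.

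My first move would be to reduce to the best-behaved case by running through the operations (I)--(III) on $F^+$ from Section 7. By Proposition \ref{cyclic_shift} the cohomology of $X(w)$ and of $X(sw')$ are preserved under cyclic shifts, and the map $r^i_{w,sw'}$ is transported accordingly; using this I would try to bring $w$ into a form where a second presentation $w = s'w''s'$ with $s' \neq s$ is available. In that situation Proposition \ref{surjective_on_tatetwist} already produces surjectivity of $r^i$ for $i > \ell(sw')$, and the remaining degrees are handled by exactly the matrix argument of Example \ref{Example3}: the two splittings from Corollary \ref{splitting} combine into a map $H^i_c(X(sw')) \oplus H^i_c(X(w's)) \to H^i_c(X(sw')) \oplus H^i_c(X(w's))$ of the form $(x,y) \mapsto (x - \sigma^\ast y, y - \tau^\ast x)$, which is an isomorphism because $\sigma\tau = \tau\sigma = F$ has nontrivial Tate twist on each isotypic component. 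Since $B$ sits inside $H^{i-2}_c(X(w'))(-1)$ and the $H^{i-2}_c(X(w's))(-1)$-summand is precisely the part killed by $r^i$ (that is $A$), this would force si-full rank of $r^i|_B$.

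When no such alternative presentation exists, I would reduce to the case $w = sw's$ with $s \notin \mathrm{supp}(w')$ via cyclic shift and Commuting relations, so that $B$ is essentially the cokernel studied in Proposition \ref{height_one}. Here the long exact sequence
\begin{equation*}
\cdots \to H^{i-1}_c(X(sw')) \to H^i_c(X(w)) \to A \oplus B \xrightarrow{r^i} H^i_c(X(sw')) \to \cdots
\end{equation*}
combined with Lemma \ref{exact_sequence} translates si-full rank of $r^i|_B$ into a disjointness-of-supports statement between the inductively known objects $B$ and $\ker(r^i)/A$. Using Frobenius semisimplicity (Proposition \ref{Frobss}) and the height/weight comparisons of Proposition \ref{prop_misc} and Corollary \ref{coro_l(w)-1}, one can bound which irreducibles $V = j_\mu(-j)$ can simultaneously appear in $H^{i-2}_c(X(w'))(-1)$ and in $H^i_c(X(sw'))$ with matching Tate twists.

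The hard part will be this last disjointness analysis, which is really the content of the conjecture: making precise exactly which unipotent constituents of the induced representations $i^G_{P(v)}$ can occur on both sides simultaneously, and ruling out accidental cancellation in $r^i|_B$ without direct geometric input. I expect that a full proof will require a strengthening of Theorem \ref{thm_conj} to all degrees $i$, together with a compatibility statement for the gradings $H(v)_z$ under the boundary maps of Lemma \ref{vanishing_square}; absent such a strengthening, the argument collapses exactly as the author warns, leaving the conjecture unresolved in general and provable only in the special cases (two presentations, height $\leq 1$, or Coxeter type) where a direct geometric splitting is available.
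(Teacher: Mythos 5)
The statement you are trying to prove is stated in the paper as a \emph{conjecture}: the paper itself offers no proof of it, only motivation (Proposition \ref{surjective_on_tatetwist} and the height-one computations of Section 5) and, in Remark \ref{rk_sisurjective}, a reduction showing that only the isotypic components $V$ with $i=2t(V)$ genuinely matter, the remaining degrees being recoverable by purity of $\overline{X}(w)$ and a hypersquare/boundary-map analysis. Your proposal likewise does not close the argument, and you say so yourself: the ``disjointness analysis'' you defer to at the end is exactly the unproven content of the conjecture, so what you have is a programme, not a proof. That honest assessment is correct, but several of the intermediate reductions you rely on are themselves flawed, so even as a programme the route is shakier than you present it.

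Concretely: (a) Proposition \ref{surjective_on_tatetwist} requires a second presentation $w=s'w''s'$ with $s'\neq s$ \emph{and} $w''=w'$; this is a very special configuration, and there is no reason the operations (I)--(III) can always produce it --- the paper nowhere claims this, and for most $w$ no such pair of presentations exists, so your ``first move'' fails as a general reduction. (b) Cyclic shift preserves $H^\ast_c(X(w))$ (Proposition \ref{cyclic_shift}), but the object of the conjecture is the pair $(Z,r^i_{w,sw'})$ attached to a chosen presentation $w=sw's$; a cyclic shift changes the presentation and hence the variety $Z=X(w)\cup X(sw')$ and the restriction map, so ``transporting $r^i$ accordingly'' is not automatic and would itself need a geometric argument. (c) The reduction to $s\notin\supp(w')$ via (I)--(II) is likewise not available in general. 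Finally, your weight/support bounds (Proposition \ref{prop_misc}, Proposition \ref{Frobss}, Corollary \ref{coro_l(w)-1}) can constrain which $V=j_\mu(-j)$ occur on both sides, but they cannot rule out cancellation on the critical components with $i=2t(V)$, which is precisely where Remark \ref{rk_sisurjective} localizes the problem; settling those components appears to require new geometric input (e.g.\ a Gysin or intersection-theoretic argument on $Z$), not merely a strengthening of Theorem \ref{thm_conj}. So the gap you name at the end is real, and it is the same gap the paper leaves open by formulating the statement as a conjecture.
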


\begin{rk}\label{rk_sisurjective}
For our purpose it is even enough to have the validity of a weaker form of the above conjecture. More precisely, it suffices to know that
for all irreducible $H$-representations $V$ with $i=-2t(V)$ the map $r^{i}_{\mid B^V}: B^V \to H^{2i}_c(X(sw'))^V$ has full rang.

Indeed if the assumption $i=-2t(V)$ is not satisfied, then we proceed as follows to determine the $V$-isotypic part of the map $r^i_{w,sw'}.$
We fix a reduced decomposition of $w'$. Since $i>-2t(V)$ there is by purity of $\overline{X}(w)$  some
hypersquare $Q\subset {F^+}$ of dimension $d$ (which is assumed to be minimal)
 with head $w$ and with $\{w,sw'\}\subset Q$ and such that a given irreducible subrepresentation $W\subset B^V \subset H^i_c(X(w) \cup X(sw'))$ 
is induced by an isomorphic subrepresentation $W' \subset 
H^{i-1}_c(X(Q)\setminus X(w) \cup X(sw'))$ via the corresponding boundary map $\delta^{i-1}.$
A minimal hypersquare exists since the extreme case where $\tail(Q)=e$  yields one. Then $\tail(Q)=sv'$ or $\tail(Q)=v'$ for some $v'\in {F^+}$
with $v'\preceq w'$. 

Suppose that $d=2.$

\noindent 1. Case.  $\tail(Q)=sv'.$ Thus $W'\subset H^{i-1}_c(X(sv's) \cup X(sv'))$  maps onto
$W\subset H^i_c(X(w)\cup X(sw'))$ via the boundary map $\delta^{i-1}$.

Subcase a) $W'$ is induced by  $H^{i-1}_c(X(sv'))$.

Subsubcase i) $\delta_{sw'sv'}^{i-1}(W') \neq 0$ where  $\delta_{sw'sv'}^{i-1}$ is the boundary map
$H^{i-1}_c(X(sv')) \to H^i_c(X(sw'))$ (which is known by induction, cf. the following pages).  
In this case $r_{w,sw'}^i$ maps $W\subset H^i_c(X(w)\cup X(sw'))$ onto $\delta_{sw'sv'}^{i-1}(W')  \subset H^i_c(X(sw'))$
by considering the commutative diagram:
\medskip

$\begin{array}{ccccccc}
 & \vdots & & \vdots & & \vdots &  \\
 & \uparrow & & \uparrow & & \uparrow &  \\
\cdots \to & H^i_c(X(w)) & \to &  H_c^{i}(X(w)\cup X(sw')) & \to & H^i_c(X(sw')) &  \to \cdots \\
 & \uparrow & &  \;\;\;\;\;\;\;\; \uparrow  \delta^{i-1}& & \;\;\;\;\;\;\;\;\;\;\; \uparrow \delta^{i-1}_{sw',sv'} &  \\
\cdots \to &  H_c^{i-1}(X(v)) & \to &  H_c^{i-1}(X(v) \cup X(sv')) &  \to & H^{i-1}_c(X(sv')) &  \to \cdots \\
 & \uparrow  & & \uparrow  & & \uparrow &  \\
 & \vdots & & \vdots & & \vdots &  \\
\end{array}$

\medskip
Subsubcase ii) $\delta_{sw'sv'}^{i-1}(W') = 0$. Then $r^i(W)=0.$

Subcase b) $W'$ is induced by  $H^{i-1}_c(X(v))$. In this case $r^i(W)=0$.

\noindent 2. Case. $\tail(Q)=w'.$  This case yields  a contradiction since the boundary map $H^{i-1}_c(X(w's) \cup X(w')) \to H^i_c(X(w) \cup X(sw'))$
vanishes by Corollary \ref{cohomology_pb}.

\medskip

Suppose that $d=3.$

\noindent 1. Case $\tail(Q)=sv'$ for some $v'\prec w'$ with $\ell(w')=\ell(v')+1,$ i.e. $Q$ has the shape

$Q:\;\,\; \begin{array}{ccccc}
 & & w & & \\ & \nearrow & \uparrow & \nwarrow & \\ sw' & & sv_1's & & sv_2's  \\ \uparrow & \nearrow \nwarrow & & \nearrow \nwarrow & \uparrow
\\ sv_1'& & sv_2' & & sv's  \\ & \nwarrow & \uparrow & \nearrow & \\  & & sv' & &
\end{array}
$

\noindent for some $v_1',v_2' \in F^+.$ We set $A=\{sv_1',sv_2',sv'\}$ and $=\{sv_1's,sv_2's,sv's\}.$ Then $X(A)$ is closed in $X(Q)\setminus (X(w)\cup X(sw'))$ whereas $X(B)$
is open in the latter space. Now we may imitate the procedure of the case $d=2.$ The variety $X(A)$ corresponds to $X(sv')$ whereas
$X(B)$ plays the role of $X(v).$

\noindent 2. Case $\tail(Q)=v'$ for some $v'\prec w'$ with $\ell(w')=\ell(v')+1,$ i.e., $Q$ has the shape

$Q:\;\,\; \begin{array}{ccccc}
 & & w & & \\ & \nearrow & \uparrow & \nwarrow & \\ sw' & & sv's & & w's  \\ \uparrow & \nearrow \nwarrow & & \nearrow \nwarrow & \uparrow
\\ sv'& & w' & & v's  \\ & \nwarrow & \uparrow & \nearrow & \\  & & v' & &
\end{array}
$

We claim that $r_{w,sw'}^i$ is trivial. Indeed, as $X(sv's) \cup X(sv')$ and $X(w's) \cup X(w')$ are both open in
$X(Q)\setminus (X(w) \cup X(sw'))$ whereas $X(v's) \cup X(v')$ is closed we see that

\begin{itemize}
 \item $W'\subset H^{i-1}_c(X(sv's) \cup X(sv'))$  gives a contradiction to the minimality with respect to $d$.
\item  $W'\subset H^{i-1}_c(X(w's) \cup X(w'))$ gives a contradiction as the boundary map  is trivial.
\end{itemize}
So $W'$ is induced by $H^{i-1}_c(X(v's) \cup X(v'))$ and it is mapped to $W\subset H^i_c(X(Q(w',w)))$ via the boundary map.
But the latter one is given by the diagram
\medskip

$\begin{array}{ccccc}
\vdots & & \vdots &  & \vdots   \\
\uparrow & & \uparrow &  & \uparrow   \\
 H^i_c(X(Q(w',w)) & = &  H_c^{i}(X(w)\cup X(sw')) & \oplus & H^i_c(X(w's)\cup X(w'))  \\
 \uparrow & & \uparrow & & \uparrow   \\
 H_c^{i-1}(X(Q(v',v)) & = &  H_c^{i-1}(X(v) \cup X(sv')) & \oplus & H^{i-1}_c(X(v's) \cup X(v')) \\
 \uparrow  & & \uparrow & & \uparrow  \\
\vdots & & \vdots &  & \vdots   \\
\end{array}$

\medskip
\noindent i.e. it is for trivial reasons the direct sum of the summands. Hence we get a contradiction.

The higher dimensional cases $d\geq 4$ behave as above whether $\tail(Q)=sv'$ or $\tail(Q)=v'$. The latter case gives a contradiction. \qed
\end{rk}

Thus under the validity of the above conjecture for determining the cohomology of $X(w)$, it remains to compute the cohomology of the edge $X(w') \cup X(w's)$ which we explain now.

Let $w,v \in W$ with $\ell(w)=\ell(v)+1.$ We want to determine the cohomology of the locally closed subvariety $X(w) \cup X(v)\subset X.$
Suppose that we may write  $w=sw's$ as
in the previous sections. If $v\in \{sw',w's\}$ then $H^i_c(X(w) \cup X(v))= H^{i-2}_c(X(w') \cup X(w's))(-1)$ and we may suppose
by induction on the length of $w$ that these groups are known.

So let $v=sv's$ with $v' <w'.$ Then the cohomology of $X(w)\cup X(v)$ sits in a long exact cohomology sequence
$$\cdots \to H^i_c(X(w) \cup X(v)) \to H^i_c(X(Q)) \to H^i_c(X(sw') \cup X(sv')) \to \cdots $$
where $Q$ is the  square $Q=\{w,v,sw',sv'\} \subset W$.  The cohomology of
$X(sw') \cup X(sv')$ is known by induction on the length of $w's.$ On the other hand, the square
\smallskip

$
Q:\,\,\, \begin{array}{ccccc}
 & & w & & \\ & \nearrow & & \nwarrow & \\ v & & & & sw' . \\ & \nwarrow & & \nearrow & \\ & & sv' & &
\end{array}
$

\medskip
\noindent is induced by the square

\smallskip
$
\hat{Q}:\;\,\;\begin{array}{ccccc}
 & & w's & & \\ & \nearrow & & \nwarrow & \\ v's & & & & w'  \\ & \nwarrow & & \nearrow & \\ & & v' & &
\end{array}
$

\medskip
\noindent via multiplication with $s\in S$ from the left. The union $Q\cup \hat{Q}$ gives rise to a cube or a 3-dimensional hypersquare
\smallskip

$Q\cup \hat{Q}:\;\,\; \begin{array}{ccccc}
 & & w & & \\ & \nearrow & \uparrow & \nwarrow & \\ sw' & & v & & sw'  \\ \uparrow & \nearrow \nwarrow & & \nearrow \nwarrow & \uparrow
\\ sv'& & w' & & v's  \\ & \nwarrow & \uparrow & \nearrow & \\  & & v' & &
\end{array}
$

\noindent Then by Proposition \ref{hypersquares_surjective} the variety $X(Q)$ is an $\mA^1$-bundle over $X(\hat{Q})$ and $X(Q \cup \hat{Q})$
is a $\mP^1$-bundle over $X(\hat{Q})$. Further the restriction map in cohomology
$$r^i_{Q,\{sw',sv'\}}:H^i_c(X(Q)) \to H^i_c(X(sw') \cup X(sv'))$$
can be computed in the same way as in Conjecture \ref{si-surjective} resp. Remark \ref{rk_sisurjective}. Thus if we are able to determine the cohomology
group $H^\ast_c(X(\hat{Q}))$ we have  knowledge of the
cohomology of $X(w) \cup X(v).$ Hence we have  reduced the question of determining the cohomology of the edge $X(w) \cup X(v)$ by the prize of
enlarging the square but where the head has smaller length.

\begin{eg}
Let $G=\GL_4.$ Let $w=(1,4)(2,3)\in W.$ Here we write  $w=sw's$ $=(3,4)(1,3)(2,4)(3,4)$ so that
$sw'=(2,3)(3,4)(2,3)(1,2)(2,3)$, $w'=(2,3)(1,2)(3,4)(2,3)$  and  $w's=(2,3)(1,2)(2,3)(3,4)(2,3)$.
Hence for computing the cohomology of the edge $X(w's) \cup X(w')$ we consider the 
square 

$
Q:\,\,\, \begin{array}{ccccc}
 & & w's & & \\ & \nearrow & & \nwarrow & \\ w'   & & & & (2,3)(1,2)(2,3)(3,4) \\ & \nwarrow & & \nearrow & \\ & & (2,3)(1,2)(3,4) & &
\end{array}
$ 

\medskip
and henceforth the square
\medskip

$
\hat{Q}:\,\,\, \begin{array}{ccccc}
 & & (1,2)(2,3)(3,4)(2,3) & & \\ & \nearrow & &  \nwarrow & \\ (1,2)(3,4)(2,3) & & & & (1,2)(2,3)(3,4) . \\ & \nwarrow & & \nearrow & \\ & & (1,2)(3,4) & &
\end{array}
$ 

The cohomology of $X(\hat{Q})$ is by using the results in section 5 easily computed as
\begin{eqnarray*}
 H^\ast_c(X(\hat{Q})) & = & v^G_{P_{(1,2,1)}}[-2] \oplus j_{(3,1)}(-1)[-3]^2 \oplus i^G_G(-2)[-4] \\
& &  \oplus j_{(3,1)}(-2)[-5]\oplus i^G_G(-3)[-6]^2 \oplus  i^G_G(-4)[-8].
\end{eqnarray*}
Hence
\begin{eqnarray*}
 H^\ast_c(X(Q)) & = & v^G_{P_{(1,2,1)}}(-1)[-4] \oplus j_{(3,1)}(-2)[-5]^2 \oplus i^G_G(-3)[-6] \\
& &  \oplus j_{(3,1)}(-3)[-7]\oplus i^G_G(-4)[-8]^2 \oplus  i^G_G(-5)[-10].
\end{eqnarray*}
On the other hand the cohomology of $Y=X((2,3)(1,2)(2,3)(3,4)) \cup X((2,3)(1,2)(3,4))$ is given 
(using loc.cit.) by
$$ H^\ast_c(Y)=v^G_{P_{(2,1,1)}}(-1)[-4] \oplus j_{(2,2)}(-2)[-5] \oplus j_{(3,1)}(-2)[-5]\oplus i^G_G(-3)[-6] \oplus  i^G_G(-4)[-8].$$
By a careful study of the restriction map we compute that the cohomology of $X(w's)\cup X(w')$ is hence given by
\begin{eqnarray*}H^\ast_c(X(w's)\cup X(w')) & = & j_{(2,2)}(-1)[-4] \oplus j_{(3,1)}(-2)[-5] \oplus j_{(2,2)}(-2)[-6] \\
& &  \oplus j_{(3,1)}(-3)[-7] \oplus  i^G_G(-4)[-8] \oplus  i^G_G(-5)[-10].
\end{eqnarray*}
Exemplarily, the contribution $j_{(3,1)}(-2)[-5]$ in $H^\ast_c(Y)$ is induced by the Coxeter element $(2,3)(1,2)(3,4).$  
Further one checks that this representation lies in the  image of the restriction map 
$H^\ast_c(X((1,3,4))\cup X((2,3)(1,2)(3,4))) \to H^\ast_c(X((2,3)(1,2)(3,4))).$ Finally one verifies
that the representation on the LHS is induced by $H^5(X(Q))$ which implies that $j_{(3,1)}(-2) \subset H^5_c(Y)$
is killed  by $H^5_c(X(Q)).$ Alternatively, on can apply the method presented in Remark \ref{rk_sisurjective}.
With the same methods one deduces that
\begin{eqnarray*}
 H^\ast_c(X((1,4)(2,3))) & = & v^G_B[-6] \oplus j_{(2,1,1)}(-2)[-7] \oplus j_{(2,2)}(-3)[-8]^2 \\
& &  \oplus j_{(3,1)}(-4)[-9] \oplus  i^G_G(-6)[-12].
\end{eqnarray*}
Again we consider exemplarily the contribution $j_{(2,2)}(-2) \subset H^6_c(X(sw')).$ It is induced by
$H^5_c(X((1,4,3))).$ The set $Q'=\{w,sw',(1,4),(1,4,3)\}$ is a square and the restriction map
$H^5_c(X((1,4))\cup X((1,4,3)))  \to H^5_c(X((1,4,3)))$ is surjective. By computing $H^4_c(X(Q'))=H^2_c(X(\hat{Q'}))(-1)$
with $Q'=\{w's,w',(1,3,4),(1,3)\}$ one verifies that $j_{(2,2)}(-2) \subset H^6_c(X(sw'))$ is killed by the restriction
map.
\end{eg}

\begin{rk}
In general we have to apply to $w\in W$ the operations (I) - (III) (in the sense of Weyl groups) of the previous section in order to write it in the shape $w=sw's,$
cf. Lemma \ref{reduce_sw's}. In what follows, we hope that this Lemma (or rather a variant)  generalizes to arbitrary hypersquares.
\end{rk}

The reader might expect how the strategy  works in higher dimensions. Here we suppose that a similar conjecture as above  holds true.
Hence we have to determine the cohomology of $X(Q)$ for squares
$Q\subset W.$ So let $Q$ be such a square with head $w=sw's.$
\medskip

\noindent {\it Case 1:} $Q$ is of the shape

$
Q:\,\,\, \begin{array}{ccccc}
 & & w & & \\ & \nearrow & & \nwarrow & \\ sw' & & & & w's  \\ & \nwarrow & & \nearrow & \\ & & w' & &
\end{array}
$.

\noindent In this case the we have by Proposition \ref{cohomology_pb} a splitting $H^i_c(X(Q))=H^i_c(X(w's) \cup X(w')) \oplus H^{i-2}_c(X(w's) \cup X(w'))(-1).$ By induction  the cohomology
of $X(w's)\cup X(w')$ and hence of $X(Q)$ is known.

\medskip

\noindent {\it Case 2:} $Q$ is of the shape

$
Q:\,\,\, \begin{array}{ccccc}
 & & w & & \\ & \nearrow & & \nwarrow & \\ v & & & & sw'  \\ & \nwarrow & & \nearrow & \\ & & sv' & &
\end{array}
$

\noindent with $v=sv's$. In this case the we have $H^i_c(X(Q))=H^{i-2}_c(X(\hat{Q}))(-1)$ where $\hat{Q}=\{w's,v's,w',v'\}.$ By induction the cohomology
of $X(\hat{Q})$ and hence of $X(Q)$ is known.

\medskip

\noindent {\it Case 3:} $Q$ is of the shape

$
Q:\,\,\, \begin{array}{ccccc}
 & & w & & \\ & \nearrow & & \nwarrow & \\ v_1 & & & & v_2  \\ & \nwarrow & & \nearrow & \\ & & v_3 & &
\end{array}
$

\noindent with $v_i=sv_i's$  for $i=1,2,3$ and where $Q':=\{w',v_1',v_2',v_3'\}\subset W$ is a square. In this case  we consider
as in the case of edges above the square $sQ'=\{sw',sv_1',sv_2',sv_0' \}.$
Then the cohomology of $X(Q)$ sits in a long exact cohomology sequence
$$\cdots \to H^{i-1}_c(X(sQ') \to H^i_c(X(Q)) \to H^i_c(X(Q)\cup X(sQ')) \to H^i_c(X(sQ')) \to \cdots .$$
Again by induction on the length of the head of a cube the cohomology of $X(sQ')$, $X(Q's)$ and $X(Q')$ are known. Further we
have $H^i_c(X(Q) \cup X(sQ'))=H^{i-2}_c(X(Q's)\cup X(Q'))(-1),$
where $Q's:=\{w's,v_1's,v_2's,v_3's\}.$
Thus we may compute $r^i_{Q\cup sQ',sQ'}$ as in the lower dimensional cases, i.e. as in Conjecture \ref{si-surjective} resp.
Remark \ref{rk_sisurjective}.

But there are yet two other  kind of squares.

\medskip

\noindent {\it Case 4:} $Q$ is of the shape

$
Q:\,\,\, \begin{array}{ccccc}
 & & w & & \\ & \nearrow & & \nwarrow & \\ sw' & & & & w's  \\ & \nwarrow & & \nearrow & \\ & & su' & &
\end{array}
$

\noindent with $su'=u's$.

\vspace{0.5cm}

\noindent and

\vspace{0.5cm}

\noindent {\it Case 5:} $Q$ is of the shape

$
Q:\,\,\, \begin{array}{ccccc}
 & & w & & \\ & \nearrow & & \nwarrow & \\ sw' & & & & sv's  \\ & \nwarrow & & \nearrow & \\ & & su's & &
\end{array}
$

\noindent with $u' < v' <w'$ and $\ell(w')=\ell(u') +2.$

\bigskip

Unfortunately, as we see, the higher the dimension of a square is the more complicated the situation behaves. This is of course due to
the relations which exist in the Weyl group $W.$
For this reason, we also consider the monoid $F^+$ in the sequel where this phenomenon does not appear.

Before we proceed we recall the following well-known fact.

\begin{lemma}\label{lemma_DL}
Let $w \in W$ and $s,t\in S$. Suppose that $\ell(sw)=\ell(w)+1$, $\ell(wt)=\ell(w)+1$ and $\ell(swt)=\ell(w).$ Then
$w=swt.$
\end{lemma}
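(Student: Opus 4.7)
The plan is to exploit the Strong Exchange Condition applied to the reduced expression for $sw$, using the discrepancy between the hypothesized length of $wt$ and the length one would get by cancellation.

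First I would fix a reduced expression $w = s_{i_1} s_{i_2} \cdots s_{i_r}$ with $r = \ell(w)$. The hypothesis $\ell(sw) = \ell(w)+1$ guarantees that $s \cdot s_{i_1} s_{i_2} \cdots s_{i_r}$ is a reduced expression for $sw$. Now the third hypothesis rewrites as $\ell((sw)t) = \ell(sw)-1$, so $t$ is a right descent of $sw$. The Exchange Condition applied to the right multiplication of this reduced expression for $sw$ by $t$ then yields precisely two possibilities: either
\begin{itemize}
\item[(a)] $swt = s_{i_1} s_{i_2} \cdots s_{i_r} = w$, i.e.\ the $s$ at the left end is deleted; or
\item[(b)] $swt = s \cdot s_{i_1} \cdots \widehat{s_{i_k}} \cdots s_{i_r}$ for some $1 \leq k \leq r$.
\end{itemize}
Case (a) is exactly the conclusion of the lemma (using $t^{2}=1$), so it only remains to rule out (b).

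The key step is the contradiction in case (b): cancelling $s$ from both sides gives $wt = s_{i_1} \cdots \widehat{s_{i_k}} \cdots s_{i_r}$, an expression of length $r-1$. Hence $\ell(wt) \leq r - 1 = \ell(w) - 1$, directly contradicting the hypothesis $\ell(wt) = \ell(w)+1$. Therefore (b) is impossible and (a) must hold, yielding $w = swt$.

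I do not expect a genuine obstacle here: the lemma is essentially a one-line application of the Exchange Condition, and the symmetry between $w$ and $swt$ under the hypotheses (which a direct computation $sv = wt$, $vt = sw$, $svt = w$ for $v := swt$ confirms) is just the reflection of the same fact. The only subtlety is to apply the Exchange Condition on the \emph{right} (to $(sw) \cdot t$) rather than on the left, so that the deleted letter can either be the prepended $s$ (giving (a)) or an interior letter (giving (b) and the contradiction).
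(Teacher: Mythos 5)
Your proof is correct. The paper itself gives no argument here: it simply cites \cite[Lemma 1.6.4]{DL}, so there is no internal proof to compare against. Your argument is the standard self-contained one: prepend $s$ to a reduced word for $w$ to get a reduced word for $sw$ (valid since $\ell(sw)=\ell(w)+1$), note that $\ell(swt)=\ell(sw)-1$ makes $t$ a right descent of $sw$, and apply the (right-hand) Exchange Condition; deleting an interior letter would force $\ell(wt)\leq \ell(w)-1$, contradicting $\ell(wt)=\ell(w)+1$, so the deleted letter must be the prepended $s$, giving $swt=w$. This is exactly the kind of argument underlying the cited Deligne--Lusztig lemma, so in effect you have supplied the proof the paper outsources; the only cosmetic point is that the remark about $t^{2}=1$ in case (a) is unnecessary, since $swt=w$ is already the desired conclusion.
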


\begin{proof}
 This is \cite[Lemma 1.6.4]{DL}
\end{proof}

Thus if $\ell(sw')=\ell(w's)=\ell(w')+1$ and $\ell(sw's)=\ell(w')$ we have $sw'=w's.$

\vspace{0.5cm}

Let $Q\subset W$ be a hypersquare  with head  $w=sw's$. There are a priori for the tail of $Q$ the following possibilities (mod symmetry) where $u'\leq w'$.

\noindent {\bf Case A:} $\tail(Q)=u'$ with $\ell(su's)=\ell(u')+2.$

\noindent {\bf Case B:} $\tail(Q)=u'$ with $\ell(su')=\ell(u's)=\ell(u')+1$ and $\ell(su's)=\ell(u').$

\noindent {\bf Case C:} $\tail(Q)=su'$ with $\ell(su')=\ell(u')+1$ and $\ell(su's)=\ell(u').$

\noindent {\bf Case D:} $\tail(Q)=su'$ with $\ell(su')=\ell(u')+1$ and $\ell(su's)=\ell(u')+2.$

\noindent {\bf Case E:} $\tail(Q)=su's$ with $\ell(su's)=\ell(u')+2.$

\medskip
We shall examine in all cases the structure of $Q$ and the cohomology of $X(Q).$

\medskip

\noindent {\bf  Case A:} $\tail(Q)=u'$ with $u'<w'$ and $\ell(su's)=\ell(u')+2.$

Let $\dim(Q)=d$, so that $\# Q=2^d.$ We consider the subintervals $I(u,w),I(su',sw')$, $I(u's,w's),I(u',w').$ Since $Q$ is a square each of them
is a square as well and has consequently $2^{d-2}$ elements. We shall see that the union of them is $Q.$
Indeed, let $v\in Q.$

\noindent Case 1) If $v \leq w'$ then $v\in I(u',w').$

\noindent  Case 2) Let $v \leq sw'$ and $v \not\leq w'.$ Thus we may write $v=sv'$ with $v' \leq w'.$ As $\ell(su')=\ell(u')+1$ we see that
by considering reduced decompositions that we must have $v\geq su'.$ Thus $w\in I(su',sw').$

\noindent Case 3) Let $v\leq w's$ and $v \not\leq w'.$ This case is symmetric to Case 2, hence $w\in I(u's,w's).$

\noindent Case 4) Let $v\leq sw's$ and $v\not \leq sw'$ and $v\not\leq w's.$  Then as in Case 2 we argue that $v=sv's$ with $v' \geq u'$.
Hence $v\in I(su's,sw's).$

As $4\cdot 2^{d-2}=2^d.$ we see that the pairwise intersection of the above 4 subsquares is empty and that
$I(su',sw')$ (resp. $I(u,w),I(u's,w's))$ is induced by $I(u',w')$ by multiplying with $s$ from the left (resp. conjugating with $s$, multiplying with $s$
from the right). Hence $Q$ is a union of special squares (cf. Definition \ref{special_square}) and the
cohomology is consequently given by
$$H^i_c(X(Q))=H^i_c(X(Q(u',w's))) \oplus H^{i-2}_c(X(Q(u',w's)))(-1)$$
which is known by induction on the length.

\medskip

\noindent{\bf Case B}. $\tail(Q)=u'$ with $u'< w'$ and $\ell(su's)=\ell(u').$ It follows that $su'=u's$ by Lemma \ref{lemma_DL}. We claim that this case does not appear.
The case where $\dim (Q)=2$ does not occur. Let $\dim (Q)=3.$ Then $Q$ must have the shape

$I(w,u'): \;\,\; \begin{array}{ccccc}
 & & w & & \\ sw' & & sv's & & w's  \\ sv'& & w' & & v's  \\  & & u' & &
\end{array}$

\noindent for some $v'\geq u'$ since $\ell(su')=\ell(u')+1$ (consider a reduced decomposition of $v'$). Hence $v'=u'.$ But then $sv's=su's$,
a contradiction as $\ell(su's)=\ell(u').$

If $\dim Q >3$ then we argue by induction. Indeed in $Q$ there must be a subsquare of dimension $\dim(Q)-1$ with ${\rm head}(Q)=sv's$ and
${\rm tail}(Q)=u'.$
By induction this is not possible.

\medskip

\noindent{\bf Case C:} $\tail(Q)=su'$ with $\ell(us') > \ell(u')< \ell(su').$

We shall see that this case behaves very rigid. More precisely, we shall see that $Q$ is paved by squares of type ${\rm Case}$ 4.
Here we make usage of the following statement.
\begin{lemma}\label{not_possible}
 Let $w'\in W$, $s\in S$ with $\ell(sw's)=\ell(w')+2.$ Then there is no $v'\leq w'$ with $\ell(v')=\ell(w')-1$ and such that $v'=su'=u's$
for some $u'\leq v'.$
\end{lemma}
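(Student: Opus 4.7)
The plan is to argue by contradiction: assuming such $v',u'$ exist, I will extract from the identity $v'=su'=u's$ a strong commutation constraint on $v'$ which, combined with the cover relation $v'\lessdot w'$, will force $s$ to be a descent of $w'$ on one side -- contradicting the hypothesis $\ell(sw's)=\ell(w')+2$ (which forces $s$ to be neither a left nor a right descent of $w'$).

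First I would derive the commutation $sv's=v'$. The identities $v'=su'=u's$ yield $sv'=u'=v's$, hence $sv's=v's\cdot s=v'$; so $v'$ lies in the centralizer $C_W(s)$. Next, since $u'=sv'$ and $u'\leq v'$ in the Bruhat order, one has $\ell(u')\leq\ell(v')$, which rules out $\ell(sv')=\ell(v')+1$. Therefore $s$ is a left descent of $v'$, and by the symmetric argument using $u'=v's$ also a right descent; in particular $\ell(u')=\ell(v')-1$.

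Next I would use the explicit structure of $C_W(s)$ in $W=S_n$. Writing $s=s_i=(i,i{+}1)$, one has
\[
C_W(s)\;=\;S_{\{1,\dots,i-1\}}\times\langle s_i\rangle\times S_{\{i+2,\dots,n\}},
\]
with the three factors commuting pairwise. Combined with the fact that $s$ is a descent of $v'$, this forces a reduced factorization $v'=v'_L\cdot s\cdot v'_R$ where $v'_L$ permutes $\{1,\dots,i-1\}$ and $v'_R$ permutes $\{i+2,\dots,n\}$. In one-line notation, $v'(i)=i+1$, $v'(i+1)=i$, and $v'$ preserves $\{1,\dots,i-1\}$ and $\{i+2,\dots,n\}$ setwise.

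The last and most delicate step will be to exploit the description of Bruhat covers in $S_n$: the relation $v'\lessdot w'$ forces $w'=v'\cdot(a,b)$ for a unique transposition $(a,b)$, $a<b$, with $v'(a)<v'(b)$ and no $c\in(a,b)$ satisfying $v'(a)<v'(c)<v'(b)$. The hard part is a short case analysis on the position of $\{a,b\}$ relative to $\{i,i+1\}$; in each subcase I will read off $w'(i),w'(i+1),w'^{-1}(i),w'^{-1}(i+1)$ directly from $v'_L,v'_R$. The subcases $\{a,b\}\cap\{i,i+1\}=\emptyset$, $a=i$ with $b\geq i+2$, and $b=i+1$ with $a\leq i-1$ will produce a right descent $w'(i)>w'(i+1)$; the subcases $b=i$ with $a<i$, and $a=i+1$ with $b\geq i+2$ will produce a left descent $w'^{-1}(i)>w'^{-1}(i+1)$ (in each case by tracking how the transposition $(a,b)$ relocates the values $i$ and $i+1$ relative to the positions $i,i+1$); and the remaining case $(a,b)=(i,i+1)$ is excluded since it would give $w'=v's$ of length $\ell(v')-1\neq\ell(v')+1$. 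Each subcase thereby contradicts $\ell(sw's)=\ell(w')+2$, completing the proof.
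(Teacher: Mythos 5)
Your overall route is sound and genuinely different from the paper's, but one intermediate claim is false as stated. The centralizer of $s=s_i=(i,i+1)$ in $S_n$ is not $S_{\{1,\dots,i-1\}}\times\langle s_i\rangle\times S_{\{i+2,\dots,n\}}$; it is $\langle s_i\rangle\times S_{\{1,\dots,n\}\setminus\{i,i+1\}}$. Commuting with $s$ only forces $\{v'(i),v'(i+1)\}=\{i,i+1\}$ and says nothing about keeping $\{1,\dots,i-1\}$ and $\{i+2,\dots,n\}$ separate: for instance $v'=(1,4)(2,3)\in S_4$ commutes with $s=(2,3)$ and has $s$ as a descent, yet sends $1\mapsto 4$. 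So there is in general no factorization $v'=v'_L\,s\,v'_R$ with $v'_L,v'_R$ supported on the two intervals, and you cannot literally "read off" the values of $w'$ from such data. Fortunately this claim is also unnecessary: the only inputs your case analysis needs are $v'(i)=i+1$, $v'(i+1)=i$ (which do follow, exactly as you argue, from $sv'=v's$ together with $s$ being a descent of $v'$) and the cover conditions on $(a,b)$. With just these facts each of your announced subcase conclusions checks out: $(a,b)$ disjoint from $\{i,i+1\}$, $a=i$ with $b\ge i+2$, and $a\le i-1$ with $b=i+1$ give $w'(i)>w'(i+1)$; $b=i$ with $a<i$ and $a=i+1$ with $b\ge i+2$ give $w'^{-1}(i)>w'^{-1}(i+1)$; and $(a,b)=(i,i+1)$ is impossible for length reasons. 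Since $\ell(sw's)=\ell(w')+2$ indeed forces $s$ to be neither a left nor a right descent of $w'$, the contradiction stands once the false structural sentence is deleted; so this is a repairable blemish rather than a fatal gap.

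For comparison, the paper proves the lemma by fixing a reduced decomposition $w'=s_1\cdots s_r$ and applying the Exchange Lemma twice: from $v'=su'=u's$ it extracts an identity among subwords that contradicts either $\ell(sw')=\ell(w')+1$ or $\ell(w's)=\ell(w')+1$. That argument is uniform over all Coxeter groups, whereas yours is specific to type $A$ (one-line notation and the transposition description of Bruhat covers). In the setting of this paper, which only treats $\GL_n$, your concrete type-$A$ computation is a perfectly legitimate alternative and arguably more transparent, at the cost of a case analysis that the Exchange-Lemma argument avoids.
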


\begin{proof}
Let $w'=s_1\cdots s_r$  be a reduced decomposition. Suppose that there exists such a $v'=su'=u's$ as above. Then there is some index
$1\leq i \leq r$ with $su'=s_1\cdots \hat{s_i} \cdots s_r.$  On the other hand, since $\ell(sv')<\ell(v')$ there exits by the {\it Exchange Lemma}
some integer $1\leq m \leq r$ with $ss_1\cdots s_{m-1} =s_1\cdots s_m$ (with $s_i$ omitted depending on whether $i<m$ or $i>m$).
If $m<i$, then $w'=s_1\cdots s_r = ss_1\cdots \hat{s_m}\cdots s_r$ a contradiction to the assumption that $\ell(sw')=\ell(w')+1.$
If $m>i$, then $ss_1\cdots \hat{s_i}\cdots s_{m-1}= s_1\cdots \hat{s_i}\cdots s_{m}.$
But $su'=s\cdot s_1\cdots \hat{s_i}\cdots \hat{s_m}\cdots s_r=s_1\cdots \hat{s_i}\cdots \hat{s_m}\cdots s_r\cdot s$ as $su'=u's.$
Hence we deduce that $s_{m+1}\cdots s_r \cdot s=s_m\cdot s_{m+1}\cdots s_r.$ Again by plugging this expression into the reduced decomposition for $w'$,
 we obtain a contradiction to the assumption that $\ell(w's)=\ell(w')+1.$
\end{proof}

We start with the case of a square. Here it is as in Case 4 before. Consider now a hypersquare $Q$ of dimension 3. Thus it
must have the shape

$Q:\;\,\; \begin{array}{ccccc}
 & & w & & \\ sw' & & sv's & & w's  \\ ? & & ? & & ?  \\  & & su' & &
\end{array}$

\noindent with $su'=u's$, $v'<w'$ and $u'< w'$ for certain elements $?\in W.$ As $\ell(su')=\ell(u')+1$ we deduce that $ v' \geq u'$. Hence we can make the structure
of $Q$ more precise, i.e.

$Q:\;\,\; \begin{array}{ccccc}
 & & w & & \\ sw' & & sv's & & w's  \\ sv' & & ? & & v's  \\  & & su' & &
\end{array}$

\noindent As one verifies there are a priori for $?\in W$ only 2 possibilities : $?\in \{w',sz'\}$ with $sz'=z's$ and $z'\in I(u',w').$
But by Lemma \ref{not_possible} we must have $?=sz'.$ Hence $Q$ is the union of two squares of the shape as in Case 4.

Let $\dim(Q)=d>3$. The square $Q$ begins with the following elements

$Q:\;\,\; \begin{array}{ccccc}
 & & w & & \\ sw' & & sv_1's, \cdots ,sv_{d-2}'s  & & w's  \\ \vdots & & \vdots & & \vdots  \\  & & su' & &
\end{array}$

\noindent By induction on the size of $Q$ we know that all subsquares of $Q$ of the shape $Q(su',sv_i's)$ are union of squares of the
the desired shape. Now one verifies that square $Q$ ends as follows

$Q:\;\,\; \begin{array}{ccccc}
 & & w & &  \\ \vdots & & \vdots & & \vdots \\ sx' & & sy_1', \cdots ,sy_{d-2}'  & & x's  \\  & & su' & &
\end{array}$

\noindent with $sy_i=y_is, \; i=1,\ldots,d.$ There must be some $y_i$ with $sy'\leq w$ and again by induction the statement follows.

Next we turn to the cohomology with respect to  these hypersquares. We start again with the 2-dimensional case.
We fix reduced decompositions of $w'$ and $u'$. We consider the hypersquare $Q^{{F^+}}(u',w)\subset F^+$

$Q^{F^+}(u',w):\;\,\; \begin{array}{ccccc}
 & & w & & \\ & \nearrow & \uparrow & \nwarrow & \\ sw' & & su's & & sw'  \\ \uparrow & \nearrow \nwarrow & & \nearrow \nwarrow & \uparrow
\\ su'& & w' & & u's  \\ & \nwarrow & \uparrow & \nearrow & \\  & & u' & &
\end{array}$

\noindent and the interval $I(u',w)$ in $W$. A case by case study together  with the fact (which follows by Lemma \ref{lemma_DL})  that apart from $u'$ there is no
$z'\prec w'$ with $\ell(z')=\ell(w')-1$ and $\gamma(z')=\gamma(u')$ it is seen  that the preimage of $I(u',w)$
under the proper map $\pi:X^{\ell(w)+1} \to X$ is just $X(Q^{F^+}(u',w)).$ Hence we get a
proper surjective  map
$$\pi: X(Q^{F^+}(u',w)) \to X(I(u',w)). $$
The hypersquare $Q$ is an open subset of the interval $I(u',w)$ so that $U:=X(Q)$ is an open subvariety of
$X(I(u',w)).$ The closed complement is given by $Y:=X(I(u',w))\setminus X(Q).$
We consider their preimages $U':=\pi^{-1}(X(Q))$ and $Y':=\pi^{-1}(Y)$ in $X^{\ell(w)+1}.$
We get a commutative diagram of long exact cohomology sequences
\begin{equation}\label{lonexact_diagram}
 \begin{array}{ccccccc}
\cdots \to & H^i_c(U') & \to &  H_c^{i}(X(Q^{F^+}(u',w))) & \to & H^i_c(Y') &  \to \cdots \\
 & \uparrow & & \uparrow & & \uparrow &  \\
\cdots \to &  H_c^{i}(X(Q)) & \to &  H_c^{i}(X(I(u',w)) &  \to & H^{i}_c(Y) &  \to \cdots \\
\end{array}
\end{equation}

We claim that we can recover the cohomology of $X(Q)$ by this diagram. In fact, the cohomology of $X(Q^{F^+}(u',w))$ is known by its
particular structure since it is the union of special squares, i.e.
$$H^i_c(X(Q^{F^+}(u',w)))=H^i_c(X(Q^{F^+}(u',w's))) \oplus  H^{i-2}_c(X(Q^{F^+}(u',w's)))(-1).$$
Further $Y=X(w') \cup X(u')$ whereas $Y'=X_1^{F^+}(su's) \cup Y^{F^+}$ with the obvious meaning for $Y^{F^+}$ and $X_1^{F^+}(su's)$ is the closed
subset of $X^{F^+}(su's)$ in Remark \ref{same_is_true_for_F}.  Then $$H^i_c(Y')=H^i_c(Y) \oplus H^{i-2}_c(X(u'))(-1)$$
and $$H^i_c(U')=H^i_c(U) \oplus H^{i-2}_c(X(u's))(-1).$$
Hence the restriction  map
$H^i_c(X(Q^{F^+}(u',w))) \to H^i_c(Y')$ is induced by the sum of the maps
$$H^i_c(X(Q(u',w's))) \to H^i_c(Y) \mbox{ and } H^{i-2}_c(X(Q(u',w's)))(-1) \to H^{i-2}_c(X(u'))(-1).$$
The latter one factories over the representation $H^{i-2}_c(X(w')\cup X(v'))(-1)$. Hence both maps are known by induction. Thus we deduce the cohomology of $U'$.
By factoring out the second summand in $H^i_c(U')=H^i_c(U) \oplus H^{i-2}_c(X(u's))(-1)$ we get the cohomology of $X(Q).$
Furthermore, the boundary map  $H^{i-2}_c(X(u')) \to H^{i-1}_c(X(u's))$ which appears in the boundary map $H_c^{i}(Y') \to H^{i+1}_c(U')$
 is known, as well.

If $\dim(Q)>2$ then one verifies that the above description generalizes to the higher dimensional setting. In particular, we get
 $$H^i_c(Y')=H^i_c(Y) \oplus H^{i-2}_c(\bigcup_{sv'\in Q \atop sv'=v's}X(v'))(-1)$$
and
$$H^i_c(U')=H^i_c(U) \oplus H^{i-2}_c(\bigcup_{sv'\in Q \atop sv'=v's}X(sv'))(-1).$$
The second summand is known by induction as the set $\{sv'\in Q \mid sv'=v's\}$ forms a subsquare in $W$.

\medskip

For the remaining two cases (D and E),  we introduce the following partial order on the sets of squares of type Case 1 - 5 via
the following pre-order  diagram.

$$\begin{array}{ccccc}
 & & {\rm Case\; 3} & & \\
 &  & \downarrow & & \\
 & & {\rm Case\; 5} & &  \\
& \swarrow  & &  \searrow & \\
{\rm Case\; 4} & & & &  {\rm Case\; 2}\\
& & & & \downarrow \\
 & & & &  {\rm Case\; 1}
 \end{array}$$

\noindent Here the arrow  ${\rm Case}$ i $\to$ {\rm Case} j means that Case j $<$ Case i.

\medskip

\noindent{\bf Case D:} $\tail(Q)=su'$ with $u'<w'$ and $\ell(su's)=\ell(u')+2.$

We start with the case of a square. Here it has the shape as in Case 2 before. Consider now a hypersquare $Q$ of dimension 3. Thus it
must have the shape

$Q:\;\,\; \begin{array}{ccccc}
 & & w & & \\ sw' & & sv's & & ?  \\ sv' & & ? & & su's  \\  & & su' & &
\end{array}$

\noindent for certain elements $?\in W$ where $v'\geq u'$ as $\ell(u's)=\ell(u')+1$. There are two possibilities for completing $Q$.
If $w'\geq su'$, then we get

$Q:\;\,\; \begin{array}{ccccc}
 & & w & & \\ sw' & & sv's & & w's  \\ sv' & & w' & & su's  \\  & & su' & &
\end{array}.$

On the other hand, if $w'\not\geq su'$, then we get

$Q:\;\,\; \begin{array}{ccccc}
 & & w & & \\ sw' & & sv's & & sz's  \\ sv' & & sz' & & su's  \\  & & su' & &
\end{array}.$

Hence if we write $Q=Q(su',sv's) \stackrel{\cdot}{\cup} Q'$ then $Q'$ is a specialization of $Q(su',sv's)$, i.e. $Q' \leq Q(su',sv's).$
For  $\dim(Q)=d>3,$ we claim that $Q$ is paved by $3$-dimensional hypersquares of this kind. More precisely, if $Q'\subset Q$ is a
3-dimensional subsquare which we write as $Q=Q_1 \stackrel{\cdot}{\cup} Q_2$  where  $\head(Q_i)=sv_is$ with $v_2 \prec v_1,$ then $Q_1 \leq Q_2.$
Indeed, the square $Q$ begins with the following elements

$Q:\;\,\; \begin{array}{ccccc}
 & & w & & \\ sw' & & sv_1's, \cdots ,sv_{d-2}'s  & & ?  \\ \vdots & & \vdots & & \vdots  \\  & & su' & &
\end{array}$

\noindent with $?\in \{w's,sv'_{d-1}s\}.$
By induction on the size of $Q$ we know that all subsquares of $Q$ of the shape $Q(su',sv_i's)$ are union of squares of the
the desired shape. But the union over all these squares exhaust all apart from
$\{w,sw',?,?\}.$ Indeed the number of elements in this union is $2^{d-1}+2^{d-2}+\cdots + 2^2 =2^d-4.$
Again if $w'\geq su'$, then  $\{w,sw',?,?\}=Q_w$ and the claim follows. On the other hand if $w' \not\geq su'$, then we can even says that $Q$
is paved by squares of type Case 2, since there cannot be an element $v'\leq w'$ with $v'\in Q$ and  $v'\geq su'.$

Let's determine the cohomology of $X(Q)$. If $\dim (Q)=2$, then we get $$H^i_c(Q(su',w))=H^{i-2}_c(Q(u',w's))(-1).$$
If $\dim(Q)=3$ and $w'\not\geq su'$ then again - by the observation above -  we get $H^i_c(Q(su',w))=H^{i-2}_c(Q(u',w's))(-1).$
Consider the other possibility of a cube. Here we consider as in the previous case the hypersquare
$\hat{Q}:=Q^{F^+}(u',w)$ in ${F^+}$ and the interval $I(u',w)$ in $W$.

$\hat{Q}:\;\,\; \begin{array}{ccccccccccc}
 & & & & & w & & & & &  \\
& & sw' & & sv's & & w's & & sz's & &  \\
sv' & & w' & & su's & &  v's & & sz' &  & z's \\
& & su' & & v' & & u's & & z' & &  \\
& & &  & & u' & & & & &
\end{array}$

\noindent with $z'=su'.$ The map $\pi:X^{\ell(w)+1} \to X$ induces surjective map
$$\pi: X(Q^{F^+}(u',w)) \to X(I(u',w))$$
which is even proper although $X(Q^{F^+}(u',w))$ might be strictly contained in the subset  $\pi^{-1}(X(I(u',w)))$.
(The reason is that for any closed subset $A\subset \pi^{-1}(X(I(u',w)))$ the identity $\pi(A)=\pi(A\cap X(Q^{F^+}(u',w)))$ holds).
We set  $U:=X(Q) \subset X(I(u',w))$ and $Y:=X(I(u',w))\setminus X(Q).$
We consider their preimages $U':=\pi^{-1}(X(Q))$ and $Y':=\pi^{-1}(Y)$ in $X(Q^{F^+}(u',w)).$
Again we claim that we can recover the cohomology of $X(Q)$ by the diagram \ref{lonexact_diagram}. The reasoning is similar to Case C.
In fact, we have
$$H^i_c(X(Q^{F^+}(u',w)))=H^i_c(X(Q^{F^+}(u',w's))) \oplus  H^{i-2}_c(X(Q^{F^+}(u',w's)))(-1)$$
since $Q^{F^+}(u',w)$ is paved by special squares.
Further
$$Y=X(u') \cup X(v') \cup X(u's) \cup X(v's)$$ and
$$Y'=Y^{F^+} \cup X_1^{F^+}(sz') \cup X^{F^+}(sz's)$$ whereas
$$U'=U^{F^+} \cup X^{F^+}(z')\cup  X^{F^+}(z's) \cup X_2^{F^+}(sz') \cup X_2^{F^+}(sz's).$$
Here $X_2^{F^+}(su's)$ is the open subset of $X^{F^+}(su's)$ in Remark \ref{same_is_true_for_F}.
Then $$H^i_c(Y')=H^i(Y) \oplus H^{i-2}_c(X(u')\cup X(u's))(-1)$$
and $$H^i_c(U')=H^i_c(U) \oplus H^{i-2}_c(X(su's)\cup X(su'))(-1).$$
The restriction map
$H^i_c(X(Q^{F^+}(u',w))) \to H^i_c(Y')$ is given by the sum of the maps
$$H^i_c(X(Q(u',w's))) \to H^i_c(Y) \mbox{ and } H^{i-2}_c(X(Q(u',w's)))(-1) \to H^{i-2}_c(X(u's)\cup X(u'))(-1).$$
Again both maps are known by induction. Thus we deduce the cohomology of $U'$.
By factoring out the second summand in $H^i_c(U')=H^i_c(U) \oplus H^{i-2}_c(X(su's)\cup X(su'))(-1)$ we get the cohomology of $X(Q).$

The higher dimensional case is treated similar as in Case C.

\medskip

\noindent{\bf Case E:} $\tail(Q)=su's$ with $u'<w'$ and $\ell(su's)=\ell(u')+2.$

We start with the case of a square. Here it has the shape as in Case 3 or Case 5 before.
Consider now a hypersquare $Q$ of dimension 3. Thus if its lower subsquare is as in Case 5
must have the shape

$Q:\;\,\; \begin{array}{ccccc}
 & & w & & \\ ? & & sv's & & ?  \\ sv' & & ? & & sz's  \\  & & su's & &
\end{array}$

\noindent for certain elements $?\in W.$ There are three possibilities for completing $Q$.
If $w'\geq su'$, then we get

$Q:\;\,\; \begin{array}{ccccc}
 & & w & & \\ sw' & & sv's & & w's  \\ sv' & & sy' & & sz's  \\  & & su's & &
\end{array}$

\noindent with $sy'=y's.$ On the other hand, if $w'\not\geq su'$, then we get

$Q:\;\,\; \begin{array}{ccccc}
 & & w & & \\ sw' & & sv's & & sy's  \\ sv' & & ? & & sz's  \\  & & su's & &
\end{array}$

\noindent with $?=sy'$ or $?=sx's$ for some $x'<v'.$
Hence if we write $Q=Q(su's,sv's) \stackrel{\cdot}{\cup} Q'$ then $Q'$ is a specialization of $Q(su's,sv's)$, i.e. $Q' \leq Q(su',sv's).$
For  $\dim(Q)=d>3,$ one proves as in Case D that is paved by $3$-dimensional hypersquares of this kind.

Consider now a hypersquare $Q$ of dimension 3 such that its lower subsquare is as in Case 3,

$Q:\;\,\; \begin{array}{ccccc}
 & & w & & \\ ? & & sv's & & ?  \\ sx's & & ? & & sy's  \\  & & su's & &
\end{array}$

\noindent for certain elements $?\in W.$ This is the most generic case in the sense that the upper square in $Q$ can be arbitrary, i.e.
if we write $Q=Q(su's,sv's) \stackrel{\cdot}{\cup} Q'$ then $Q'$ is a any specialization of $Q(su',sv's).$

Now we consider the cohomology and discuss only the case of  a square $Q$. The higher dimensional cases are treated as before,
see also Remark \ref{rk_pi} for a general approach. So let $Q$ be a square as in Case 5 (Case 3 has been already explained)
\smallskip

$
Q:\,\,\, \begin{array}{ccccc}
 & & w & & \\ & \nearrow & & \nwarrow & \\ sw' & & & & sv's  \\ & \nwarrow & & \nearrow & \\ & & su's & &
\end{array}
$
\medskip

\noindent where $
\,\,\, \begin{array}{ccccc}
 & & w' & & \\ & \nearrow & & \nwarrow & \\ y' & & & & v'  \\ & \nwarrow & & \nearrow & \\ & & u' & &
\end{array}
$

\noindent is a square with $y'=u's.$ We consider the extended interval $I(su',w) \subset W$ resp. the cube $\hat{Q}:=Q^{F^+}(su',w)\subset F^+$.
\smallskip

$\hat{Q}:\;\,\; \begin{array}{ccccc}
 & & w & & \\ & \nearrow & \uparrow & \nwarrow & \\ sw' & & sv's & & sy's  \\ \uparrow & \nearrow \nwarrow & & \nearrow \nwarrow & \uparrow
\\ sv'& & su's & & sy'  \\ & \nwarrow & \uparrow & \nearrow & \\  & & su' & &
\end{array}
.$
\smallskip

\noindent Here $$Y=X(su') \cup X(sv').$$

$$Y'=Y^{F^+}  \cup X_1^{F^+}(sy's).$$

$$U= X(w) \cup X(sw') \cup X(sv's) \cup X(su's).$$

$$U'= U^{F^+} \cup X^{F^+}(sy') \cup X^{F^+}_2(sy's).$$

\noindent Hence we get
$$H^i_c(Y')=H^i_c(Y) \oplus H^{i-2}_c(X(u's))(-1)$$ and
$$H^i_c(U')=H^i_c(U) \oplus H^{i-2}_c(X(su's))(-1).$$
The restriction map $H^i_c(X(Q^{F^+}(su',w))) \to H^i_c(Y')$ is given as follows.
First note that $H^i_c(X(\hat{Q}))= H^{i-2}_c(X(s\backslash Q))(-1),$
where $s\backslash Q$ is the cube such that $s \cdot (s\backslash Q) =Q.$
With respect to the summand $H^i_c(Y)$ we know the map factorizes over $H^i_c(X(Q^{F^+}(su',sw'))) \to H^i_c(X(sv') \cup X(su'))$.
As for the summand  $H^{i-2}_c(X(su'))(-1)$ the necessary  information follows from the identity  $H^i_c(X(\hat{Q}))= H^{i-2}_c(X(s\backslash Q))(-1).$
All maps are known. On the other hand we know by induction the boundary map $H^{i-2}_c(X(su')) \to H^{i-1}_c(X(su's)).$
Again we deduce the cohomology of $U'$ and by factoring out the summand  $H^{i-2}_c(X(su's))(-1)$ we get the cohomology of
$U$.

Thus we have examined all cases. In remains to say that the start of induction is the situation where $\head(Q)$ is minimal in its conjugacy class.
This case can be handled explicitly using successively Proposition \ref{boundary_Coxeter}.

\begin{rk}\label{rk_pi}
Let $I=I(u,w)\subset W$ be any interval. The map $\pi:X^{\ell(w)+1} \to X$ induces a
proper map
$$\pi: \pi^{-1}(X(I)) \to X(I). $$
The following lines gives a description of the preimage $Z=\pi^{-1}(X(I)) \subset X^{\ell(w)+1}$. Let $v\in Q^{{F^+}}(1,w).$

\noindent 1. Case. $\ell(\gamma(v))=\ell(v).$

Subcase a) $\gamma(v) \not\geq u.$ In this case $X^{{F^+}}(v)\cap Z =\emptyset$.

Subcase b) $\gamma(v) \geq u.$ In this case $X^{{F^+}}(v) \subset Z$ and the restriction of $\pi$ to $X^{{F^+}}(v)$
induces an isomorphism $X^{{F^+}}(v) \stackrel{\sim}{\to} X(\gamma(v)).$

\noindent 2. Case. $\ell(\gamma(v))<\ell(v).$ By Lemma \ref{doppelts} we may suppose that $v=v_1\cdot t\cdot t\cdot v_2.$
Thus we may write $X^{{F^+}}(v)=X^{{F^+}}_1(v) \bigcup X^{{F^+}}_2(v)$ where $X^{{F^+}}_1(v)$ is closed and $X^{{F^+}}_2(v)$ is open.
We have $\mA^1$-bundles
$X_1^{F^+}(v) \to X^{F^+}(v_1v_2)$ and $X^{F^+}_2(v) \cup X^{F^+}(v_1tv_2) \to X^{F^+}(v_1tv_2).$ The
map $\pi|_{X^{F^+}} :X^{F^+}(v) \to X$ factorizes through
$X^{F^+}(v_1tv_2) \cup X^{F^+}(v_1v_2).$ Hence we have reduced the question to elements of lower length.

Suppose additionally that $w=sw's.$ Then $Q^{F^+}(1,w)$ is paved by special squares. Then it is possible
to say what the image of such a special square $Q_v=\{v,sv',v's,v'\} $ under the map $\pi$ is. But we do not carry out this since
there are to many cases.

\end{rk}

\vspace{0.5cm}

\section{Appendix B}

Here we give summarizing tables of the cohomology of DL-varieties with respect to Weyl group elements  of full support 
in  $\GL_3$ and $\GL_4$. We  list only representatives of cyclic shift classes.

\begin{center}
\begin{tabular}{c|c}
$\GL_3$ & $H^\ast_c(X(w))$\\ \hline \\
$(1,2,3)$ & $j_{(1,1,1)}[-2] \oplus j_{(2,1)}(-1)[-3] \oplus j_{(3)}(-2)[-4]$ \\ \\ \hline \\
$(1,3)$ & $j_{(1,1,1)}[-3] \oplus j_{(3)}(-3)[-6]$ \\ &
\end{tabular}
\end{center}
\begin{center}
\begin{tabular}{c|c}
$\GL_4$ & $H^\ast_c(X(w))$\\ \hline \\
$(1,2,3,4)$ & $j_{(1,1,1,1)}[-3] \oplus j_{(2,1,1)}(-1)[-4] \oplus j_{(3,1)}(-2)[-5] \oplus j_{(4)}(-3)[-6] $  \\ \\ \hline \\
$(1,2,4)$ & $j_{(1,1,1,1)}[-4] \oplus j_{(2,2)}(-2)[-5] \oplus j_{(4)}(-4)[-8]$ \\ \\ \hline \\
$(1,3)(2,4)$ & $j_{(1,1,1,1)}[-4] \oplus j_{(2,2)}(-1)[-4] \oplus j_{(2,1,1)}(-2)[-5] \oplus j_{(3,1)}(-2)[-5] \oplus $ \\ \\ &  $j_{(2,2)}(-3)[-6] \oplus j_{(4)}(-4)[-8]$ \\ \\ \hline \\
$(1,3,2,4)$ & $j_{
(1,1,1,1)}[-5] \oplus j_{(2,2)}(-2)[-6] \oplus j_{(2,1,1)}(-2)[-6] \oplus j_{(2,2)}(-3)[-7] \oplus $ \\ \\ &  $j_{(3,1)}(-3)[-7] \oplus j_{(4)}(-5)[-10]$ \\ \\ \hline \\
$(1,4)$ & $j_{(1,1,1,1)}[-5] \oplus j_{(2,1,1)}(-1)[-5] \oplus j_{(2,2)}(-2)[-6] \oplus j_{(2,2)}(-3)[-7] \oplus $ \\  \\&  $j_{(3,1)}(-4)[-8] \oplus j_{(4)}(-5)[-10]$ \\ \\ \hline \\
$(1,4)(2,3)$ & $j_{(1,1,1,1)}[-6] \oplus j_{(2,1,1)}(-2)[-7] \oplus j_{(2,2)}(-3)[-8]^2  \oplus j_{(3,1)}(-4)[-9] \oplus j_{(4)}(-6)[-12].$ 
\end{tabular}
\end{center}

\vspace{1cm}


\begin{thebibliography}{ABC2'}
\bibitem[BGG]{BGG} I.N. Bernstein, I.M. Gelʹfand, S.I. Gelʹfand, {\it  Differential operators on the base affine space and a study of g-modules}.
 Lie groups and their representations (Proc. Summer School, Bolyai J\'anos Math. Soc., Budapest, 1971), pp. 21 -– 64, Halsted, New York, 1975.
\bibitem[BL]{BL} S. Billey, V. Lakshmibai, {\it Singular loci of Schubert varieties. Progress in Mathematics}, {\bf 182}. Birkh\"auser Boston,
Inc., Boston, MA, 2000.
\bibitem[Bl]{Bl} S. Bloch, {\it Algebraic cycles and higher K-theory}. Adv. in Math. {\bf 61} (1986), no. 3, 267--304.
\bibitem[BM]{BM} M. Brou\'e, J. Michel, {\it Sur certains \'el\'ements r\'eguliers des groupes de Weyl et les vari\'et\'es de Deligne-Lusztig
associ\'ees}, Finite reductive groups (Luminy, 1994), 73--139, Progr. Math., {\bf 141}, Birkh\"auser Boston, Boston, MA, 1997.
\bibitem[De]{De} P. Deligne, {\it  Action du groupe des tresses sur une catégorie}, Invent. Math. {\bf 128} (1997), no. 1,  159--175.
\bibitem[DL]{DL} P. Deligne, G. Lusztig, {\it Representations of reductive groups over finite fields},  Ann. of Math. (2)  {\bf 103}  (1976), no. 1, 103--161.
\bibitem[DM]{DM}  F. Digne, J. Michel, {\it Endomorphisms of Deligne-Lusztig varieties}, Nagoya Math. Journal {\bf 183} (2006), 35--103.
\bibitem[DMR]{DMR} F. Digne, J. Michel, R. Rouquier, {\it Cohomologie des vari\'et\'es de Deligne-Lusztig}, Adv. Math. {\bf 209}  (2007), No. 2, 749-822.
\bibitem[DOR]{DOR} J.-F. Dat, S. Orlik, M. Rapoport, {\it Period domains over finite and p-adic fields},  Cambridge Tracts in Mathematics (No. {\bf 183}), Cambridge
University Press, 2010.
\bibitem[Dr]{Dr} V.G. Drinfelʹd, {\it Coverings of p-adic symmetric domains}, Funkcional. Anal. i Priložen. {\bf 10} (1976), no. 2, 29--40.
\bibitem[Du]{Du} O. Dudas, {\it Cohomology of Deligne-Lusztig varieties for short-length regular elements in exceptional groups}, J. Algebra, {\bf 392}, (2013), 276--298.
\bibitem[FH]{FH} W. Fulton, J. Harris, {\it Representation theory. A first course}. Graduate Texts in Mathematics, {\bf 129} Readings in Mathematics. Springer-Verlag, New York, 1991.
\bibitem[Fu]{Fu} W. Fulton, {\it Intersection theory}. Second edition. Ergebnisse der Mathematik und ihrer Grenzgebiete. 3. Folge. A Series of Modern Surveys in Mathematics [Results in Mathematics and Related Areas. 3rd Series. A Series of Modern Surveys in Mathematics], 2. Springer-Verlag, Berlin, 1998.
\bibitem[Ge]{Ge}  A. Genestier, {\it  Espaces sym\'etriques de Drinfeld}, Ast\'erisque No. {\bf 234} (1996).
\bibitem[GK]{GK}  E. Grosse-Kl\"onne, {\it Integral structures in the p-adic holomorphic discrete series}. Represent. Theory {\bf 9} (2005), 354--384.
\bibitem[GP]{GP} M. Geck, G. Pfeiffer, {\it On the irreducible characters of Hecke algebras},  Adv. Math.  {\bf 102}   (1993),  no. 1, 79--94.
\bibitem[GP']{GP'} M. Geck, G. Pfeiffer, {\it Characters of finite Coxeter groups and Iwahori-Hecke algebras},  London Mathematical Society Monographs. New Series, {\bf 21}. The Clarendon Press, Oxford University Press, New York,  2000.
\bibitem[GKP]{GKP} M. Geck, S. Kim, G. Pfeiffer, {\it  Minimal length elements in twisted conjugacy classes of finite Coxeter groups},  J. Algebra  {\bf 229} (2000),  no. 2, 570--600.
\bibitem[HV]{HV} M. Hazewinkel, T. Vorst, {\it On the Snapper \slash \, Liebler-Vitale \slash \, Lam Theorem on permutation representations of the symmetric group}, Journal of Pure and Applied Algebra {\bf 23} (1982), 29--32.
\bibitem[Ho]{Ho} R. Howe, {\it Harish-Chandra homomorphisms for p-adic groups}. With the collaboration of Allen Moy. CBMS Regional Conference Series in Mathematics, 59. Published for the Conference Board of the Mathematical Sciences, Washington, DC; by the American Mathematical Society, Providence, RI, 1985.
\bibitem[Hu]{Hu} R. Huber, {\it \'Etale cohomology of rigid analytic varieties and adic spaces}. Aspects of Mathematics, E30. Friedr. Vieweg und Sohn, Braunschweig, 1996.
\bibitem[I]{I} T. Ito, {\it Weight-monodromy conjecture for $p$-adically uniformized varieties}, Invent. Math. {\bf 159} (2005), no. 3, 607--656.
\bibitem[Ku]{Ku} S. Kumar, {\it  Kac-Moody groups, their flag varieties and representation theory}. Progress in Mathematics, {\bf 204}. Birkhäuser Boston, Inc., Boston, MA, 2002.
\bibitem[Le]{Le} G. I. Lehrer, {\it The spherical building and regular semisimple elements}, Bull. Austral. Math. Soc. {\bf 27} (1983), no. 3, 361-379.
\bibitem[L]{L} G. Lusztig,  {\it Representations of finite Chevalley groups}, Expository lectures from the CBMS Regional Conference held at Madison, Wis., August 8--12, 1977. CBMS Regional Conference Series in Mathematics, {\bf 39}.
American Mathematical Society, Providence, R.I.,  1978.
\bibitem[L2]{L2} G. Lusztig, {\it Coxeter orbits and eigenspaces of Frobenius}, Invent. Math. {\bf 38} (1976/77), no. 2, 101--159.
\bibitem[L3]{L3} G. Lusztig,  {\it Characters of reductive groups over a finite field}. Annals of Mathematics Studies, 107. Princeton University Press, Princeton, NJ, 1984.
\bibitem[L4]{L4} G. Lusztig, {\it Homology bases arising from reductive groups over a finite field}, Carter, R. W. (ed.) et al., 
Algebraic groups and their representations. Proceedings of the NATO Advanced Study Institute on modular representations 
and subgroup structure of algebraic groups and related finite groups, Cambridge, UK, June 23--July 4, 1997. 
Dordrecht: Kluwer Academic Publishers. NATO ASI Ser., Ser. C, Math. Phys. Sci. {\bf 517}, (1998), 53-72.
\bibitem[LV]{LV} R.A. Liebler, M.R. Vitale, {\it Ordering the partition characters of the symmetric group}, J. Algebra {\bf 25} (1973), 487--489.
\bibitem[M]{M} J. Milne, {\it Lectures on etale cohomology}, http://www.jmilne.org/math/.
\bibitem[O]{O} S. Orlik, {\it  Kohomologie von Periodenbereichen \"uber endlichen K\"orpern}, J. Reine Angew. Math. {\bf 528} (2000), 201--233.
\bibitem[RTW]{RTW} B. R\'emy, A. Thuillier, A. Werner, {\it Automorphisms of Drinfeld half-spaces over a finite field}, Compositio Math. {\bf 149} (2013), 1211-1224.
\bibitem[SGA5]{SGA5} S\'eminaire de G\'eométrie Alg\'ebrique du Bois Marie - 1965-66 - Cohomologie l-adique et Fonctions L - (SGA 5). Lecture notes in mathematics. 589. Berlin; New York: Springer-Verlag.
\bibitem[SS]{SS} P. Schneider, U. Stuhler, {\it The cohomology of $p$-adic symmetric spaces},  Invent. Math. {\bf 105} (1991), 47--122.
\end{thebibliography}
\end{document}